\newcommand{\ra}[1]{\renewcommand{\arraystretch}{#1}}
\titleformat{\chapter}[display]
{\normalfont\huge\bfseries}{\chaptertitlename\\thechapter}{20pt}{\Huge}
\titleformat{\subsubsection}[runin]
{\normalfont\normalsize\bfseries}{\thesubsubsection}{1em}{}
\titleformat{\paragraph}[runin]
{\normalfont\normalsize\bfseries}{\theparagraph}{1em}{}
\titleformat{\subparagraph}[runin]
{\normalfont\normalsize\bfseries}{\thesubparagraph}{1em}{}
\titlespacing*{\chapter} {0pt}{50pt}{40pt}
\titlespacing*{\section} {0pt}{3.5ex plus 1ex minus .2ex}{2.3ex plus .2ex}
\titlespacing*{\subsection} {0pt}{3.25ex plus 1ex minus .2ex}{1.5ex plus .2ex}
\titlespacing*{\subsubsection}{0pt}{3.25ex plus 1ex minus .2ex}{1.5ex plus .2ex}
\titlespacing*{\paragraph} {0pt}{3.25ex plus 1ex minus .2ex}{1em}
\titlespacing*{\subparagraph} {\parindent}{3.25ex plus 1ex minus .2ex}{1em}
\keywords{Jacobian Conjecture}
\subjclass[2010]{Primary 14R15; Secondary 13F20}
\newcommand{\hs}{\hspace{-0.5pt}}
\def\xcirc{\hs\circ\hs}
\newtheorem{theorem}{Theorem}[section]
\newtheorem{lemma}[theorem]{Lemma}
\newtheorem{proposition}[theorem]{Proposition}
\newtheorem{corollary}[theorem]{Corollary}
\theoremstyle{definition}
\newtheorem{definition}[theorem]{Definition}
\newtheorem{notation}[theorem]{Notation}
\theoremstyle{remark}
\newtheorem{remark}[theorem]{Remark}
\DeclareMathOperator{\Aut}{Aut}
\DeclareMathOperator{\CH}{CH}
\DeclareMathOperator{\J}{J}
\DeclareMathOperator{\ide}{id}
\DeclareMathOperator{\Supp}{Supp}
\DeclareMathOperator{\en}{en}
\DeclareMathOperator{\factors}{factors}
\DeclareMathOperator{\st}{st}
\DeclareMathOperator{\dir}{dir}
\DeclareMathOperator{\Succ}{Succ}
\DeclareMathOperator{\Pred}{Pred}
\DeclareMathOperator{\Dir}{Dir}
\DeclareMathOperator{\HH}{H}
\DeclareMathOperator{\linspan}{linspan}
\DeclareMathOperator{\lcm}{lcm}
\newcommand{\ov}{\overline}
\newcommand{\wt}{\widetilde}
\begin{document}

\title{On the shape of possible counterexamples to the Jacobian Conjecture}

\author{Jorge A. Guccione}
\address{Departamento de Matem\'atica\\ Facultad de Ciencias Exactas y Naturales-UBA,
Pabell\'on~1-Ciudad Universitaria\\ Intendente Guiraldes 2160 (C1428EGA) Buenos Aires, Argentina.}
\address{Instituto de Investigaciones Matem\'aticas ``Luis A. Santal\'o"\\ Facultad de
Ciencias Exactas y Natu\-rales-UBA, Pabell\'on~1-Ciudad Universitaria\\
Intendente Guiraldes 2160 (C1428EGA) Buenos Aires, Argentina.}
\email{vander@dm.uba.ar}

\author{Juan J. Guccione}
\address{Departamento de Matem\'atica\\ Facultad de Ciencias Exactas y Naturales-UBA\\
Pabell\'on~1-Ciudad Universitaria\\ Intendente Guiraldes 2160 (C1428EGA) Buenos Aires, Argentina.}
\address{Instituto Argentino de Matem\'atica-CONICET\\ Saavedra 15 3er piso\\
(C1083ACA) Buenos Aires, Ar\-gentina.}
\email{jjgucci@dm.uba.ar}

\thanks{Jorge A. Guccione and Juan J. Guccione were supported by PIP 112-200801-00900 (CONICET)}

\author{Christian Valqui}
\address{Pontificia Universidad Cat\'olica del Per\'u, Secci\'on Matem\'aticas, PUCP,
Av. Universitaria 1801, San Miguel, Lima 32, Per\'u.}

\address{Instituto de Matem\'atica y Ciencias Afines (IMCA) Calle Los Bi\'ologos 245. Urb San C\'esar.
La Molina, Lima 12, Per\'u.}
\email{cvalqui@pucp.edu.pe}

\thanks{Christian Valqui was supported by PUCP-DGI-2011-0206, PUCP-DGI-2012-0011 and Lucet 90-DAI-L005.}

\begin{abstract} We improve the algebraic methods of Abhyankar for the Jacobian Conjecture in dimension two and describe the shape of possible
counterexamples. We give an elementary proof of the result of Heitmann in~\cite{H}, which states that $\gcd(\deg(P),\deg(Q))\ge 16$ for any
counterexample $(P,Q)$. We also prove that $\gcd(\deg(P),\deg(Q))\ne 2p$ for any prime $p$ and analyze thoroughly the case $16$, adapting a
reduction of degree technique introduced by Moh.
\end{abstract}

\maketitle

\section*{Introduction} Let $K$ be a field of characteristic zero.
The Jacobian Conjecture (JC) in dimension two, stated by Keller in~\cite{K},
says that any pair of polynomials $P,Q\in L:= K[x,y]$ with $[P,Q]:=\partial_x P \partial_y Q -
\partial_x Q \partial_y P\in K^{\times}$ defines an automorphism of $K[x,y]$.

In this paper we improve the algebraic methods of Abhyankar describing the shape of the support
of possible counterexamples. We use elementary algebraic methods combined with basic discrete analytic
geometry on the plane, i.e. on the points $\mathds{N}_0\times\mathds{N}_0$ in the case of
$L=K[x,y]$ and in $\frac 1l\mathds{Z}\times\mathds{N}_0$ in the case of $L^{(l)}:=K[x^{\pm \frac{1}{l}},y]$.

The first innovation is a definition of the directions and an order relation on them, based
on the crossed product of vectors, which simplifies substantively the treatment of consecutive
directions associated with the Newton polygon of Jacobian pairs. It is related to~\cite{H}*{Lemma~1.15}
and enables us to simplify substantially the treatment of the Newton polygon and its edges (compare with~\cite{A2}*{7.4.14}).

The second innovation lies in the use of the polynomial $F$ with
$[F,\ell_{\rho,\sigma}(P)]=\ell_{\rho,\sigma}(P)$, obtained in Theorem~\ref{central} for a given
Jacobian pair $(P,Q)$. This element can be traced back to 1975 in~\cite{J}. There also appears
the element $G_0\in K[P,Q]$, which becomes important in the proof of our
Proposition~\ref{proporcionalidad de direcciones mayores}. The polynomial
$F$ mentioned above is well known and used by many authors, see for
example~\cite{J}, \cite{ML} and \cite{vdE}*{10.2.8} (together with \cite{vdE}*{10.2.17 i)}).
In Theorem~\ref{central}, we
add some geometric statements on the shape of the supports, especially
about the endpoints (called $\st$ and $\en$) associated to an edge of
the Newton Polygon. In~\cite{H}*{Proposition~1.3} some of these statements,
presented in an algebraic form, can be found.

We will apply different endomorphisms in order to deform the support of a Jacobian pair. Opposed
to most of the authors working in this area (\cite{H}, \cite{Z}, \cite{M}), we remain all the
time in $L$ (or $L^{(l)}$). In order to do this we use the following very simple expression of
the change of the Jacobian under an endomorphism $\varphi\colon L\to L$ (or $ L\to L^{(l)}$,
or $L^{(l)}\to L^{(l)}$):
$$
[\varphi(P),\varphi(Q)]=\varphi([P,Q])[\varphi(x),\varphi(y)].
$$

Another key ingredient is the concept of regular corners and its classification, which we
present in Section~5. The geometric fact
that certain edges can be cut above the diagonal, Proposition~\ref{case IIb}, was already known
to Joseph and used in~\cite{J}*{Theorem~4.2}, in order to prove the polarization theorem.

In Section~6 we give an elementary proof of a result of~\cite{H}:
If
$$
B := \begin{cases}\infty & \text{if the jacobian conjecture is true,}\\
\min\bigl(\gcd(v_{1,1}(P),v_{1,1}(Q))\bigr)&\text{if it is false, where $(P,Q)$ runs on the counterexamples,}
\end{cases}
$$
then $B\ge 16$. In spite of Heitmann's
assertion ``Nothing like this appears in the literature but results of this type are known
by Abhyankar and Moh and are easily inferred from their published work'', referring to his
result, we do not know how to do this, and we did not find anything like this in the literature
till now. For example, in the survey papers~\cite{A2} and~\cite{A3}, this result is not mentioned,
although in~\cite{A3}*{Corollary~8.9} it is proven that $B \ge 9$.

In Section~7 we present our main new results:
Propositions~\ref{proporcionalidad de direcciones mayores} and~\ref{proporcionalidad de direcciones menores}
and Corollaries~\ref{fracciones de F} and~\ref{fracciones de F1}. At first sight they look
rather technical, but are related to the fact that for a Jacobian pair $(P,Q)$ in $K[x,y]$
we know that $P$ and $Q$ are star symmetric.
Propositions~\ref{proporcionalidad de direcciones mayores} and~\ref{proporcionalidad de direcciones menores}
yield partial star symmetries between elements in $K[P,Q]$ and $P$, whereas
Corollaries~\ref{fracciones de F} and~\ref{fracciones de F1} guarantee that
the leading forms of $P$ associated with certain directions can be written as powers of certain
polynomials. This
allows us to establish a very strong divisibility criterion for the possible
regular corners, Theorem~\ref{divisibilidad}, which enables us to
prove that $B=16$ or $B>20$, a result that is consistent with~\cite{H}*{Theorem~2.24}, where the two smallest
possible corners are $(D,E)=(12,4)$ and $(D,E)=(15,6)$, and $D+E$ gives the $\gcd$ of the total
degrees of $P$ and $Q$.

We also prove that $B\ne 2p$ for all prime $p$. This result is announced
to be proven by Abhyankar in a remark after~\cite{H}*{Theorem~1.16}, and it is said that it can be
proven similarly to~\cite{H}*{Proposition~2.21}. However, we could not translate the proof
of~\cite{H}*{Proposition~2.21} to our setting nor modify it to give $B\ne 2p$. Once again in the survey
articles~\cite{A2} and~\cite{A3}, this result is not mentioned, although in~\cite{A2}*{Lemma~6.1} it is
proven that $\gcd(\deg(P),\deg(Q))\ne p$. We also found~\cite{Z}*{Theorem~4.12} from which $B\ne 2p$
follows. But the proof relies on~\cite{Z}*{Lemma~4.10}, which has a gap, since it claims without proof
that $I_2 \subset \frac 1m \Gamma(f_2)$, an assertion which cannot be proven to be true. The same
article claims to have proven that $B>16$, and the author claims to have verified that $B>33$, but
it relies on the same flawed argument, so $B\ge 16$ remains up to the moment the best lower limit
for $B$.

One part of our strategy is described by~\cite{H}: ``The underlying strategy is the
minimal counterexample approach. We assume the Jacobian conjecture is false and derive
properties which a minimal counterexample must satisfy. The ultimate goal is either a
contradiction (proving the conjecture) or an actual counterexample.'' Actually this is
the strategy followed by Moh in~\cite{M}, who succeeded in proving that for a
counterexample $(P,Q)$, $\max(\deg(P),\deg(Q))>100$. The trouble of this strategy
is that the number of equations and variables one has to solve in order to discard
the possible counterexamples, grows rapidly, and the brute force approach with computers
gives no conceptual progress, although it allows us to increase the lower bound for $\max(\deg(P),\deg(Q))$.

The approach followed in~\cite{H} is more promising, since every possible $B$ ruled out
actually eliminates a whole infinite family of possible counterexamples and cannot be
achieved by computer power. The possible counterexample at $B=16$ is still within reach,
and in Section 8 we give a very detailed description of its shape, after reducing the
degrees, following essentially the same strategy of Moh in~\cite{M}, who does it in the
particular case $m=3$, $n=4$. This shows the advantage of the present method compared
to~\cite{H}, where the author says ``we have no nice way to handle these cases", referring
to the exceptional cases found by Moh.

Using the classification of regular corners we can produce the algebraic data corresponding to a
resolution at infinity, and these data are strongly related to the shape of a possible
counterexample. It would be interesting to describe thoroughly the relation between the
algebraic and topological methods used in the different approaches mentioned above.

The results in the first six sections of this paper are analogous to those established for
the one dimensional Dixmier conjecture in~\cite{G-G-V1}. The first section is just a
reminder of definitions and properties from~\cite{G-G-V1}. In Section~2 we give an
improved version of the analogous results in that paper, the main difference being
the proof of the existence of $G_0$ in Theorem~\ref{central} and Proposition~\ref{pavadass}(5).
In section~3 we recall some of the results of~\cite{G-G-V1} about the order of directions.
At the beginning of Section~4 we introduce the concept of a minimal pair and
prove that a minimal pair can be assumed to have a trapezoidal shape.

The results corresponding to Proposition~5.3 of~\cite{G-G-V1} now are distributed along
various propositions that classify regular corners in section~5.
In section~6 we obtain the fact that $B\ge 16$, in the same way as the corresponding
result in~\cite{G-G-V1}. The rest of the results
in this paper are new.

We point out that the proofs that $B=16$ or $B>20$ and that $B\ne 2p$ for any prime number~$p$
 can be adapted easily to the case of the Dixmier conjecture.

\section{Preliminaries}\label{preliminares} We recall some notations and properties from~\cite{G-G-V1}.
For each $l\in \mathds{N}$, we consider the commutative $K$-algebra $L^{(l)}$, generated by variables
$x^{\frac{1}{l}}$, $x^{\frac{-1}{l}}$ and $y$, subject to the relation $x^{\frac{1}{l}} x^{\frac{-1}{l}} = 1$.
In other words $L^{(l)} := K[x^{\frac{1}{l}},x^{\frac{-1}{l}},y]$. Obviously, there is a canonical
inclusion $L^{(l)}\subseteq L^{(h)}$, for each $l,h\in \mathds{N}$ such that $l|h$. We define the set of
directions by
\begin{equation*}
\mathfrak{V} := \{(\rho,\sigma)\in \mathds{Z}^2: \text{$\gcd(\rho,\sigma) = 1$} \}.
\end{equation*}
We also define
\begin{align*}
&\mathfrak{V}_{\ge 0} := \{(\rho,\sigma)\in \mathfrak{V}:\rho+\sigma\ge 0\},\\
&\mathfrak{V}_{>0} := \{(\rho,\sigma)\in \mathfrak{V}:\rho+\sigma>0\}
\shortintertext{and}
&\mathfrak{V}^0 := \{(\rho,\sigma)\in \mathfrak{V}:\rho+\sigma>0\text{ and }\rho>0\}.
\end{align*}
Note that $\mathfrak{V}_{\ge 0} = \mathfrak{V}_{>0}\cup \{(1,-1),(-1,1)\}$.

\begin{definition} For all $(\rho,\sigma)\in \mathfrak{V}$ and $(i/l,j)\in \frac{1}{l}\mathds{Z}\times
\mathds{Z}$ we write $v_{\rho,\sigma}(i/l,j):= \rho i/l+\sigma j$.
\end{definition}

\begin{definition}\label{not valuaciones para polinomios} Let $(\rho,\sigma)\in \mathfrak{V}$. For
$P = \sum a_{\frac{i}{l},j} x^{\frac{i}{l}} y^j\in L^{(l)}\setminus\{0\}$, we define:

\begin{itemize}

\smallskip

\item[-] The {\em support} of $P$ as $\Supp(P) := \left\{\left(i/l,j\right): a_{\frac{i}{l},j}\ne
    0\right\}$.

\smallskip

\item[-] The {\em $(\rho,\sigma)$-degree} of $P$ as $v_{\rho,\sigma}(P):= \max\left\{ v_{\rho,\sigma}
    (i/l,j): a_{\frac{i}{l},j} \ne 0\right\}$.

\smallskip

\item[-] The {\em $(\rho,\sigma)$-leading term} of $P$ as $\ell_{\rho,\sigma}(P):= \displaystyle
    \sum_{\{\rho \frac{i}{l} + \sigma j = v_{\rho,\sigma}(P)\}} a_{\frac{i}{l},j} x^{\frac{i}{l}} y^j$.

\smallskip

\end{itemize}
\end{definition}

\begin{remark} To abbreviate expressions we set $v_{\rho,\sigma}(0) := -\infty$ and
$\ell_{\rho,\sigma}(0) := 0$, for all $(\rho,\sigma)\in \mathfrak{V}$. Moreover, instead of $\Supp(P) = \{a\}$
we will write $\Supp(P) = a$.
\end{remark}

\begin{definition}\label{not elementos rho-sigma homogeneos} We say that $P\in L^{(l)}$ is {\em
$(\rho,\sigma)$-homogeneous} if $P = \ell_{\rho,\sigma}(P)$.
\end{definition}

\begin{definition} We assign to each direction its corresponding unit vector in $S^1$, and we define an
{\em interval} in $\mathfrak{V}$ as the preimage under this map of an arc of $S^1$ that is not the whole
circle. We consider each interval endowed with the order that increases counterclockwise.
\end{definition}

For each $P\in L^{(l)}\setminus \{0\}$, we let $\HH(P)$ denote the convex hull of the support of $P$. As it is
well known, $\HH(P)$ is a polygon, called the {\em Newton polygon of $P$}, and it is evident that each one of
its edges is the convex hull of the support of $\ell_{\rho,\sigma}(P)$, where $(\rho,\sigma)$ is orthogonal to the
given edge and points outside of $\HH(P)$.

\begin{notation}\label{Comienzo y Fin de un elemento de W} Let $(\rho,\sigma)\!\in\! \mathfrak{V}$ arbitrary.
We let $\st_{\rho,\sigma}(P)$ and $\en_{\rho,\sigma}(P)$ denote the first and the last point that we find on
$H(\ell_{\rho,\sigma}(P))$ when we run counterclockwise along the boundary of $H(P)$. Note that these points
coincide when $\ell_{\rho,\sigma}(P)$ is a monomial\endnote{In the case in which $P$ is
$(\rho,\sigma)$-homogeneous but is not a monomial this definition is a little confusing.
Remark~\ref{starting and end with cross} clarifies this situation.
}.
\end{notation}

The {\em cross product} of two vectors $A = (a_1,a_2)$ and $B = (b_1,b_2)$ in $\mathds{R}^2$ is
$A\times B:=\det\left(\begin{smallmatrix} a_1 & a_2\\ b_1 & b_2 \end{smallmatrix}\right)$.

\begin{remark}\label{starting and end with cross}
Note that if $\ell_{\rho,\sigma}(P)$ is not a monomial, then $(\rho,\sigma)\times (\en_{\rho,\sigma}(P)
-\st_{\rho,\sigma}(P))>0$\endnote{This property, combined with the fact that $\st_{\rho,\sigma}(P)$ and
$\en_{\rho,\sigma}(P)$ are the end points of $H(\ell_{\rho,\sigma}(P))$, determines  $\st_{\rho,\sigma}(P)$
and $\en_{\rho,\sigma}(P)$.
}.
\end{remark}

\begin{remark}\label{starting vs end}
If $(\rho_0,\sigma_0)<(\rho,\sigma)<(-\rho_0,-\sigma_0)$, then $v_{\rho_0,\sigma_0}(\en_{\rho,\sigma}(P))\le
v_{\rho_0,\sigma_0}(\st_{\rho,\sigma}(P))$, while if $(\rho_0,\sigma_0)>(\rho,\sigma)>(-\rho_0,-\sigma_0)$,
then $v_{\rho_0,\sigma_0}(\en_{\rho,\sigma}(P))\ge
v_{\rho_0,\sigma_0}(\st_{\rho,\sigma}(P))$,
 with equality in both cases only if $\ell_{\rho,\sigma}(P)$ is a monomial. Moreover,
 in the first case
\begin{equation*}
\st_{\rho,\sigma}(P) = \Supp(\ell_{\rho_0,\sigma_0}(\ell_{\rho,\sigma}(P)))\qquad\text{and}\qquad
\en_{\rho,\sigma}(P) = \Supp(\ell_{-\rho_0,-\sigma_0}(\ell_{\rho,\sigma}(P))).
\end{equation*}
Hence, if $(\rho,\sigma)\in \mathfrak{V}_{>0}$, then
$$
\st_{\rho,\sigma}(P) = \Supp(\ell_{1,-1}(\ell_{\rho,\sigma}(P))) \qquad\text{and}\qquad
\en_{\rho,\sigma}(P) = \Supp(\ell_{-1,1}(\ell_{\rho,\sigma}(P))),
$$
and, if $\rho+\sigma<0$, then
$$
\st_{\rho,\sigma}(P) = \Supp(\ell_{-1,1} (\ell_{\rho,\sigma}(P)))\qquad
\text{and}\qquad \en_{\rho,\sigma}(P) = \Supp(\ell_{1,-1}(\ell_{\rho,\sigma}(P))).
$$
\end{remark}

\begin{remark}\label{pr v de un producto} Let $P,Q\in L^{(l)}\setminus\{0\}$ and
$(\rho,\sigma)\in  \mathfrak{V}$. The following assertions hold:
\begin{enumerate}

\smallskip

\item $\ell_{\rho,\sigma}(PQ) = \ell_{\rho,\sigma}(P)\ell_{\rho,\sigma}(Q)$.

\smallskip

\item If $P = \sum_i P_i$, $v_{\rho,\sigma}(P_i)=v_{\rho,\sigma}(P)$ and $\sum_i \ell_{\rho,\sigma}(P_i)\ne 0$, then $\ell_{\rho,\sigma}(P) = \sum_i
    \ell_{\rho,\sigma}(P_i)$.

\smallskip

\item $v_{\rho,\sigma}(PQ) = v_{\rho,\sigma}(P) + v_{\rho,\sigma}(Q)$.

\smallskip

\item $\st_{\rho,\sigma}(PQ) = \st_{\rho,\sigma}(P)+\st_{\rho,\sigma}(Q)$.

\smallskip

\item $\en_{\rho,\sigma}(PQ)= \en_{\rho,\sigma}(P)+\en_{\rho,\sigma}(Q)$.

\smallskip

\item $-v_{-\rho,-\sigma}(P)\le v_{\rho,\sigma}(P)$.

\end{enumerate}
\end{remark}
We will use freely these facts throughout the article.

\begin{notation}\label{jacobiano} For $P,Q\in L^{(l)}$ we write $[P,Q] := \det \J(P,Q)$, where $\J(P,Q)$
is the jacobian matrix of $(P,Q)$.
\end{notation}

\begin{definition} Let $P,Q\in L^{(l)}$. We say that $(P,Q)$ is a {\em Jacobian pair} if $[P,Q]\in K^{\times}$.
\end{definition}

\begin{remark}\label{re v de un conmutador} Let $P,Q\in L^{(l)}\setminus\{0\}$ and let $(\rho,\sigma)\in
\mathfrak{V}$. We have:

\begin{enumerate}

\smallskip

\item If $P$ and $Q$ are $(\rho,\sigma)$-homogeneous, then $[P,Q]$ is also. If moreover $[P,Q] \ne 0$,
then
$$
v_{\rho,\sigma}([P,Q]) = v_{\rho,\sigma}(P) + v_{\rho,\sigma}(Q) - (\rho+\sigma).
$$

\smallskip

\item If $P=\sum_{i} P_{i}$ and $Q=\sum_{j} Q_{j}$ are the $(\rho,\sigma)$-homogeneous decompositions
of $P$ and~$Q$, then the$(\rho,\sigma)$-homogeneous decomposition $[P,Q]=\sum_k [P,Q]_k$ is given by
\begin{equation}\label{descomposicion}
[P,Q]_k=\sum_{i+j=k+\rho+\sigma} [P_i,Q_j].
\end{equation}

\smallskip

\item If $[P,Q]=0$, then $[\ell_{\rho,\sigma}(P),\ell_{\rho,\sigma}(Q)]=0$.
\end{enumerate}
\end{remark}

\begin{proposition}\label{pr v de un conmutador} Let $P,Q\in L^{(l)}\setminus \{0\}$ and $(\rho,\sigma)\in
\mathfrak{V}$. We have
\begin{equation}
v_{\rho,\sigma}([P,Q])\le v_{\rho,\sigma}(P) + v_{\rho,\sigma}(Q) - (\rho+\sigma).\label{dfpvc}
\end{equation}
Moreover
$$
v_{\rho,\sigma}([P,Q])= v_{\rho,\sigma}(P) + v_{\rho,\sigma}(Q) - (\rho+\sigma)\Longleftrightarrow
[\ell_{\rho,\sigma}(P),\ell_{\rho,\sigma}(Q)]\ne 0
$$
and in this case $[\ell_{\rho,\sigma}(P),\ell_{\rho, \sigma}(Q)] = \ell_{\rho,\sigma}([P,Q])$.
\end{proposition}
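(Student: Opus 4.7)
The plan is to reduce the statement to the homogeneous case already handled in Remark~\ref{re v de un conmutador}. Write the $(\rho,\sigma)$-homogeneous decompositions $P=\sum_i P_i$ and $Q=\sum_j Q_j$, where $P_i=0$ for $i>v_{\rho,\sigma}(P)$ and $P_{v_{\rho,\sigma}(P)}=\ell_{\rho,\sigma}(P)$, and analogously for $Q$. By the bilinearity of the bracket we then have $[P,Q]=\sum_{i,j}[P_i,Q_j]$.

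First I would establish the inequality. By Remark~\ref{re v de un conmutador}(1), each term $[P_i,Q_j]$ is either zero or $(\rho,\sigma)$-homogeneous of degree $i+j-(\rho+\sigma)$. Since $i\le v_{\rho,\sigma}(P)$ and $j\le v_{\rho,\sigma}(Q)$ whenever $P_i$ and $Q_j$ are nonzero, every nonzero summand has $(\rho,\sigma)$-degree at most $v_{\rho,\sigma}(P)+v_{\rho,\sigma}(Q)-(\rho+\sigma)$, which gives the bound~\eqref{dfpvc} (the case $[P,Q]=0$ is covered by the convention $v_{\rho,\sigma}(0)=-\infty$).

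For the equivalence, I would inspect the $(\rho,\sigma)$-homogeneous component of $[P,Q]$ of maximal possible degree $k:=v_{\rho,\sigma}(P)+v_{\rho,\sigma}(Q)-(\rho+\sigma)$. By formula~\eqref{descomposicion} of Remark~\ref{re v de un conmutador}(2),
$$
[P,Q]_k=\sum_{i+j=v_{\rho,\sigma}(P)+v_{\rho,\sigma}(Q)}[P_i,Q_j],
$$
and the only pair $(i,j)$ in this sum with both $P_i\ne 0$ and $Q_j\ne 0$ is $(v_{\rho,\sigma}(P),v_{\rho,\sigma}(Q))$. Hence $[P,Q]_k=[\ell_{\rho,\sigma}(P),\ell_{\rho,\sigma}(Q)]$. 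If this element is nonzero, it equals $\ell_{\rho,\sigma}([P,Q])$ and equality holds in~\eqref{dfpvc}; if it is zero, then $[P,Q]=\sum_{k'<k}[P,Q]_{k'}$, forcing $v_{\rho,\sigma}([P,Q])<k$, i.e. strict inequality in~\eqref{dfpvc}.

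There is no real obstacle here beyond unambiguous bookkeeping of which indices can contribute to the top homogeneous component; the only mildly delicate point is to treat the degenerate case $[P,Q]=0$ (so $[\ell_{\rho,\sigma}(P),\ell_{\rho,\sigma}(Q)]=0$ by Remark~\ref{re v de un conmutador}(3)) consistently with the convention on $v_{\rho,\sigma}$.
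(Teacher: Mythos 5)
Your argument is correct and is exactly the intended proof: the paper's own proof is the single line ``It follows directly from the decomposition~\eqref{descomposicion},'' and what you write out is precisely that reduction, with the bookkeeping (only $(i,j)=(v_{\rho,\sigma}(P),v_{\rho,\sigma}(Q))$ can contribute to the top component, Remark~\ref{re v de un conmutador}(1) for the degree of each summand, and the convention $v_{\rho,\sigma}(0)=-\infty$) carried out in detail.
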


\begin{proof} It follows directly from the decomposition~\eqref{descomposicion}.\end{proof}

\begin{definition}\label{def vectors alineados} We say that two vectors $A,B\in\mathds{R}^2$ are
{\em aligned} and write $A\sim B$, if $A\times B = 0$.
\end{definition}

\begin{remark} \label{relacion de equivalencia}
Note that the restriction of $\sim$ to $\mathds{R}^2\setminus\{0\}$ is an equivalence
relation. Note also that if $A\in \mathds{R}\times \mathds{R}_{>0}$, $B\in \mathds{R}\times \mathds{R}_{\ge 0}$
and $A\sim B$, then $B=\lambda A$ for some $\lambda \ge 0$.
\end{remark}

\section{Shape of Jacobian pairs}
\setcounter{equation}{0}

The results in this section appear in several papers, for instance~\cite{A}, \cite{H} and \cite{J},
but we need to establish them in a slightly different form, including the geometric information about
the shape of the support.

\begin{proposition}\label{P y Q alineados} Let $(\rho,\sigma)\in \mathfrak{V}$ and let $P,Q\in
L^{(l)}\setminus\{0\}$ be two $(\rho,\sigma)$-homogeneous elements. Set $\tau:=v_{\rho,\sigma}(P)$ and
$\mu:=v_{\rho,\sigma}(Q)$.

\begin{enumerate}

\smallskip

\item If $\tau=\mu=0$, then $[P,Q] = 0$.

\smallskip

\item Assume that $[P,Q]\! =\! 0$ and $(\mu,\tau)\!\ne\! (0,0)$. Let $m,n\!\in\!\mathds{Z}$ with
    $\gcd(m,n)\!=\!1$ and $n\tau\! =\! m\mu$. Then
\begin{enumerate}

\smallskip

\item[a)]  There exists $\alpha\in K^{\times}$ such that $P^n = \alpha Q^m$.

\smallskip

\item[b)] There exist $R\in L^{(l)}$ and $\lambda_P,\lambda_Q\in K^\times$, such that
\begin{equation}
P = \lambda_P R^m\quad\text{and}\quad Q = \lambda_Q R^n.\label{chvh5}
\end{equation}
Moreover

\begin{itemize}

\smallskip

\item[-] if $\mu\tau<0$, then $P,Q\in K[x^{1/l},x^{-1/l}]$,

\smallskip

\item[-] if $\mu\tau\ge 0$, then we can choose $m,n\in \mathds{N}_0$,

\smallskip

\item[-] if $P,Q\in L$, then $R\in L$.

\end{itemize}

\end{enumerate}

\end{enumerate}

\end{proposition}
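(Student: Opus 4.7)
For part (1), I exploit Euler's identity: since $P,Q$ are $(\rho,\sigma)$-homogeneous of degree $0$, we have $\rho x\,\partial_x P = -\sigma y\,\partial_y P$ and similarly for $Q$. Multiplying $\rho\sigma xy\,[P,Q]$ out and substituting shows the whole expression collapses to $0$, so $[P,Q]=0$ in the integral domain $L^{(l)}$ whenever $\rho\sigma \ne 0$. If $\rho = 0$ (respectively $\sigma = 0$), then $\sigma = \pm 1$ (respectively $\rho = \pm 1$), and $\tau = 0$ forces $P,Q \in K[x^{\pm 1/l}]$ (respectively $K[y]$), so one of the partial derivatives vanishes and $[P,Q] = 0$ is immediate.

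For part (2a), I pass to the fraction field $K(x^{1/l},y)$, a purely transcendental extension of $K$ of transcendence degree $2$ in which $K$ is algebraically closed, and consider the $K$-derivation $D := [P,\,\cdot\,]$. Note $D=0$ iff $\partial_x P = \partial_y P = 0$ iff $P \in K$; in that case $\tau = 0$, hence $m = 0$ and $n = \pm 1$, and (a) is immediate with $\alpha := P^n$. Otherwise $D \ne 0$ and, by standard characteristic-zero results, $\ker D$ is algebraically closed in $K(x^{1/l},y)$ and has transcendence degree at most $1$ over $K$. Since $P,Q \in \ker D$, they are algebraically dependent over $K$; take any nonzero $F \in K[U,V]$ with $F(P,Q) = 0$ and decompose it into $(\rho,\sigma)$-homogeneous parts using $\deg U = \tau$, $\deg V = \mu$. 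Because $n\tau = m\mu$ with $\gcd(m,n) = 1$, every integer solution of $a\tau + b\mu = d$ has the form $(a,b) = (a_0 + tn,\, b_0 - tm)$ with $t \in \mathds{Z}$; hence each homogeneous part factors, in the fraction field, as $P^{a_0}Q^{b_0}\,H(P^n/Q^m)$ for some nonzero Laurent polynomial $H \in K[T,T^{-1}]$. Thus $W := P^n/Q^m$ is algebraic over $K$ and so lies in $K^\times$, giving $P^n = \alpha Q^m$.

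For part (2b), I use that $L^{(l)} = K[x^{\pm 1/l},y]$ is a UFD whose units are $K^\times \cdot \{x^{a/l} : a \in \mathds{Z}\}$. Factor $P = u \prod_i p_i^{\alpha_i}$ and $Q = u' \prod_i p_i^{\beta_i}$ with distinct irreducibles $p_i$ and units $u = \lambda x^{a/l}$, $u' = \lambda' x^{a'/l}$ (the irreducibles can be taken common to both by (a)). The equation $P^n = \alpha Q^m$ and unique factorization give $n\alpha_i = m\beta_i$, so by $\gcd(m,n) = 1$ I may write $\alpha_i = m\gamma_i$, $\beta_i = n\gamma_i$; matching unit parts similarly yields $na = ma'$, so $c := a/m = a'/n$ is an integer. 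Setting $R := x^{c/l}\prod_i p_i^{\gamma_i} \in L^{(l)}$ produces $P = \lambda_P R^m$ and $Q = \lambda_Q R^n$ with $\lambda_P,\lambda_Q \in K^\times$. For the moreover statements: when $\mu\tau < 0$, $m$ and $n$ have opposite signs, so $P^n = \alpha Q^m$ rearranges to a product of positive powers of $P$ and $Q$ lying in $K^\times$, forcing $P$ and $Q$ to be units of $L^{(l)}$ and hence in $K[x^{\pm 1/l}]$; when $\mu\tau \ge 0$, replacing $(m,n)$ by $(-m,-n)$ if necessary places both in $\mathds{N}_0$; and when $P,Q \in L$, rerunning the factorization inside the UFD $L$ (whose units are merely $K^\times$, forcing $a = a' = c = 0$) keeps $R \in L$.

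The step I anticipate as the main obstacle is the derivation argument in (2a): verifying that $\ker D$ is algebraically closed in $K(x^{1/l},y)$ with transcendence degree at most $1$ (both relying on characteristic zero), and carefully tracking that $W = P^n/Q^m$ lies in $K$ via the parametrization $(a,b) = (a_0 + tn,\, b_0 - tm)$ afforded by $\gcd(m,n) = 1$. The UFD bookkeeping in (b) and the three moreover clauses are then routine once (a) is secured.
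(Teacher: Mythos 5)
Your proof is correct, but it takes a genuinely different route from the paper's at each stage, so let me compare. For part (1) the paper changes variables: when $\rho\ne 0$, any $(\rho,\sigma)$-homogeneous element of degree $0$ lies in $K[z]$ with $z:=x^{-\sigma/\rho}y$, so $P$ and $Q$ are univariate and the Jacobian vanishes trivially; you instead use Euler's identity $\rho x\partial_x P+\sigma y\partial_y P=\tau P$ to compute $\rho\sigma xy[P,Q]=0$ directly. Both are elementary and short; yours avoids passing through $K[z]$. For part (2a) the paper simply cites Joseph's \cite{J}*{Proposition~2.1(2)}; your derivation-theoretic argument (that $\ker D$ for $D:=[P,\,\cdot\,]$ is algebraically closed of transcendence degree $\le 1$ when $D\ne 0$, so $P,Q$ are algebraically dependent, and then pushing a $(\rho,\sigma)$-homogeneous relation $F_d(P,Q)=0$ through the parametrization $(a,b)=(a_0+tn,b_0-tm)$ to conclude $P^n/Q^m\in K^\times$) reproves that result from scratch, which makes your write-up more self-contained at the cost of invoking a bit of commutative algebra (Nowicki/van den Essen--type facts about derivation kernels and the fact that $K$ is relatively algebraically closed in $K(x^{1/l},y)$). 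For part (2b) the paper works concretely: it writes $P=x^{r/l}y^s f(z)$, $Q=x^{u/l}y^v g(z)$, obtains $f^n=\alpha g^m$ in $K[z]$, extracts $h$ with $f=\lambda_P h^m$, $g=\lambda_Q h^n$, and builds $R$ from a B\'ezout combination of the monomial exponents; you instead use unique factorization of $L^{(l)}=K[x^{\pm 1/l},y]$ (a localization of a polynomial ring, hence a UFD with units $K^\times\cdot x^{\mathds{Z}/l}$) and match multiplicities via $\gcd(m,n)=1$, which is cleaner structurally, though the paper's version makes the monomial bookkeeping and the final $v_{-1,0}$ argument for $R\in L$ a little more transparent. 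In short: your proposal is sound and complete; it replaces one elementary change of variables, one citation, and one hands-on polynomial computation with Euler's identity, a derivation-kernel argument, and UFD bookkeeping respectively.
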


\begin{proof} (1)\enspace If $\rho=0$, then $P,Q\in K[x^{1/l},x^{-1/l}]$ and if $\rho\ne 0$, then
$P,Q\in K[z]$ where $z:=x^{-\sigma/\rho}y$. In both cases, $[P,Q]=0$ follows easily.

\smallskip

\noindent (2a)\enspace This is~\cite{J}*{Proposition~2.1(2)}.

\smallskip

\noindent (2b)\enspace Assume first that $\mu\tau<0$ and take $n,m\in\mathds{Z}$ coprime with $n\tau =m\mu$.
By statement~(a), there exists $\alpha\in K^{\times}$ such that $P^n = \alpha Q^m$. Since $mn<0$, necessarily
$P,Q\in K[x^{1/l},x^{-1/l}]$ and $\rho\ne 0$. Moreover, since $P$ and $Q$ are $(\rho,\sigma)$-homogeneous,
$$
P=\lambda_P x^{\frac{r}{l}}\quad\text{and}\quad Q=\lambda_Q x^{\frac{u}{l}},
$$
for some $\lambda_P,\lambda_Q\in K^\times$ and $r,u\in\mathds{Z}$ with $rn = um$. Clearly
$R:=x^{\frac{r}{lm}}=x^{\frac{u}{ln}}$ satisfies~\eqref{chvh5}. In order to finish the proof we only must note
that, since $m$ and $n$ are coprime, $R\in L^{(l)}$.

\smallskip

Assume now $\mu\tau\ge 0$ and let $m,n\in\mathds{N}_0$ be such that $n\tau = m\mu$ and $\gcd(m,n) = 1$. Set
$$
z := \begin{cases} x^{-\frac{\sigma}{\rho}} y & \text{if $\rho\ne 0$,}\\ x^{\frac{1}{l}} & \text{if
$\rho=0$,}\end{cases}
$$
and write
$$
P = x^{\frac{r}{l}}y^s f(z)\quad\text{and}\quad Q = x^{\frac{u}{l}}y^v g(z),
$$
where $f,g\in K[z]$ with $f(0)\ne 0\ne g(0)$. By statement~(2a) there exists $\alpha\in K^{\times}$ such that
$$
x^{\frac{nr}{l}}y^{ns} f^n(z) = P^n = \alpha Q^m = \alpha x^{\frac{mu}{l}}y^{mv} g^m(z),
$$
from which we obtain
$$
(nr/l,ns) = (mu/l,mv)\quad\text{and}\quad f^n = \alpha g^m.
$$
Since $\gcd(m,n) = 1$, by the second equality there exist $h\in K[z]$ and $\lambda_P,\lambda_Q\in K^{\times}$
such that
\begin{equation}
f = \lambda_P h^m(z)\quad\text{and}\quad g = \lambda_Q h^n(z)\label{chvh1}
\end{equation}
Take $c,d\in \mathds{Z}$ such that $cm+dn = 1$ and define $(a/l,b):= c (r/l,s) + d (u/l,v)$. Since
$$
m(a/l,b)=(r/l,s)\quad\text{and}\quad n(a/l,b)\!=\!(u/l,v),
$$
it follows from~\eqref{chvh1}, that $R:=x^{\frac{a}{l}}y^b h(z)$ satisfies~\eqref{chvh5}, as desired.

\smallskip

Finally, if $P,Q\!\in\! L$, then $v_{-1,0}(R)\! =\! \frac{1}{m} v_{-1,0}(P)\! \le\! 0$, which combined with the
fact that $R\!\in\! L^{(1)}$ implies that $R\!\in \! L$.
\end{proof}

\begin{lemma}\label{Lema Joseph} Let $(\rho,\sigma)\in \mathfrak{V}$ and let $P,Q\in L^{(l)} \setminus
\{0\}$ be such that $[P,Q]\in K^{\times}$. If $v_{\rho,\sigma}(P)\ne 0$, then there exists $G_0\in K[P,Q]$ such that
$$
[\ell_{\rho,\sigma}(G_0),\ell_{\rho,\sigma}(P)]\ne 0\quad\text{and}\quad
[[\ell_{\rho,\sigma}(G_0),\ell_{\rho,\sigma}(P)],\ell_{\rho,\sigma}(P)]=0.
$$
Moreover, if we define recursively $G_i:=[G_{i-1},P]$, then
$[\ell_{\rho,\sigma}(G_i),\ell_{\rho,\sigma}(P)]=0$ for $i\ge 1$.
\end{lemma}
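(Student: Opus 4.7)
The plan is to distinguish two cases depending on whether $[\ell_{\rho,\sigma}(Q),\ell_{\rho,\sigma}(P)]$ vanishes.

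If $[\ell_{\rho,\sigma}(Q),\ell_{\rho,\sigma}(P)]\ne 0$, then $G_0:=Q$ is the desired element. Indeed, by Proposition~\ref{pr v de un conmutador} combined with $[Q,P]=-\lambda\in K^{\times}$ we obtain
$$[\ell_{\rho,\sigma}(G_0),\ell_{\rho,\sigma}(P)]=\ell_{\rho,\sigma}([Q,P])=-\lambda\ne 0,$$
and $[-\lambda,\ell_{\rho,\sigma}(P)]=0$ trivially. Moreover $G_1=-\lambda$ and $G_i=0$ for $i\ge 2$, so the iterated statement is immediate.

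If $[\ell_{\rho,\sigma}(Q),\ell_{\rho,\sigma}(P)]=0$, since $\tau:=v_{\rho,\sigma}(P)\ne 0$ forces $(\mu,\tau)\ne (0,0)$, Proposition~\ref{P y Q alineados}(2) produces $R\in L^{(l)}$, $\lambda_P,\lambda_Q\in K^{\times}$ and coprime $m,n$ with $\ell_{\rho,\sigma}(P)=\lambda_P R^m$ and $\ell_{\rho,\sigma}(Q)=\lambda_Q R^n$; I would pick $R$ so that $m$ is maximal, which forces every $(\rho,\sigma)$-homogeneous $h$ with $[h,\ell_{\rho,\sigma}(P)]=0$ to lie in $K\cdot\{R^k:k\in\mathds{Z}\}$. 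The first task is to exhibit some $G\in K[P,Q]$ with $[\ell_{\rho,\sigma}(G),\ell_{\rho,\sigma}(P)]\ne 0$. A natural candidate is $H:=Q^m-(\lambda_Q^m/\lambda_P^n)P^n$, whose leading forms cancel and for which $[H,P]=-\lambda m Q^{m-1}\ne 0$. If the inequality in Proposition~\ref{pr v de un conmutador} is an equality for $H$, then $H$ works; otherwise $\ell_{\rho,\sigma}(H)$ is a scalar multiple of some $R^{k}$, and I would subtract a suitable monomial $\alpha P^a Q^b\in K[P,Q]$ with $am+bn=k$ to strictly decrease $v_{\rho,\sigma}$, iterating until the leading form escapes the family $K\cdot\{R^k\}$.

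Given such a $G$, I would select $G_0\in K[P,Q]$ with $[\ell_{\rho,\sigma}(G_0),\ell_{\rho,\sigma}(P)]\ne 0$ of \emph{minimal $Q$-degree} (well-defined since $[P,Q]\in K^{\times}$ forces $P,Q$ to be algebraically independent). The key identity $[G,P]=-\lambda\,\partial_Q G$ for $G\in K[P,Q]$, deduced via the chain rule from $[P,P]=0$ and $[Q,P]=-\lambda$, yields $G_i=(-\lambda)^i\partial_Q^i G_0$, so $\deg_Q(G_i)=\deg_Q(G_0)-i$ until the sequence vanishes. Minimality now forces $[\ell_{\rho,\sigma}(G_i),\ell_{\rho,\sigma}(P)]=0$ for all $i\ge 1$; combined with Proposition~\ref{pr v de un conmutador} the case $i=1$ reads $\ell_{\rho,\sigma}(G_1)=[\ell_{\rho,\sigma}(G_0),\ell_{\rho,\sigma}(P)]$, which gives the remaining assertion $[[\ell_{\rho,\sigma}(G_0),\ell_{\rho,\sigma}(P)],\ell_{\rho,\sigma}(P)]=0$.

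The principal obstacle is the existence step in the second case: ensuring the cancellation procedure eventually produces a $G\in K[P,Q]$ whose leading form genuinely fails to lie in $K\cdot\{R^k\}$, since the availability of cancelling monomials $P^aQ^b$ depends on the numerical semigroup generated by $m$ and $n$, and a careful case analysis (using the sign of $\mu$ and the representability of the relevant exponents) may be required.
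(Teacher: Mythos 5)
Your reduction in the first case (take $G_0:=Q$ when $[\ell_{\rho,\sigma}(Q),\ell_{\rho,\sigma}(P)]\ne 0$) is correct, and your closing argument for the ``moreover'' clause is a clean alternative to the paper's: picking $G_0$ of minimal $Q$-degree among elements of $K[P,Q]$ whose leading form fails to commute with $\ell_{\rho,\sigma}(P)$, and using $[G,P]=-\lambda\,\partial_Q G$ to force $\deg_Q G_i<\deg_Q G_0$ for $i\ge 1$, is essentially equivalent to the paper's device of taking the largest index $j$ with $[\ell_{\rho,\sigma}(H_j),\ell_{\rho,\sigma}(P)]\ne 0$ along the iterated brackets of an initial $H$. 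So the downstream part of the lemma is fine once you have some $H\in K[P,Q]$ with $[\ell_{\rho,\sigma}(H),\ell_{\rho,\sigma}(P)]\ne 0$.

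That existence step, in the hard case $[\ell_{\rho,\sigma}(Q),\ell_{\rho,\sigma}(P)]=0$, is where the proposal has a genuine gap, as you yourself flag. Your cancellation procedure --- start with $H=Q^m-(\lambda_Q^m/\lambda_P^n)P^n$, and whenever $\ell_{\rho,\sigma}(H)=\alpha R^k$ subtract a monomial $\beta P^aQ^b$ with $am+bn=k$ --- is not shown to terminate, let alone to ever leave the family $K\cdot\{R^k\}$. Even when $\mu\tau\ge 0$ so that $m,n\in\mathds{N}$, the degree $v_{\rho,\sigma}(H)$ drops below $mn\cdot v_{\rho,\sigma}(R)$ at the first step and keeps dropping, so $k$ can and will fall into a gap of the numerical semigroup $\langle m,n\rangle$, at which point there is no $P^aQ^b$ with $a,b\ge 0$ to subtract. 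More importantly, nothing in the argument rules out the a priori possibility that \emph{every} $G\in K[P,Q]$ has $\ell_{\rho,\sigma}(G)\in K\cdot\{R^k\}$; you need a global argument to exclude this, and the reduction loop alone does not supply one. The paper resolves this with a dimension count: on $M(t):=\linspan\{P^iQ^j:0\le i,j\le t\}$, algebraic independence of $P,Q$ gives $\dim M(t)=(t+1)^2$, whereas if every $H\in M(t)$ had $[\ell_{\rho,\sigma}(H),\ell_{\rho,\sigma}(P)]=0$ then two elements of equal $(\rho,\sigma)$-degree could always be combined (via Proposition~\ref{P y Q alineados}(2b)) to lower the degree, so $M(t)$ would be spanned by one representative $z_\beta$ per attainable value $\beta$, forcing $\dim M(t)\le l(M-m)t$ for explicit constants $m,M$ --- a contradiction for $t$ large. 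Your proposal needs this (or an equivalent pigeonhole/growth) argument to be complete; the explicit reduction alone does not establish existence.
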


\begin{proof} Let $t\in\mathds{N}$ and set
$$
M(t):=\linspan \{P^iQ^j:i,j=0, \dots,t\}.
$$
Since $\{P^iQ^j\}$ is linearly in\-dependent, we have
\begin{equation}
\dim M(t)=(t+1)^2.\label{chvh2}
\end{equation}
On the other hand, a direct computation shows that
$$
mt\le -v_{-\rho,-\sigma}(z)\le v_{\rho,\sigma}(z)\le Mt\quad\text{for each $z\in M(t)$,}
$$
where
\begin{align*}
&m:=\min\{0,-v_{-\rho,-\sigma}(P),-v_{-\rho,-\sigma}(Q),-v_{-\rho,-\sigma}(P)-v_{-\rho,-\sigma}(Q)\}\\
\shortintertext{and}
&M:= \max\{0,v_{\rho,\sigma}(P),v_{\rho,\sigma}(Q),v_{\rho,\sigma}(P) +v_{\rho,\sigma}(Q)\}.
\end{align*}
Consequently,
$$
J:=v_{\rho,\sigma}(M(t)\setminus\{0\})\subseteq \frac{1}{l}\mathds{Z} \cap [mt,Mt].
$$
For each $\beta\in J$ we take a $z_{\beta}\in M(t)$ with $v_{\rho,\sigma}(z_{\beta}) = \beta$. We first prove
that there exist $t\in \mathds{N}$ and $H\in M(t)$, such that
\begin{equation}
[\ell_{\rho,\sigma}(P),\ell_{\rho,\sigma}(H)]\ne 0.\label{chvh3}
\end{equation}
Assume by contradiction that
$$
[\ell_{\rho,\sigma}(P),\ell_{\rho,\sigma}(H)]=0\qquad\text{for all $H\in M(t)$ and all $t\in \mathds{N}$.}
$$
We claim that then $M(t) = \linspan\{z_{\beta}:\beta\in J\}$. In fact, suppose this equality is false and take
$z\in M(t)\setminus \linspan \{z_{\beta}:\beta\in J\}$ with $\beta:= v_{\rho,\sigma}(z)$ minimum. By assumption
$$
[\ell_{\rho,\sigma}(P),\ell_{\rho,\sigma}(z)] = 0 = [\ell_{\rho,\sigma}(P), \ell_{\rho,\sigma}(z_{\beta})].
$$
Since $v_{\rho,\sigma}(P) \ne 0$ and $v_{\rho,\sigma}(z) = v_{\rho,\sigma} (z_{\beta})$, by
Proposition~\ref{P y Q alineados}(2b) there exist $R\in L^{(l)}$, $\lambda,\lambda_{\beta}\in K^\times$ and
$n\in\mathds{Z}$, such that
$$
\ell_{\rho,\sigma}(z) = \lambda R^n\quad\text{and}\quad \ell_{\rho,\sigma}(z_{\beta}) = \lambda_{\beta} R^n.
$$
Hence $v_{\rho,\sigma}(z-\lambda\lambda_{\beta}^{-1}z_{\beta})< v_{\rho,\sigma}(z)$, which contradicts the
choice of $z$, finishing the proof of the claim. Consequently $\dim M(t)\le l(M-m)t$, which
contradicts~\eqref{chvh2} if we take $t\ge l(M-m)$. Thus we can find $H\in K[P,Q]$ such that~\eqref{chvh3} is
satisfied.

We now define recursively $(H_j)_{j\ge 0}$ by setting
$$
H_0:=H,\quad\text{and}\quad H_{j+1}:=[H_j,P].
$$
Since $H_0\in K[P,Q]$, eventually $H_n=0$. Let $k$ be the largest index for which $H_k\ne0$. By
Remark~\ref{re v de un conmutador}(3) we know that $[\ell_{\rho,\sigma}(H_k),\ell_{\rho,\sigma}(P)]=0$.
But we also have $[\ell_{\rho,\sigma}(H_0),\ell_{\rho,\sigma}(P)]\ne 0$ and hence there exists a largest
$j$ such that $[\ell_{\rho,\sigma}(H_j),\ell_{\rho,\sigma}(P)]\ne 0$. By
Proposition~\ref{pr v de un conmutador} we have
$$
[\ell_{\rho,\sigma}(H_j),\ell_{\rho,\sigma}(P)]=\ell_{\rho,\sigma}(H_{j+1}),
$$
and so $G_0:=H_j$ satisfies the required conditions.
\end{proof}

\begin{proposition}\label{extremosalineados} Let $P,Q\in L^{(l)}\setminus \{0\}$ and $(\rho,\sigma)\in
\mathfrak{V}$. If $[\ell_{\rho,\sigma}(P), \ell_{\rho,\sigma}(Q)]=0$, then
$$
\st_{\rho,\sigma}(P)\sim\st_{\rho,\sigma}(Q)\quad\text{and}\quad
\en_{\rho,\sigma}(P)\sim\en_{\rho,\sigma}(Q).
$$
\end{proposition}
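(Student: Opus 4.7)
The plan is to reduce everything to the structure theorem for aligned homogeneous pairs, Proposition~\ref{P y Q alineados}. Since $\st_{\rho,\sigma}(P)$ and $\en_{\rho,\sigma}(P)$ depend only on $\ell_{\rho,\sigma}(P)$ (cf.~Remark~\ref{starting vs end}), and similarly for $Q$, I may replace $P,Q$ by $P':=\ell_{\rho,\sigma}(P)$ and $Q':=\ell_{\rho,\sigma}(Q)$ and assume from the start that $P,Q$ are themselves $(\rho,\sigma)$-homogeneous with $[P,Q]=0$. I then split on whether $\tau:=v_{\rho,\sigma}(P)$ and $\mu:=v_{\rho,\sigma}(Q)$ both vanish.

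Assume first that $(\tau,\mu)\ne(0,0)$. I invoke Proposition~\ref{P y Q alineados}(2b) to obtain $R\in L^{(l)}$, scalars $\lambda_P,\lambda_Q\in K^\times$ and coprime $m,n\in\mathds{Z}$ with $P=\lambda_P R^m$ and $Q=\lambda_Q R^n$. In the subcase $\mu\tau<0$ the same proposition forces $P,Q\in K[x^{\pm 1/l}]$ to be Laurent monomials in $x$; then $\st_{\rho,\sigma}(P)=\en_{\rho,\sigma}(P)$ and $\st_{\rho,\sigma}(Q)=\en_{\rho,\sigma}(Q)$ are single points on the $x$-axis, hence pairwise aligned. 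In the subcase $\mu\tau\ge 0$ we may take $m,n\in\mathds{N}_0$, and iterating parts~(4) and~(5) of Remark~\ref{pr v de un producto} I get
$$
\st_{\rho,\sigma}(P)=m\,\st_{\rho,\sigma}(R),\qquad \st_{\rho,\sigma}(Q)=n\,\st_{\rho,\sigma}(R),
$$
together with the analogous identities for $\en_{\rho,\sigma}$. Thus $\st_{\rho,\sigma}(P)$ and $\st_{\rho,\sigma}(Q)$ are non-negative scalar multiples of the single vector $\st_{\rho,\sigma}(R)$, which by Remark~\ref{relacion de equivalencia} gives the desired alignment; the argument for $\en_{\rho,\sigma}$ is identical.

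In the remaining case $\tau=\mu=0$, every element of $\Supp(P)\cup\Supp(Q)$ lies on the line $\{(u,v)\in\mathds{R}^2 : \rho u+\sigma v=0\}$, which passes through the origin. Any two vectors lying on such a line are scalar multiples of one another, hence aligned in the sense of Definition~\ref{def vectors alineados}; in particular both $\st_{\rho,\sigma}(P)\sim\st_{\rho,\sigma}(Q)$ and $\en_{\rho,\sigma}(P)\sim\en_{\rho,\sigma}(Q)$.

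The only bookkeeping point that requires care is the degenerate situation in the subcase $\mu\tau\ge 0$ in which one of $m,n$ equals $0$: then the corresponding leading form is a nonzero constant with $\st=\en=(0,0)$, and the alignment is preserved because the zero vector has vanishing cross product with every vector. No other step is subtle: the entire argument is a clean reading of Proposition~\ref{P y Q alineados} through the lens of Remark~\ref{pr v de un producto}(4)--(5).
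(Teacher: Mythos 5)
Your proof is correct, but it takes a genuinely different route from the paper's. The paper does \emph{not} invoke the structure theorem Proposition~\ref{P y Q alineados} at all: it fixes a transversal direction $(\rho_0,\sigma_0)$ with $(\rho_0,\sigma_0)<(\rho,\sigma)<(-\rho_0,-\sigma_0)$, notes via Remark~\ref{starting vs end} that $\ell_{\rho_0,\sigma_0}(\ell_{\rho,\sigma}(P))$ is the monomial $\mu_P x^{r/l}y^s$ with exponent $\st_{\rho,\sigma}(P)$ (and similarly for $Q$), and then applies Remark~\ref{re v de un conmutador}(3) to conclude that the Jacobian of these two monomials vanishes; since the Jacobian of two monomials is, up to a nonzero scalar, the cross product of their exponents times a monomial, the alignment drops out of a one-line computation. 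Your argument instead reduces to the $(\rho,\sigma)$-homogeneous case and then factors both leading forms as scalar multiples of powers of a common $R$ via Proposition~\ref{P y Q alineados}(2b), reading off $\st_{\rho,\sigma}(P)=m\,\st_{\rho,\sigma}(R)$ and $\st_{\rho,\sigma}(Q)=n\,\st_{\rho,\sigma}(R)$ from Remark~\ref{pr v de un producto}(4)--(5). This is sound, and your case bookkeeping ($\tau=\mu=0$, the Laurent-monomial subcase $\mu\tau<0$, and the degenerate $m=0$ or $n=0$) is all handled correctly. The paper's route is shorter and more elementary, needing only the monomial Jacobian computation; yours is ``higher level'' and explains \emph{why} the endpoints align --- both leading forms are powers of the same $R$ --- at the cost of importing the full strength of the commuting-homogeneous-pair structure theorem. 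One small cosmetic point: once you have $\st_{\rho,\sigma}(P)=m\,\st_{\rho,\sigma}(R)$ and $\st_{\rho,\sigma}(Q)=n\,\st_{\rho,\sigma}(R)$, the alignment is immediate from the bilinearity of the cross product (\,$(mX)\times(nX)=0$\,); the appeal to Remark~\ref{relacion de equivalencia} is not needed.
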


\begin{proof} Consider $(\rho_0,\sigma_0)$ such that $(\rho_0,\sigma_0)<(\rho,\sigma)<(-\rho_0,-\sigma_0)$. By
Remark~\ref{re v de un conmutador}(3),
$$
0=[\ell_{(\rho_0,\sigma_0)}(\ell_{\rho,\sigma}(P)),\ell_{(\rho_0,\sigma_0)}(\ell_{\rho,\sigma}(Q))].
$$
On the other hand, by Remark~\ref{starting vs end} there exist $\mu_P,\mu_Q\in K^{\times}$ such that
$$
\ell_{\rho_0,\sigma_0}(\ell_{\rho,\sigma}(P)) = \mu_Px^{\frac{r}{l}}y^s\quad\text{and}\quad
\ell_{\rho_0,\sigma_0}(\ell_{\rho,\sigma}(Q)) = \mu_Q x^{\frac{u}{l}}y^v,
$$
where $(r/l,s)=\st_{\rho,\sigma}(P)$ and $(u/l,v)=\st_{\rho,\sigma}(Q)$. Clearly
$$
0=[\ell_{\rho_0,\sigma_0}(\ell_{\rho,\sigma}(P)),\ell_{\rho_0,\sigma_0}(\ell_{\rho,\sigma}(Q))] =
\mu_P\mu_Q\left(\frac{rv}{l}-\frac{us}{l}\right) x^{\frac{r+u}{l}-1}y^{s+v-1},
$$
from which $\st_{\rho,\sigma}(P)\sim\st_{\rho,\sigma}(Q)$ follows. Similar arguments yield
$\en_{\rho,\sigma}(P)\sim\en_{\rho,\sigma}(Q)$, finishing the proof.
\end{proof}

\begin{proposition}\label{extremosnoalineados} Let $P,Q,R\in L^{(l)}\setminus \{0\}$ be such that
$$
[\ell_{\rho,\sigma}(P),\ell_{\rho,\sigma}(Q)]=\ell_{\rho,\sigma}(R),\endnote{
By Proposition~\ref{pr v de un conmutador} we know that $\ell_{\rho,\sigma}(R) = \ell_{\rho,\sigma}([P,Q])$.}
$$
where $(\rho,\sigma)\in \mathfrak{V}$. We have:

\begin{enumerate}

\smallskip

\item $\st_{\rho,\sigma}(P)\nsim\st_{\rho,\sigma}(Q)$ if and only if $\st_{\rho,\sigma}(P)
    +\st_{\rho,\sigma}(Q) - (1,1) =\st_{\rho,\sigma}(R)$.

\smallskip

\item $\en_{\rho,\sigma}(P)\nsim\en_{\rho,\sigma}(Q)$ if and only if
    $\en_{\rho,\sigma}(P)+\en_{\rho,\sigma}(Q)-(1,1)=\en_{\rho,\sigma}(R)$.

\end{enumerate}

\end{proposition}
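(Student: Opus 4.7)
The plan is to handle both parts uniformly by introducing an auxiliary direction $(\rho_0,\sigma_0)\in\mathfrak{V}$ that isolates the vertex of the $(\rho,\sigma)$-leading form singled out by $\st$ (for part~(1)) or by $\en$ (for part~(2)), and then to reduce the whole question to a single Jacobian bracket of monomials, exactly as was done in the proof of Proposition~\ref{extremosalineados}. I carry this out for~(1); part~(2) is identical after swapping the side from which $(\rho_0,\sigma_0)$ approaches $(\rho,\sigma)$.

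For part~(1), choose $(\rho_0,\sigma_0)$ with $(\rho_0,\sigma_0)<(\rho,\sigma)<(-\rho_0,-\sigma_0)$, sufficiently close to $(\rho,\sigma)$ that, by Remark~\ref{starting vs end}, for each $T\in\{P,Q,R\}$ the expression $\ell_{\rho_0,\sigma_0}(\ell_{\rho,\sigma}(T))$ is a monomial supported at $\st_{\rho,\sigma}(T)$. Set
$$A:=\ell_{\rho_0,\sigma_0}(\ell_{\rho,\sigma}(P))=\mu_P x^{r/l}y^s,\qquad B:=\ell_{\rho_0,\sigma_0}(\ell_{\rho,\sigma}(Q))=\mu_Q x^{u/l}y^v,$$
where $(r/l,s)=\st_{\rho,\sigma}(P)$ and $(u/l,v)=\st_{\rho,\sigma}(Q)$. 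A direct computation (as in the proof of Proposition~\ref{extremosalineados}) gives
$$[A,B]=\mu_P\mu_Q\!\left(\tfrac{rv-us}{l}\right)\!x^{(r+u)/l-1}y^{s+v-1},$$
so $[A,B]\ne 0$ if and only if $\st_{\rho,\sigma}(P)\nsim\st_{\rho,\sigma}(Q)$.

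Now apply Proposition~\ref{pr v de un conmutador} to the $(\rho,\sigma)$-homogeneous elements $\ell_{\rho,\sigma}(P)$ and $\ell_{\rho,\sigma}(Q)$ with respect to the direction $(\rho_0,\sigma_0)$. If $\st_{\rho,\sigma}(P)\nsim\st_{\rho,\sigma}(Q)$, then $[A,B]\ne 0$ gives
$$\ell_{\rho_0,\sigma_0}(\ell_{\rho,\sigma}(R))=\ell_{\rho_0,\sigma_0}\bigl([\ell_{\rho,\sigma}(P),\ell_{\rho,\sigma}(Q)]\bigr)=[A,B],$$
whose support is $\st_{\rho,\sigma}(P)+\st_{\rho,\sigma}(Q)-(1,1)$; but this support is also $\st_{\rho,\sigma}(R)$ by Remark~\ref{starting vs end}, proving the forward implication. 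Conversely, if $\st_{\rho,\sigma}(P)\sim\st_{\rho,\sigma}(Q)$ then $[A,B]=0$, so Proposition~\ref{pr v de un conmutador} yields the strict inequality
$$v_{\rho_0,\sigma_0}(\st_{\rho,\sigma}(R))=v_{\rho_0,\sigma_0}(\ell_{\rho,\sigma}(R))<v_{\rho_0,\sigma_0}\bigl(\st_{\rho,\sigma}(P)+\st_{\rho,\sigma}(Q)-(1,1)\bigr),$$
so in particular $\st_{\rho,\sigma}(R)\ne\st_{\rho,\sigma}(P)+\st_{\rho,\sigma}(Q)-(1,1)$. Part~(2) follows with $(\rho_0,\sigma_0)$ chosen on the other side of $(\rho,\sigma)$, so that $\ell_{\rho_0,\sigma_0}$ now extracts the $\en$-vertex of each $(\rho,\sigma)$-leading form. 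The only real obstacle is bookkeeping: one must take $(\rho_0,\sigma_0)$ close enough to $(\rho,\sigma)$ that Remark~\ref{starting vs end} applies to all of $P$, $Q$ and $R$ simultaneously, and one must chain Proposition~\ref{pr v de un conmutador} in both directions of the biconditional. No ingredient beyond the tools already developed is required.
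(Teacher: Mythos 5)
Your proof is correct and follows essentially the same approach as the paper: reduce the question to the Jacobian bracket of the two monomials picked out by $\ell_{\rho_0,\sigma_0}$ for an auxiliary direction $(\rho_0,\sigma_0)$ with $(\rho_0,\sigma_0)<(\rho,\sigma)<(-\rho_0,-\sigma_0)$, using Remark~\ref{starting vs end} to identify these monomials with $\st_{\rho,\sigma}(\cdot)$, and chain through Proposition~\ref{pr v de un conmutador}. Two small remarks: the ``sufficiently close'' caveat is unnecessary, since Remark~\ref{starting vs end} applies for any $(\rho_0,\sigma_0)$ in the stated range; and you handle the backward implication via the contrapositive (alignment gives a strict valuation inequality, hence the two points differ), whereas the paper runs it forward (the asserted equality forces $[\ell_{\rho_0,\sigma_0}(P),\ell_{\rho_0,\sigma_0}(Q)]\ne 0$, hence non-alignment) --- logically equivalent, no real difference.
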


\begin{proof} (1)\enspace It is enough to prove it when $P$, $Q$ and $R$ are $(\rho,\sigma)$-homogeneous,
so we will assume it. Choose $(\rho_0,\sigma_0)\in \mathfrak{V}$ such that
$(\rho_0,\sigma_0)<(\rho,\sigma)<(-\rho_0,-\sigma_0)$. By Remark~\ref{starting vs end}
\begin{equation}\label{starting}
\ell_{\rho_0,\sigma_0}(P) = \mu_P x^{\frac{r}{l}}y^s,\quad
\ell_{\rho_0,\sigma_0}(Q) = \mu_Q x^{\frac{u}{l}}y^v\quad
\text{and}\quad \ell_{\rho_0,\sigma_0}(R) = \mu_R x^{\frac{a}{l}}y^b,
\end{equation}
where
$$
\mu_P,\mu_Q,\mu_R\in K^{\times},\quad \left(\frac{r}{l},s\right):=\st_{\rho,\sigma}(P),\quad
\left(\frac{u}{l},v\right):=\st_{\rho,\sigma}(Q)\quad\text{and}\quad
\left(\frac{a}{l},b\right):=\st_{\rho,\sigma}(R).
$$
Clearly
$$
[\ell_{\rho_0,\sigma_0}(P),\ell_{\rho_0,\sigma_0}(Q)] = \mu_P\mu_Q\left(\frac{rv}{l}-\frac{us}{l}\right)
x^{\frac{r+u}{l}-1}y^{s+v-1}
$$
and hence, by Proposition~\ref{pr v de un conmutador},
\begin{equation}
\st_{\rho,\sigma}(P)\nsim \st_{\rho,\sigma}(Q)\Longleftrightarrow
[\ell_{\rho_0,\sigma_0}(P),\ell_{\rho_0,\sigma_0}(Q)]\ne 0 \Longleftrightarrow \ell_{\rho_0,\sigma_0}(R) =
[\ell_{\rho_0,\sigma_0}(P),\ell_{\rho_0,\sigma_0}(Q)].\label{chvh4}
\end{equation}
Consequently if $\st_{\rho,\sigma}(P)\nsim \st_{\rho,\sigma}(Q)$, then
$$
\mu_R x^{\frac{a}{l}}y^b = \mu_P\mu_P\left(\frac{rv}{l}-\frac{us}{l}\right) x^{\frac{r+u}{l}-1}y^{s+v-1},
$$
which evidently implies that
$$
\st_{\rho,\sigma}(P) + \st_{\rho,\sigma}(Q) - (1,1) = \st_{\rho,\sigma}(R).
$$
Reciprocally if this last equation holds, then by~\eqref{starting}
$$
v_{\rho_0,\sigma_0}(P) + v_{\rho_0,\sigma_0}(Q) -(\rho_0+\sigma_0)= v_{\rho_0,\sigma_0}(R),
$$
and so, again by Proposition~\ref{pr v de un conmutador},
$$
[\ell_{\rho_0,\sigma_0}(P),\ell_{\rho_0,\sigma_0}(Q)]\ne 0,
$$
which by~\eqref{chvh4} implies that $\st_{\rho,\sigma}(P)\nsim \st_{\rho,\sigma}(Q)$.

\smallskip

\noindent (2)\enspace It is similar to the proof of statement~(1).
\end{proof}

\begin{remark}\label{F no es monomio} Let $(\rho,\sigma)\in \mathfrak{V}$ and let $P,F\in L^{(l)}$ be
$(\rho,\sigma)$-homogeneous such that $[F,P] = P$. If $F$ is a monomial, then $F = \lambda xy$ with $\lambda
\in K^{\times}$, and, either $\rho+\sigma = 0$ or $P$ is also a monomial\endnote{Write $F = \lambda
x^{\frac{a}{l}} y^b$ and $P = x^{\frac{r}{l}}y^s p(z)$ with
$$
\lambda\in K^{\times},\qquad p(0)\ne 0 \qquad\text{and}\qquad z:=\begin{cases} x^{-\sigma/\rho}y & \text{if
$\rho\ne 0$,}\\ x^{1/l} &\text{if $\rho =0$.}\end{cases}
$$
Clearly
$$
\Supp(P) = \Supp([F,P]) \subseteq \Supp(P) + \Bigl(\frac{a}{l},b\Bigr) - (1,1),
$$
and so $\Supp(F) = (1,1)$. Hence $F = \lambda xy$ with $\lambda\in K^{\times}$. Moreover
\begin{align*}
x^{\frac{r}{l}}y^s p(z)&=\bigl[\lambda xy, x^{\frac{r}{l}}y^sp(z)\bigr]\\
 &= \lambda x^{\frac{r}{l}}y^s [xy,p(z)] + \lambda
\bigl[xy,x^{\frac{r}{l}}y^s\bigr]p(z)\\
& = \lambda x^{\frac{r}{l}}y^s p'(z) [xy,z] + \lambda \Bigl(s- \frac{r}{l}\Bigr)x^{\frac{r}{l}}y^s p(z)\\
&= \lambda v_{-1,1}(z) x^{\frac{r}{l}}y^s p'(z) z + \lambda \Bigl(s- \frac{r}{l}\Bigr)x^{\frac{r}{l}}y^s p(z),
\end{align*}
which implies
$$
p(z) = \lambda\Bigl(v_{-1,1}(z)p'(z) z + \Bigl(s- \frac{r}{l}\Bigr) p(z)\Bigr).
$$
 Hence $p(z)\mid v_{-1,1}(z) p'(z) z$, and so, if $P$ is not a monomial, then $v_{-1,1}(z)=0$, from which
 $\rho+\sigma = 0$ follows.}.
\end{remark}

\begin{theorem}\label{central} Let $P\in L^{(l)}$ and let $(\rho,\sigma)\in \mathfrak{V}_{> 0}$ be such that
$v_{\rho,\sigma}(P)>0$. If $[P,Q]\in K^{\times}$ for some $Q\in L^{(l)}$, then there exists $G_0\in K[P,Q]\setminus\{0\}$
and a $(\rho,\sigma)$-ho\-mo\-ge\-neous element $F\in L^{(l)}$ such that
\begin{equation}\label{eq central}
v_{\rho,\sigma}(F)=\rho+\sigma,\quad[F,\ell_{\rho,\sigma}(P)]= \ell_{\rho,\sigma}(P)\quad \text{and}\quad [
\ell_{\rho,\sigma}(G_0), \ell_{\rho,\sigma}(P)]F= \ell_{\rho,\sigma}(G_0)\ell_{\rho,\sigma}(P).
\end{equation}
Moreover, we have
\begin{enumerate}

\smallskip

\item If $P,Q\in L$, then we can take $F\in L$.

\smallskip

\item $\st_{\rho,\sigma}(P)\sim\st_{\rho,\sigma}(F)$ or $\st_{\rho,\sigma}(F) = (1,1)$.

\smallskip

\item $\en_{\rho,\sigma}(P)\sim\en_{\rho,\sigma}(F)$ or $\en_{\rho,\sigma}(F)= (1,1)$.

\smallskip

\item $\st_{\rho,\sigma}(P)\nsim (1,1)\nsim\en_{\rho,\sigma}(P)$.

\smallskip

\item If we define recursively $G_i:=[G_{i-1},P]$, then $[\ell_{\rho,\sigma}(G_i),\ell_{\rho,\sigma}(P)]=0$
  for $i\ge 1$.
\end{enumerate}
\end{theorem}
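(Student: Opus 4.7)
The plan is to construct $F$ explicitly as $F:=(\lambda_B/\lambda_A)\,C R^{n-m}$, where $C:=\ell_{\rho,\sigma}(G_0)$, $B:=\ell_{\rho,\sigma}(P)$, $A:=[C,B]$, and $R,m,n,\lambda_A,\lambda_B$ come from applying Proposition~\ref{P y Q alineados}(2) to the pair $(A,B)$. First invoke Lemma~\ref{Lema Joseph}, which applies since $v_{\rho,\sigma}(P)>0$, to obtain $G_0\in K[P,Q]$ with $A\ne 0$ and $[A,B]=0$; the lemma also gives statement~(5) for free. Since $A,B$ are $(\rho,\sigma)$-homogeneous with $[A,B]=0$ and $v_{\rho,\sigma}(B)>0$, Proposition~\ref{P y Q alineados}(2) yields coprime $m,n\in\mathds{N}_0$ with $n>0$, constants $\lambda_A,\lambda_B\in K^\times$, and $R\in L^{(l)}$ (with $R\in L$ if $P,Q\in L$) such that $A=\lambda_A R^m$ and $B=\lambda_B R^n$.

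With $F:=(\lambda_B/\lambda_A)\,C R^{n-m}$, the third identity of~\eqref{eq central}, $AF=CB$, is immediate; the degree $v_{\rho,\sigma}(F)=\rho+\sigma$ follows from $v_{\rho,\sigma}(A)=v_{\rho,\sigma}(C)+v_{\rho,\sigma}(B)-(\rho+\sigma)$ via Proposition~\ref{pr v de un conmutador}; and, using the Leibniz rule for the bracket together with $[A,B]=0$,
$$
A[F,B]=[AF,B]=[CB,B]=C[B,B]+[C,B]\,B=AB,
$$
so $[F,B]=B$ in the fraction field of $L^{(l)}$. Statement~(1) then follows because $C\in L$ (as $G_0\in K[P,Q]\subset L$) and $R\in L$ give $F\in L$.

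The main obstacle is proving $F\in L^{(l)}$ in the case $m>n$ (the case $n\ge m$ being automatic). Starting from $A=[C,\lambda_B R^n]=\lambda_B nR^{n-1}[C,R]$ one derives $[C,R]=(\lambda_A/(\lambda_B n))\,R^{m-n+1}$; after separately dispatching the monomial case for $B$, $R$ is a nonunit of the UFD $L^{(l)}$, so $m-n+1\ge 0$, and the task reduces to showing $R^{m-n}\mid C$. I would write $C=R^sC'$ with $R\nmid C'$, use $[R^sC',R]=R^s[C',R]$ to get $R^s[C',R]\propto R^{m-n+1}$, and then combine the primitivity of $R$ (which follows from $\gcd(m,n)=1$) with the nontriviality of the induced derivation of $[\cdot,R]$ on $L^{(l)}/(R)$ to iteratively force $s\ge m-n$.

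Finally, for~(2) and~(3) apply Proposition~\ref{extremosnoalineados} to the $(\rho,\sigma)$-homogeneous identity $[\ell_{\rho,\sigma}(F),\ell_{\rho,\sigma}(B)]=\ell_{\rho,\sigma}(B)$: either $\st_{\rho,\sigma}(F)\nsim\st_{\rho,\sigma}(B)$, forcing $\st_{\rho,\sigma}(F)+\st_{\rho,\sigma}(B)-(1,1)=\st_{\rho,\sigma}(B)$ and hence $\st_{\rho,\sigma}(F)=(1,1)$, or $\st_{\rho,\sigma}(F)\sim\st_{\rho,\sigma}(B)=\st_{\rho,\sigma}(P)$; analogously for $\en$. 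For~(4), suppose by contradiction $\st_{\rho,\sigma}(P)\sim(1,1)$, so $\st_{\rho,\sigma}(B)=(\lambda,\lambda)$ with $\lambda>0$; then~(2) together with the observation that any point of $\Supp(F)$ aligned with $(1,1)$ must equal $(1,1)$ (by $(\rho,\sigma)$-homogeneity of $F$ of degree $\rho+\sigma$) forces $\st_{\rho,\sigma}(F)=(1,1)$. Consequently $\ell_{1,-1}(F)=\alpha xy$ and $\ell_{1,-1}(B)=\beta x^\lambda y^\lambda$, and a direct computation gives $[\ell_{1,-1}(F),\ell_{1,-1}(B)]=0$. By Proposition~\ref{pr v de un conmutador} this means $v_{1,-1}([F,B])<v_{1,-1}(F)+v_{1,-1}(B)=0$, contradicting $[F,B]=B$ and $v_{1,-1}(B)=0$. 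The symmetric argument with $(-1,1)$ rules out $\en_{\rho,\sigma}(P)\sim(1,1)$.
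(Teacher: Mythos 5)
Your overall plan coincides with the paper's: set $C:=\ell_{\rho,\sigma}(G_0)$, $B:=\ell_{\rho,\sigma}(P)$, $A:=[C,B]$ with $G_0$ from Lemma~\ref{Lema Joseph}, take $F$ with $AF=CB$, verify~\eqref{eq central} by Leibniz, and get (2), (3), (5) from Proposition~\ref{extremosnoalineados} and Lemma~\ref{Lema Joseph}; your variant of (4) (showing $[\ell_{1,-1}(F),\ell_{1,-1}(B)]=0$ and then invoking Proposition~\ref{pr v de un conmutador} against $[F,B]=B$) is fine. The one place the paper does \emph{not} argue directly is exactly the claim that $F\in L^{(l)}$ (and $F\in L$ when $P,Q\in L$): it cites~\cite{J}*{Lemma~2.2} for this. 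You attempt to prove it, and that attempt has a genuine gap.

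Concretely, after writing $A=\lambda_AR^m$, $B=\lambda_BR^n$ (Proposition~\ref{P y Q alineados}(2b)), deriving $[C,R]=c_0R^{m-n+1}$, and reducing to $R^{m-n}\mid C$ when $m>n$, you write $C=R^sC'$ with $R\nmid C'$ and assert that ``the primitivity of $R$ (which follows from $\gcd(m,n)=1$)'' together with ``the nontriviality of the induced derivation of $[\cdot,R]$ on $L^{(l)}/(R)$'' will ``iteratively force $s\ge m-n$.'' But $\gcd(m,n)=1$ only says $R$ is not a proper power; it does not make $R$ squarefree or irreducible (e.g.\ $A=\pi^{2m}S^m$, $B=\pi^{2n}S^n$ gives $R=\pi^2S$). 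So ``$R\nmid C'$'' does not control the multiplicity of each irreducible factor $\pi$ of $R$ in $C'$, and the reduction in fact has to be carried out factor by factor. Writing $C=\pi^{s_\pi}C_1$, $R=\pi^{e}R_1$ and expanding by Leibniz gives $[C,R]=\pi^{s_\pi+e-1}\bigl(\pi[C_1,R_1]+eR_1[C_1,\pi]+s_\pi C_1[\pi,R_1]\bigr)$, so comparing $\pi$-adic valuations with $[C,R]=c_0R^{m-n+1}$ yields only $s_\pi+e-1\le e(m-n+1)$, i.e.\ $s_\pi\le e(m-n)+1$ --- the \emph{wrong} direction; establishing the needed lower bound $s_\pi\ge e(m-n)$ requires ruling out $\pi^2\mid\bigl(eR_1[C_1,\pi]+s_\pi C_1[\pi,R_1]\bigr)$, and nothing in your sketch does that (nor is the ``separately dispatched'' monomial case, where $\mu\tau<0$ in Proposition~\ref{P y Q alineados}, actually treated). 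This missing step is precisely the content of Joseph's lemma and cannot be waved through; as it stands the construction $F\in L^{(l)}$ is not established.
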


\begin{proof}
From Lemma~\ref{Lema Joseph} we obtain $G_0$ such that the hypotheses of Lemma~2.2 of~\cite{J}
are satisfied. Hence, by this lemma,
$$
F:=\frac{\ell_{\rho,\sigma}(G_0)\ell_{\rho,\sigma}(P)}{[\ell_{\rho,\sigma}(G_0), \ell_{\rho,\sigma}(P)]}\in
L^{(l)}
$$
and if $P$ and $Q$ are in $L$, then $F\!\in\! L$. Hence statement~(1) is true. Furthermore an easy
computation shows that statement (5) is also true and equalities~\eqref{eq central} are satisfied\endnote{
By Lemma~\ref{Lema Joseph} we know that
 $[[\ell_{\rho,\sigma}(G_0),\ell_{\rho,\sigma}(P)],\ell_{\rho,\sigma}(P)]] = 0$. Hence
\begin{align*}
[\ell_{\rho,\sigma}(G_0),\ell_{\rho,\sigma}(P)][F,\ell_{\rho,\sigma}(P)] &= [[\ell_{\rho,\sigma}(G_0),
\ell_{\rho,\sigma}(P)]F,\ell_{\rho,\sigma}(P)]\\
& = [\ell_{\rho,\sigma}(G_0)\ell_{\rho,\sigma}(P),\ell_{\rho,\sigma}(P)]\\
&= [\ell_{\rho,\sigma}(G_0),\ell_{\rho,\sigma}(P)]\ell_{\rho,\sigma}(P),
\end{align*}
which implies $[F,\ell_{\rho,\sigma}(P)] = \ell_{\rho,\sigma}(P)$ because $[\ell_{\rho,\sigma}(G_0),
\ell_{\rho,\sigma}(P)] \ne 0$. Moreover, by Proposition~\ref{pr v de un conmutador},
$$
[F,\ell_{\rho,\sigma}(P)] = \ell_{\rho,\sigma}(P)\Longrightarrow v_{\rho,\sigma}(F) = \rho+\sigma.
$$
Finally, statement~(5) is an immediate consequence of Lemma~\ref{Lema Joseph}.}.
Statements~(2) and~(3) follow from Proposition~\ref{extremosnoalineados}. For statement~(4), assume that
$\st_{\rho,\sigma}(P)\sim (1,1)$. We claim that this implies that $\st_{\rho,\sigma}(F) = (1,1)$. Otherwise,
by statement~(2) we have
$$
\st_{\rho,\sigma}(F)\sim \st_{\rho,\sigma}(P)\sim (1,1),
$$
which implies $\st_{\rho,\sigma}(F)\sim (1,1)$, since $\st_{\rho,\sigma}(F)\ne (0,0)\ne \st_{\rho,\sigma}(P)$.
So there exists $\lambda \in \mathds{Q}\setminus \{1\}$ such that $\st_{\rho,\sigma}(F) = \lambda (1,1)$. But
this is impossible because $v_{\rho,\sigma}(F) = \rho + \sigma$ implies $\lambda=1$. Hence the claim is true,
and so
$$
\st_{\rho,\sigma}(P) + \st_{\rho,\sigma}(F)-(1,1) = \st_{\rho,\sigma}(P),
$$
which by Proposition~\ref{extremosnoalineados}(1) leads to the contradiction
$$
\st_{\rho,\sigma}(P)\nsim \st_{\rho,\sigma}(F) = (1,1).
$$
Similarly $\en_{\rho,\sigma}(P)\nsim (1,1)$.
\end{proof}

\begin{remark} \label{teorema central con rho mas sigma < 0}
In general, the conclusions of Theorem~\ref{central} do not hold if $\rho+\sigma < 0$. For instance, consider
the following pair in $L^{(1)}$:
$$
P=x^{-1} + x^3 y (2 + 18 x^2 y + 36 x^4 y^2) + x^9 y^3 (8 + 72 x^2 y + 216 x^4 y^2 + 216 x^6 y^3)
$$
and
$$
Q=x^2 y + x^6 y^2 (1 + 6 x^2 y + 9 x^4 y^2).
$$
Clearly $[P,Q]=-1$ and $v_{1,-2}(P)=3>0$. However, one can show that there is no $F\in L^{(1)}$ such that
$[F,\ell_{1,-2}(P)]=\ell_{1,-2}(P)$\endnote{
Assume on the contrary that there exists $F\in L^{(1)}$ such that
$$
[F,\ell_{1,-2}(P)] = \ell_{1,-2}(P).
$$
Replacing $F$ with $\ell_{1,-2}(F)$ we can assume that $F$ is $(1,-2)$-homogeneous.  If $\st_{1,-2}(F)\sim
\st_{1,-2}(P) = (9,3)$, then $\st_{1,-2}(F)=\lambda(3,1)$ for some $\lambda\in K$. But by
Proposition~\ref{pr v de un conmutador} we know that $v_{1,-2}(F) = -1$ and so $\lambda=-1$ which is
impossible, since $\st_{1,-2}(F)\in \mathds{Z}\times \mathds{N}_0$. Hence, by Theorem~\ref{central}(2),
we have $\st_{1,-2}(F)=(1,1)$. The same argument shows that $\en_{1,-2}(F)=(1,1)$.
But then $F$ is a monomial, which contradicts Remark~\ref{F no es monomio}.
}.
\end{remark}

\begin{remark}\label{polinomio asociado f^{(l)}} Let $P\in L^{(l)}\setminus\{0\}$ and $(\rho,\sigma)\in
\mathfrak{V}$ with $\rho>0$. If $\ell_{\rho,\sigma}(P)= x^{\frac{r}{l}}y^s p(x^{-\frac{\sigma}
{\rho}}y)$, where
$$
p:=\sum_{i=0}^{\gamma} a_i x^i\in K[x]\quad\text{with $a_0\ne 0$ and $a_{\gamma}\ne 0$},
$$
then, by Remark~\ref{starting vs end} with $(\rho_0,\sigma_0)= (0,-1)$,
\begin{equation}
\st_{\rho,\sigma}(P)=\Bigl(\frac{r}{l},s\Bigr)\quad\text{and}\quad\en_{\rho,\sigma}(P)=
\Bigl(\frac{r}{l}-\frac{\gamma\sigma}{\rho},s +\gamma\Bigr).\label{eq57}
\end{equation}
\end{remark}

\begin{definition}\label{forma debil} Let $P\in L^{(l)}\setminus\{0\}$. We define the set of
{\em directions associated with} $P$ as
$$
\Dir(P):=\{(\rho,\sigma)\in\mathfrak{V}:\#\Supp(\ell_{\rho,\sigma}(P))>1\}.
$$
\end{definition}

\begin{remark} \label{180 grados}
Note that if $P\in L^{(l)}\setminus\{0\}$ is a monomial, then $\Dir(P) =
\emptyset$ and that if $P\in L^{(l)}\setminus\{0\}$ is
$(\rho,\sigma)$-homogeneous, but is not a monomial, then $\Dir(P) =
\{(\rho,\sigma),(-\rho,-\sigma)\}$. Furthermore, if $P\in
L^{(l)}\setminus\{0\}$ is not homogeneous, then any two consecutive
directions of $P$ are separated by less than $180$°.
\end{remark}

\begin{proposition}\label{pavadass} Let $(\rho,\sigma)\in\mathfrak{V}^0$ and $P,F\in L^{(l)}\setminus\{0\}$.
Assume that $F$ is $(\rho,\sigma)$-ho\-mo\-ge\-neous, $v_{\rho,\sigma}(P)>0$ and
\begin{equation}\label{conmutador de F y P}
[F,\ell_{\rho,\sigma}(P)]=\ell_{\rho,\sigma}(P).
\end{equation}
Write
$$
F=x^{\frac{u}{l}}y^v f(z)\quad\text{and}\quad \ell_{\rho,\sigma}(P)=x^{\frac{r}{l}}y^s p(z) \quad\text{with
$z:=x^{-\frac{\sigma}{\rho}}y$ and $p(0)\ne 0\ne f(0)$.}
$$
Then
\begin{enumerate}

\smallskip

\item $f$ is separable and every irreducible factor of $p$ divides $f$.

\smallskip

\item If $(\rho,\sigma)\in \Dir(P)$, then $v_{0,1}(\st_{\rho,\sigma}(F))< v_{0,1}(\en_{\rho,\sigma}(F))$.

\smallskip

\item Suppose that $p,f\in K[z^k]$ for some $k\in\mathds{N}$ and let $\ov p$ and $\ov f$ denote the
    univariate polynomials defined by $p(z)=\ov p(z^k)$ and $f(z)=\ov f(z^k)$. Then $\ov f$ is separable
    and every irreducible factor of $\ov p$ divides $\ov f$.

\smallskip

\item If $P,F\in L$ and $v_{0,1}(\en_{\rho,\sigma}(F))-v_{0,1}(\st_{\rho,\sigma} (F)) = \rho$, then the
    multiplicity of each linear factor (in an algebraic closure of $K$) of $p$ is equal to
$$
\frac{1}{\rho} \deg(p)= \frac{1}{\rho}\bigl(v_{0,1}(\en_{\rho,\sigma} (P)) -v_{0,1}(\st_{\rho,\sigma}(P)
\bigr).
$$

\smallskip

\item Assume that $(\rho,\sigma)\in \Dir(P)$. If $s>0$ or $\# \factors(p)>1$, then there exist no
    $(\rho,\sigma)$-ho\-mo\-geneous element $R\in L^{(l)}$ such that
\begin{equation}\label{condicions para R}
v_{\rho,\sigma}(R)=\rho+\sigma\quad\text{and}\quad [R,\ell_{\rho,\sigma}(P)]=0.
\end{equation}
Consequently, in this case $F$ satisfying~\eqref{conmutador de F y P} is unique.
\end{enumerate}
\end{proposition}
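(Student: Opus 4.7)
The plan is to recast the hypothesis $[F,\ell_{\rho,\sigma}(P)]=\ell_{\rho,\sigma}(P)$ as a single polynomial identity in $z$, and then to extract information by $q$-adic valuations, by restricting to $K[z^k]$, by using $F,P\in L$ to constrain the shape of $f$, and finally by a partial-fraction/residue analysis on a linear ODE. First I would compute the Jacobian bracket in the variables $(x,z)$ with $z=x^{-\sigma/\rho}y$, using $\{x,z\}_{x,y}=x^{-\sigma/\rho}$. Writing $F=x^{1+\sigma/\rho}z^{v}f(z)$ and $\ell_{\rho,\sigma}(P)=x^{\tau/\rho}z^{s}p(z)$, the bracket computation and comparison with $\ell_{\rho,\sigma}(P)$ produces the master identity
\begin{equation*}
(\rho+\sigma)\,f\,(sp+zp')-\tau\,p\,(vf+zf')=\rho\,z^{1-v}\,p.
\end{equation*}
Since both sides must be polynomial in $z$ and $p(0),f(0)\ne 0$, one reads off $v\in\{0,1\}$, and evaluation at $z=0$ yields the boundary relations $s=0$ if $v=0$ and $f(0)\bigl[(\rho+\sigma)s-\tau\bigr]=\rho$ if $v=1$.

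For (1), fix an irreducible $q\in K[z]$ distinct from $z$ (automatic, since $p(0),f(0)\ne 0$) and set $e=w_q(f)$, $m=w_q(p)$. In characteristic zero $w_q(p')=m-1$ for $m\ge 1$, so bookkeeping gives $w_q(\mathrm{LHS})=e+m-1$ when $e,m\ge 1$, $w_q(\mathrm{LHS})=m-1$ when $m\ge 1$ and $e=0$, and $w_q(\mathrm{LHS})=e-1$ when $m=0$ and $e\ge 1$. Comparing with $w_q(\mathrm{RHS})=m$ forces $e\le 1$ in every configuration (separability of $f$) and rules out $e=0$ when $m\ge 1$ (every irreducible factor of $p$ divides $f$). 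Part (2) is immediate: $(\rho,\sigma)\in\Dir(P)$ forces $\deg p\ge 1$, so $\operatorname{rad}(p)\mid f$ has degree $\ge 1$, and by Remark~\ref{polinomio asociado f^{(l)}} this equals $v_{0,1}(\en_{\rho,\sigma}(F))-v_{0,1}(\st_{\rho,\sigma}(F))$. For (3) note that $\bar q(z^k)$ has as roots the $k$-th roots of the simple roots of $\bar q$, which are distinct and nonzero (because $p(0)\ne 0$); hence $\bar q(z^k)$ is separable in $K[z]$, each of its $K$-irreducible factors divides $\bar f(z^k)=f$ by (1), so $\bar q(z^k)\mid\bar f(z^k)$, and polynomial division in $K[w]$ yields $\bar q\mid\bar f$; separability of $\bar f$ transfers in the same way.

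For (4), the hypothesis $v_{0,1}(\en_{\rho,\sigma}(F))-v_{0,1}(\st_{\rho,\sigma}(F))=\rho$ reads $\deg f=\rho$. Expanding $F=\sum_i f_i\,x^{u/l-i\sigma/\rho}y^{v+i}$ and imposing $F\in L=K[x,y]$ forces $u/l-i\sigma/\rho\in\mathds{Z}_{\ge 0}$ whenever $f_i\ne 0$; because $\gcd(\rho,\sigma)=1$, the integrality condition confines $i$ to a single residue class modulo $\rho$, so among $0\le i\le\rho$ only $i=0$ and $i=\rho$ survive. Hence $f(z)=f_0+f_\rho z^\rho=f_\rho(z^\rho-\gamma)$ with $\gamma=-f_0/f_\rho\in K^\times$ (non-zero because $f_0=f(0)\ne 0$). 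The same $L$-membership analysis applied to $P$ shows $p\in K[z^\rho]$, so $p=\bar p(z^\rho)$. By (1) every root of $\bar p$ must be a root of $\bar f(w)=w-\gamma$, so $\bar p(w)=c(w-\gamma)^N$ with $N=\deg\bar p=\deg p/\rho$; therefore $p(z)=c(z^\rho-\gamma)^N$ and every linear factor of $p$ in $\bar K$ has multiplicity exactly $N=\deg p/\rho$.

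For (5), suppose such an $R$ exists. Proposition~\ref{P y Q alineados}(2) provides $S=x^{a/l}y^b\bar s(z)\in L^{(l)}$ with $\bar s(0)\ne 0$, scalars $\lambda_R,\lambda_P\in K^\times$ and coprime positive integers $m',n'$ satisfying $R=\lambda_R S^{m'}$, $\ell_{\rho,\sigma}(P)=\lambda_P S^{n'}$ and $m'\tau=n'(\rho+\sigma)$. Hence $p=\lambda_P\bar s^{n'}$, $s=n'b$, and $\factors(p)=\factors(\bar s)$. By (1) then $\bar s\mid f$; writing $f=g\bar s$ and substituting into the master identity, using $(\rho+\sigma)n'=m'\tau$ and $(\rho+\sigma)s=m'b\tau$, and dividing by $\bar s^{n'}$ yields the linear ODE
\begin{equation*}
g\bigl[(m'b-v)\bar s+(m'-1)z\bar s'\bigr]-zg'\bar s=\tfrac{\rho}{\tau}\,z^{1-v}.
\end{equation*}
The homogeneous solutions are $Cz^{m'b-v}\bar s^{m'-1}$, and variation of parameters $g=u\,z^{m'b-v}\bar s^{m'-1}$ reduces the equation to $u'=-\rho/(\tau z^{m'b}\bar s^{m'})$, remarkably independent of $v$. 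For $g$ (hence $f$, hence $F$) to be polynomial in $z$, the antiderivative $u$ must be rational, i.e.\ every residue of $u'$ must vanish. If $m'=1$, the ODE collapses to $\bar s\bigl[(b-v)g-zg'\bigr]=\tfrac{\rho}{\tau}z^{1-v}$, and evaluation at any root of $\bar s$ (which is non-constant because $(\rho,\sigma)\in\Dir(P)$) yields the contradiction $0\ne 0$. If $m'\ge 2$ and $s>0$ then $m'b\ge 1$, and the residue of $1/(z^{m'b}\bar s^{m'})$ at $z=0$ is the coefficient of $z^{m'b-1}$ in the Taylor expansion of $\bar s^{-m'}$ about $0$, which is non-zero because $\bar s(0)\ne 0$. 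If instead $m'\ge 2$ and $\#\factors(p)>1$, then $\bar s$ has distinct roots $\alpha\ne\beta$, and expanding $(z-\beta)^{-m'}$ about $z=\alpha$ shows that the residue at $\alpha$ involves the non-zero factor $(-1)^{m'-1}\binom{2m'-2}{m'-1}(\alpha-\beta)^{-2m'+1}$. In every sub-case $u$ acquires a logarithmic term, so $g$ cannot be polynomial, contradicting $F\in L^{(l)}$. The main obstacle will be organising this residue bookkeeping cleanly while keeping the case split on $(v,s,m')$ under control.
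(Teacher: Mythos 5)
Your derivation of the master identity and your treatment of parts (1)--(4) are essentially sound. A small note on (1): when $e,m\ge 1$ the term of $q$-adic valuation $e+m-1$ has coefficient proportional to $(\rho+\sigma)m-\tau e$, which may vanish; but in that event $w_q(\mathrm{LHS})\ge e+m > m = w_q(\mathrm{RHS})$, again impossible, so the conclusion $e=1$ still follows after a one-line case split rather than the clean equality you assert. Your (3) is a genuinely different and arguably cleaner route than the paper's, which substitutes $t=z^k$ and reruns the whole valuation argument; you instead transport separability and divisibility through $w\mapsto w^k$ using $\bar q(0)\ne 0$, and the polynomial-division step to get from $\bar q(z^k)\mid\bar f(z^k)$ to $\bar q\mid\bar f$ is correct. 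Your (4) makes explicit the $F\in L$ constraint that the paper builds silently into its choice of indexing, so the two proofs are the same in substance.

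Part (5) is where your approach diverges from the paper's and where the real gaps are. The paper's argument is geometric: using Remark~\ref{polinomio asociado f^{(l)}} and Proposition~\ref{extremosnoalineados} it shows that $s>0$ or $\#\factors(p)>1$ forces $\en_{\rho,\sigma}(F)\ne(1,1)$ and hence $\en_{\rho,\sigma}(F)\sim\en_{\rho,\sigma}(P)$; if $R$ existed then $\en_{\rho,\sigma}(F)=\en_{\rho,\sigma}(R)$, and a suitable $\lambda\in K^{\times}$ makes $F_\lambda:=F-\lambda R$ satisfy $[F_\lambda,\ell_{\rho,\sigma}(P)]=\ell_{\rho,\sigma}(P)$ while pushing $\en_{\rho,\sigma}(F_\lambda)$ off the ray through $\en_{\rho,\sigma}(P)$, a contradiction. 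Your ODE-and-residues route has two genuine problems. First, the substitution $f=g\bar s$ requires $\bar s\mid f$; statement (1) only gives $\operatorname{rad}(p)=\operatorname{rad}(\bar s)\mid f$, and since $f$ is separable, $\bar s\mid f$ would force $\bar s$ to be squarefree, which Proposition~\ref{P y Q alineados}(2b) does not provide (take $\ell_{\rho,\sigma}(P)$ and $R$ both powers of $(z-\lambda)^2x$, say). Second, the residue claims do not hold as stated: for $m'\ge 2$ and $s>0$, the residue of $z^{-m'b}\bar s^{-m'}$ at $0$ is the $(m'b-1)$-st Taylor coefficient of $\bar s^{-m'}$, and $\bar s(0)\ne 0$ does not force this to be nonzero ($\bar s=1+z^2$ with $m'b=2$ gives residue $0$); and for $m'\ge 2$ and $\#\factors(p)>1$, the residue at a root $\alpha$ collects contributions from $z^{m'b}$ and from every other factor of $\bar s^{m'}$, so isolating the single term $(-1)^{m'-1}\binom{2m'-2}{m'-1}(\alpha-\beta)^{1-2m'}$ does not control the total. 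To make this plan work you would need both a proof that $\bar s$ is squarefree and a complete residue computation; the paper avoids both by replacing the analytic argument with the combinatorial one about endpoints.
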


\begin{proof} Note that, since $[-,-]$ is a derivation in both variables, we have
$$
[F,\ell_{\rho,\sigma}(P)] = \bigl[x^{\frac{u}{l}}y^v f(z),x^{\frac{r}{l}}y^s p(z)\bigr] =
x^{\frac{u+r}{l}-1}y^{v+s-1} \bigl(c f(z) p(z) + azf(z)p'(z) - b zf'(z)p(z)\bigr),\endnote{
The complete computation is
\begin{align*}
[F,\ell_{\rho,\sigma}(P)] & = \bigl[x^{\frac{u}{l}}y^v f(z),x^{\frac{r}{l}}y^s p(z)\bigr]\\
&=\bigl[x^{\frac{u}{l}}y^v ,x^{\frac{r}{l}}y^s\bigr]f(z) p(z) + x^{\frac{u}{l}}y^v
\bigl[f(z),x^{\frac{r}{l}}y^s\bigr] p(z) + x^{\frac{r}{l}}y^s\bigl[x^{\frac{u}{l}}y^v , p(z)\bigr]f(z)\\
&=\bigl[x^{\frac{u}{l}}y^v ,x^{\frac{r}{l}}y^s\bigr]f(z) p(z) + x^{\frac{u}{l}}y^v
\bigl[z,x^{\frac{r}{l}}y^s\bigr] f'(z) p(z) + x^{\frac{r}{l}}y^s\bigl[x^{\frac{u}{l}}y^v, z\bigr]f(z)p'(z)\\
&= x^{\frac{u+r}{l}-1}y^{v+s-1} \bigl(c f(z) p(z) + azf(z)p'(z) - b zf'(z)p(z)\bigr).
\end{align*}}
$$
where
$$
c:=\binom {\frac ul}{v} \times \binom{\frac rl}{s},\quad a:= \binom {\frac ul}{v}\times
\binom{-\frac{\sigma}{\rho}}{1}=\frac{1}{\rho} v_{\rho,\sigma}(F)\quad\text{and}\quad b:=
\binom{\frac rl}{s}\times \binom{-\frac{\sigma}{\rho}}{1}=\frac{1}{\rho} v_{\rho,\sigma}(P).
$$
Hence, by equality~\eqref{conmutador de F y P} there exists $h\in \mathds{N}_0$ such that
\begin{equation}
z^h p = cp f+ az p' f-bzf'p.\label{eqq1}\endnote{
This follows from the following easy lemma.
\begin{lemma}\label{igualdad} Let $(\rho,\sigma)\in \mathfrak{V}$ with $\rho\ne0$ and let $p,g\in K[z]$ with
$z:=x^{-\sigma/\rho}y$. If $p(0)\ne 0$ and there are $l\in \mathds{\mathds{N}}$, $b,d\in \mathds{N}_0$ and
$a,c\in \mathds{Z}$ such that
$$
x^{\frac{a}{l}}y^bp(z) = x^{\frac{c}{l}}y^dg(z),
$$
then there exists $h\in \mathds{N}_0$ such that $z^hp(z) = g(z)$.
\end{lemma}

\begin{proof} Write $g(z)=z^h \ov g(z)$, with $\ov g(0)\ne 0$. Comparing in the equality
$x^{\frac{a}{l}}y^b p(z)= x^{\frac{c}{l}}y^d z^h\ov g(z)$ the monomials with lowest $y$-degree,
we obtain $x^{\frac{a}{l}}y^b = x^{\frac{c}{l}} y^d z^h$. Consequently $p(z) = \ov g(z)$, as desired.
\end{proof}
}
\end{equation}
Let $g$ be a linear factor of $p$ in an algebraic closure of $K$, with multiplicity $m$. Write
$p=p_1 g^m$ and $f=f_1 g^n$, where $n\ge 0$ is the multiplicity of $g$ in $f$. Since
$$
p'=p_1 m g^{m-1}g'+p_1' g^m\quad\text{and}\quad f'=f_1 n g^{n-1}g'+f_1' g^n,
$$
equality~\eqref{eqq1} can be written
$$
z^h p_1 g^m = g^{m+n-1}\bigl(g(cp_1f_1+azf_1p_1'-bzf_1'p_1)+(am-bn)zf_1p_1 g'\bigr),
$$
which implies $n\le 1$. But $n=0$ is impossible since $a,m > 0$. So, statement~(1) follows.

\smallskip

Assume now $(\rho,\sigma)\in \Dir(P)$. Then $\deg p>0$, and so, by statement~(1) we
have $\deg f>0$. Consequently statement~(2) follows from Remark~\ref{polinomio asociado f^{(l)}}. Using now
that
$$
zp'(z)=kt\ov p'(t)\quad\text{and}\quad zf'(z)=kt\ov f'(t)\qquad\text{where $t:= z^k$,}
$$
we deduce from~\eqref{eqq1} the equality
$$
z^h \ov p(t) = c\ov p(t) \ov f(t)+ at \ov p' \ov f(t)-bt\ov f'(t)\ov p(t).
$$
The same procedure as above, but using this last equality instead of~\eqref{eqq1}, yields statement~(3).

\smallskip

Now we prove statement~(4). Write
$$
F = \sum_{i=0}^{\alpha} b_i x^{u-i\sigma} y^{v+i\rho}\quad \text{and}\quad \ell_{\rho,\sigma}(P) =
\sum_{i=0}^{\gamma} c_i x^{r-i\sigma} y^{s+i\rho}
$$
with $b_0\ne 0$, $b_{\alpha}\ne 0$, $c_0\ne 0$ and $c_{\gamma}\ne 0$. By definition
$$
f = \sum_{i=0}^{\alpha} b_iz^{i\rho}\quad\text{and}\quad p = \sum_{i=0}^{\gamma} c_iz^{i\rho}.
$$
Moreover, since by~\eqref{eq57},
$$
\alpha\rho = v_{0,1}(\en_{\rho,\sigma}(F))- v_{0,1}( \st_{\rho,\sigma}(F)),
$$
it follows from the hypothesis that $\alpha=1$. Hence
$$
f (z)= b_0 + b_1 z^{\rho} = \mu (z^{\rho}-\mu')=\ov f(z^{\rho}),
$$
where $\mu:=b_1$ and $\mu':= b_0/b_1$. Consequently, by statement~(3), there exists $\mu_P\in K^{\times}$ such
that
$$
p(z) = \mu_P(z^{\rho}-\mu')^{\gamma},
$$
from which statement~(4) follows easily. Finally we prove statement~(5). For this we first
prove~\eqref{condicion} below, and then we prove that for any $R$ satisfying~\eqref{condicions para R} there
exists $\lambda\in K^{\times}$ such that $F_\lambda:=F-\lambda R$ satisfies~\eqref{conmutador de F y P} and
$\en_{\rho,\sigma}(P)\nsim\en_{\rho,\sigma}(F_\lambda)$, which is a contradiction.

Assume that $\# \factors(p)>1$ or that $s>0$. We claim that $\en_{\rho,\sigma}(F)\ne (1,1)$.
If the first inequality holds, then, by statement~(1), we have $\deg(f) > 1$. Consequently, by
Remark~\ref{polinomio asociado f^{(l)}}, it is impossible that $\en_{\rho,\sigma}(F)= (1,1)$.
Assume that $s>0$.
By Proposition~\ref{extremosnoalineados}(1), either
$$
\st_{\rho,\sigma}(F)=(1,1)\quad\text{or}\quad \st_{\rho,\sigma}(F)\sim \st_{\rho,\sigma}(P)=(r/l,s).
$$
In the first case
$v_{0,1}(\st_{\rho,\sigma}(F)) =1$, while in the second one, since by
Remark~\ref{re v de un conmutador}(1) we know that $\st_{\rho,\sigma} (F) \ne (0,0)$ , there exists $\lambda>0$ such that $\st_{\rho,\sigma}(F) =
\lambda\st_{\rho,\sigma}(P)$. So
$v_{0,1}(\st_{\rho,\sigma}(F)) =  \lambda s>0$.
In both cases, by statement~(2),
$$
v_{0,1}(\en_{\rho,\sigma}(F))>v_{0,1}(\st_{\rho,\sigma}(F))\ge 1,
$$
which clearly implies $\en_{\rho,\sigma}(F)\ne (1,1)$, as desired. Thus, by
Proposition~\ref{extremosnoalineados}(2) we conclude that, if $\# \factors(p)>1$ or $s>0$, then
\begin{equation}\label{condicion}
[F,\ell_{\rho,\sigma}(P)] = \ell_{\rho,\sigma}(P) \Longrightarrow \en_{\rho,\sigma}(P)\sim\en_{\rho,\sigma}(F).
\end{equation}
Suppose that $R\!\in\! L^{(l)}$ is a $(\rho,\sigma)$-homogeneous element that satisfies
condition~\eqref{condicions para R}. By Proposition~\ref{extremosalineados} we know that
$\en_{\rho,\sigma}(P)\!\sim\!\en_{\rho,\sigma}(R)$ and so $\en_{\rho,\sigma}(F)\!\sim\!\en_{\rho,\sigma}(R)$.
Since by Remark~\ref{re v de un conmutador}(1)
\begin{equation}\label{e1}
v_{\rho,\sigma}(F) = \rho+\sigma = v_{\rho,\sigma}(R),
\end{equation}
this implies that
\begin{equation}\label{e2}
\en_{\rho,\sigma}(F) = \en_{\rho,\sigma}(R).
\end{equation}
Let $\bar{r}$ be an univariate polynomial such that $\bar{r}(0)\ne 0$ and $R = x^{\frac{h}{l}}y^k\bar{r}(z)$.
We have
$$
F = x^{\frac{\ov{u}}{l}}\mathfrak{f}(z)\quad\text{and}\quad R = x^{\frac{\ov{h}}{l}}\mathfrak{r}(z),
$$
where $\mathfrak{f}(z):= z^v f(z)$, $\mathfrak{r}(z):= z^k \bar{r}(z)$, $\ov{u}:=u+v\sigma l/\rho$ and
$\ov{h}:= h+k\sigma l/\rho$. By Remark~\ref{polinomio asociado f^{(l)}} and equality~\eqref{e2}
$$
\deg(\mathfrak{f})=\deg(f)+v = v_{0,1}(\en_{\rho,\sigma}(F)) = v_{0,1}(\en_{\rho,\sigma}(R)) = \deg(\bar{r})+k
 =\deg(\mathfrak{r}).
$$
Moreover $\ov{u} = \ov{h}$ since, by equality~\eqref{e1},
$$
\rho \frac{\ov{u}}{l} = v_{\rho,\sigma}(F) = v_{\rho,\sigma}(R) = \rho \frac{\ov{h}}{l}.
$$
Let $\lambda\in K^{\times}$ be such that $\deg(\mathfrak{f}-\lambda \mathfrak{r})< \deg(\mathfrak{f})$ and let
$$
F_{\lambda}:= F-\lambda R = x^{\frac{\ov{u}}{l}}\bigl(\mathfrak{f}(z)-\lambda\mathfrak{r}(z)\bigr).
$$
Again by Remark~\ref{polinomio asociado f^{(l)}}
$$
\en_{\rho,\sigma}(F_{\lambda})=\en_{\rho,\sigma}(F)-t(-\sigma,\rho)\qquad \text{where }
t:=\frac{\deg(\mathfrak{f})- \deg(\mathfrak{f}-\lambda \mathfrak{r})}{\rho}>0.
$$
Hence
$$
\en_{\rho,\sigma}(P)\times \en_{\rho,\sigma}(F_\lambda)= -t(\en_{\rho,\sigma}(P)\times(-\sigma,\rho))
= -t v_{\rho,\sigma}(P)<0,
$$
and so $\en_{\rho,\sigma}(P)\nsim\en_{\rho,\sigma}(F_\lambda)$. But, since
$$
[F_{\lambda},\ell_{\rho,\sigma}(P)]= [F-\lambda R,\ell_{\rho,\sigma}(P)]= [F,\ell_{\rho,\sigma}(P)]
- \lambda[R,\ell_{\rho,\sigma}(P)]=[F,\ell_{\rho,\sigma}(P)]=\ell_{\rho,\sigma}(P),
$$
this contradicts~\eqref{condicion}, and hence, such an $R$ cannot exist. Clearly the uniqueness of $F$ follows,
since any other $F'$ satisfying~\eqref{conmutador de F y P} yields $R:=F-F'$ which satisfies~\eqref{condicions para R}.
\end{proof}

\section{More on the order on directions}

\setcounter{equation}{0}

In this section we consider the same order on directions as other authors, e. g.~\cite{A} and~\cite{H},
but we profit from the following characterization of this order in small intervals: If $I$ is an interval in
$\mathfrak{V}$ and if there is no closed half circle contained in $I$, which means that there is no
$(\rho,\sigma)\in I$ with $(-\rho,-\sigma)\in I$, then for $(\rho,\sigma),(\rho'\sigma')\in I$ we have
\begin{equation}\label{order}
(\rho,\sigma)<(\rho',\sigma') \Longleftrightarrow (\rho,\sigma)\times (\rho',\sigma')>0.
\end{equation}
We also present in Proposition~\ref{varphi preserva el Jacobiano} the chain rule for Jacobians in a
convenient way.

\smallskip

\begin{remark}\label{a remark} Let $(\rho,\sigma)\in \mathfrak{V}$ and let $P,Q\in L^{(l)}$. If
$$
v_{\rho,\sigma}(P)>0,\quad v_{\rho,\sigma}(Q)\ge 0\quad\text{and}\quad
[\ell_{\rho,\sigma}(P),\ell_{\rho,\sigma}(Q)]=0,
$$
then by Proposition~\ref{P y Q alineados}(2) we know that there exist $\lambda_P,\lambda_Q\!\in\! K^{\times}$,
$m,n\!\in\! \mathds{N}_0$ coprime and a $(\rho,\sigma)$-ho\-mo\-ge\-neous element $R\in L^{(l)}$, with
$R\in L$ if $P,Q\in L$, such that
\begin{equation*}
\ell_{\rho,\sigma}(P)=\lambda_P R^m\qquad\text{and}\qquad \ell_{\rho,\sigma}(Q) = \lambda_Q R^n.
\end{equation*}
Note that $v_{\rho,\sigma}(P)>0$ implies $m\in \mathds{N}$. Consequently, we have

\begin{enumerate}

\smallskip

\item $\frac n m = \frac{v_{\rho,\sigma}(Q)}{v_{\rho,\sigma}(P)}$,

\smallskip

\item $\st_{\rho,\sigma}(Q) = \frac{n}{m}\st_{\rho,\sigma}(P)$,

\smallskip

\item $\en_{\rho,\sigma}(Q) = \frac{n}{m}\en_{\rho,\sigma}(P)$,

\end{enumerate}
and, if moreover $v_{\rho,\sigma}(Q)>0$, then
\begin{align*}
(\rho,\sigma)\in \Dir(P) & \Leftrightarrow \text{$\ell_{\rho,\sigma}(P)$ is not a monomial}\\
& \Leftrightarrow \text{$R$ is not a monomial}\\
& \Leftrightarrow \text{$\ell_{\rho,\sigma}(Q)$ is not a monomial}\\
& \Leftrightarrow (\rho,\sigma)\in \Dir(Q).
\end{align*}
By Proposition~\ref{pr v de un conmutador} the condition $[\ell_{\rho,\sigma}(P),\ell_{\rho,\sigma}(Q)]=0$ can
be replaced by
$$
v_{\rho,\sigma}([P,Q])< v_{\rho,\sigma}(P)+v_{\rho,\sigma}(Q)-(\rho +\sigma).
$$
We will use freely this fact.
\end{remark}

For each $(r/l,s)\in\frac{1}{l}\mathds{Z}\times\mathds{Z}\setminus\mathds{Z}(1,1)$ there exists a unique
$(\rho,\sigma)\in\mathfrak{V}_{>0}$, denoted by $\dir(r/l,s)$, such that $v_{\rho,\sigma}(r/l,s)=0$. In
fact clearly
\begin{equation}
(\rho,\sigma)=\begin{cases}\left(-ls/d,r/d\right)&\text{ if $r-ls>0$,}\\ \left(ls/d,-r/d\right)&\text{ if
$r-ls<0$,}\end{cases}\label{val}
\end{equation}
where $d:=\gcd(r,ls)$, satisfies the required condition, and the uniqueness is evident.

\begin{remark}\label{valuacion depende de extremos} Note that if $(\rho,\sigma)\in\mathfrak{V}_{>0}$,
$(r/l,s)\ne(r'/l,s')$ and $v_{\rho,\sigma}(r/l,s) = v_{\rho,\sigma}(r'/l,s')$ then
$$
(\rho,\sigma) = \dir\left(\left(\frac rl,s\right)-\left(\frac{r'}{l},s'\right) \right).
$$
In particular
$$
(\rho,\sigma) = \dir\bigl(\en_{\rho,\sigma}(P) -\st_{\rho,\sigma}(P)\bigr)\qquad \text{for all $P\in
L^{(l)}\setminus\{0\}$ and $(\rho,\sigma)\in\Dir(P)\cap \mathfrak{V}_{>0}$}.
$$
\end{remark}

\begin{remark} \label{intervalo de direcciones}
Let $(a/l,b)\in \frac 1l\mathds{Z}\times\mathds{N}$ and set
$$
(\ov\rho,\ov\sigma):=\frac 1d (bl,-a),\quad\text{where $d:=\gcd(bl,a)$.}
$$
Then, for any $(\rho,\sigma)\in\mathfrak{V}$, we have
$$
v_{\rho,\sigma}(a/l,b)>0\Longleftrightarrow (\ov\rho,\ov\sigma)\times (\rho,\sigma)>0\Longleftrightarrow
(\ov\rho,\ov\sigma)<(\rho,\sigma)<(-\ov\rho,-\ov\sigma)\endnote{
Clearly, if $u,v\in S^{1}$, then $u\times v>0$ if and only if $v\in \ ]u,-u[$, with the counterclockwise order.
}.
$$
\end{remark}

\begin{definition}\label{Sucesor y predecesor} Let $P\in L^{(l)}\setminus\{0\}$ which is not a monomial
and $(\rho,\sigma)\in\mathfrak{V}$. We define the {\em successor} $\Succ_P(\rho,\sigma)$ of $(\rho,\sigma)$
to be the first element of $\Dir(P)$ that one encounters starting from $(\rho,\sigma)$ and running
counterclockwise, and the {\em predecessor} $\Pred_P(\rho,\sigma)$, to be the first one, if we run
clockwise\endnote{Note that $\Pred_P(\rho,\sigma)\ne \Succ_P(\rho,\sigma)$ except in the case in which $P$ is
$(\rho,\sigma)$-homogeneous. In this case
$$
\Pred_P(\rho,\sigma) = \Succ_P(\rho,\sigma) = (-\rho,-\sigma).
$$
}.
\end{definition}

Note that $\mathfrak{V}_{>0}$ is the interval $](1,-1),(-1,1)[$ and the order on $\mathfrak{V}_{>0}$ is given
by~\eqref{order}.

\begin{lemma}\label{formula basica de orden} Let $(a/l,b),(c/l,d)\in \frac 1l \mathds{Z}\times \mathds{Z}$ and
$(\rho,\sigma)\in \mathfrak{V}_{>0}$. If $v_{1,-1}(a/l,b) > v_{1,-1}(c/l,d)$, then
\begin{align*}
&v_{\rho,\sigma}\left(\frac al,b\right)>v_{\rho,\sigma}\left(\frac cl,d\right) \Longleftrightarrow
\dir\left(\left(\frac al,b\right)- \left(\frac cl,d\right)\right) >(\rho,\sigma)\\
\shortintertext{and}
&v_{\rho,\sigma}\left(\frac al,b\right)<v_{\rho,\sigma}\left(\frac cl,d\right) \Longleftrightarrow
\dir\left(\left(\frac al,b\right)- \left(\frac cl,d\right)\right) < (\rho,\sigma).
\end{align*}
\end{lemma}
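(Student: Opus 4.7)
The plan is a direct cross-product calculation. Set $V := (a/l,b)-(c/l,d)$ and write $V = (V_1,V_2)$ with $V_1 := (a-c)/l$ and $V_2 := b-d$. The hypothesis says exactly that $v_{1,-1}(V) > 0$, so $V$ is not a multiple of $(1,1)$ and $\dir(V)$ is defined. Formula~\eqref{val}, applied in the case $r - ls > 0$ with $r = a-c$ and $s = b-d$, shows in addition that $\dir(V) \in \mathfrak{V}_{>0}$, since the sum of its coordinates equals $v_{1,-1}(V)/g > 0$ for $g := \gcd(a-c,\,l(b-d))$.

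Next I would compute the cross product $(\rho,\sigma)\times\dir(V)$ explicitly. By~\eqref{val}, $\dir(V) = (-lV_2/g,\,lV_1/g)$, and therefore
$$
(\rho,\sigma)\times\dir(V) = \rho\cdot\frac{lV_1}{g} - \sigma\cdot\Bigl(-\frac{lV_2}{g}\Bigr) = \frac{l}{g}\bigl(\rho V_1 + \sigma V_2\bigr) = \frac{l}{g}\,v_{\rho,\sigma}(V).
$$
Since $l/g > 0$, the signs of $(\rho,\sigma)\times\dir(V)$ and $v_{\rho,\sigma}(V)$ coincide.

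Finally, both $(\rho,\sigma)$ and $\dir(V)$ belong to $\mathfrak{V}_{>0} = ](1,-1),(-1,1)[$, which is an open half circle and therefore contains no pair of antipodal directions. The characterization~\eqref{order} of the order in such intervals then applies and gives
$$
(\rho,\sigma) < \dir(V) \Longleftrightarrow (\rho,\sigma)\times\dir(V) > 0 \Longleftrightarrow v_{\rho,\sigma}(V) > 0,
$$
with the analogous equivalence for the reverse strict inequality. Since $v_{\rho,\sigma}(V) = v_{\rho,\sigma}(a/l,b) - v_{\rho,\sigma}(c/l,d)$, this yields the lemma. There is no real obstacle in the argument; the only thing to check is that~\eqref{order} applies to the pair $((\rho,\sigma),\dir(V))$, which is immediate because both lie in the open semicircular interval $\mathfrak{V}_{>0}$.
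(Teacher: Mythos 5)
Your proof is correct and follows essentially the same route as the paper: introduce the difference vector, use formula~\eqref{val} to express its direction, compute the cross product $(\rho,\sigma)\times\dir(V)$, observe that it equals a positive multiple of $v_{\rho,\sigma}(V)$, and invoke the characterization~\eqref{order} of the order on $\mathfrak{V}_{>0}$. The only cosmetic difference is that you verify explicitly that $\dir(V)\in\mathfrak{V}_{>0}$, whereas the paper relies on the fact that $\dir$ is by definition valued in $\mathfrak{V}_{>0}$.
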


\begin{proof} Let
$$
(\rho',\sigma'):=\dir\bigl((a/l,b)-(c/l,d)\bigr)\quad\text{and}\quad g:=\gcd(bl-dl,a-c).
$$
Since $v_{1,-1}(a/l,b)>v_{1,-1}(c/l,d)$ implies $a-c>bl-dl$, we have
$$
(\rho',\sigma')=\left(\frac{dl-bl}{g},\frac{a-c}{g}\right).
$$
Consequently
$$
v_{\rho,\sigma}\left(\frac al,b\right)-v_{\rho,\sigma}\left(\frac cl,d\right) =
\frac{g}{l}\left(\rho\frac{a-c}{g} -\sigma\frac{dl-bl}{g}\right)= \frac{g}{l}\bigl(\rho
\sigma'-\rho'\sigma\bigr) = \frac{g}{l}\, (\rho,\sigma)\times (\rho',\sigma'),
$$
and so, the result follows immediately from~\eqref{order}.
\end{proof}

\begin{corollary}\label{formula basica de orden'} Let $(a/l,b),(c/l,d)\in \frac 1l \mathds{Z}\times
\mathds{Z}$ and $(\rho,\sigma)<(\rho',\sigma')$ in $\mathfrak{V}_{>0}$. If
$$
v_{1,-1}(a/l,b) > v_{1,-1}(c/l,d),
$$
then
\begin{align*}
&v_{\rho',\sigma'}\left(\frac al,b\right)\ge v_{\rho',\sigma'}\left(\frac cl,d\right) \Longrightarrow
v_{\rho,\sigma}\left(\frac al,b\right)> v_{\rho,\sigma}\left(\frac cl,d\right)\\
\shortintertext{and}
&v_{\rho,\sigma}\left(\frac al,b\right)\le v_{\rho,\sigma}\left(\frac cl,d\right) \Longrightarrow
v_{\rho',\sigma'}\left(\frac al,b\right) < v_{\rho',\sigma'}\left(\frac cl,d\right).
\end{align*}
\end{corollary}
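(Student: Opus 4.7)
The plan is to reduce both implications to Lemma~\ref{formula basica de orden} by introducing the single direction
\[
\theta := \dir\bigl((a/l,b)-(c/l,d)\bigr).
\]
First I verify that $\theta$ is well-defined: the hypothesis $v_{1,-1}(a/l,b)>v_{1,-1}(c/l,d)$ forces $(a-c)/l \ne b-d$, so the difference $(a/l,b)-(c/l,d)$ does not lie in $\mathds{Z}(1,1)$, and the recipe preceding Remark~\ref{valuacion depende de extremos} produces $\theta\in\mathfrak{V}_{>0}$.

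For the first implication, Lemma~\ref{formula basica de orden} gives the equivalence
\[
v_{\rho',\sigma'}(a/l,b) < v_{\rho',\sigma'}(c/l,d) \Longleftrightarrow \theta < (\rho',\sigma'),
\]
whose contrapositive rewrites the hypothesis $v_{\rho',\sigma'}(a/l,b)\ge v_{\rho',\sigma'}(c/l,d)$ as $\theta \ge (\rho',\sigma')$. Combined with the standing assumption $(\rho,\sigma)<(\rho',\sigma')$ this yields $\theta>(\rho,\sigma)$, and a second application of Lemma~\ref{formula basica de orden} gives the desired strict inequality $v_{\rho,\sigma}(a/l,b) > v_{\rho,\sigma}(c/l,d)$.

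The second implication is the exact contrapositive of the first: negating its conclusion produces $v_{\rho,\sigma}(a/l,b) \le v_{\rho,\sigma}(c/l,d)$, while negating its hypothesis produces $v_{\rho',\sigma'}(a/l,b) < v_{\rho',\sigma'}(c/l,d)$. So no additional argument is needed. The only thing to watch is the strict-versus-non-strict bookkeeping when negating the equivalences supplied by Lemma~\ref{formula basica de orden}; since that lemma already performs the nontrivial step of translating valuations into the cyclic order on $\mathfrak{V}_{>0}$, there is no real obstacle here, and the corollary is essentially a direct transitivity argument in the totally ordered interval $\mathfrak{V}_{>0}$.
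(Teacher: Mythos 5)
Your proposal is correct and matches the paper's approach exactly: the paper's own proof is simply ``It follows easily from Lemma~\ref{formula basica de orden},'' and you have filled in precisely the missing bookkeeping — introducing $\theta=\dir((a/l,b)-(c/l,d))$, using the total order on $\mathfrak{V}_{>0}$ to negate the equivalences of the lemma cleanly, chaining $\theta\ge(\rho',\sigma')>(\rho,\sigma)$, and observing that the second implication is the contrapositive of the first.
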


\begin{proof} It follows easily from Lemma~\ref{formula basica de orden}.
\end{proof}

The next two propositions are completely clear. The first one asserts that if
you have two consecutive edges of a Newton polygon, then all that is between them is
the common vertex. The second one asserts that if the end point of an edge coincides with
the starting point of another edge, then they are consecutive.

\begin{proposition}\label{le basico} Let $P\!\in\! L^{(l)}\!\setminus\!\{0\}$ and let
$(\rho_1,\sigma_1)$ and $(\rho_2,\sigma_2)$ be consecutive elements in $\Dir(P)$. If $(\rho_1,\sigma_1)<
(\rho,\sigma) <(\rho_2,\sigma_2)$, then $\en_{\rho_1,\sigma_1}(P) = \Supp(\ell_{\rho,\sigma}(P)) =
\st_{\rho_2,\sigma_2}(P)$.
\end{proposition}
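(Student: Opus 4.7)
The proposition is a direct algebraic transcription of the geometric fact that two consecutive edges of the Newton polygon $\HH(P)$ share a common vertex, and that any direction $(\rho,\sigma)$ strictly between their outward normals supports exactly this vertex. The plan is to convert this picture into the algebraic language of the preceding section.

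Since no element of $\Dir(P)$ lies strictly between $(\rho_1,\sigma_1)$ and $(\rho_2,\sigma_2)$, the hypothesis yields $(\rho,\sigma)\notin \Dir(P)$, so $\ell_{\rho,\sigma}(P)$ is a monomial; let $q\in \frac{1}{l}\mathds{Z}\times\mathds{Z}$ denote its single support point. Applying Remark~\ref{starting vs end} to $(\rho_1,\sigma_1)$ with $(\rho_0,\sigma_0):=(-\rho,-\sigma)$, and to $(\rho_2,\sigma_2)$ with $(\rho_0,\sigma_0):=(\rho,\sigma)$ (both choices place us in the ``first case'' of the remark, since $(\rho,\sigma)$ and the $(\rho_i,\sigma_i)$ all lie in a common interval), yields
$$
\en_{\rho_1,\sigma_1}(P) = \Supp(\ell_{\rho,\sigma}(\ell_{\rho_1,\sigma_1}(P))) \quad\text{and}\quad \st_{\rho_2,\sigma_2}(P) = \Supp(\ell_{\rho,\sigma}(\ell_{\rho_2,\sigma_2}(P))).
$$
Equivalently, $\en_{\rho_1,\sigma_1}(P)$ (respectively $\st_{\rho_2,\sigma_2}(P)$) is the unique point of $\Supp(\ell_{\rho_1,\sigma_1}(P))$ (respectively $\Supp(\ell_{\rho_2,\sigma_2}(P))$) maximizing $v_{\rho,\sigma}$. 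Since $q$ is the unique $v_{\rho,\sigma}$-maximizer on the larger set $\Supp(P)$, it will suffice to prove $q\in \Supp(\ell_{\rho_1,\sigma_1}(P))\cap\Supp(\ell_{\rho_2,\sigma_2}(P))$.

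To show $q\in\Supp(\ell_{\rho_1,\sigma_1}(P))$ (the case $i=2$ is symmetric), I argue by contradiction. Suppose $v_{\rho_1,\sigma_1}(q) < v_{\rho_1,\sigma_1}(P)$ and pick any $p\in \Supp(\ell_{\rho_1,\sigma_1}(P))$. Then $v_{\rho_1,\sigma_1}(p) > v_{\rho_1,\sigma_1}(q)$ and, by choice of $q$, $v_{\rho,\sigma}(q) > v_{\rho,\sigma}(p)$. After reducing to the case where the three relevant directions all lie in $\mathfrak{V}_{>0}$, Lemma~\ref{formula basica de orden} forces $(\rho_1,\sigma_1)<\dir(p-q)<(\rho,\sigma)<(\rho_2,\sigma_2)$, while $v_{\dir(p-q)}(p)=v_{\dir(p-q)}(q)$ shows that $p$ and $q$ both lie in $\Supp(\ell_{\dir(p-q)}(P))$. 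Thus $\dir(p-q)\in \Dir(P)$ strictly between $(\rho_1,\sigma_1)$ and $(\rho_2,\sigma_2)$, contradicting their consecutiveness.

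The main obstacle is the bookkeeping around Lemma~\ref{formula basica de orden}, which is phrased only for directions in $\mathfrak{V}_{>0}$: one must first rotate/reflect coordinates so that the ambient interval falls inside $\mathfrak{V}_{>0}$, or else reprove the relevant variant directly from the cross-product characterization~\eqref{order}. The degenerate sub-case $p-q\in \mathds{Z}(1,1)$, in which $\dir(p-q)$ is not defined, is ruled out after the reduction above, as it would force $\rho_1+\sigma_1=0$ and therefore put $p$ and $q$ on the same level set of $v_{\rho_1,\sigma_1}$, contradicting $v_{\rho_1,\sigma_1}(p)>v_{\rho_1,\sigma_1}(q)$.
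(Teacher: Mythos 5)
The paper itself does not give a formal proof of Proposition~\ref{le basico}; the text preceding it simply declares both Propositions~\ref{le basico} and~\ref{le bbasico} to be ``completely clear'' from the geometry of the Newton polygon. Your overall plan is reasonable and the first half is correct: since $(\rho,\sigma)\notin\Dir(P)$, $\ell_{\rho,\sigma}(P)$ is a monomial with support $\{q\}$, and via Remark~\ref{starting vs end} the whole statement reduces to showing $q\in\Supp(\ell_{\rho_1,\sigma_1}(P))\cap\Supp(\ell_{\rho_2,\sigma_2}(P))$. But the contradiction step contains a genuine gap. You write that ``$v_{\dir(p-q)}(p)=v_{\dir(p-q)}(q)$ shows that $p$ and $q$ both lie in $\Supp(\ell_{\dir(p-q)}(P))$.'' That does not follow. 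Membership in $\Supp(\ell_{\rho',\sigma'}(P))$ requires that the point \emph{maximize} $v_{\rho',\sigma'}$ over all of $\Supp(P)$; knowing only that two points $p$ and $q$ have the \emph{same} value under $v_{\dir(p-q)}$ (which is true by the very definition of $\dir(p-q)$) says nothing about either of them being maximal, and in general a third point $r\in\Supp(P)$ can satisfy $v_{\dir(p-q)}(r)>v_{\dir(p-q)}(p)$. If the inference were valid one could use it to conclude that the normal direction of the segment joining any two points of $\Supp(P)$ lies in $\Dir(P)$, which is clearly false.

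To repair the argument you need to bring in the maximality. One way is to take $p:=\en_{\rho_1,\sigma_1}(P)$ and observe that the set $T:=\{(\rho',\sigma'): p\in\Supp(\ell_{\rho',\sigma'}(P))\}=\bigcap_{r\in\Supp(P)}\{(\rho',\sigma'): v_{\rho',\sigma'}(p)\ge v_{\rho',\sigma'}(r)\}$ is a closed arc in $\mathfrak{V}$ containing $(\rho_1,\sigma_1)$, whose endpoints lie in $\Dir(P)$ (a finite intersection of closed half-circles, with a change of maximizer at each endpoint). Its counterclockwise endpoint can then neither lie strictly between $(\rho_1,\sigma_1)$ and $(\rho_2,\sigma_2)$ (that would contradict consecutiveness) nor equal $(\rho_1,\sigma_1)$ itself (Remark~\ref{starting vs end} shows that for directions slightly counterclockwise of $(\rho_1,\sigma_1)$ the maximizer is exactly $p$), so $T$ contains $(\rho,\sigma)$ and hence $p=q$. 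That gives the contradiction you want, but it is a different mechanism from the one you invoked; the step you wrote down is not a valid shortcut.
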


\begin{proposition}\label{le bbasico} Let $P\in L^{(l)}\setminus\{0\}$ and let
$(\rho,\sigma),(\rho',\sigma')\in \mathfrak{V}$. If $\en_{\rho,\sigma}(P) = \st_{\rho',\sigma'}(P)$,
then there is no $(\rho'',\sigma'')\in \Dir(P)$ such that $(\rho,\sigma)<(\rho'',\sigma'') <(\rho',\sigma')$.
\end{proposition}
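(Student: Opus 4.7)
The approach is to show that the point $v := \en_{\rho,\sigma}(P) = \st_{\rho',\sigma'}(P)$ is the strict maximizer of $v_{\rho'',\sigma''}$ over $\Supp(P)$ for every direction $(\rho'',\sigma'')$ strictly between $(\rho,\sigma)$ and $(\rho',\sigma')$. Once this is established, $\ell_{\rho'',\sigma''}(P)$ reduces to the single monomial supported at $v$, so $(\rho'',\sigma'')\notin\Dir(P)$, contradicting the existence of any such $(\rho'',\sigma'')$.

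The two key inputs are: (a)~since $v$ belongs to both $\Supp(\ell_{\rho,\sigma}(P))$ and $\Supp(\ell_{\rho',\sigma'}(P))$, one has $v_{\rho,\sigma}(v)=v_{\rho,\sigma}(P)$ and $v_{\rho',\sigma'}(v)=v_{\rho',\sigma'}(P)$; hence by linearity of the valuations, $v_{\rho,\sigma}(v-w)\ge 0$ and $v_{\rho',\sigma'}(v-w)\ge 0$ for every $w\in\Supp(P)$; (b)~the assumption $(\rho,\sigma)<(\rho'',\sigma'')<(\rho',\sigma')$ together with the convention~\eqref{order} places the three directions in a common open half-circle (they all lie in the normal cone at the vertex $v$ of $\HH(P)$, whose aperture is strictly less than~$\pi$), so one can write $(\rho'',\sigma'')=\alpha(\rho,\sigma)+\beta(\rho',\sigma')$ with $\alpha,\beta\in\mathds{R}_{>0}$, and the two vectors $(\rho,\sigma)$, $(\rho',\sigma')$ are linearly independent over $\mathds{R}$.

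Combining (a) and~(b) yields
\begin{equation*}
v_{\rho'',\sigma''}(v-w)=\alpha\, v_{\rho,\sigma}(v-w)+\beta\, v_{\rho',\sigma'}(v-w)\ge 0\qquad\text{for all }w\in\Supp(P),
\end{equation*}
with equality only if both summands vanish (since $\alpha,\beta>0$); but then $v-w=0$ by the linear independence of $(\rho,\sigma)$ and $(\rho',\sigma')$. Hence $v_{\rho'',\sigma''}(w)<v_{\rho'',\sigma''}(v)$ for every $w\in\Supp(P)\setminus\{v\}$, so $\Supp(\ell_{\rho'',\sigma''}(P))=\{v\}$, $\ell_{\rho'',\sigma''}(P)$ is a monomial, and $(\rho'',\sigma'')\notin\Dir(P)$, the desired contradiction.

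The main obstacle is step~(b): justifying the positivity of the coefficients $\alpha,\beta$ in the decomposition of $(\rho'',\sigma'')$, which hinges on the three directions actually lying in a common open half-circle rather than on a larger arc. Once this half-circle property is granted (either from the paper's standing convention on intervals and the order~\eqref{order}, or from the elementary geometric observation that $(\rho,\sigma)$ and $(\rho',\sigma')$ both lie in the normal cone of a vertex), the rest is the short linear-algebra computation above.
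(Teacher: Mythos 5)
The paper gives no proof of this proposition: it precedes it with ``The next two propositions are completely clear,'' offering only the gloss that if the end point of one edge is the starting point of another, then the two edges are consecutive. So there is no paper argument to compare against, and your proof has to be judged on its own.

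The core computation --- writing $(\rho'',\sigma'')=\alpha(\rho,\sigma)+\beta(\rho',\sigma')$ with $\alpha,\beta>0$ and concluding that $v$ is the strict maximizer of $v_{\rho'',\sigma''}$ on $\Supp(P)$ --- is correct and clean. You were right to flag step~(b) as the delicate point, but the justification offered for it is not quite airtight. What is actually true is that $(\rho,\sigma)$ and $(\rho',\sigma')$ both lie in the \emph{closed} normal cone $\overline{N(v)}$ of $v$ in $\HH(P)$ (each may be an edge normal rather than an interior direction), and this closed arc has aperture at most $\pi$, \emph{with strict inequality precisely when $\HH(P)$ is two-dimensional}. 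When $\HH(P)$ degenerates to a segment, $\overline{N(v)}$ at an endpoint is a closed half-plane, and the hypothesis $\en_{\rho,\sigma}(P)=\st_{\rho',\sigma'}(P)$ is compatible with $(\rho',\sigma')=-(\rho,\sigma)$: the two directions become linearly dependent, the decomposition with $\alpha,\beta>0$ fails, and the argument collapses. That case must therefore be treated separately --- fortunately it is trivial, since then $\Dir(P)$ consists of exactly the two antipodal normals of the segment and neither lies strictly inside the half-circle arc consistent with the counterclockwise boundary traversal. The other degenerate possibility, $(\rho,\sigma)=(\rho',\sigma')$, is vacuous. With these two edge cases disposed of, and with $\overline{N(v)}$ of aperture strictly less than $\pi$ in the remaining case (so that the relevant arc between $(\rho,\sigma)$ and $(\rho',\sigma')$ is the short one and the coefficients $\alpha,\beta$ are indeed positive), your argument closes cleanly.
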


\begin{proposition}\label{pr ell por automorfismos} For $k\in\mathds{Z}$ consider the au\-to\-mor\-phism
of $L^{(l)}$ defined by
$$
\varphi\bigl(x^{\frac{1}{l}}\bigr) := x^{\frac{1}{l}}\qquad\text{and}\qquad \varphi(y) := y+\lambda
x^{\frac{k}{l}}.
$$
Let $(\rho,\sigma)$ be the direction defined by $\rho > 0$ and
$\frac{\sigma}{\rho} =\frac{k}{l}$. We have
$$
\ell_{\rho,\sigma}(\varphi(P)) = \varphi(\ell_{\rho,\sigma}(P)),\quad \ell_{-\rho,-\sigma}(\varphi(P)) =
\varphi(\ell_{-\rho,-\sigma}(P))\quad\text{and}\quad \ell_{\rho_1,\sigma_1}(\varphi(P)) = \ell_{\rho_1,
\sigma_1}(P),
$$
for all $P\in L^{(l)}\setminus\{0\}$ and all $(\rho,\sigma)< (\rho_1,\sigma_1) < (-\rho,-\sigma)$. Moreover
$\en_{\rho,\sigma}(\varphi(P)) = \en_{\rho,\sigma}(P)$.
\end{proposition}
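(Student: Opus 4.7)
The plan is to exhibit the direction $(\rho,\sigma)$ explicitly and then exploit the simple fact that, for this particular $(\rho,\sigma)$, the substitution $\varphi$ is $(\rho,\sigma)$\emph{-weight preserving on every monomial}. All four assertions then reduce to routine weight comparisons.

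First I would set $d:=\gcd(l,|k|)$ (with the convention $\gcd(l,0)=l$) and take $(\rho,\sigma):=(l/d,k/d)$. Then $(\rho,\sigma)\in\mathfrak{V}$, $\rho>0$ and $\sigma/\rho=k/l$. The key observation is that $v_{\rho,\sigma}(x^{k/l})=\rho k/l=\sigma=v_{\rho,\sigma}(y)$, so every monomial in
$$
\varphi(x^{i/l}y^j)=x^{i/l}(y+\lambda x^{k/l})^j=\sum_{m=0}^{j}\binom{j}{m}\lambda^{m}x^{(i+mk)/l}y^{j-m}
$$
has exactly the same $(\rho,\sigma)$-weight $\rho i/l+\sigma j$ as $x^{i/l}y^j$. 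Hence $\varphi$ is a graded automorphism of $L^{(l)}$ for the $(\rho,\sigma)$-grading. Applying this to the decomposition $P=\sum_{w}P_w$ into $(\rho,\sigma)$-homogeneous components and using that $\varphi$ is injective to rule out cancellation, I get $\ell_{\rho,\sigma}(\varphi(P))=\varphi(\ell_{\rho,\sigma}(P))$, and the same argument applied to the lowest weight component yields $\ell_{-\rho,-\sigma}(\varphi(P))=\varphi(\ell_{-\rho,-\sigma}(P))$.

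Next I would take $(\rho_1,\sigma_1)$ with $(\rho,\sigma)<(\rho_1,\sigma_1)<(-\rho,-\sigma)$ and compute
$$
v_{\rho_1,\sigma_1}(y)-v_{\rho_1,\sigma_1}(x^{k/l})=\sigma_1-\rho_1 k/l=\tfrac{1}{\rho}\bigl((\rho,\sigma)\times(\rho_1,\sigma_1)\bigr)>0,
$$
where the positivity comes from~\eqref{order}. Then each term with $m\ge 1$ in the expansion of $\varphi(x^{i/l}y^j)$ has strictly smaller $(\rho_1,\sigma_1)$-weight than the $m=0$ term $x^{i/l}y^j$. Summing over the monomials of $P$ gives $v_{\rho_1,\sigma_1}(\varphi(P)-P)<v_{\rho_1,\sigma_1}(P)$, whence $\ell_{\rho_1,\sigma_1}(\varphi(P))=\ell_{\rho_1,\sigma_1}(P)$.

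For the last identity $\en_{\rho,\sigma}(\varphi(P))=\en_{\rho,\sigma}(P)$, I would use the geometric description underlying Remark~\ref{starting vs end}: since $(-\sigma,\rho)$ is $(\rho,\sigma)$ rotated ninety degrees counterclockwise, $\en_{\rho,\sigma}(Q)=\Supp\bigl(\ell_{-\sigma,\rho}(\ell_{\rho,\sigma}(Q))\bigr)$ for every nonzero $Q$. Inside $\varphi(\ell_{\rho,\sigma}(P))$, the $(-\sigma,\rho)$-weight of the $m$th term of the expansion differs from that of the $m=0$ term by $-m(\sigma k/l+\rho)=-m\rho(1+k^2/l^2)<0$, so $\ell_{-\sigma,\rho}(\varphi(\ell_{\rho,\sigma}(P)))=\ell_{-\sigma,\rho}(\ell_{\rho,\sigma}(P))$, and the claim follows after combining with the first identity. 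The only real bookkeeping hurdle will be picking the correct auxiliary direction $(-\sigma,\rho)$ for this last step; everything else amounts to comparing $v_{\rho,\sigma}(y)$ and $v_{\rho,\sigma}(x^{k/l})$ and invoking the cross-product criterion~\eqref{order}.
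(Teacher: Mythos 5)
Your proof is correct and follows essentially the same route as the paper's: you identify the direction $(\rho,\sigma)=(l/d,k/d)$, observe that $\varphi$ preserves $(\rho,\sigma)$-weight monomial-by-monomial (the paper phrases this as ``$\varphi$ is $(\rho,\sigma)$-homogeneous''), and for intermediate directions $(\rho_1,\sigma_1)$ note that the perturbation terms have strictly lower $(\rho_1,\sigma_1)$-weight because $(\rho,\sigma)\times(\rho_1,\sigma_1)>0$. The only small divergence is the final endpoint claim: the paper applies the explicit formula~\eqref{eq57} to identify $\en_{\rho,\sigma}$ with the monomial of largest $y$-degree and notes $\varphi$ fixes that monomial, whereas you pick the orthogonal auxiliary direction $(-\sigma,\rho)$ and invoke Remark~\ref{starting vs end}; these are interchangeable bookkeeping choices and both are valid.
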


\begin{proof} Take $d:=\gcd(k,l)>0$, $\rho:=l/d$ and $\sigma:=k/d$. Clearly $\frac{\sigma}{\rho} = \frac kl$.
Moreover, since $\varphi$ is $(\rho,\sigma)$-homogeneous it is also clear that
$$
\ell_{\rho,\sigma}(\varphi(P)) = \varphi(\ell_{\rho,\sigma}(P))\qquad\text{and}\qquad
\ell_{-\rho,-\sigma}(\varphi(P)) = \varphi(\ell_{-\rho,-\sigma}(P))
$$
for all $P\in L^{(l)}\setminus\{0\}$. Now we prove that the last equality is also true. By the hypothesis
about $(\rho_1,\sigma_1)$ we have $\rho_1\sigma < \rho \sigma_1$. Thus
$$
\ell_{\rho_1,\sigma_1}\bigl(y+\lambda x^{\frac{\sigma}{\rho}}\bigr) = y,
$$
since $\rho>0$. Consequently
\begin{equation*}
\ell_{\rho_1,\sigma_1}\bigl(\varphi(x^{\frac{i}{l}} y^j)\bigr)= \ell_{\rho_1,\sigma_1}\bigl(x^{\frac{i}{l}}
(y+\lambda x^{\frac{\sigma}{\rho}})^j \bigr)= x^{\frac{i}{l}} y^j,
\end{equation*}
from which
$$
\ell_{\rho_1,\sigma_1}(\varphi(P)) = \ell_{\rho_1,\sigma_1}(\varphi( \ell_{\rho_1,\sigma_1}(P))) =
\ell_{\rho_1,\sigma_1}(P),
$$
follows\endnote{In fact, from
\begin{equation*}
\ell_{\rho_1,\sigma_1}\bigl(\varphi(x^{\frac{i}{l}} y^j)\bigr) = x^{\frac{i}{l}} y^j \quad
\text{for all $i\in \mathds{Z}$ and $j\in \mathds{N}$,}
\end{equation*}
it follows that
\begin{equation}
\ell_{\rho_1,\sigma_1}\bigl(\varphi(P)\bigr) = P\quad\text{for all $(\rho_1,\sigma_1)$-homogeneous $P\in
L^{(l)}\setminus \{0\}$}\label{eq27}
\end{equation}
and
\begin{equation}
v_{\rho_1,\sigma_1}(\varphi(R)) = v_{\rho_1,\sigma_1}(R)\quad\text{for all $R\in L^{(l)}\setminus
\{0\}$.}\label{eq28}
\end{equation}
Fix now $P\in L^{(l)}\setminus\{0\}$ and write
$$
P = \ell_{\rho_1,\sigma_1}(P) + R\quad\text{with $R = 0$ or $v_{\rho_1,\sigma_1}(R)<v_{\rho_1,\sigma_1}(P)$.}
$$
Applying first equality~\eqref{eq28} and then equality~\eqref{eq27}, we obtain
$$
\ell_{\rho_1,\sigma_1}(\varphi(P)) = \ell_{\rho_1,\sigma_1}(\varphi( \ell_{\rho_1,\sigma_1}(P))) =
\ell_{\rho_1,\sigma_1}(P),
$$
as desired.
}.
The last assertion follows from the second equality in~\eqref{eq57} and the fact that the monomials of
greatest degree in $y$ of $\ell_{\rho,\sigma}(\varphi(P))$ and $\ell_{\rho,\sigma}(P)$ coincide.
\end{proof}

\begin{proposition}\label{varphi preserva el Jacobiano} Let $R_0,R_1\in\{L,L^{(l)}\}$, $P,Q\in R_0$ and
$\varphi\colon R_0\to R_1$ an algebra morphism. Then
\begin{equation}\label{Chain rule for Jacobian}
[\varphi(P),\varphi(Q)]=\varphi([P,Q])[\varphi(x),\varphi(y)].
\end{equation}
\end{proposition}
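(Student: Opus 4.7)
The plan is to prove identity~\eqref{Chain rule for Jacobian} by expressing the Jacobian matrix $\J(\varphi(P),\varphi(Q))$ as a product of two $2\times 2$ matrices via the chain rule for partial derivatives, so that the identity falls out from the multiplicativity of the determinant.

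First I would establish the chain rule: for every $f\in R_0$,
$$\partial_x\varphi(f)=\varphi(\partial_x f)\,\partial_x\varphi(x)+\varphi(\partial_y f)\,\partial_x\varphi(y),$$
and the analogous identity with $\partial_x$ replaced by $\partial_y$. Both sides, viewed as $K$-linear maps $R_0\to R_1$ in the argument $f$, are $\varphi$-derivations: the left-hand side because $\partial_x$ is a derivation on $R_1$ and $\varphi$ is an algebra morphism, and the right-hand side because $\varphi\xcirc\partial_x$ and $\varphi\xcirc\partial_y$ are $\varphi$-derivations from $R_0$ to $R_1$, each multiplied by an element of $R_1$ that is independent of $f$. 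A direct computation shows that both sides coincide on $y$ and, in the case $R_0=L^{(l)}$, on $x^{1/l}$ (differentiating $\varphi(x^{1/l})^l=\varphi(x)$ and solving for $\partial_x\varphi(x^{1/l})$). Since every $\varphi$-derivation sends $1$ to $0$ and satisfies $D(u^{-1})=-\varphi(u)^{-2}D(u)$ for invertible $u$, the agreement automatically extends to $x^{-1/l}$, hence to all of $R_0$.

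Second, stacking the chain rule for $P$ and $Q$ into matrix form yields
$$\J(\varphi(P),\varphi(Q))=\varphi\bigl(\J(P,Q)\bigr)\cdot\J(\varphi(x),\varphi(y)),$$
where $\varphi(\J(P,Q))$ denotes the matrix obtained by applying $\varphi$ entrywise. Taking determinants, and using both that $\varphi$ commutes with the formation of $2\times 2$ determinants (since $\varphi$ is a ring homomorphism) and the multiplicativity $\det(AB)=\det(A)\det(B)$, I obtain
$$[\varphi(P),\varphi(Q)]=\varphi([P,Q])\,[\varphi(x),\varphi(y)],$$
which is the desired equality.

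There is no real conceptual obstacle here; the only minor subtlety is the verification of the chain rule on the fractional generator $x^{1/l}$ in the case $R_0=L^{(l)}$, but this reduces to routine bookkeeping with the Leibniz rule once one differentiates $\varphi(x^{1/l})^l=\varphi(x)$.
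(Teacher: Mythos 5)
Your proof is correct and takes a genuinely different route from the paper. The paper cites Abhyankar's formal Jacobian chain rule for rational functions in $K(x,y)$, applies it directly to settle the case $l=1$, and then handles general $l$ by a separate computation that identifies $L^{(l)}$ with $K[z,z^{-1},y]$ via $z=x^{1/l}$ and tracks the factor $\frac{1}{lz^{l-1}}$ relating the $(x,y)$-Jacobian to the $(z,y)$-Jacobian. Your proof instead establishes the scalar chain rule for $\partial_x\varphi(f)$ and $\partial_y\varphi(f)$ from scratch by observing that both sides are $\varphi$-derivations $R_0\to R_1$ and checking agreement on the algebra generators (with the nice remark that agreement on $x^{1/l}$ forces agreement on $x^{-1/l}$, since a $\varphi$-derivation's value on $u^{-1}$ is determined by its value on $u$); the matrix factorization $\J(\varphi(P),\varphi(Q))=\varphi(\J(P,Q))\J(\varphi(x),\varphi(y))$ and multiplicativity of the determinant then finish the argument. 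What your approach buys is self-containedness — no appeal to an external formal chain rule stated for rational functions — and a uniform treatment of $L$ and $L^{(l)}$ in a single pass; what the paper's citation-based approach buys is brevity. Both are sound.
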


\begin{proof} Recall the (formal) Jacobian chain rule (see for example~\cite{A2}*{(1.7), p. 1160}) which
generalizes the (formal) derivative chain rule and which says that given any 2-variable rational functions
$f_1(x,y),f_2(x,y),g_1(x,y),g_2(x,y)\in K(x,y)$, we have
$$
J_{(x,y)}(h_1,h_2)=J_{(f_1,f_2)}(g_1,g_2)J_{(x,y)}(f_1,f_2),
$$
where by definition $h_i(x,y) := g_i(f_1(x,y), f_2(x,y))$, and
\begin{equation}\label{aux}
J_{(f_1,f_2)}(g_1,g_2) := j (f_1(x,y), f_2(x,y))\quad\text{with}\quad j (x,y) := J_{(x,y)}(g_1,g_2).
\end{equation}
Assume first that $l=1$. Then equality~\eqref{Chain rule for Jacobian} follows applying equality~\eqref{aux}
with
$$
g_1:=P,\quad g_2:=Q,\quad f_1:=\varphi(x)\quad\text{and}\quad f_2:=\varphi(y),
$$
since $\varphi([P,Q]) = j(\varphi(x),\varphi(y))$, where $j(x,y):=[P,Q]\in L^{(1)}\subseteq K(x,y)$. Assume
now that $l$ is arbitrary. Identifying $L^{(l)}$ with $K[z,z^{-1},y]$ via $z=x^{1/l}$, we obtain
$$
[P,Q]=(P_z Q_y-P_y Q_z)\frac{1}{lz^{l-1}},\quad\text{for $P,Q\in L^{(l)}$}.
$$
Consequently equality~\eqref{Chain rule for Jacobian} is valid for $R_0,R_1\in\{L,L^{(l)}\}$.
\end{proof}

\section{Minimal pairs and $\bm{(m,n)}$-pairs}
\label{section4}
\setcounter{equation}{0}

Our next aim is to determine a lower bound for
$$
B := \begin{cases}\infty & \text{if the jacobian conjecture is true,}\\
\min\bigl(\gcd(v_{1,1}(P),v_{1,1}(Q))\bigr)&\text{if JC is false, where $(P,Q)$ runs on the counterexamples.}
\end{cases}
$$

A {\em minimal pair} is a counterexample $(P,Q)$ to JC such that $B=\gcd(v_{1,1}(P),v_{1,1}(Q))$.

An $(m,n)$-pair is a Jacobian pair $(P,Q)$ with $P,Q\in L^{(l)}$ for some $l$, that satisfies certain
conditions (see Definition~\ref{Smp}).

In this section we prove that if $B<\infty$, then there exists a minimal pair that is also an $(m,n)$-pair
for some $m,n\in\mathds{N}$. We could prove the result using only our previous results, but
we prefer to use the well known fact that a counterexample to JC can be brought into a subrectangular shape,
following an argument communicated by Leonid Makar-Limanov.

\smallskip

\begin{proposition}\label{para11} Let $(\rho,\sigma)\in \mathfrak{V}$ be such that $(1,0)\le (\rho,\sigma)\le
(0,1)$. If $(P,Q)$ is a counterexample to JC, then
$$
v_{\rho,\sigma}(P)>0,\qquad v_{\rho,\sigma}(Q)>0\qquad\text{and}\qquad v_{\rho,\sigma}(P)+v_{\rho,\sigma}(Q) -
(\rho+\sigma) >0.
$$
\end{proposition}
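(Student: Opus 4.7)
The plan is to first show that neither $P$ nor $Q$ can lie in $K$, $K[x]$ or $K[y]$, then to derive the crude combinatorial bound $v_{\rho,\sigma}(P),v_{\rho,\sigma}(Q)\ge\max(\rho,\sigma)$, and finally to read off all three inequalities, handling the boundary direction $(\rho,\sigma)=(1,1)$ by a separate short argument.

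First I would rule out $P,Q\in K\cup K[x]\cup K[y]$. Clearly neither is in $K$, since $[P,Q]\ne 0$. If $P\in K[y]$, then $[P,Q]=-P'(y)\,\partial_x Q$, so $[P,Q]\in K^{\times}$ forces $P'(y)\in K^{\times}$ and $\partial_x Q\in K^{\times}$; hence $P=ay+b$ and $Q=cx+g(y)$ with $a,c\in K^{\times}$ and $g\in K[y]$, and then $K[P,Q]=K[x,y]$, so $(P,Q)$ is an automorphism, contradicting that it is a counterexample. Symmetric arguments dispose of $P\in K[x]$ and of $Q\in K[x]\cup K[y]$.

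Because $P\in K[x,y]$ and $\rho,\sigma\ge 0$, each $(i,j)\in\Supp(P)$ satisfies $\rho i+\sigma j\ge 0$. Since $P\notin K[y]$ some point $(i,j)\in\Supp(P)$ has $i\ge 1$, giving $v_{\rho,\sigma}(P)\ge\rho$; since $P\notin K[x]$ some point of $\Supp(P)$ has $j\ge 1$, giving $v_{\rho,\sigma}(P)\ge\sigma$. Hence $v_{\rho,\sigma}(P)\ge\max(\rho,\sigma)\ge 1$ and likewise for $Q$. Adding yields
\[
v_{\rho,\sigma}(P)+v_{\rho,\sigma}(Q)\ge 2\max(\rho,\sigma)\ge \rho+\sigma.
\]

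The right-most inequality is strict unless $\rho=\sigma$, which in our range $(1,0)\le(\rho,\sigma)\le(0,1)$ and with $\gcd(\rho,\sigma)=1$ forces $(\rho,\sigma)=(1,1)$. In that remaining case, equality $v_{1,1}(P)+v_{1,1}(Q)=2$ would require $v_{1,1}(P)=v_{1,1}(Q)=1$, so both $P$ and $Q$ would have total degree one; but then $[P,Q]\in K^{\times}$ makes $(P,Q)$ an affine automorphism of $K[x,y]$, once more contradicting counterexample. The only step that demands more than inspection is the first one, where one has to rule out the degenerate shapes $P,Q\in K[x]\cup K[y]$; once that is in hand the rest of the argument is a mechanical count.
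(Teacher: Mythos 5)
Your proof is correct, and the overall strategy (use the support constraint to force a lower bound on the $(\rho,\sigma)$-degree, and rule out the degenerate shapes that would violate it) is the same as the paper's. The difference is in how the strict inequality is extracted. The paper splits on $\rho\ge\sigma$ versus $\sigma\ge\rho$ and proves directly that $v_{\rho,\sigma}(P),v_{\rho,\sigma}(Q)>\max(\rho,\sigma)$, by observing that $v_{\rho,\sigma}(P)\le\rho$ forces $P$ to be of the form $\mu x+f(y)$, after which $(P,Q)$ cannot be a counterexample; the three inequalities then all follow at once. You instead prove the weaker non-strict bound $v_{\rho,\sigma}(P),v_{\rho,\sigma}(Q)\ge\max(\rho,\sigma)$ (by first ruling out $P,Q\in K[x]\cup K[y]$), which gives the first two inequalities and gives the third one except when $\rho=\sigma$, i.e. $(\rho,\sigma)=(1,1)$, where you then observe that equality would force both $P$ and $Q$ to be affine, again an automorphism. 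Your modularization is slightly cleaner: needing only the $\ge$ bound, you never have to analyze exactly which supports satisfy $i\rho+j\sigma\le\rho$, so you sidestep the awkward boundary direction $(\rho,\sigma)=(1,0)$ where that analysis gives $P=xg(y)+f(y)$ rather than $P=\mu x+f(y)$ (and the paper's implication as written is slightly too strong). In exchange you pay with a small separate argument at $(1,1)$, which is genuinely trivial, so on balance the organization is marginally tidier without adding real content.
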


\begin{proof} Note that if $(1,0)\le (\rho,\sigma)\le (1,1)$, then $\rho\ge \sigma\ge 0$, while if $(1,1)\le
(\rho,\sigma)\le (0,1)$, then $\sigma\ge \rho\ge 0$. In the case $\rho\ge \sigma\ge 0$ it is enough to prove
that $v_{\rho,\sigma}(P),v_{\rho,\sigma}(Q) >\rho$. Assume for example that $v_{\rho,\sigma}(P)\le \rho$, then
$$
(i,j)\in \Supp(P) \Longrightarrow i\rho+j\sigma\le \rho \Longrightarrow i=0, \text{ or } i=1 \text{ and } j=0,
$$
which means that $P=\mu x+f(y)$ for some $\mu\in K$ and $f\in K[y]$, and obviously $(P,Q)$ can not be a
counterexample to JC. The case $\sigma\ge \rho\ge 0$ is similar.
\end{proof}

\begin{remark}\label{no se dividen} If $(P,Q)$ is a minimal pair, then neither $v_{1,1}(P)$ divides
$v_{1,1}(Q)$ nor $v_{1,1}(Q)$ divides $v_{1,1}(P)$. This fact can be proven using a classical
argument\endnote{Assume for example that $v_{1,1}(P)$ divides $v_{1,1}(Q)$. By
Proposition~\ref{P y Q alineados}(2), there exist $\lambda_P,\lambda_Q\in K^{\times}$,
$m,n\in \mathds{N}$ and a $(1,1)$-homogeneous polynomial $R\in L$, such that
$$
\ell_{1,1}(P) = \lambda_P R^m\qquad\text{and}\qquad\ell_{1,1}(Q) = \lambda_Q R^n.
$$
Since $v_{1,1}(P)\mid v_{1,1}(Q)$, we have $n = k m$ for some $k\in\mathds{N}$. Hence,
$$
\ell_{1,1}(Q) = \ell_{1,1}\biggl(\frac{\lambda_Q}{\lambda_P^k} P^k\biggr),
$$
and so $Q_1:= Q-\frac{\lambda_Q}{\lambda_P^k} P^k$ satisfies $v_{1,1}(Q_1)<v_{1,1}(Q)$. Moreover it is clear
that $[P,Q_1] = [P,Q]\in K^{\times}$. Now we can construct successively $Q_2,Q_3,\dots$, such that $[P,Q_k]\in K^{\times}$ and
$v_{1,1}(Q_k)<v_{1,1}(Q_{k-1})$, until we obtain that $v_{1,1}(P)$ does not divide $v_{1,1}(Q_k)$. Then
$B_1:=\gcd(v_{1,1}(Q_k),v_{1,1}(P))\!<\!B$, which contradicts the minimality of~$B$. Similarly $v_{1,1}(Q)$
divides $v_{1,1}(P)$ is impossible.}
given for example in the proof of~\cite{vdE}*{Theorem~10.2.23}.
\end{remark}

\begin{definition}\label{Smp} Let $m,n\in\mathds{N}$ be coprime with $n,m>1$. A pair $(P,Q)$ of elements
$P,Q\in L^{(l)}$ (respectively $P,Q\in L$) is called an {\em $(m,n)$-pair} in $L^{(l)}$
(respectively in $L$), if
$$
[P,Q]\in K^{\times},\quad \frac{v_{1,1}(P)}{v_{1,1}(Q)}=
\frac{v_{1,0}(P)}{v_{1,0}(Q)} = \frac mn\quad\text{and}\quad v_{1,-1}(\en_{1,0}(P))<0.
$$
An $(m,n)$-pair $(P,Q)$ is called {\em a standard $(m,n)$-pair} if $P,Q\in L^{(1)}$ and
$v_{1,-1}(\st_{1,0}(P))<0$.
\end{definition}

\begin{lemma}\label{lema geometrico 1}
Let $(\rho,\sigma),(\rho',\sigma')\in\mathfrak{V}$ and $A,B\in \frac 1l\mathds{Z}\times \mathds{N}_0$ such that
$$
v_{\rho,\sigma}(A)v_{\rho',\sigma'}(B)=v_{\rho,\sigma}(B)v_{\rho',\sigma'}(A)\quad\text{and}\quad (\rho,\sigma)\times(\rho',\sigma')\ne 0.
$$
Then $A\sim B$.
 \end{lemma}

\begin{proof}
Write $A=(a_1,a_2)$ and $B=(b_1,b_2)$. The Lemma follows immediately from the equality
$$
\begin{pmatrix}\rho&\sigma\\ \rho'&\sigma'\end{pmatrix}\begin{pmatrix}a_1&b_1\\ a_2&b_2\end{pmatrix}=\begin{pmatrix}v_{\rho,\sigma}(A)&v_{\rho,\sigma}(B)\\ v_{\rho',\sigma'}(A)&v_{\rho',\sigma'}(B)\end{pmatrix},
$$
taking determinants.
\end{proof}

\begin{remark}\label{lema geometrico 2}
Let $P,Q\in L^{(l)}$ and $(\rho,\sigma)\in\mathfrak{V}$. Assume that $v_{\rho,\sigma}(P)\ne 0$, $\st_{\rho,\sigma}(P)\sim \st_{\rho,\sigma}(Q)$ and $\en_{\rho,\sigma}(P)\sim \en_{\rho,\sigma}(Q)$. Then
$$
\en_{\rho,\sigma}(Q)=\lambda \en_{\rho,\sigma}(P)\quad\text{and}\quad \st_{\rho,\sigma}(Q)=\lambda \st_{\rho,\sigma}(P),\quad\text{with}\quad
\lambda:=\frac{v_{\rho,\sigma}(Q)}{v_{\rho,\sigma}(P)}.
$$
 \end{remark}
If $(P,Q)$ is an $(m,n)$-pair, then $(Q,P)$ is an $(n,m)$-pair, as is shown by the following proposition.
\begin{proposition}\label{propiedades de los pares}
Let $(P,Q)$ be an $(m,n)$-pair. Then the following properties hold:
\begin{enumerate}
  \item $v_{1,0}(P),v_{1,0}(Q)> 0$.
  \item $\en_{1,0}(Q) \sim \en_{1,0}(P)$ and $\en_{1,0}(Q) = \frac nm \en_{1,0}(P)$.
  \item $\frac{1}{m}\en_{1,0}(P)=\frac{1}{n}\en_{1,0}(Q)\in \frac{1}{l}\mathds{N}\times \mathds{N}$ and
$v_{1,-1}(\en_{1,0}(Q))<0$.
\item $v_{0,-1}(\en_{1,0}(P))<-1$ and $v_{0,-1}(\en_{1,0}(Q))<-1$.
\item Neither $P$ nor $Q$ are monomials.
\end{enumerate}
\end{proposition}

\begin{proof}
Item~(1) follows from inequality~\eqref{dfpvc}, since $v_{10}(P)<0$ implies $v_{10}(Q)<0$. Now we prove item~(2). Assume by
contradiction that $\en_{1,0}(Q) \nsim \en_{1,0}(P)$. By Propositions~\ref{pr v de un conmutador}, \ref{extremosalineados}, and~\ref{extremosnoalineados}(2) we have
\begin{equation}\label{e8}
\en_{1,0}(Q)+\en_{1,0}(P)=(1,1),
\end{equation}
which combined with the fact that $v_{1,-1}(\en_{1,0}(P))<0$ and $v_{1,0}(P)> 0$ implies that there exists $0<r<l$ with
$\en_{1,0}(P)=(r/l,1)$ and $\en_{1,0}(Q)=((l-r)/l,0)$. Set
$$
M:=\{(1,1)\}\cup ((\Dir(P)\cup \Dir(Q))\cap ](1,0),(1,1)[)= \{(\rho_0,\sigma_0)<\dots<(\rho_k,\sigma_k)=(1,1)\}.
$$
We claim that
\begin{equation}\label{suma de valuaciones mayor que 1}
v_{0,1}(\st_{\rho_j,\sigma_j}(P))+v_{0,1}(\st_{\rho_j,\sigma_j}(Q))>1,\quad\text{for $j>0$}.
\end{equation}
In fact, if $k=0$ this is trivial. Otherwise, by Proposition~\ref{le basico} and Remark~\ref{starting vs end}, we have
$$
v_{0,1}(\st_{\rho_j,\sigma_j}(P))\le v_{0,1}(\en_{\rho_j,\sigma_j}(P))=v_{0,1}(\st_{\rho_{j+1},\sigma_{j+1}}(P)),\quad\text{for $0\le j < k$,}
$$
with strict inequality if $(\rho_j,\sigma_j)\in \Dir(P)$, and the same is true for $Q$. The claim follows immediately from these facts, since
$$
(\rho_0,\sigma_0)\in\Dir(P)\cup\Dir(Q)\quad\text{and}\quad v_{0,1}(\st_{\rho_0,\sigma_0}(P))+v_{0,1}(\st_{\rho_0,\sigma_0}(Q))=1,
$$
where the equality follows from Proposition~\ref{le basico} and equality~\eqref{e8}.

Inequality~\eqref{suma de valuaciones mayor que 1} implies that
$\st_{\rho_j,\sigma_j}(P)+\st_{\rho_j,\sigma_j}(Q)\ne (1,1)$ for $j>0$, and so Proposition~\ref{extremosnoalineados}(1) and
Proposition~\ref{le basico} yield
\begin{equation}\label{e10}
\en_{\rho_j,\sigma_j}(P) = \st_{\rho_{j+1},\sigma_{j+1}}(P)\sim \st_{\rho_{j+1},\sigma_{j+1}}(Q)=\en_{\rho_j,\sigma_j}(Q)\quad\text{for $0\le j<k$.}
\end{equation}
On the other hand, $\rho_j>0$ because $(\rho_j,\sigma_j)\in\, ](1,0),(1,1)]$, and hence,
$$
v_{\rho_j,\sigma_j}(Q)\ge v_{\rho_j,\sigma_j}(\en_{1,0}(Q))= \frac{l-r}{l}>0.
$$
This allows us to use Remark~\ref{lema geometrico 2} combined with~\eqref{e10}, in order to prove inductively that
\begin{equation}\label{cocientes iguales}
\frac{v_{\rho_0,\sigma_0}(P)}{v_{\rho_0,\sigma_0}(Q)}=\frac{v_{\rho_k,\sigma_k}(P)}{v_{\rho_k,\sigma_k}(Q)}=\frac mn= \frac{v_{1,0}(P)}{v_{1,0}(Q)}.
\end{equation}
Set $A:=\en_{1,0}(P)$ and $B:=\en_{1,0}(Q)$. By Proposition~\ref{le basico}, we have $v_{\rho_0,\sigma_0}(A)=v_{\rho_0,\sigma_0}(P)$ and
$v_{\rho_0,\sigma_0}(B)=v_{\rho_0,\sigma_0}(Q)$. Consequently, by~\eqref{cocientes iguales},
$$
v_{1,0}(A)v_{\rho_0,\sigma_0}(B) - v_{1,0}(B)v_{\rho_0,\sigma_0}(A) =  v_{1,0}(P)v_{\rho_0,\sigma_0}(Q) - v_{1,0}(Q)v_{\rho_0,\sigma_0}(P) = 0,
$$
which, by Lemma~\ref{lema geometrico 1} with $(\rho,\sigma)=(1,0)$ and $(\rho',\sigma')=(\rho_0,\sigma_0)$, leads to $A\sim B$, contradicting the assumption that $\en_{1,0}(Q) \nsim \en_{1,0}(P)$ and proving item~(2).

From item~(2) we obtain
$$
\frac{1}{m}\en_{1,0}(P)=\frac{1}{n}\en_{1,0}(Q)\in \frac{1}{l}\mathds{N}\times \mathds{N}_0
\quad\text{and}\quad v_{1,-1}(\en_{1,0}(Q))<0.
$$
But $v_{0,1}(\en_{1,0}(P)),v_{0,1}(\en_{1,0}(Q))>0$, since $v_{1,-1}(\en_{1,0}(P)),v_{1,-1}(\en_{1,0}(Q))<0$, and so item~(3) holds.
Thus $v_{0,-1}(\en_{1,0}(P))<-1$ and $v_{0,-1}(\en_{1,0}(Q))<-1$\endnote{
We prove this only for $P$ since the proof for $Q$ is similar. Since $v_{0,-1}(\en_{1,0}(P))\in -m\mathds{N}_0$, it suffices
to show that $v_{0,-1}(\en_{1,0}(P))<0$. But otherwise, $\en_{1,0}(P) = (a,0)$ for some $a\in \mathds{Q}$, and so
$$
v_{1,0}(P) = v_{1,0}(\en_{1,0}(P)) = v_{1,-1}(\en_{1,0}(P))<0,
$$
which is impossible since $v_{1,0}(P)>0$.},
which is item~(4). In order to check item~(5), assume for instance that $P$ is a monomial. Then, by item~(4),
$$
v_{0,-1}(P)+v_{0,-1}(Q)=v_{0,-1}(\en_{1,0}(P))+v_{0,-1}(Q)<-1+0,
$$
which contradicts inequality~\eqref{dfpvc}.
\end{proof}

\begin{figure}[htb]
\centering
\begin{tikzpicture}
\fill[gray!20] (0,0) -- (0.25,0.75) -- (1,2.5) -- (1,0.5) -- (0.5,0) -- (0,0);
\draw  [thick] (0.25,0.75) -- (1,2.5) -- (1,0.5);
\draw[step=.5cm,gray,very thin] (0,0) grid (2,3.6);
\draw [->] (0,0) -- (2.3,0) node[anchor=north]{$x$};
\draw [->] (0,0) --  (0,4) node[anchor=east]{$y$};
\draw[dotted] (0,0) -- (2.3,2.3);
\draw[dotted] (0,1.5) -- (1,2.5);
\draw [->,thick] (1.5,2.5) -- (2,2.5) node[fill=white,anchor=west]{\tiny{(1,0)}};
\draw [->,thick] (0.5,2.5) -- (-0.5,3) node[fill=white,anchor=east]{\tiny{$\Succ_{\varphi(P)}(1,0)$}};
\end{tikzpicture}
\caption{The shape of $\varphi(P)$ according to Proposition~\ref{primera condicion estandar}.}
\end{figure}
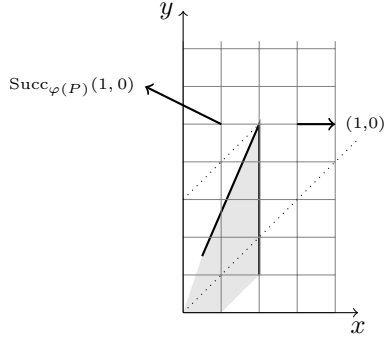

\begin{proposition}\label{primera condicion estandar} Let $(P,Q)$ be a minimal pair. Then there exist
$m,n\in \mathds{N}$ which are coprime, and $\varphi\in\Aut(L)$ such that $(\varphi(P),\varphi(Q))$ is an
$(m,n)$-pair satisfying $v_{1,1}(\varphi(P))=v_{1,1}(P)$ and $v_{1,1}(\varphi(Q))=v_{1,1}(Q)$.
Moreover,
\begin{equation}
(-1,1)<\Succ_{\varphi(P)}(1,0),\Succ_{\varphi(Q)}(1,0)<(-1,0).\label{ekua4}
\end{equation}
\end{proposition}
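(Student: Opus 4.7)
\emph{Subrectangular reduction.} Following the authors' hint, my plan is to construct $\varphi\in\Aut(L)$ preserving $v_{1,1}(P)$ and $v_{1,1}(Q)$ that brings $(\varphi(P),\varphi(Q))$ into a subrectangular shape, and then to read off the $(m,n)$-pair conditions. Let $\mathcal{A}\subset\Aut(L)$ be the subgroup generated by the shears $y\mapsto y+\lambda x$, $x\mapsto x+\lambda y$ (for $\lambda\in K$) and the swap $x\leftrightarrow y$; each such automorphism preserves $v_{1,1}$ on any polynomial. I would choose $\varphi\in\mathcal{A}$ that minimizes a lexicographic invariant of the Newton polygons (for instance $(v_{0,1}(\varphi(P)),v_{1,0}(\varphi(P)))$). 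Makar-Limanov's classical argument then forces $(\varphi(P),\varphi(Q))$ to be subrectangular: setting $(a,b):=\en_{1,0}(\varphi(P))$ and $(c,d):=\en_{1,0}(\varphi(Q))$, we have $\HH(\varphi(P))\subseteq[0,a]\times[0,b]$ with $(a,b)\in\Supp(\varphi(P))$, whence $\en_{0,1}(\varphi(P))=(a,b)$; similarly $\en_{0,1}(\varphi(Q))=(c,d)$.

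\emph{Extracting the ratios.} Proposition~\ref{para11} gives $v_{1,0}(\varphi(P)),v_{1,0}(\varphi(Q))>0$, so $v_{1,0}([\varphi(P),\varphi(Q)])=0<v_{1,0}(\varphi(P))+v_{1,0}(\varphi(Q))-1$; by Proposition~\ref{pr v de un conmutador} we obtain $[\ell_{1,0}(\varphi(P)),\ell_{1,0}(\varphi(Q))]=0$, and Remark~\ref{a remark} furnishes coprime $m,n\in\mathds{N}$ with $n/m=v_{1,0}(\varphi(Q))/v_{1,0}(\varphi(P))$ and $(c,d)=(n/m)(a,b)$. The same argument at $(0,1)$ yields another coprime pair $(m',n')$ with $(c,d)=(n'/m')(a,b)$; since $(a,b)\neq(0,0)$ this forces $(m',n')=(m,n)$. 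Subrectangularity gives $v_{1,1}(\varphi(P))=a+b$ and $v_{1,1}(\varphi(Q))=c+d=(n/m)(a+b)$, so $v_{1,1}(\varphi(P))/v_{1,1}(\varphi(Q))=m/n$, while the preservation $v_{1,1}(\varphi(P))=v_{1,1}(P)$ and $v_{1,1}(\varphi(Q))=v_{1,1}(Q)$ is built into the choice of $\varphi$. Coprimality of $m,n$ is immediate, and $m,n>1$ follows from Remark~\ref{no se dividen}: the case $m=1$ would give $v_{1,1}(P)\mid v_{1,1}(Q)$, contradicting minimality, and $n=1$ is symmetric.

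\emph{Asymmetry, successor condition, and main obstacle.} Composing with $x\leftrightarrow y$ if $a>b$, we may assume $a\le b$. The equality $a=b$ would force $\ell_{1,1}(\varphi(P))$ and $\ell_{1,1}(\varphi(Q))$ to be pure powers of $xy$, a degeneracy broken by a generic further shear $y\mapsto y+\mu x$ (still preserving $v_{1,1}$); absorbing such a shear into $\varphi$ and re-minimizing then forces $a<b$, giving $v_{1,-1}(\en_{1,0}(\varphi(P)))<0$. For $(-1,1)<\Succ_{\varphi(P)}(1,0),\Succ_{\varphi(Q)}(1,0)<(-1,0)$: any edge of $\HH(\varphi(P))$ incident to $(a,b)$ with outward normal $(\rho,\sigma)$ satisfying $\rho+\sigma\ge 0$ would produce a support point outside $[0,a]\times[0,b]$ or violate the extremality of $(a,b)$, while the boundary case $\Succ_{\varphi(P)}(1,0)=(-1,0)$ is ruled out by arranging $\ell_{0,1}(\varphi(P))$ to be a monomial (absorbable via further shears $x\mapsto x+\mu y$); the claim for $\varphi(Q)$ then follows from the alignment $(c,d)=(n/m)(a,b)$. \emph{The main obstacle} is to orchestrate all these normalizations simultaneously---subrectangularity, $a<b$, monomial coordinate-direction leading forms for both $\varphi(P)$ and $\varphi(Q)$, and preservation of $v_{1,1}$---so that no step undoes a previous one; verifying that Makar-Limanov's minimization realizes all these conditions together is the delicate technical heart of the argument.
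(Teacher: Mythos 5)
Your proof has a fatal gap at the very first step, and the gap is precisely the statement that the paper spends most of its effort proving. You claim that the subrectangular reduction can be carried out inside the subgroup $\mathcal{A}$ of $\Aut(L)$ generated by linear shears and the coordinate swap, i.e., by degree-preserving automorphisms, so that $v_{1,1}$-preservation comes for free. But the subrectangular reduction of van den Essen's Corollary~10.2.21 and Makar-Limanov's argument genuinely use \emph{non-linear} tame automorphisms, such as $y\mapsto y+f(x)$ with $\deg f\ge 2$, which do not preserve $v_{1,1}$. To make $\ell_{1,1}(P)$ into a monomial $x^ay^b$ after a purely linear change, you would need $\ell_{1,1}(P)$ to factor into at most two distinct linear forms over $\overline K$, and you have given no reason for that; the cutting of upper edges of the Newton polygon, which is the mechanism behind the subrectangular shape, is exactly the step that uses degree-changing substitutions. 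So the claim that you can ``choose $\varphi\in\mathcal{A}$ that minimizes a lexicographic invariant'' and get subrectangularity is unsupported.

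The paper takes the opposite route: it first applies the subrectangular reduction with an \emph{arbitrary} automorphism $\varphi$ (no attempt to control $v_{1,1}$), and then proves $v_{1,1}(\varphi(P))=v_{1,1}(P)$ a posteriori. This is done by passing to the inverse $\psi:=\varphi^{-1}$, setting $M=v_{1,1}(\psi(x))$, $N=v_{1,1}(\psi(y))$, invoking the Jung--Abhyankar--Moh divisibility $M\mid N$ or $N\mid M$, and then showing that the case $\max(M,N)>1$ would produce a counterexample with $\gcd(v_{1,1}(P),v_{1,1}(Q))$ strictly smaller, contradicting minimality. Nothing in your proposal substitutes for this computation; without it, the assertion $v_{1,1}(\varphi(P))=v_{1,1}(P)$ is unjustified. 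Similarly, your treatment of $a<b$ (``a degeneracy broken by a generic further shear \ldots re-minimizing then forces $a<b$'') and of the successor bound \eqref{ekua4} (``absorbable via further shears \ldots arranging $\ell_{0,1}(\varphi(P))$ to be a monomial'') is too vague to check; the paper settles $a<b$ by Theorem~\ref{central}(4) and proves \eqref{ekua4} by a careful case analysis using the $(\rho,\sigma)$-homogeneous derivation $F$ from Theorem~\ref{central}. You should replace the degree-preservation-by-construction claim with the paper's minimality argument, and replace the hand-waving about shears by the explicit Theorem~\ref{central} computations.
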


\begin{proof} Since $(P,Q)$ is a counterexample to JC, by \cite{vdE}*{Corollary 10.2.21} there exists an
automorphism $\varphi$ of $L$ and integers $1\le a\le b$ such that
\begin{equation}\label{subrectangular}
(a,b)\in \Supp(\varphi(P))\subseteq \{(i,j): 0\le i\le a,\ 0\le j\le b\}.
\end{equation}
We can also achieve that inequality~\eqref{ekua4} is satisfied. This is a well known fact (see for
ins\-tance~\cite{ML}*{page~8} or \cite{L}*{discussion at~1.12})\endnote{
Let
$(\ov{P},\ov{Q}):=(\varphi(P),\varphi(Q))$. It is enough to prove that there exists $\varphi'\in \Aut(L)$ such
that
\begin{equation}
(-1,1)<\Succ_{\varphi'(\ov{P})}(1,0),\Succ_{\varphi'(\ov{Q})}(1,0)<(-1,0),\label{ekuaa1}
\end{equation}
and
\begin{equation}\label{subrectangular1}
(a,b)\in \Supp(\varphi'(\ov{P}))\subseteq \{(i,j): 0\le i\le a,\ 0\le j\le b\}.
\end{equation}
Let $\psi_1\in \Aut(L)$ be the flip given by $\psi_1(x):=y$ and $\psi_1(y):=-x$. Define
$P_1 := \psi_1(\ov{P})$ and $Q_1 := \psi_1(\ov{Q})$. In order to obtain~\eqref{ekuaa1}
and~\eqref{subrectangular1} it suffices to show that there exists $\varphi_1'\in \Aut(L)$ such that
\begin{equation}
(0,-1)<\Pred_{\varphi'_1(P_1)}(0,1),\Pred_{\varphi'_1(Q_1)}(0,1)<(1,-1).\label{ekuaa2}
\end{equation}
and
\begin{equation}\label{subrectangular2}
(b,a)\in \Supp(\varphi'_1(P_1))\subseteq \{(i,j): 0\le i\le b,\ 0\le j\le a\}.
\end{equation}
In fact, these facts imply
\begin{equation*}
(-1,1)<\Succ_{\psi_1(\varphi'_1(P_1))}(1,0),\Succ_{\psi_1(\varphi'_1(Q_1))}(1,0)<(-1,0),
\end{equation*}
and
\begin{equation*}
(a,b)\in \Supp(\psi_1(\varphi'_1(P_1)))\subseteq \{(i,j): 0\le i\le a,\ 0\le j\le b\}.
\end{equation*}
which allow us to take $\varphi' := \psi_1\xcirc \varphi'_1\xcirc \psi_1$. Note that the Jacobian
de\-ter\-mi\-nant of $\psi_1$ is $1$, and so $(P_1,Q_1)$ is a counterexample to JC. Moreover
\begin{equation*}
(b,a)\in \Supp(P_1)\subseteq \{(i,j): 0\le i\le b,\ 0\le j\le a\}.
\end{equation*}
Consequently, $\en_{P_1}(1,0) = (b,a) = \st_{P_1}(0,1)$ and so, by Proposition~\ref{le bbasico},
$$
(0,-1) \le \Pred_{P_1}(0,1)\le (1,0).
$$
Now we are going to prove that there exists $\varphi_1'\in \Aut(L)$ such that~\eqref{subrectangular2} is
satisfied and
$$
(0,-1) \le \Pred_{\varphi_1'(P_1)}(0,1) <(1,0).
$$
If $(0,-1) \le \Pred_{P_1}(0,1)< (1,0)$, the  we can take $\varphi_1':=\ide$. So, we can assume that
$\Pred_{P_1}(0,1) = (1,0)$. Since $v_{1,0}(P_1) = b>0$, by Theorem~\ref{central} there exists a
$(1,0)$-homogeneous polynomial $F$ such that
$$
v_{1,0}(F) = 1\qquad\text{and}\qquad [F,\ell_{1,0}(P_1)] = \ell_{1,0}(P_1).
$$
Moreover by item~3) of that theorem, we know that
$$
\en_{1,0}(F) = (1,1)\qquad\text{or}\qquad \en_{1,0}(F) \sim \en_{1,0}(P_1) = (b,a).
$$
On the other hand since $v_{1,0}(F) = 1$ and $F$ is $(1,0)$-homogeneous, there exists a univariate polynomial
$f\ne 0$ such that $F = xf(y)$, which implies that $\Supp(F)\subseteq \{(1,k):k\in \mathds{N}_0\}$.
Consequently $\en_{1,0}(F) = (1,1)$, because $(1,k)\sim (b,a)$ is impossible since $b>a>0$. By
Remark~\ref{F no es monomio} and Proposition~\ref{pavadass}(2) we know that $\st_{(1,0)}(F) = (1,0)$. Thus
$F = \mu_Fx(y-\lambda)$ for some $\mu_F,\lambda\in K^{\times}$. Therefore, by
Proposition~\ref{pavadass}(1), there exists $\mu_{P_1}\in K^{\times}$ such that $\ell_{1,0}(P_1) = \mu_{P_1}
x^b(y-\lambda)^a$. Let $\varphi'_1\in \Aut(L)$ be the automorphism defined by $\varphi'_1(x) := x$ and
$\varphi'_1(y) := y+\lambda$. By Proposition~\ref{pr ell por automorfismos},
$$
\ell_{1,0}(\varphi'_1(P_1)) = \mu_{P_1} x^by^a\qquad\text{and}\qquad \ell_{\rho_1,\sigma_1}(\varphi'_1(P_1)) =
\ell_{\rho_1,\sigma_1}(P_1)\quad\text{for all $(1,0)<(\rho_1,\sigma_1)\le (0,1)$.}
$$
So,~\eqref{subrectangular2} is satisfied and $(0,-1)\le \Pred_{\varphi_1'(P_1)}(0,1)<(1,0)$. Combining this
with Proposition~\ref{para11} and Remark~\ref{a remark}, we obtain that
$$
\Dir(\varphi_1'(Q_1))\cap [(1,0),(0,1)] = \Dir(\varphi_1'(P_1))\cap [(1,0),(0,1)] = \emptyset,
$$
and so $(0,-1)\le \Pred_{\varphi_1'(Q_1)}(0,1) <(1,0)$. In order to finish the proof we must see that
$$
\Pred_{\varphi_1'(P_1)}(0,1),\Pred_{\varphi_1'(Q_1)}(0,1)\notin [(1,-1),(1,0)[.
$$
We will prove this only for $\varphi_1'(P_1)$ since the argument for $\varphi_1'(Q_1)$ is the same.
By~\cite{CN} it is  impossible that $(\rho,\sigma)=(1,-1)$. Assume $(\rho,\sigma)\in
\hspace{0.7pt}](1,-1),(1,0)[$, which
means that $\rho>-\sigma>0$. Since
$$
v_{\rho,\sigma}(\varphi_1'(P_1)) \ge v_{\rho,\sigma}(b,a) = \rho b+\sigma a > (\rho +\sigma) a \ge \rho+
\sigma >0,
$$
we can apply Theorem~\ref{central}. Hence, there exist a $(\rho,\sigma)$-homogeneous polynomial $F_1$ such
that $v_{\rho,\sigma}(F_1) = \rho + \sigma$ and
$$
(b,a) = \en_{\rho,\sigma}(\varphi_1'(P_1))\sim \en_{\rho,\sigma}(F_1) \qquad\text{or}\qquad
\en_{\rho,\sigma}(F_1) = (1,1).
$$
If $(b,a)\sim \en_{\rho,\sigma}(F_1)$, then there exists $\lambda>0$ such that $\en_{\rho,\sigma}(F_1) =
\lambda (b,a)$. So
$$
\rho + \sigma = v_{\rho,\sigma}(F_1) = \rho \lambda b + \lambda\sigma a > \lambda a(\rho+\sigma)
\Longrightarrow
0<\lambda a < 1,
$$
which is impossible, since $\lambda a = v_{0,1}(\en_{\rho,\sigma}(F_1))\in \mathds{Z}$. Consequently
$\en_{\rho,\sigma}(F_1) = (1,1)$. By Remarks~\ref{starting vs end} and~\ref{F no es monomio}, we have
$v_{0,1}(\st_{\rho,\sigma}(F_1)) < v_{0,1}(\en_{\rho,\sigma}(F_1)) = 1$. Therefore $\st_{\rho,\sigma}(F_1) =
(k,0)$ for some $k\in \mathds{N}$, which leads to the contradiction $\rho + \sigma =
v_{\rho,\sigma}(\st_{\rho,\sigma}(F_1)) = \rho k \ge \rho > \rho + \sigma$, and finishes the proof.
}.
We claim that there exist $m,n\in \mathds{N}$ such that $(\ov{P},\ov{Q}):=(\varphi(P),\varphi(Q))$ is an
$(m,n)$-pair. Clearly
\begin{equation}
\en_{1,0}(\ov{P})=(a,b)=\st_{1,1}(\ov{P}).\label{ekua1}
\end{equation}
Moreover $v_{1,-1}(\en_{1,0}(\ov{P})) = a - b < 0$, since by Theorem~\ref{central}(4), we have
$a<b$. Now we prove that there exist $m,n\in \mathds{N}$ coprime, such that
\begin{equation}
\frac{v_{1,1}(\ov{P})}{v_{1,1}(\ov{Q})} = \frac mn = \frac{v_{1,0}(\ov{P})}{v_{1,0}(\ov{Q})}. \label{ekua0}
\end{equation}
By Proposition~\ref{para11} the hypotheses of Remark~\ref{a remark} are satisfied for $(\ov{P},\ov{Q})$
and all $(\rho,\sigma)\in \mathfrak{V}$ such that $(1,0)\le (\rho,\sigma) \le (1,1)$. Hence there exists
$m,n\in \mathds{N}$ coprime such that $\frac{v_{11}(\ov{P})}{v_{11}(\ov{Q})} = \frac m n$,
\begin{equation}
(a,b) = \st_{1,1}(\ov{P}) = \frac m n \st_{1,1}(\ov{Q})\label{ekua2}
\end{equation}
and
$$
\Dir(\ov{Q})\cap \hspace{0.7pt} ](1,0),(1,1)[\, = \Dir(\ov{P})\cap
\hspace{0.9pt}](1,0),(1,1)[\, = \emptyset,
$$
where the last equality follows from~\eqref{ekua1} and Proposition~\ref{le bbasico}. Hence by
Proposition~\ref{le basico}
we have $\en_{1,0}(\ov{Q})=\st_{1,1}(\ov{Q})$ which, combined with~\eqref{ekua1} and~\eqref{ekua2}, gives
\begin{equation*}
\en_{1,0}(\ov{P}) = \frac m n \en_{1,0}(\ov{Q}).
\end{equation*}
This yields equality~\eqref{ekua0}.

Next we prove that
\begin{equation}
v_{1,1}(\ov{P})=v_{1,1}(P)\qquad\text{and}\qquad v_{1,1}(\ov{Q})=v_{1,1}(Q).\label{ekuaa3}
\end{equation}
For this consider the inverse $\psi:=\varphi^{-1}$. Set $M:=v_{1,1}(\psi(x))$ and $N:=v_{1,1}(\psi(y))$.
By~\cite{J} and~\cite{vdE}*{Corollary~5.1.6(a)}, we know that either $N|M$ or $M|N$. If $M=N=1$ then
clearly $\psi$ and $\varphi$ preserve $v_{1,1}$, as desired.

We assert that the case $M|N$ and $N>1$, and the case $N|M$ and $M>1$, are impossible. Assume for example $M|N$
and $N>1$ and set $R:=\ell_{1,1}(\psi(x))$. Since
$$
v_{1,1}([\psi(x),\psi(y)]) = 0 < M + N -2 =  v_{1,1}(\psi(x)) + v_{1,1}(\psi(y)) - 2,
$$
It follows from Proposition~\ref{pr v de un conmutador}, that $[\ell_{1,1}(\psi(x)),\ell_{1,1}(\psi(y))] = 0$.
Hence, by Proposition~\ref{P y Q alineados},
$$
\ell_{1,1}(\psi(y))=\lambda R^k\qquad\text{for some $\lambda\in K^{\times}$ and $k\in \mathds{N}$.}
$$
By~\eqref{subrectangular} we know that $\ell_{1,1}(\ov{P}) = \lambda_P x^ay^b$ for some $\lambda_P\in
K^{\times}$, and that
$$
i\le a,\quad j\le b\quad\text{and}\quad i+j< a+b\qquad\text{for all $(i,j)\in
\Supp(\ov{P})\setminus\{(a,b)\}$.}
$$
Hence, for all such $(i,j)$, we have
$$
v_{1,1}(\psi(x^i y^j))=i v_{1,1}(R)+ j v_{1,1}(R^k)<a v_{1,1}(R)+ b v_{1,1}(R^k)= v_{1,1}(\psi(x^ay^b)),
$$
and so
\begin{equation}
v_{1,1}(\psi(\ov{P}))=v_{1,1}(\psi(x^ay^b))= v_{1,1}(R)(a+kb).\label{ekua5}
\end{equation}
On the other hand by equality~\eqref{ekua2} we can write $a = \bar a m$ and $b = \bar b m$ with $\bar a,\bar
b\in \mathds{N}$. Hence equality~\eqref{ekua5} can be written as
\begin{equation*}
v_{1,1}(\psi(\ov{P})) = m v_{1,1}(R)(\bar a+k \bar b).
\end{equation*}
By~\eqref{ekua2} we have $\st_{1,1}(\ov{Q}) = \frac n m (a,b) = n(\bar a,\bar b)$. So, by
Proposition~\ref{para11} and Remark~\ref{a remark},
\begin{equation}\label{subrectangular'}
(n\bar a,n\bar b)\in \Supp(\ov{Q})\subseteq \{(i,j): 0\le i\le n\bar a,\ 0\le j\le n\bar b\}.
\end{equation}
A similar computation
as above, but using~\eqref{subrectangular'} instead of~\eqref{subrectangular}, shows that
$$
v_{1,1}(\psi(\ov{Q}))=n v_{1,1}(R)(\bar a+k \bar b).\endnote{
By~\eqref{subrectangular'}, there exists $\lambda_Q\in K^{\times}$ such that $\ell_{1,1}(\ov{Q}) = \lambda_Q
x^{n\bar a}y^{n\bar b}$, and
$$
i\le n\bar a,\quad j\le n\bar b\quad\text{and}\quad i+j< n\bar a+n\bar b\qquad\text{for all $(i,j)\in
\Supp(\ov{Q})\setminus\{(n\bar a,n\bar b)\}$.}
$$
Thus, for all such $(i,j)$, we have
$$
v_{1,1}(\psi(x^i y^j))=i v_{1,1}(R)+ j v_{1,1}(R^k)<n\bar a v_{1,1}(R)+ n\bar b v_{1,1}(R^k)=
v_{1,1}(\psi(x^{n\bar a}y^{n\bar b})),
$$
and so $v_{1,1}(\psi(\ov{Q}))=v_{1,1}(\psi(x^{n\bar a}y^{n\bar b}))= nv_{1,1}(R)(\bar a+k\bar b)$.
}
$$
Consequently
$$
\gcd(v_{1,1}(\psi(\ov{P}))v_{1,1}(\psi(\ov{Q})))=v_{1,1}(R)(\bar a+k\bar b)\ge \bar a + \bar b
=\gcd(v_{1,1}(\ov{P}), v_{1,1}(\ov{Q})),
$$
where the last equality follows from equality~\eqref{ekua2}.

Since $(\psi(\ov{P}),\psi(\ov{Q})))=(P,Q)$ is a minimal pair, equality must hold, and so we have
$k=1$ and $v_{1,1}(R)=1$, which contradicts $k v_{1,1}(R) = v_{1,1}(\ell_{1,1}(\psi(y))) = N >1$.

Similarly one discards the case $N|M$ and $M>1$, which finishes the proof of~~\eqref{ekuaa3}.

Hence $(\ov{P},\ov{Q})$ is minimal pair and so, by Remark~\ref{no se dividen}, we have $m,n>1$.
\end{proof}

\section{Regular corners of $\bm{(m,n)}$-pairs}

\setcounter{equation}{0}

It is known (see e.g. \cite{vdE}*{Theorem 10.2.1}) that the Newton polygons of a Jacobian pair $(P,Q)$
in $L$ are similar. The same is not true in $L^{(l)}$, but it is almost true. One of the basic geometric
reasons for this difference is the fact that, by Propositions~\ref{pr v de un conmutador},
\ref{extremosalineados} and~\ref{extremosnoalineados}, if two corners of $P$ and $Q$ are not aligned, then
they must sum to $(1,1)$. In $L$ this is only possible for $(1,0)$ and $(0,1)$, but in $L^{(l)}$ this happens for
all $(k/l,0)$ and $(1-k/l,1)$ if $k\in \mathds{Z}\setminus\{0\}$ (see Case~I.b),
equality~\eqref{conjunto de starting}).

We will analyze the edges and corners of the Newton polygons of an $(m,n)$-pair, corresponding to the
directions in
$$
I:=\,](1,-1),(1,0)]=\{(\rho,\sigma)\in \mathfrak{V}: (1,-1)<(\rho,\sigma)\le (1,0) \}.
$$
Note that for $(\rho,\sigma)\in I$ we have $\rho+\sigma>0$, $\sigma\le 0$ and $\rho>0$. In particular we
will analyze what we call regular corners (see Definition~\ref{def regular corner}). The conditions
we will find on regular corners will allow us to discard many ``small'' cases
in Sections~\ref{Lower bounds} and~\ref{More conditions on B}, and to obtain lower bounds for $B$.

From now on we assume that $K$ is algebraically closed unless otherwise stated.

\begin{lemma}\label{v positivos}
Let $(P,Q)$ be an $(m,n)$-pair in $L^{(l)}$ and let $(\rho,\sigma)\in I$. If $\en_{\rho,\sigma}(P)=\frac mn
\en_{\rho,\sigma}(Q)$, then $v_{\rho,\sigma}(P)>0$ and $v_{\rho,\sigma}(Q)>0$. Moreover, if
$v_{0,-1}(\st_{\rho,\sigma}(P))<-1$ or $v_{0,-1}(\st_{\rho,\sigma}(Q))<-1$,
 then $[\ell_{\rho,\sigma}(P),\ell_{\rho,\sigma}(Q)]= 0$.
\end{lemma}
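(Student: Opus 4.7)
The plan is to derive the first assertion from a quick valuation comparison and to prove the second by contradiction using the explicit bracket expansion that appears in the proof of Proposition~\ref{pavadass}.

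First, taking $(\rho,\sigma)$-valuations of both sides of the hypothesis $\en_{\rho,\sigma}(P) = \frac{m}{n}\en_{\rho,\sigma}(Q)$ gives
\[
v_{\rho,\sigma}(P) = \tfrac{m}{n}\,v_{\rho,\sigma}(Q),
\]
since $v_{\rho,\sigma}(P) = v_{\rho,\sigma}(\en_{\rho,\sigma}(P))$ by definition of $\en_{\rho,\sigma}(P)$. Because $[P,Q]\in K^{\times}$ forces $v_{\rho,\sigma}([P,Q])=0$, Proposition~\ref{pr v de un conmutador} yields
\[
v_{\rho,\sigma}(P)+v_{\rho,\sigma}(Q)\;\ge\;\rho+\sigma\;>\;0,
\]
the strict positivity coming from $(\rho,\sigma)\in I\subset \mathfrak{V}_{>0}$. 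Substituting the proportionality gives $\tfrac{m+n}{n}v_{\rho,\sigma}(Q)>0$, so $v_{\rho,\sigma}(Q)>0$, and hence $v_{\rho,\sigma}(P)>0$ as well.

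For the second assertion, suppose toward contradiction that $[\ell_{\rho,\sigma}(P),\ell_{\rho,\sigma}(Q)]\ne 0$. Then Proposition~\ref{pr v de un conmutador} forces
\[
[\ell_{\rho,\sigma}(P),\ell_{\rho,\sigma}(Q)] \;=\; \ell_{\rho,\sigma}([P,Q]) \;=\; [P,Q] \;\in\; K^{\times},
\]
a nonzero constant. Write $\ell_{\rho,\sigma}(P)=x^{r/l}y^s p(z)$ and $\ell_{\rho,\sigma}(Q)=x^{r'/l}y^{s'}q(z)$, with $z:=x^{-\sigma/\rho}y$ and $p(0)\ne 0\ne q(0)$; by definition $s=-v_{0,-1}(\st_{\rho,\sigma}(P))$ and $s'=-v_{0,-1}(\st_{\rho,\sigma}(Q))$. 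The same Leibniz-rule computation as in the proof of Proposition~\ref{pavadass} (now applied to $\ell_{\rho,\sigma}(P)$ and $\ell_{\rho,\sigma}(Q)$ instead of $F$ and $\ell_{\rho,\sigma}(P)$) gives
\[
[\ell_{\rho,\sigma}(P),\ell_{\rho,\sigma}(Q)] \;=\; x^{(r+r')/l-1}\,y^{s+s'-1}\,\bigl(c\,pq+a\,zpq'-b\,zp'q\bigr),
\]
with $c=(rs'-sr')/l$, $a=v_{\rho,\sigma}(P)/\rho$ and $b=v_{\rho,\sigma}(Q)/\rho$. Each monomial $z^j$ ($j\in\mathds{N}_0$) of the bracketed polynomial contributes a term whose $y$-degree is $s+s'-1+j$; for the whole expression to be a nonzero constant some such $j$ must satisfy $s+s'-1+j=0$, forcing $s+s'\le 1$. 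But the hypothesis $v_{0,-1}(\st_{\rho,\sigma}(P))<-1$ (resp.\ $v_{0,-1}(\st_{\rho,\sigma}(Q))<-1$) means $s>1$ (resp.\ $s'>1$), and since both $s,s'\in \mathds{N}_0$ this gives $s+s'>1$, a contradiction. Hence $[\ell_{\rho,\sigma}(P),\ell_{\rho,\sigma}(Q)]=0$.

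The only real obstacle is writing down the bracket formula cleanly; once it is in hand, the argument reduces to counting the $y$-degree of each $z^j$-component. This single monomial-counting step unifies what would otherwise split into the two cases $\st_{\rho,\sigma}(P)\nsim \st_{\rho,\sigma}(Q)$ (where $c\ne 0$ and Proposition~\ref{extremosnoalineados}(1) directly gives $s+s'=1$) and $\st_{\rho,\sigma}(P)\sim \st_{\rho,\sigma}(Q)$ (where $c=0$ and the leftover factor $z$ in $azpq'-bzp'q$ is exactly what accounts for the degree shift).
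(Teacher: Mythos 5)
Your argument for the first claim is essentially the paper's, only phrased as a direct inequality chain rather than by contradiction: both compare $v_{\rho,\sigma}([P,Q])=0$ with $v_{\rho,\sigma}(P)+v_{\rho,\sigma}(Q)-(\rho+\sigma)$ via inequality~\eqref{dfpvc} and exploit the proportionality coming from the hypothesis on end points.

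For the second claim your conclusion is correct, but you take a more computational route than the paper. The paper never unpacks the Leibniz rule: it applies inequality~\eqref{dfpvc} a second time, now in the direction $(0,-1)$ and to the pair $\bigl(\ell_{\rho,\sigma}(P),\ell_{\rho,\sigma}(Q)\bigr)$, using Remark~\ref{starting vs end} to identify $v_{0,-1}(\ell_{\rho,\sigma}(P)) = v_{0,-1}(\st_{\rho,\sigma}(P)) = -s$ (and similarly $-s'$ for $Q$); combined with $v_{0,-1}([\ell_{\rho,\sigma}(P),\ell_{\rho,\sigma}(Q)])=0$, which as in your proof follows from Proposition~\ref{pr v de un conmutador} and $[P,Q]\in K^{\times}$, the inequality reads $0\le -s-s'+1$, i.e.\ $s+s'\le 1$. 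Your explicit expansion $x^{(r+r')/l-1}y^{s+s'-1}\,(cpq+azpq'-bzp'q)$ derives the very same bound by inspecting the $y$-degrees of the monomials $z^j$. So the two arguments are equivalent in content; the paper's is shorter and avoids any polynomial manipulation, while yours makes the mechanism behind the valuation inequality visible in coordinates. There is no gap in your version: you correctly identify $s,s'$ with $-v_{0,-1}$ of the respective starting points, you correctly note that a $(\rho,\sigma)$-homogeneous nonzero constant must be a single monomial $z^j$ with $j\ge 0$, and the contradiction $s+s'\le 1$ versus $s>1$ (or $s'>1$) is sound.
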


\begin{proof}
Assume by contradiction that $v_{\rho,\sigma}(P)\le 0$. Then $v_{\rho,\sigma}(Q)= \frac nm v_{\rho,\sigma}(P)
\le 0$. But then, since $\rho+\sigma>0$, we have
$$
v_{\rho,\sigma}(P)+ v_{\rho,\sigma}(Q)-(\rho+\sigma)<0=v_{\rho,\sigma}([P,Q]),
$$
which contradicts~\eqref{dfpvc} and proves $v_{\rho,\sigma}(P)>0$. The same argument proves that
$v_{\rho,\sigma}(Q)>0$.

Now assume for instance that
$$
v_{0,-1}(\st_{\rho,\sigma}(P))<-1\qquad\text{and}\qquad [\ell_{\rho,\sigma}(P),\ell_{\rho,\sigma}(Q)]\ne 0.
$$
Since $(0,-1)<(\rho,\sigma)<(0,1)$, by Remark~\ref{starting vs end} we have
$$
v_{0,-1}(\ell_{\rho,\sigma}(P))=v_{0,-1}(\ell_{0,-1}(\ell_{\rho,\sigma}(P)))=v_{0,-1}(\st_{\rho,\sigma}(P))<-1,
$$
and so, we obtain
$$
v_{0,-1}(\ell_{\rho,\sigma}(P))+v_{0,-1}(\ell_{\rho,\sigma}(Q))-(-1+0)<0=
v_{0,-1}([\ell_{\rho,\sigma}(P),\ell_{\rho,\sigma}(Q)]),
$$
which contradicts inequality~\eqref{dfpvc}, proving $[\ell_{\rho,\sigma}(P),\ell_{\rho,\sigma}(Q)]= 0$.
Similar arguments apply to the case $v_{0,-1}(\st_{\rho,\sigma}(Q))<-1$.
\end{proof}

For $P\in L^{(l)}\setminus \{0\}$ we set
$$
A(P):=\{(\rho,\sigma)\in \Dir(P)\cap I: v_{0,-1}(\st_{\rho,\sigma}(P))<-1\text{ and }
v_{1,-1}(\st_{\rho,\sigma}(P))<0\}.
$$

\begin{proposition}\label{A(P) vs Dir(P)} Let $(\rho,\sigma)\in \Dir(P)\cap I$. If $(\rho',\sigma')
<(\rho,\sigma)\le (1,0)$ for some $(\rho',\sigma')\in A(P)$, then $(\rho,\sigma)\in A(P)$.
\end{proposition}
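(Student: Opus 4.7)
The plan is to verify the two inequalities $v_{0,-1}(\st_{\rho,\sigma}(P))<-1$ and $v_{1,-1}(\st_{\rho,\sigma}(P))<0$, since $(\rho,\sigma)\in\Dir(P)\cap I$ is already part of the hypothesis. The underlying idea is simply to walk along the boundary of the Newton polygon from the $(\rho',\sigma')$-edge to the $(\rho,\sigma)$-edge and observe a strict decrease of both invariants at every step.

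First, I would enumerate $\Dir(P)\cap[(\rho',\sigma'),(\rho,\sigma)]$ in increasing order as
$$(\rho',\sigma')=(\alpha_1,\beta_1)<(\alpha_2,\beta_2)<\cdots<(\alpha_k,\beta_k)=(\rho,\sigma),\qquad k\ge 2.$$
Since $I$ is an interval and each $(\alpha_i,\beta_i)$ sits between its endpoints, every $(\alpha_i,\beta_i)$ belongs to $I$. Proposition~\ref{le basico} then gives $\en_{\alpha_i,\beta_i}(P)=\st_{\alpha_{i+1},\beta_{i+1}}(P)$ for $i=1,\dots,k-1$, so it suffices to bound the drop of $v_{0,-1}$ and $v_{1,-1}$ across each single edge.

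Second, I would apply Remark~\ref{starting vs end} twice on each $(\alpha_i,\beta_i)$: once with $(\rho_0,\sigma_0)=(0,-1)$, legitimate because $(0,-1)<(\alpha_i,\beta_i)<(0,1)$ in the right half-circle, and once with $(\rho_0,\sigma_0)=(1,-1)$, legitimate because $(1,-1)<(\alpha_i,\beta_i)\le (1,0)<(-1,1)$. Since $(\alpha_i,\beta_i)\in\Dir(P)$ forces $\ell_{\alpha_i,\beta_i}(P)$ to be a non-monomial, both inequalities are strict:
$$v_{0,-1}(\en_{\alpha_i,\beta_i}(P))<v_{0,-1}(\st_{\alpha_i,\beta_i}(P)),\qquad v_{1,-1}(\en_{\alpha_i,\beta_i}(P))<v_{1,-1}(\st_{\alpha_i,\beta_i}(P)).$$

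Telescoping these $k-1$ gluings then yields
$$v_{0,-1}(\st_{\rho,\sigma}(P))<v_{0,-1}(\st_{\rho',\sigma'}(P))<-1\quad\text{and}\quad v_{1,-1}(\st_{\rho,\sigma}(P))<v_{1,-1}(\st_{\rho',\sigma'}(P))<0,$$
the outer inequalities being exactly the defining conditions of $(\rho',\sigma')\in A(P)$. Hence $(\rho,\sigma)\in A(P)$. I foresee no genuine obstacle: the argument is a purely geometric monotonicity statement along the boundary of the Newton polygon, and the only point requiring care is to check that the two auxiliary directions $(0,-1)$ and $(1,-1)$ sit on the correct side of $(\alpha_i,\beta_i)$ in order to invoke Remark~\ref{starting vs end}.
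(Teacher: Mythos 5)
Your proof is correct and takes essentially the same route as the paper: the paper compresses your telescoping step into the opening remark "it suffices to prove the result in the case in which $\en_{\rho',\sigma'}(P)=\st_{\rho,\sigma}(P)$" and then applies Remark~\ref{starting vs end} with $(\rho_0,\sigma_0)=(0,-1)$ and $(\rho_0,\sigma_0)=(1,-1)$, exactly as you do at each intermediate edge. Your version simply makes the reduction to the adjacent case explicit by enumerating the directions in $\Dir(P)$ between $(\rho',\sigma')$ and $(\rho,\sigma)$.
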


\begin{proof} It suffices to prove the result in the case in which $\en_{\rho',\sigma'}(P) =
\st_{\rho,\sigma}(P)$. In this case, since $(0,-1) <(\rho',\sigma')<(0,1)$ and $(1,-1)<(\rho',\sigma')
<(-1,1)$, it follows from Remark~\ref{starting vs end} that
$$
v_{0,-1}(\st_{\rho,\sigma}(P)) = v_{0,-1}(\en_{\rho',\sigma'}(P)) < v_{0,-1}(\st_{\rho',\sigma'}(P)) < -1
$$
and
$$
v_{1,-1}(\st_{\rho,\sigma}(P)) = v_{1,-1}(\en_{\rho',\sigma'}(P)) <v_{1,-1}(\st_{\rho',\sigma'}(P)) < 0,
$$
which implies that $(\rho,\sigma)\in A(P)$.
\end{proof}

\begin{proposition}\label{A(P) vs A(Q)} Let $(P,Q)$ be an $(m,n)$-pair and $(\ov{\rho},\ov{\sigma}):=
\max(A(P))$. Then
$$
](\ov{\rho},\ov{\sigma}),(1,0)]\cap \Dir(Q) =  \emptyset.
$$
\end{proposition}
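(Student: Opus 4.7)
My plan is to argue by contradiction: assume that $\Dir(Q)\cap\,](\ov\rho,\ov\sigma),(1,0)]$ contains some element, choose $(\rho,\sigma)$ maximal there, and derive that $\ell_{\rho,\sigma}(Q)$ would have to be a monomial, which contradicts $(\rho,\sigma)\in\Dir(Q)$.

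The first step is to pin down the shape of $P$ on the interval. Any $(\rho,\sigma)\in \Dir(P)\cap\,](\ov\rho,\ov\sigma),(1,0)]$ would, by Proposition~\ref{A(P) vs Dir(P)} applied with $(\rho',\sigma')=(\ov\rho,\ov\sigma)\in A(P)$, itself belong to $A(P)$, contradicting the maximality of $(\ov\rho,\ov\sigma)$ in $A(P)$. Hence $\Dir(P)\cap\,](\ov\rho,\ov\sigma),(1,0)]=\emptyset$, and the successor of $(\ov\rho,\ov\sigma)$ in $\Dir(P)$ lies strictly past $(1,0)$. Proposition~\ref{le basico} then yields $\Supp(\ell_{\rho,\sigma}(P))=\en_{\ov\rho,\ov\sigma}(P)=:(a,b)$ for every $(\rho,\sigma)\in\,](\ov\rho,\ov\sigma),(1,0)]$; in particular $\ell_{1,0}(P)=c\, x^a y^b$ is a monomial, and by Remark~\ref{propiedades de los pares} we have $b=-v_{0,-1}(\en_{1,0}(P))>1$.

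Next I will deduce that $(1,0)\notin\Dir(Q)$. By Proposition~\ref{para11} and Proposition~\ref{pr v de un conmutador}, $[\ell_{1,0}(P),\ell_{1,0}(Q)]=0$, so Remark~\ref{a remark} provides a common $(1,0)$-homogeneous $R$ with $\ell_{1,0}(P)=\lambda_P R^m$ and $\ell_{1,0}(Q)=\lambda_Q R^n$. Because $\ell_{1,0}(P)$ is a monomial and $K$ is a domain, $R$ is a monomial, so $\ell_{1,0}(Q)$ is a monomial as well, with $\Supp(\ell_{1,0}(Q))=\tfrac{n}{m}(a,b)$.

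Now I take the maximal $(\rho,\sigma)\in\Dir(Q)\cap\,](\ov\rho,\ov\sigma),(1,0)]$; by the previous step $(\rho,\sigma)<(1,0)$, and the successor of $(\rho,\sigma)$ in $\Dir(Q)$ must lie strictly above $(1,0)$. Proposition~\ref{le basico} applied to $Q$ yields $\en_{\rho,\sigma}(Q)=\Supp(\ell_{1,0}(Q))=\tfrac{n}{m}(a,b)=\tfrac{n}{m}\en_{\rho,\sigma}(P)$. Since $\ell_{\rho,\sigma}(P)=c\, x^a y^b$ is a monomial with $b>1$, we have $v_{0,-1}(\st_{\rho,\sigma}(P))=-b<-1$, so Lemma~\ref{v positivos} forces $[\ell_{\rho,\sigma}(P),\ell_{\rho,\sigma}(Q)]=0$. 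Applying Remark~\ref{a remark} once more, $\ell_{\rho,\sigma}(P)$ and $\ell_{\rho,\sigma}(Q)$ are proportional to powers of a common $(\rho,\sigma)$-homogeneous polynomial; that polynomial must be a monomial because $\ell_{\rho,\sigma}(P)$ is, making $\ell_{\rho,\sigma}(Q)$ a monomial and contradicting $(\rho,\sigma)\in\Dir(Q)$. The main obstacle is precisely the identification $\en_{\rho,\sigma}(P)=\tfrac{m}{n}\en_{\rho,\sigma}(Q)$ at the putative bad direction: it forces the choice of $(\rho,\sigma)$ to be maximal so that $\ell_{1,0}(Q)$, already shown to be a monomial, pins down $\en_{\rho,\sigma}(Q)$ via Proposition~\ref{le basico}, connecting the structures of the two Newton polygons across the corner at $(1,0)$.
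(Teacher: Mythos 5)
Your proof is correct and follows essentially the same route as the paper: clear $\Dir(P)$ on $\,](\ov\rho,\ov\sigma),(1,0)]$ via Proposition~\ref{A(P) vs Dir(P)}, use Proposition~\ref{le basico} to identify $\en_{\rho,\sigma}(P)=\tfrac mn\en_{\rho,\sigma}(Q)$ at the maximal offending direction, force $[\ell_{\rho,\sigma}(P),\ell_{\rho,\sigma}(Q)]=0$ via Lemma~\ref{v positivos}, and obtain the contradiction from Remark~\ref{a remark}. One citation should be corrected: where you invoke Proposition~\ref{para11} to get $[\ell_{1,0}(P),\ell_{1,0}(Q)]=0$, that result is stated for counterexamples to the Jacobian Conjecture in $L$, while an $(m,n)$-pair may live in $L^{(l)}$ with $l>1$; the fact you need ($v_{1,0}(P)+v_{1,0}(Q)-1>0$, hence the vanishing commutator) is instead supplied directly by Remark~\ref{propiedades de los pares}, which you already cite elsewhere. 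Your explicit preliminary step showing $(1,0)\notin\Dir(Q)$ is unnecessary in the paper's version (the case $(\rho,\sigma)=(1,0)$ is absorbed uniformly), but it is harmless.
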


\begin{proof} Assume that the statement is false and take
$$
(\rho,\sigma):=\,\max\bigl(](\ov{\rho},\ov{\sigma}),(1,0)]\cap \Dir(Q)\bigr).
$$
By Proposition~\ref{A(P) vs Dir(P)} we know that $](\ov{\rho},\ov{\sigma}),(1,0)]\cap \Dir(P) = \emptyset$.
Hence, by Proposition~\ref{le basico},
\begin{equation}\label{pepitito3}
\st_{\rho,\sigma}(P) = \en_{\rho,\sigma}(P) = \en_{1,0}(P),
\end{equation}
and so, by Proposition~\ref{propiedades de los pares}(4),
$$
v_{0,-1}(\st_{\rho,\sigma}(P)) = v_{0,-1}(\en_{1,0}(P)) < -1.
$$
On the other hand,
$$
\en_{\rho,\sigma}(P) = \en_{1,0}(P) = \frac m n \en_{1,0}(Q) = \frac m n \en_{\rho,\sigma}(Q),
$$
where the first equality follows from~\eqref{pepitito3}, the second on from the definition of $(m,n)$-pair,
and the third one, from
the fact that $](\rho,\sigma),(1,0)]\cap \Dir(Q) = \emptyset$ and Proposition~\ref{le basico}.
Hence, by Lemma~\ref{v positivos} and Remark~\ref{a remark}, we conclude that $(\rho,\sigma)\in \Dir(P)$,
which is a contradiction.
\end{proof}

\begin{proposition}\label{esquinas regulares} If $(P,Q)$ is an $(m,n)$-pair and $(\rho,\sigma)\in A(P)$, then
\begin{enumerate}

\smallskip

\item $\en_{\rho,\sigma}(P)=\frac mn \en_{\rho,\sigma}(Q)$,

\smallskip

\item $\st_{\rho,\sigma}(P)=\frac mn \st_{\rho,\sigma}(Q)$,

\smallskip

\item $(\rho,\sigma)\in \Dir(Q)$.
\end{enumerate}
Moreover $A(Q)=A(P)$ and, if we set
$$
(\rho_1,\sigma_1):=\begin{cases} \min(A(P))&\text{if $A(P)\ne\emptyset$,}\\ \min(\Succ_P(1,0),\Succ_Q(1,0))
&\text{if $A(P)=\emptyset$,}
\end{cases}
$$
then $\Pred_P(\rho_1,\sigma_1) =\Pred_Q(\rho_1,\sigma_1) \in I$.
\end{proposition}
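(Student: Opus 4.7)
The plan is to prove statements (1)--(3) by downward induction over $A(P)$, deduce $A(Q)=A(P)$ by exchanging the roles of $P$ and $Q$, and finally handle the predecessor claim by a similar alignment argument combined with the constraint $v_{1,-1}([P,Q])=0$. The recurring mechanism is: given proportionality $\en_{\rho,\sigma}(P)=\frac{m}{n}\en_{\rho,\sigma}(Q)$ and $v_{0,-1}(\st_{\rho,\sigma}(P))<-1$, Lemma~\ref{v positivos} produces $v_{\rho,\sigma}(P)>0$ and $[\ell_{\rho,\sigma}(P),\ell_{\rho,\sigma}(Q)]=0$, so Proposition~\ref{P y Q alineados}(2) yields $R\in L^{(l)}$ with $\ell_{\rho,\sigma}(P)=\lambda_P R^m$ and $\ell_{\rho,\sigma}(Q)=\lambda_Q R^n$; since $m\ge 1$, either $R$ is not a monomial (forcing $(\rho,\sigma)\in\Dir(P)\cap\Dir(Q)$, so that Proposition~\ref{extremosalineados} and Remark~\ref{a remark}(2) give $\st_{\rho,\sigma}(Q)=\frac{n}{m}\st_{\rho,\sigma}(P)$) or both leading forms are monomials.

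\emph{Base case.} Set $(\bar\rho,\bar\sigma):=\max A(P)$. Propositions~\ref{A(P) vs Dir(P)} and~\ref{A(P) vs A(Q)} forbid $\Dir(P)\cup\Dir(Q)$ from meeting $\,](\bar\rho,\bar\sigma),(1,0)]$, so Proposition~\ref{le basico} yields $\en_{\bar\rho,\bar\sigma}(P)=\en_{1,0}(P)$ and $\en_{\bar\rho,\bar\sigma}(Q)=\en_{1,0}(Q)$; combined with Remark~\ref{propiedades de los pares} this gives (1), and since $(\bar\rho,\bar\sigma)\in\Dir(P)$ the mechanism produces (3) and (2). \emph{Inductive step.} Let $(\rho,\sigma)\in A(P)$ with $(\rho,\sigma)\ne\max A(P)$, and let $(\rho',\sigma')$ be the nearest $A(P)$-direction above. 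Proposition~\ref{A(P) vs Dir(P)} rules out any $\Dir(P)$ element strictly between. If $(\tau,\upsilon)\in\Dir(Q)$ were strictly between, then $\Supp(\ell_{\tau,\upsilon}(P))=\st_{\rho',\sigma'}(P)$ by Proposition~\ref{le basico}, so $\st_{\tau,\upsilon}(P)$ inherits $v_{0,-1}<-1$ from $(\rho',\sigma')\in A(P)$, and the induction gives $\en_{\tau,\upsilon}(Q)=\st_{\rho',\sigma'}(Q)=\frac{n}{m}\st_{\rho',\sigma'}(P)=\frac{n}{m}\en_{\tau,\upsilon}(P)$; the mechanism then forces $\ell_{\tau,\upsilon}(Q)$ to be a monomial, contradicting $(\tau,\upsilon)\in\Dir(Q)$. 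Hence $(\rho,\sigma)$ and $(\rho',\sigma')$ are consecutive in $\Dir(Q)$ as well, so $\en_{\rho,\sigma}(Q)=\frac{n}{m}\en_{\rho,\sigma}(P)$, giving (1), and (2)--(3) follow as in the base case.

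\emph{Equality $A(P)=A(Q)$.} For $(\rho,\sigma)\in A(P)$, statement (2) gives $\st_{\rho,\sigma}(Q)=\frac{n}{m}\st_{\rho,\sigma}(P)$; since support $y$-coordinates are non-negative integers with $\gcd(m,n)=1$ and $m,n\ge 2$, the inequalities $v_{0,-1}(\st)<-1$ and $v_{1,-1}(\st)<0$ transfer from $P$ to $Q$. The reverse inclusion holds because $(Q,P)$ is an $(n,m)$-pair (the condition $v_{1,-1}(\en_{1,0}(Q))<0$ comes from Remark~\ref{propiedades de los pares}), so the same proof applies with the roles of $P$ and $Q$ swapped.

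\emph{Predecessor claim.} Set $\pi_P:=\Pred_P(\rho_1,\sigma_1)$ and $\pi_Q:=\Pred_Q(\rho_1,\sigma_1)$. In both branches of the definition of $(\rho_1,\sigma_1)$, the point $\st_{\rho_1,\sigma_1}(P)$ satisfies $v_{0,-1}<-1$ and $v_{1,-1}<0$: directly if $A(P)\ne\emptyset$, and via $\st_{\rho_1,\sigma_1}(P)=\en_{1,0}(P)$ (by Proposition~\ref{le basico} applied to the $\Dir(P)$-gap containing or ending at $(\rho_1,\sigma_1)$) together with Remark~\ref{propiedades de los pares} otherwise. If $\pi_P<\pi_Q$, then $(\tau,\upsilon):=\pi_Q\in\Dir(Q)\setminus\Dir(P)$ and $\Supp(\ell_{\tau,\upsilon}(P))=\st_{\rho_1,\sigma_1}(P)$ by Proposition~\ref{le basico}, so the mechanism yields a monomiality contradiction exactly as in the inductive step; symmetry then gives $\pi_P=\pi_Q$. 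If moreover $\pi_P\le(1,-1)$, then $\ell_{\rho,\sigma}(P)$ is the monomial at $\st_{\rho_1,\sigma_1}(P)$ for every $(\rho,\sigma)\in\,](1,-1),(\rho_1,\sigma_1)[$ (the boundary case $\pi_P=(1,-1)$ is handled since then $\en_{1,-1}(P)=\st_{\rho_1,\sigma_1}(P)$ and this point has $v_{1,-1}<0$), forcing $v_{1,-1}(P)=v_{1,-1}(\st_{\rho_1,\sigma_1}(P))<0$; an analogous argument gives $v_{1,-1}(Q)<0$; Proposition~\ref{pr v de un conmutador} then yields $v_{1,-1}([P,Q])<0$, contradicting $[P,Q]\in K^{\times}$. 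The main obstacle throughout is the case analysis in this last part, but in every subcase it reduces to the mechanism described in the first paragraph.
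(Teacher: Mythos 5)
Your structure---downward induction over $A(P)$ via the alignment/monomiality mechanism (Lemma~\ref{v positivos} and Remark~\ref{a remark}), transfer of the $A$-inequalities from $P$ to $Q$ using statement~(2) and integrality, and a two-part predecessor argument---is essentially the paper's proof, and the base case, the inductive step and the $A(P)=A(Q)$ part are sound.

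The gap is in the order of the two sub-arguments of the predecessor claim. You first rule out $\pi_P<\pi_Q$ by applying the mechanism at $(\tau,\upsilon):=\pi_Q$, and only afterward rule out $\pi_P\le(1,-1)$. But the mechanism requires $\pi_Q\in I$: Lemma~\ref{v positivos} needs $\rho+\sigma>0$ to pass from the alignment of end-points to $v_{\tau,\upsilon}(P)>0$ and $v_{\tau,\upsilon}(Q)>0$, and $\rho>0$ for the $v_{0,-1}$-estimate giving $[\ell_{\tau,\upsilon}(P),\ell_{\tau,\upsilon}(Q)]=0$; Remark~\ref{a remark} then needs $v_{\tau,\upsilon}(P)>0$ to produce the common root $R$ whose monomiality is transferred. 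At the point where you invoke this you have not excluded $\pi_P<\pi_Q\le(1,-1)$, and for such $\pi_Q$ these hypotheses fail, so the monomiality contradiction does not follow. The paper avoids this by first running the $v_{1,-1}([P,Q])<0$ contradiction on $(\hat\rho,\hat\sigma):=\max\{\pi_P,\pi_Q\}$ (rather than on $\pi_P$ after equality is already known), which establishes $(1,-1)<(\hat\rho,\hat\sigma)<(\rho_1,\sigma_1)$ without any prior equality; only then does it prove $\pi_P=\pi_Q$ with the machinery you use, now legitimately inside $I$. Swapping your two sub-arguments, and applying the $v_{1,-1}$-contradiction to $\max\{\pi_P,\pi_Q\}$, closes the gap.
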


\begin{proof}
Assume $A(P)\ne \emptyset$ and write $A(P)=\{(\rho_1,\sigma_1)<(\rho_2,\sigma_2)<\dots < (\rho_k,\sigma_k)\}$,
where we are considering the order of $I$. We will prove inductively statements~(1), (2) and~(3) for
$(\rho_j,\sigma_j)$, starting from $j=k$. Let $(\ov{\rho},\ov{\sigma}):=\max(A(P)\cup A(Q))$. We have
\begin{equation*}\label{1 cond}
\en_{\ov\rho,\ov\sigma}(P)=\en_{1,0}(P)=\frac mn \en_{1,0}(Q)=\frac mn \en_{\ov\rho,\ov\sigma}(Q),
\end{equation*}
where the first equality follows from Propositions~\ref{le basico} and~\ref{A(P) vs Dir(P)},
the second one, from Proposition~\ref{propiedades de los pares}(2) and the third one, from
Propositions~\ref{le basico},
since $](\ov{\rho},\ov{\sigma}),(1,0)]\cap\Dir(Q) =\emptyset$ by Propositions~\ref{A(P) vs Dir(P)}
and~\ref{A(P) vs A(Q)}. Hence,
by Lemma~\ref{v positivos} and Remark~\ref{a remark}, we have
$$
(\ov\rho,\ov\sigma)\in\Dir(P)\cap \Dir(Q)\qquad\text{and}\qquad \st_{\ov\rho,\ov\sigma}(P) = \frac mn
\st_{\ov\rho,\ov\sigma}(Q).
$$
On the other hand by Proposition~\ref{A(P) vs Dir(P)}, we have $(\ov\rho,\ov\sigma)=(\rho_k,\sigma_k)$, and so
statements~(1), (2) and~(3) hold for $(\rho_k,\sigma_k)$.

Let now $j\ge 1$, assume that statements~(1), (2) and~(3) hold for $(\rho_{j+1},\sigma_{j+1})$ and set
$$
(\tilde{\rho},\tilde{\sigma})=\max\{\Pred_P(\rho_{j+1},\sigma_{j+1}),\Pred_Q(\rho_{j+1},\sigma_{j+1})\}.
$$
Then
\begin{equation*}\label{cond1'}
\en_{\tilde{\rho},\tilde{\sigma}}(P)=\st_{\rho_{j+1},\sigma_{j+1}}(P)=\frac mn\st_{\rho_{j+1},\sigma_{j+1}}(Q)
=\frac mn\en_{\tilde{\rho},\tilde{\sigma}}(Q),
\end{equation*}
where the second equality holds by condition~2) for $(\rho_{j+1},\sigma_{j+1})$. Moreover by
Propositions~\ref{A(P) vs Dir(P)} and~\ref{le basico},
$$
(\tilde{\rho},\tilde{\sigma}) = (\rho_j,\sigma_j)\qquad\text{or}\qquad \st_{\tilde{\rho},\tilde{\sigma}}(P) =
\en_{\tilde{\rho},\tilde{\sigma}}(P) = \st_{\rho_{j+1},\sigma_{j+1}}(P),
$$
and so $v_{0,-1}(\st_{\tilde{\rho},\tilde{\sigma}}(P))<-1$. Hence, by Lemma~\ref{v positivos} and
Remark~\ref{a remark}, we have
$$
(\tilde{\rho},\tilde{\sigma})\in\Dir(P)\cap \Dir(Q)\qquad\text{and}\qquad \st_{\tilde{\rho},\tilde{\sigma}}(P)
= \frac mn \st_{\tilde{\rho},\tilde{\sigma}}(Q).
$$
On the other hand, by Proposition~\ref{A(P) vs Dir(P)} we have $(\tilde{\rho},\tilde{\sigma}) =
(\rho_j,\sigma_j)$, and so statements~(1), (2) and~(3) hold for $(\rho_j,\sigma_j)$.

\smallskip

Now we will prove that $A(P) = A(Q)$. By symmetry it suffices to prove that $A(P)\subseteq A(Q)$. Let
$(\rho,\sigma)\in A(P)$. By statement~(3) we already know $(\rho,\sigma)\in \Dir(Q)$. So we have to prove only
that
$$
v_{0,-1}(\st_{\rho,\sigma}(Q))<-1\quad\text{and}\quad v_{1,-1}(\st_{\rho,\sigma}(Q))<0.
$$
By statement~(2)
$$
v_{1,-1}(\st_{\rho,\sigma}(Q))=\frac nm v_{1,-1}(\st_{\rho,\sigma}(P))<0.
$$
Note now that again by statement~(2)
$$
\frac 1m v_{0,-1}(\st_{\rho,\sigma}(P))\in\mathds{Z},
$$
and so
$$
\frac 1m v_{0,-1}(\st_{\rho,\sigma}(P))\le -1,
$$
since $v_{0,-1}(\st_{\rho,\sigma}(P))<0$. Hence, once again by statement~(2),
$$
v_{0,-1}(\st_{\rho,\sigma}(Q))=\frac nm v_{0,-1}(\st_{\rho,\sigma}(P))\le -n<-1,
$$
which proves $A(P)\subseteq A(Q)$, as desired.

\smallskip

Now we prove
$$
(\rho_0,\sigma_0):=\Pred_P(\rho_1,\sigma_1)=\Pred_Q(\rho_1,\sigma_1)\in I.
$$
Set $(\hat{\rho},\hat{\sigma}):=\max\{\Pred_P(\rho_{1},\sigma_{1}),\Pred_Q(\rho_{1},\sigma_{1})\}$.
We first prove that
$$
(1,-1)< (\hat{\rho},\hat{\sigma}) < (\rho_{1},\sigma_{1}).
$$
Assume by contradiction that $(-\rho_1,-\sigma_1)\le (\hat{\rho},\hat{\sigma})\le (1,-1)$, which, by
Proposition~\ref{le basico}, implies that
\begin{equation}\label{pepitito1}
\en_{1,-1}(P)=\st_{\rho_1,\sigma_1}(P)\quad\text{and}\quad \en_{1,-1}(Q)=\st_{\rho_1,\sigma_1}(Q).
\end{equation}
If $(\rho_1,\sigma_1)\in A(P)\cap A(Q)$, then this implies
$$
v_{1,-1}(P)=v_{1,-1}(\st_{\rho_1,\sigma_1}(P))<0\quad\text{and}\quad v_{1,-1}(Q)=
v_{1,-1}(\st_{\rho_1,\sigma_1}(Q)) <0,
$$
and so, by inequality~\eqref{dfpvc}, we have $v_{1,-1}([P,Q])<0$, which contradicts that $[P,Q]\in
K^{\times}$. Hence we can suppose that $A(P)=\emptyset$, which by Proposition~\ref{le basico}, implies that
$$
\st_{\rho_1,\sigma_1}(P) = \en_{1,0}(P)\quad\text{and}\quad \st_{\rho_1,\sigma_1}(Q) = \en_{1,0}(Q).
$$
Consequently, by~\eqref{pepitito1},
$$
\en_{1,-1}(P)=\en_{1,0}(P)\quad\text{and}\quad \en_{1,-1}(Q)= \en_{1,0}(Q).
$$
By the definition of $(m,n)$-pair and Proposition~\ref{propiedades de los pares}(3), this implies  that
$$
v_{1,-1}(P)=v_{1,-1}(\en_{1,0}(P))<0\quad\text{and}\quad v_{1,-1}(Q) = v_{1,-1}(\en_{1,0}(Q)) <0,
$$
and so, again by inequality~\eqref{dfpvc}, we have $v_{1,-1}([P,Q])<0$, which contradicts that $[P,Q]\in
K^{\times}$.

\smallskip

In order to conclude the proof, we must show that $\Pred_P(\rho_1,\sigma_1)= \Pred_Q(\rho_1,\sigma_1)$. Assume
this is false and suppose for example that $\Pred_P(\rho_{1},\sigma_{1})<\Pred_Q(\rho_{1},\sigma_{1})$, which
implies
\begin{equation}\label{pepitito2}
\st_{\hat{\rho},\hat{\sigma}}(P)=\en_{\hat{\rho},\hat{\sigma}}(P)=\st_{\rho_1,\sigma_1}(P).
\end{equation}
If $A(P)\ne\emptyset$, then by Lemma~\ref{v positivos}, the conditions of Remark~\ref{a remark} are satisfied
for $(\hat{\rho},\hat{\sigma})$. Consequently, by this remark, $(\hat{\rho},\hat{\sigma})\in \Dir(P)$,
contradicting~\eqref{pepitito2}. Assume now $A(P)=\emptyset$, which implies that $(\hat{\rho},\hat{\sigma})\le
(1,0)<(\rho_1,\sigma_1)$. Hence, by~\eqref{pepitito2}, Proposition~\ref{le basico} and Proposition~\ref{propiedades
de los pares}(2),
$$
\st_{\hat{\rho},\hat{\sigma}}(P) = \en_{\hat{\rho},\hat{\sigma}}(P) = \en_{1,0}(P) = \frac nm \en_{1,0}(Q) =
\frac nm \en_{\hat{\rho},\hat{\sigma}}(Q)
$$
and
$$
v_{0,-1}(\st_{\hat{\rho},\hat{\sigma}}(P)) = v_{0,-1}(\en_{1,0}(P))< -1.
$$
Hence again by Lemma~\ref{v positivos}, the conditions of Remark~\ref{a remark} are satisfied
for $(\hat{\rho},\hat{\sigma})$, and therefore $(\hat{\rho},\hat{\sigma})\in \Dir(P)$, which
contradicts~\eqref{pepitito2}. The case $\Pred_Q(\rho_1,\sigma_1)<\Pred_P(\rho_1,\sigma_1)$, can be
discarded using a similar argument.
\end{proof}


\begin{definition}\label{def regular corner}
A {\em regular corner} of an $(m,n)$-pair $(P,Q)$ in $ L^{(l)}$, is a pair $(A,(\rho,\sigma))$, where
$A =(a/l,b)\in \frac 1l \mathds{Z}\times \mathds{N}_0$ and $(\rho,\sigma) \in I$ such that
\begin{enumerate}

\smallskip

\item $b\ge 1$ and $b>a/l$,

\smallskip

\item $(\rho,\sigma)\in \Dir(P)$,

\smallskip

\item $\left(\frac al,b\right)=\frac 1m \en_{\rho,\sigma}(P)$.
\end{enumerate}
A regular corner $(A,(\rho,\sigma))$ is said to be {\it at} the point $A$.
\end{definition}

\begin{proposition}\label{cases of corners}
If $(A,(\rho,\sigma))$ is a regular corner of an $(m,n)$-pair $(P,Q)$, then at least one of the following
three facts is true:
\begin{enumerate}

\smallskip

\item[\emph{(a)}] $(\rho,\sigma)\in A(P)$,

\smallskip

\item[\emph{(b)}] $\en_{\rho,\sigma}(P)=\en_{1,0}(P)$,

\smallskip

\item[\emph{(c)}] $(\rho,\sigma)=\Pred_P(\rho_1,\sigma_1)$, where $(\rho_1,\sigma_1):=\min( A(P))$.
\end{enumerate}
Moreover, there exists exactly one regular corner $(A,(\rho,\sigma))$ such that $(\rho,\sigma)\notin A(P)$.
\end{proposition}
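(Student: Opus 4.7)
The plan is a case analysis on the position of $(\rho'',\sigma''):=\Succ_P(\rho,\sigma)$. First, I would extract from the regular-corner hypothesis that $(\rho,\sigma)\in\Dir(P)\cap I$ and $\en_{\rho,\sigma}(P)=mA$, and --- combining $b>a/l$ and $b\ge 1$ with $m\ge 2$ from Definition~\ref{Smp} --- the two numerical inequalities
\[
v_{1,-1}(\en_{\rho,\sigma}(P))<0\qquad\text{and}\qquad v_{0,-1}(\en_{\rho,\sigma}(P))\le -m\le -2.
\]
If $(\rho,\sigma)\in A(P)$ then (a) holds and we are done, so assume $(\rho,\sigma)\notin A(P)$. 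The contrapositive of Proposition~\ref{A(P) vs Dir(P)} then places every element of $A(P)$ strictly above $(\rho,\sigma)$ in the order of $I$.

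Next I would split according to where $(\rho'',\sigma'')$ lies. If $(\rho'',\sigma'')>(1,0)$, Proposition~\ref{le basico} applied with the intermediate direction $(1,0)$ forces $\Supp(\ell_{1,0}(P))=\en_{\rho,\sigma}(P)$, hence $\en_{1,0}(P)=\en_{\rho,\sigma}(P)$, i.e., case (b). Otherwise $(1,-1)<(\rho,\sigma)<(\rho'',\sigma'')\le(1,0)$, so $(\rho'',\sigma'')\in\Dir(P)\cap I$ and the Newton-polygon structure yields $\en_{\rho,\sigma}(P)=\st_{\rho'',\sigma''}(P)$. Were $(\rho'',\sigma'')\notin A(P)$, the definition of $A(P)$ would force $v_{1,-1}(\st_{\rho'',\sigma''}(P))\ge 0$ or $v_{0,-1}(\st_{\rho'',\sigma''}(P))\ge -1$; transferring across the equality $\en_{\rho,\sigma}(P)=\st_{\rho'',\sigma''}(P)$ contradicts one of the two displayed inequalities (the second makes essential use of $m\ge 2$). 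Thus $(\rho'',\sigma'')\in A(P)$, and being the least direction of $\Dir(P)$ strictly above $(\rho,\sigma)$ while every element of $A(P)$ already lies above $(\rho,\sigma)$, it must equal $\min A(P)=(\rho_1,\sigma_1)$, giving $(\rho,\sigma)=\Pred_P(\rho_1,\sigma_1)$, which is case (c).

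For ``exactly one'', first note that (b) and (c) are mutually exclusive: if (c) holds then $(\rho_1,\sigma_1)\in\Dir(P)\cap I$ lies above $(\rho,\sigma)$, precluding $\en_{\rho,\sigma}(P)=\en_{1,0}(P)$; each of (b) and (c) pins down $(\rho,\sigma)$ uniquely. For existence, when $A(P)\ne\emptyset$ I would take $(\rho,\sigma):=\Pred_P(\rho_1,\sigma_1)\in\Dir(P)\cap I$ (which is in $I$ by Proposition~\ref{esquinas regulares}) and $A:=\en_{\rho,\sigma}(P)/m=\st_{\rho_1,\sigma_1}(P)/m$; Proposition~\ref{esquinas regulares}(2) and the defining inequalities of $(\rho_1,\sigma_1)\in A(P)$ deliver $A\in\frac{1}{l}\mathds{Z}\times\mathds{N}_0$, $b\ge 1$, and $b>a/l$. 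When $A(P)=\emptyset$ I would take $A:=\en_{1,0}(P)/m$ (valid by Remark~\ref{propiedades de los pares}) and $(\rho,\sigma):=\max(\Dir(P)\cap I)$, after verifying $\Dir(P)\cap I\ne\emptyset$ by examining the edge of $\HH(P)$ clockwise-adjacent to the vertex $\en_{1,0}(P)$ (or, trivially, taking $(1,0)$ itself if $(1,0)\in\Dir(P)$).

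The hard part will be the exclusion step in the second case, namely transferring the failure of the $A(P)$-conditions for $(\rho'',\sigma'')$ into a clean violation of the numerical constraints on the corner; the bound $m\ge 2$ is essential to close the $v_{0,-1}$ sub-case. A secondary nuisance is checking $\Dir(P)\cap I\ne\emptyset$ when $A(P)=\emptyset$, but this should follow from $v_{1,-1}(\en_{1,0}(P))<0$ together with the geometry of $\HH(P)$.
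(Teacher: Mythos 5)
The case analysis into (a), (b), (c) is essentially the paper's own argument: assume $(\rho,\sigma)\notin A(P)$, look at $\Succ_P(\rho,\sigma)$, and split on whether it lies inside $I$ (then show it lands in $A(P)$ via $\en_{\rho,\sigma}(P)=\st_{\Succ_P(\rho,\sigma)}(P)$ and the numerical constraints from Definition~\ref{def regular corner}, using $m\ge 2$ exactly as you note) or past $(1,0)$ (then Proposition~\ref{le basico} gives $\en_{\rho,\sigma}(P)=\en_{1,0}(P)$). Your inequalities $v_{1,-1}(\en_{\rho,\sigma}(P))<0$ and $v_{0,-1}(\en_{\rho,\sigma}(P))\le -m$ match the paper's endnote verbatim.

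On the ``Moreover'' claim you attempt to argue it inline, whereas the paper's proof of this proposition does not touch it at all --- the existence and uniqueness of the regular corner with $(\rho,\sigma)\notin A(P)$ are re-stated and proved only later in Proposition~\ref{esquina regular unica}. Your sketch is in the same spirit as that later proof, and the $A(P)\ne\emptyset$ branch (take $\Pred_P(\min A(P))$ and verify the corner via Proposition~\ref{esquinas regulares}(2)) is sound, but there is a genuine gap in the $A(P)=\emptyset$ branch: you take $(\rho,\sigma):=\max(\Dir(P)\cap I)$ and then defer $\Dir(P)\cap I\ne\emptyset$ to ``examining the edge clockwise-adjacent to the vertex $\en_{1,0}(P)$.'' That is precisely the non-trivial point, and it does not follow from the geometry of $\HH(P)$ alone: nothing a priori forces the predecessor of $(1,0)$ in $\Dir(P)$ to lie strictly above $(1,-1)$. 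The Jacobian condition must enter --- in the paper this happens in Proposition~\ref{esquinas regulares}, where a direction $\le(1,-1)$ is excluded because it would force $v_{1,-1}([P,Q])<0$ via inequality~\eqref{dfpvc}, contradicting $[P,Q]\in K^\times$. Until that step is supplied, the existence half of your ``Moreover'' is not closed.
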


\begin{proof}
Assume that $(\rho,\sigma)\notin A(P)$ and define $(\rho_1,\sigma_1):=\Succ_P(\rho,\sigma)$. If
$(\rho,\sigma)<(\rho_1,\sigma_1)\le (1,0)$, then $(\rho_1,\sigma_1)\in A(P)$\endnote{
By Proposition~\ref{le
basico} we know that $\en_{\rho,\sigma}(P) = \st_{\rho_1,\sigma_1}(P)$. Hence
$$
v_{0,-1}(\st_{\rho_1,\sigma_1}(P)) = v_{0,-1}(\en_{\rho,\sigma}(P))<-1\qquad\text{and}\qquad
v_{1,-1}(\st_{\rho_1,\sigma_1}(P)) = v_{1,-1}(\en_{\rho,\sigma}(P)) <0,
$$
where for the first inequality we have used that $\frac 1m v_{0,1}(\en_{\rho,\sigma}(P))\ge 1$. Moreover it is
clear that $(\rho_1,\sigma_1)\in \Dir(P)$. Hence, in order to finish the proof we only must show that
$(\rho_1,\sigma_1)\in I$, which
follows immediately since $(\rho,\sigma)\in I$ and, by hypothesis,
$(\rho,\sigma)<(\rho_1,\sigma_1)\le (1,0)$.},
which implies that $(\rho_1,\sigma_1)=\min (A(P))$ by Proposition~\ref{A(P) vs Dir(P)},
 and so item~(c) holds. Otherwise $(\rho,\sigma)\le (1,0)<
(\rho_1,\sigma_1)$ and, by
Proposition~\ref{le basico}, we conclude that $\en_{\rho,\sigma}(P) = \en_{1,0}(P)$.
\end{proof}

\begin{corollary}\label{some properties of corners}
If $(A,(\rho,\sigma))$ is a regular corner of an $(m,n)$-pair $(P,Q)$, then
\begin{enumerate}

\smallskip

\item $v_{\rho,\sigma}(P)>0$ and $v_{\rho,\sigma}(Q)>0$,

\smallskip

\item $\en_{\rho,\sigma}(P)=\frac mn \en_{\rho,\sigma}(Q)$,

\smallskip

\item $(\rho,\sigma)\in \Dir(Q)$.
\end{enumerate}
\end{corollary}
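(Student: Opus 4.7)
The strategy is to dispatch each of the three cases of Proposition~\ref{cases of corners} by invoking Proposition~\ref{esquinas regulares} and finishing with Lemma~\ref{v positivos}. In every case, once item~(2) is established, item~(1) is immediate from the first assertion of Lemma~\ref{v positivos}, so the real work is proving items~(2) and~(3).

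In case~(a), $(\rho,\sigma)\in A(P)$, and Proposition~\ref{esquinas regulares} directly delivers items~(2) and~(3). In case~(c), $(\rho,\sigma)=\Pred_P(\rho_1,\sigma_1)$ with $(\rho_1,\sigma_1):=\min A(P)$: the last assertion of Proposition~\ref{esquinas regulares} gives $(\rho,\sigma)=\Pred_Q(\rho_1,\sigma_1)$, which simultaneously proves~(3) and, via Proposition~\ref{le basico} applied to the consecutive pairs $(\rho,\sigma)$, $(\rho_1,\sigma_1)$ in $\Dir(P)$ and in $\Dir(Q)$, yields $\en_{\rho,\sigma}(P)=\st_{\rho_1,\sigma_1}(P)$ and $\en_{\rho,\sigma}(Q)=\st_{\rho_1,\sigma_1}(Q)$. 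Combining these with item~(2) of Proposition~\ref{esquinas regulares} at $(\rho_1,\sigma_1)\in A(P)$ gives $\en_{\rho,\sigma}(P)=\frac mn\en_{\rho,\sigma}(Q)$, which is~(2).

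The only remaining possibility is that $(\rho,\sigma)\notin A(P)$ while case~(c) does not apply, forcing $A(P)=\emptyset$ and putting us squarely in case~(b). In this situation $\en_{\rho,\sigma}(P)=\en_{1,0}(P)=\frac mn\en_{1,0}(Q)$, the second equality being Remark~\ref{propiedades de los pares}. I then apply the ``moreover'' clause of Proposition~\ref{esquinas regulares} with $(\rho_1,\sigma_1):=\min(\Succ_P(1,0),\Succ_Q(1,0))$ to obtain $\Pred_P(\rho_1,\sigma_1)=\Pred_Q(\rho_1,\sigma_1)\in I$. I would then identify $(\rho,\sigma)$ with $\Pred_P(\rho_1,\sigma_1)$: the hypothesis $\en_{\rho,\sigma}(P)=\en_{1,0}(P)$ prevents any direction of $P$ from lying in $](\rho,\sigma),(1,0)]$, while $(\rho_1,\sigma_1)\le \Succ_P(1,0)$ prevents any direction of $P$ in $](1,0),(\rho_1,\sigma_1)[$. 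Hence $(\rho,\sigma)$ is the immediate predecessor of $(\rho_1,\sigma_1)$ in $\Dir(P)$, so $(\rho,\sigma)=\Pred_Q(\rho_1,\sigma_1)\in\Dir(Q)$, establishing~(3). The same inclusion of intervals shows there is no direction of $Q$ in $](\rho,\sigma),(1,0)]$, and Proposition~\ref{le basico} then gives $\en_{\rho,\sigma}(Q)=\en_{1,0}(Q)$, whence $\frac mn\en_{\rho,\sigma}(Q)=\frac mn\en_{1,0}(Q)=\en_{\rho,\sigma}(P)$, which is~(2).

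The main obstacle is the geometric identification of $(\rho,\sigma)$ with $\Pred_P(\rho_1,\sigma_1)$ in case~(b); once this is settled, the rest is routine bookkeeping combining Propositions~\ref{esquinas regulares} and~\ref{le basico} with Lemma~\ref{v positivos}.
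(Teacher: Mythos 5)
Your proof is correct and takes essentially the same route as the paper's: the same case split via Proposition~\ref{cases of corners}, the same use of Propositions~\ref{esquinas regulares} and~\ref{le basico} together with Lemma~\ref{v positivos} in each case, and in case~(b) the same reduction to $A(P)=\emptyset$ followed by identifying $(\rho,\sigma)$ with $\Pred_P(\rho_1,\sigma_1)$. The only step you assert rather than justify is that failure of cases~(a) and~(c) forces $A(P)=\emptyset$; the paper supplies the short argument (via Proposition~\ref{A(P) vs Dir(P)}, and the observation that any $(\rho',\sigma')\in A(P)$ above $(\rho,\sigma)$ would have to exceed $\Succ_P(\rho,\sigma)=\Succ_P(1,0)>(1,0)$, hence leave $I$), but this is a one-line omission rather than a conceptual gap.
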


\begin{proof}
If $(\rho,\sigma)\in A(P)$, or $(\rho,\sigma)=\Pred_P(\rho_1,\sigma_1)$, where $(\rho_1,\sigma_1):=
\min( A(P))$,
then Lemma~\ref{v positivos} and Proposition~\ref{esquinas regulares} yield the result\endnote{
If
$(\rho,\sigma)\in A(P)$, then by items~(1) and~(3)
of Proposition~\ref{esquinas regulares}, we have
$$
\en_{\rho,\sigma}(P) = \frac mn \en_{\rho,\sigma}(Q)\qquad\text{and}\qquad (\rho,\sigma)\in\Dir(Q).
$$
Applying now Lemma~\ref{v positivos}, we obtain
$$
v_{\rho,\sigma}(P)>0 \qquad\text{and}\qquad  v_{\rho,\sigma}(Q)>0.
$$
Assume now that $(\rho,\sigma)=\Pred_P(\rho_1,\sigma_1)$. Then by Proposition~\ref{esquinas regulares},
$$
(\rho,\sigma)=\Pred_Q(\rho_1,\sigma_1)\in \Dir(Q).
$$
Moreover, by Propositions~\ref{le basico} and~\ref{esquinas regulares}(2),
$$
\en_{\rho,\sigma}(P) = \st_{\rho_1,\sigma_1}(P) = \frac mn \st_{\rho_1,\sigma_1}(Q) = \frac mn
\en_{\rho,\sigma}(Q).
$$
Finally, again by Lemma~\ref{v positivos},
$$
v_{\rho,\sigma}(P)>0 \qquad\text{and}\qquad  v_{\rho,\sigma}(Q)>0.
$$
}.
Hence, by Proposition~\ref{cases of corners}, it suffices to prove the assertions when
$\en_{\rho,\sigma}(P)=\en_{1,0}(P)$ and $(\rho,\sigma)\notin A(P)$. We claim that $A(P)=\emptyset$. In fact,
if there exists $(\rho',\sigma')\in A(P)$ with $(\rho',\sigma')<(\rho,\sigma)$, then $(\rho,\sigma)\in A(P)$
by Proposition~\ref{A(P) vs Dir(P)}. On the other hand, if there exists $(\rho',\sigma')\in A(P)$ with
$(\rho',\sigma')>(\rho,\sigma)$, then
$$
(\rho',\sigma')\ge \Succ_{P}(\rho,\sigma)=\Succ_P(1,0)>(1,0),\endnote{
Since $\en_{\rho,\sigma}(P)=\st_{\Succ_P(1,0)}(P)$, by
Proposition~\ref{le bbasico} we know that $(\rho,\sigma)$ and $\Succ_P(1,0)$ are consecutive elements in
$\Dir(P)$, and hence $\Succ_{P}(\rho,\sigma)=\Succ_P(1,0)$.}
$$
which contradicts $(\rho',\sigma')\in I$. Now we set
$$
(\rho_1',\sigma_1'):=\min(\Succ_P(1,0),\Succ_Q(1,0)).
$$
Then $(\rho,\sigma)= \Pred_P(\rho_1',\sigma_1')$\endnote{
Since $(1,0)<(\rho_1',\sigma_1') \le \Succ_P(1,0)$, by
Proposition~\ref{le basico}
$$
\en_{\rho,\sigma}(P)= \en_{1,0}(P) = \st_{\rho_1',\sigma_1'}(P).
$$
Hence, since $(\rho,\sigma)\in \Dir(P)$, from Proposition~\ref{le bbasico} it follows that $(\rho,\sigma)=
\Pred_P(\rho'_1,\sigma'_1)$.}
and from Proposition~\ref{esquinas regulares} we obtain
$$
(\rho,\sigma)=\Pred_Q(\rho_1',\sigma_1')\in \Dir(Q).
$$
Since $(\rho,\sigma)\le (1,0)< (\rho_1',\sigma_1')$, by Proposition~\ref{le basico} this implies
$\en_{\rho,\sigma}(Q)=\en_{1,0}(Q)$. Consequently, by Proposition~\ref{propiedades de los pares}(2),
$$
\en_{\rho,\sigma}(P)=\en_{1,0}(P)=\frac mn \en_{1,0}(Q)=\frac mn \en_{\rho,\sigma}(Q),
$$
and Lemma~\ref{v positivos} concludes the proof.
\end{proof}

\begin{remark}\label{a>0} If $((a/l,b),(\rho,\sigma))$ is a regular corner of an $(m,n)$-pair $(P,Q)$ in
$L^{(l)}$, then
$a>0$\endnote{
Since $(1,-1)<(\rho,\sigma)\le(1,0)$, we have $\rho>0$ and $\sigma \le 0$. Combining this with the
facts that $b>0$ and, by Corollary 5.7(1),
$$
\rho\frac{a}{l} + \sigma b = v_{\rho,\sigma}(P) > 0,
$$
we obtain $a>0$.}.
\end{remark}


Let $(A,(\rho,\sigma))$ be a regular corner of an $(m,n)$-pair $(P,Q)$ in $L^{(l)}$. Write
\begin{equation}\label{definicion de p}
\ell_{\rho,\sigma}(P)=x^{k/l}\mathfrak{p}(z)\quad\text{where $z:=x^{-\sigma/\rho}y$ and $\mathfrak{p}(z)\in
K[z]$.}
\end{equation}

Since $(\rho,\sigma)\in\Dir(P)$ the polynomial $\mathfrak{p}(z)$ is not a constant. Moreover by
Corollary~\ref{some properties of corners}(1) and  Theorem~\ref{central}(4) we know that
$v_{1,-1}(\st_{\rho,\sigma}(P))\ne 0$. Hence, one of the following five
mutually excluding conditions is true:
\begin{enumerate}

\smallskip

\item[\textrm{I.a)}] $[\ell_{\rho,\sigma}(P),\ell_{\rho,\sigma}(Q)]\ne 0$ and  $\st_{\rho,\sigma}(P)\sim
\st_{\rho,\sigma}(Q)$.

\smallskip

\item[\textrm{I.b)}] $[\ell_{\rho,\sigma}(P),\ell_{\rho,\sigma}(Q)]\ne 0$ and $\st_{\rho,\sigma}(P)\nsim
\st_{\rho,\sigma}(Q)$.

\smallskip

\item [\textrm{II.a)}] $[\ell_{\rho,\sigma}(P),\ell_{\rho,\sigma}(Q)]= 0$, $\# \factors(\mathfrak{p}(z))>1$
    and
$v_{1,-1}(\st_{\rho,\sigma}(P))<0$.

\smallskip

\item [\textrm{II.b)}] $[\ell_{\rho,\sigma}(P),\ell_{\rho,\sigma}(Q)]= 0$, $\# \factors(\mathfrak{p}(z))>1$
 and
  $v_{1,-1}(\st_{\rho,\sigma}(P))>0$.

\smallskip

\item [\textrm{III)}] $[\ell_{\rho,\sigma}(P),\ell_{\rho,\sigma}(Q)]= 0$ and
    $\mathfrak{p}(z)=\mu(z-\lambda)^r$
for some $\mu,\lambda\in K^{\times}$ and $r\in \mathds{N}$.
\end{enumerate}

\begin{remark}\label{mayores estan en A(P)} Let $(P,Q)$ be an $(m,n)$-pair in $L^{(l)}$ and let
$(\rho,\sigma)\in \Dir(P)\cap I$. If $(A,(\rho',\sigma'))$ is a regular corner and $(\rho',\sigma')<(\rho,\sigma)\le (1,0)$,
then $(\rho,\sigma)\in A(P)$.
In fact, by  Proposition~\ref{A(P) vs Dir(P)}, it suffices to consider
the case in which $\en_{\rho',\sigma'}(P) =\st_{\rho,\sigma}(P)$, and in that case it
follows easily from the definition of $A(P)$ and Definition~\ref{def regular corner}.
\end{remark}

\begin{remark}\label{estamos en Case IIa} Let $(P,Q)$ be an $(m,n)$-pair in $L^{(l)}$ and let
$(\rho,\sigma)\in\mathfrak{V}$.
If $(\rho,\sigma)\in A(P)$ then $\bigl(\frac 1m \en_{\rho,\sigma}(P),(\rho,\sigma)\bigr)$ is a regular corner
and we are in the Case II.a)\endnote{
By item~(1) of Proposition~\ref{esquinas regulares},
$$
\frac 1m \en_{\rho,\sigma}(P)\in \frac{1}{l} \mathds{Z}\times \mathds{N}_0.
$$
On the other hand clearly $(\rho,\sigma)\in I$ and items~(2) and~(3) of Definition~\ref{def regular corner}
are true. Moreover, since
$(\rho,\sigma)\in A(P)$ and $\frac{\rho+\sigma}{\rho}>0$, by equalities~\eqref{eq57} we have
$$
v_{0,-1}(\en_{\rho,\sigma}(P)) < v_{0,-1}(\st_{\rho,\sigma}(P))<-1\quad\text{and}\qquad
v_{1,-1}(\en_{\rho,\sigma}(P)) < v_{1,-1}(\st_{\rho,\sigma}(P))<0,
$$
which proves item~(1). Now Lemma~\ref{v positivos} proves that
$[\ell_{\rho,\sigma}(P),\ell_{\rho,\sigma}(Q)]=0$. Moreover, by the first equality
in~\eqref{eq57}, we know that $z$ is a factor of $\mathfrak{p}$, because $s>0$. But $\mathfrak{p}$ is not a
power of $z$, since
$\ell_{\rho,\sigma}(P)$ is not a monomial, and consequently, $\#\factors(\mathfrak{p}(z))>1$.}.
\end{remark}

\begin{remark}\label{Case IIa consecuencia} In the Case II.a), if we set
$(\rho',\sigma'):=\Pred_P(\rho,\sigma)$, then
$\bigl(\frac 1m \st_{\rho,\sigma}(P),(\rho',\sigma')\bigr)$
is a regular corner of $(P,Q)$\endnote{
Item~(2) of Definition~\ref{def regular corner} is trivial and
item~(3) follows from
Proposition~\ref{le basico}. Now we prove item~(1). If we set $\frac 1 m \st_{\rho,\sigma}(P) = (a'/l,b')$,
then $a'/l < b'$, since $\frac 1 m
v_{1,-1}(\st_{\rho,\sigma}(P))<0$, and $b' > 0$, since otherwise we have $v_{\rho,\sigma}(\st_{\rho,\sigma}(P))
 = \rho a'/l< 0$, which contradicts
Corollary~\ref{some properties of corners}(1). It remains to check that $(\rho',\sigma')\in I$. But this
follows immediately from
Proposition~\ref{esquinas regulares}, since $(\rho,\sigma)\in A(P)$, because $v_{0,-1}(\st_{\rho,\sigma}(P)) =
- m b'<-1$.
}.
\end{remark}

\begin{remark}\label{first component}
If $(P,Q)$ is an $(m,n)$-pair, then $\frac 1 m \en_{1,0}(P)$ is the first component of a regular corner of $(P,Q)$\endnote{
Assume first $(1,0)\in\Dir(P)$. In this case, using
Proposition~\ref{propiedades de los pares}, it is easy to check that
$\bigl(\frac 1 m \en_{1,0}(P),(1,0)\bigr)$ is a regular corner. Suppose
now $(1,0)\notin\Dir(P)$ and set $(\rho,\sigma):=\Pred_P(1,0)$. Since, by
Proposition~\ref{le basico}
$$
\frac 1 m \en_{\rho,\sigma}(P) = \frac 1 m \st_{1,0}(P) = \frac 1 m
\en_{1,0}(P)
$$
it is enough to show that $\bigl(\frac 1 m
\en_{\rho,\sigma}(P),(\rho,\sigma)\bigr)$ is a regular corner of $(P,Q)$.
If $A(P)=\emptyset$ then this follows from Proposition~\ref{propiedades de los
pares} and the additional part of Propositions~\ref{esquinas regulares}.
Otherwise
$$
(\rho,\sigma)=\max(\Dir(P)\cap I)=\max(A(P)),
$$
where the last equality follows from Proposition~\ref{A(P) vs Dir(P)}.
Remark~\ref{estamos en Case IIa} concludes the proof.
}.
\end{remark}

\begin{proposition}[\textrm{Cases I.a) and I.b)}]\label{extremosfinales}
Let $(P,Q)$ be an $(m,n)$-pair in $L^{(l)}$, and $((a/l,b),(\rho,\sigma))$ a regular corner of
$(P,Q)$.
Assume $[\ell_{\rho,\sigma}(P),\ell_{\rho,\sigma}(Q)]\ne 0$. Then $l-a/b > 1$ and the following assertions
hold:
\begin{enumerate}

\smallskip

\item[a)]If $\st_{\rho,\sigma}(P)\sim\st_{\rho,\sigma}(Q)$, then
$$
\frac{1}{m}\st_{\rho,\sigma}(P)\in \frac{1}{l} \mathds{Z}\times \mathds{N}_0\quad \text{and}\quad
\st_{\rho,\sigma}(P)\sim (1,0).
$$

\smallskip

\item[b)] If $\st_{\rho,\sigma}(P)\nsim\st_{\rho,\sigma}(Q)$, then there exists $k\in\mathds{N}$, with
    $k<l-\frac{a}{b}$, such that
\begin{equation}\label{conjunto de starting}
\{\st_{\rho,\sigma}(P),\st_{\rho,\sigma}(Q)\} = \left\{\left(\frac{k}{l},0\right),
\left(1-\frac{k}{l},1\right)\right\}.
\end{equation}
\end{enumerate}
\end{proposition}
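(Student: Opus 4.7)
The plan is to exploit the fact that, under the hypothesis $[\ell_{\rho,\sigma}(P), \ell_{\rho,\sigma}(Q)] \ne 0$, Proposition~\ref{pr v de un conmutador} forces
$$
[\ell_{\rho,\sigma}(P), \ell_{\rho,\sigma}(Q)] = \ell_{\rho,\sigma}([P,Q]) \in K^\times,
$$
a nonzero constant, whose support is the single point $(0,0)$. This rigidity will pin down the starting points.

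I set up coordinates by writing $\ell_{\rho,\sigma}(P) = x^{r/l} y^s p(z)$ and $\ell_{\rho,\sigma}(Q) = x^{u/l} y^v q(z)$ with $p(0), q(0) \ne 0$ and $z := x^{-\sigma/\rho} y$; the regular corner data give $\en_{\rho,\sigma}(P) = (ma/l, mb)$ and $\en_{\rho,\sigma}(Q) = (na/l, nb)$ by Corollary~\ref{some properties of corners}. A direct Jacobian computation yields
$$
[\ell_{\rho,\sigma}(Q), \ell_{\rho,\sigma}(P)] = x^{(u+r)/l - 1} y^{s+v-1}\, \gamma(z),\qquad \gamma(z)\in K[z],
$$
and distinctness of the $y$-exponents across the terms $z^k$ in the product forces $\gamma(z) = \gamma_k z^k$ to be a single nonzero monomial with
\begin{equation}\label{planEqStar}
k = 1 - s - v \ge 0 \qquad\text{and}\qquad \frac{u+r}{l} = 1 + k\,\frac{\sigma}{\rho}.
\end{equation}

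For case~(a), the alignment $\st_{\rho,\sigma}(P) \sim \st_{\rho,\sigma}(Q)$ together with $v_{\rho,\sigma}(Q)/v_{\rho,\sigma}(P) = n/m$ gives $\st_{\rho,\sigma}(Q) = (n/m)\,\st_{\rho,\sigma}(P)$ via Remark~\ref{relacion de equivalencia}; in particular $v = ns/m$, so $s + v = s(m+n)/m \le 1$ combined with $m+n \ge 4$ forces $s = v = 0$. Hence $\st_{\rho,\sigma}(P) = (r/l, 0) \sim (1,0)$, and from $u = nr/m \in \mathds{Z}$ and $\gcd(m,n) = 1$ we obtain $m \mid r$, so $\tfrac{1}{m}\,\st_{\rho,\sigma}(P) \in \tfrac{1}{l}\mathds{Z}\times\mathds{N}_0$. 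The alignment $\en_{\rho,\sigma}(P) - \st_{\rho,\sigma}(P) \sim (-\sigma,\rho)$ translates to $\sigma/\rho = (r - ma)/(lmb)$; substituting this and $u+r = (m+n)r/m$ into~\eqref{planEqStar} with $k=1$ and clearing denominators gives $r/m = (lb - a)/((m+n)b - 1)$. Since this is a positive integer and $(m+n)b - 1 \ge 4b - 1 > b$, we conclude $lb - a > b$, i.e., $l - a/b > 1$.

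For case~(b), Proposition~\ref{extremosnoalineados}(1) combined with $\st_{\rho,\sigma}([P,Q]) = (0,0)$ gives $\st_{\rho,\sigma}(P) + \st_{\rho,\sigma}(Q) = (1,1)$, hence $r + u = l$ and $s + v = 1$; so $(s,v) \in \{(0,1),(1,0)\}$, and setting $k := r$ in the first subcase and $k := l - r$ in the second produces~\eqref{conjunto de starting}. Positivity $k \ge 1$ follows from $v_{\rho,\sigma}(P), v_{\rho,\sigma}(Q) > 0$ and $\rho > 0$. An analogous direction argument pins $r$ explicitly: in the subcase $(s,v) = (0,1)$ one finds $r = m(lb-a)/((m+n)b-1)$, and in $(s,v) = (1,0)$ one finds $r = (l(mb-1) + na)/((m+n)b - 1)$. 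The inequality $k < l - a/b$ reduces in the first subcase to $nb > 1$ and in the second to $(lb - a)(mb - 1) > 0$; both are immediate from $m, n \ge 2$, $b \ge 1$, and $b > a/l$. Thus $1 \le k < l - a/b$, forcing in particular $l - a/b > 1$.

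The main obstacle is computational: obtaining the explicit polynomial $\gamma(z)$ and then extracting, from~\eqref{planEqStar} together with the direction identities for $P$ and $Q$, the closed-form expressions for $r$. Once these formulas are in hand, everything reduces to an integrality and positivity check.
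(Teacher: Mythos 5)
Your argument is correct, but it takes a noticeably different route from the paper's. To show $s=v=0$ in case~(a), the paper observes that if the $v_{0,1}$-coordinate of the common starting point were positive then $y$ would divide both $\ell_{\rho,\sigma}(P)$ and $\ell_{\rho,\sigma}(Q)$, hence their bracket, contradicting $[\ell_{\rho,\sigma}(P),\ell_{\rho,\sigma}(Q)]=\ell_{\rho,\sigma}([P,Q])\in K^{\times}$; you instead extract $s+v\le 1$ directly from the $z$-coordinate form of the Jacobian (your $k=1-s-v\ge 0$) and combine it with the proportionality $v=ns/m$. For the two inequalities $l-a/b>1$ in case~(a) and $k<l-a/b$ in case~(b), the paper argues by contradiction purely from the valuation identity $v_{\rho,\sigma}(P)+v_{\rho,\sigma}(Q)=\rho+\sigma$ together with $v_{\rho,\sigma}(Q)>0$ (resp.\ $v_{\rho,\sigma}(P)>0$), never solving for the starting abscissa; you instead solve the linear system consisting of that valuation identity and the two edge-direction identities to get the closed forms $r=m(lb-a)/((m+n)b-1)$ and $r=(l(mb-1)+na)/((m+n)b-1)$, and the inequalities then reduce to $(m+n)b-1>b$, $nb>1$, and $(lb-a)(mb-1)>0$, all immediate from $m,n\ge 2$, $b\ge 1$, $b>a/l$. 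I checked those eliminations and they are right, so the proposal does close. The paper's version is lighter and avoids exhibiting $r$ explicitly; yours is more computational but yields the exact location of the starting corners, which can be reused elsewhere. One minor note: you invoke $m+n\ge 4$, but since $m,n>1$ and $\gcd(m,n)=1$ in fact $m+n\ge 5$; in any case what the argument really uses is only $n\ge 2$ and $m\ge 2$.
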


\begin{proof}
\noindent a)\enspace Since $\st_{\rho,\sigma}(P)\sim\st_{\rho,\sigma}(Q)$, it follows from
Corollary~\ref{some properties of corners}, that
$$
\frac{1}{m} \st_{\rho,\sigma}(P)=\frac{1}{n} \st_{\rho,\sigma}(Q),
$$
and so
\begin{equation}\label{che}
A':=\frac 1m \st_{\rho,\sigma}(P)\in \frac 1{l} \mathds{Z}\times \mathds{N}_0,
\end{equation}
because $m$ and $n$ are coprime. Hence $A' = (a'/l,b')$ with $a'\in \mathds{Z}$ and $b'\in \mathds{N}_0$.
Now we prove that $\st_{\rho,\sigma}(P) \sim (1,0)$ or, equivalently, that $b' = 0$. Assume by contradiction
that $b'>0$. By Remark~\ref{polinomio asociado f^{(l)}} we can write
$$
\ell_{\rho,\sigma}(P)=x^{\frac {ma'}{l}}y^{mb'} f (z)\quad\text{and}\quad \ell_{\rho, \sigma}(Q) =
x^{\frac{na'}{l}} y^{nb'} g(z),
$$
where $z:=x^{-\frac{\sigma}{\rho}}y$ and $f(z),g(z)\in K[z]$. Since $nb',mb' \ge 1$, the term $y$
divides both $\ell_{\rho,\sigma}(P)$ and $\ell_{\rho,\sigma} (Q)$. Consequently $y$ is a factor of
$[\ell_{\rho,\sigma}(P),\ell_{\rho,\sigma}(Q)]$.
Since by Proposition~\ref{pr v de un conmutador}, we know that $[\ell_{\rho,\sigma} (P),\ell_{\rho,\sigma}(Q)]
=\ell_{\rho,\sigma}([P,Q])\in
K^{\times}$, this is a contradiction which proves that $b' = 0$.

We next prove $l-a/b > 1$ in this case. Since, by Corollary~\ref{some properties of corners}(1),
$$
a' = \frac{l}{\rho} v_{\rho,\sigma}\left(\frac{a'}{l},0\right) = \frac{l}{\rho m} v_{\rho,\sigma}
(\en_{\rho,\sigma}(P)) >0,
$$
it suffices to show that $l-a/b > a'$. Assume that this is false. Then $1-\frac{a'}{l}\le \frac{a}{bl}$, and so
$$
v_{\rho,\sigma}\left(1-\frac{a'}l,1\right) \le \frac{1}{b} v_{\rho,\sigma}\left(\frac{a}l,b\right) =
\frac{1}{bm} v_{\rho,\sigma}(\en_{\rho,\sigma}(P)),
$$
since $\rho>0$. Moreover,
$$
v_{\rho,\sigma}\left(\frac{a'}l,0\right) = \frac{1}{m}v_{\rho,\sigma}(\st_{\rho,\sigma}(P)),
$$
and so, by Proposition~\ref{pr v de un conmutador},
$$
v_{\rho,\sigma}(P) + v_{\rho,\sigma}(Q) = \rho+\sigma = v_{\rho,\sigma}\left(1-\frac{a'}l,1\right) +
v_{\rho,\sigma}\left(\frac{a'}l,0\right) \le \left(\frac{1}{bm} + \frac{1}{m}\right)v_{\rho,\sigma}(P) \le
v_{\rho,\sigma}(P).
$$
But this is impossible since $v_{\rho,\sigma}(Q)>0$ by Corollary~\ref{some properties of corners}(1).

\smallskip

\noindent b)\enspace By Proposition~\ref{pr v de un conmutador},
$$
[\ell_{\rho,\sigma}(P),\ell_{\rho,\sigma}(Q)] = \ell_{\rho,\sigma}([P,Q])\in K^{\times},
$$
and consequently, by Proposition~\ref{extremosnoalineados}(1),
$$
\st_{\rho,\sigma}(P)+\st_{\rho,\sigma}(Q)=(1,1).
$$
Therefore equality~\eqref{conjunto de starting} is true for some $k\in \mathds{Z}$. Applying $v_{\rho,\sigma}$
we obtain
\begin{equation*}
\left\{\rho\frac k{l},\rho\left(1-\frac k{l}\right) + \sigma\right\} =
\{v_{\rho,\sigma}(P),v_{\rho,\sigma}(Q)\},
\end{equation*}
which by Corollary~\ref{some properties of corners}(1), implies $k>0$. Assume that $\st_{\rho,\sigma}(Q) =
\bigl(1-\frac{k}{l},1\bigr)$.
By Corollary~\ref{some properties of corners}(2),
$$
n\left(\rho\frac{a}{l}+ \sigma b\right) = v_{\rho,\sigma}(\en_{\rho,\sigma}(Q))= v_{\rho,\sigma}
\left(1-\frac{k}{l},1\right)=\rho-\rho \frac{k}{l}+\sigma,
$$
and so
\begin{equation}
k = l-n a+\frac{\sigma l}{\rho}(1-bn).\label{chvh7}
\end{equation}
On the other hand, since $v_{\rho,\sigma}(P)>0$ and $\frac{l}{\rho b m}>0$, we have
$$
\frac{l\sigma}{\rho} + \frac{a}{b} = \frac{l}{\rho b}\left(\rho\frac{a}{l} + \sigma
b\right)= \frac{l}{\rho b m}v_{\rho,\sigma}(\en_{\rho,\sigma}(P)) >0.
$$
Multiplying this inequality by $bn-1>0$, we obtain
$$
\frac{\sigma l}{\rho}(1-bn)<\frac ab(bn-1).
$$
Combining this with equality~\eqref{chvh7} we conclude that
$$
k = l-n a+\frac{\sigma l}{\rho}(1-bn) < l-na+\frac ab(bn-1) = l-\frac{a}{b},
$$
as desired. In the case $\st_{\rho,\sigma}(P) = \bigl(1-\frac{k}{l},1\bigr)$ the proof of $k<l-\frac{a}{b}$ is
similar. Since $k\ge 1$ we also obtain $l-a/b > 1$ in the case~b).
\end{proof}

\begin{proposition}[\textrm{Case II}]\label{case II}
Let $(P,Q)$ be an $(m,n)$-pair in $L^{(l)}$, let $((a/l,b),(\rho,\sigma))$ be a regular corner of
$(P,Q)$ and let $F$ be as
in Theorem~\ref{central}. Assume that $[\ell_{\rho,\sigma}(P),\ell_{\rho,\sigma}(Q)] = 0$ and write
$\ell_{\rho,\sigma}(P)=x^{k/l}\mathfrak{p}(z)$,
where $z:=x^{-\sigma/\rho}y$ and $\mathfrak{p}(z)\in K[z]$. If $\# \factors(\mathfrak{p}(z))>1$, then
\begin{enumerate}

\smallskip

\item $\en_{\rho,\sigma}(F)\sim (a/l,b)$.

\smallskip

\item $\rho/\gcd(\rho,l)\le b$.

\end{enumerate}
Set $d:=\gcd(a,b)$, $\ov a:=a/d$, $\ov b:=b/d$ and write $\en_{\rho,\sigma}(F)=\mu (\ov a/l,\ov b)$. We have:
\begin{enumerate}[resume]

\smallskip

\item $(\rho,\sigma)=\dir\left(\en_{\rho,\sigma}(F)-(1,1)\right)=\dir(\mu \ov a -l,\mu \ov b l-l)$.

\smallskip

\item $\mu\in \mathds{N}$, $\mu\le l(bl-a)+1/\ov b$, $d\nmid \mu$ and $d>1$.

\end{enumerate}

\end{proposition}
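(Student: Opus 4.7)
The plan is to deduce the four statements in order, leveraging Theorem~\ref{central}, Proposition~\ref{pavadass}, and the auxiliary element $R$ provided by Proposition~\ref{P y Q alineados}(2).

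For statement~(1), I would apply the implication~\eqref{condicion} derived in the proof of Proposition~\ref{pavadass}(5): since $\#\factors(\mathfrak{p})>1$, the identity $[F,\ell_{\rho,\sigma}(P)]=\ell_{\rho,\sigma}(P)$ of Theorem~\ref{central} forces $\en_{\rho,\sigma}(P)\sim\en_{\rho,\sigma}(F)$, and since $\en_{\rho,\sigma}(P)=m(a/l,b)$ this gives $\en_{\rho,\sigma}(F)\sim(a/l,b)$. Statement~(3) follows: by~(1) and Theorem~\ref{central}(4) we have $\en_{\rho,\sigma}(F)\sim(a/l,b)\nsim(1,1)$, so $\en_{\rho,\sigma}(F)\ne(1,1)$, and since $v_{\rho,\sigma}(\en_{\rho,\sigma}(F))=\rho+\sigma=v_{\rho,\sigma}(1,1)$, Remark~\ref{valuacion depende de extremos} yields $(\rho,\sigma)=\dir(\en_{\rho,\sigma}(F)-(1,1))$; the alternative form $\dir(\mu\ov a-l,\mu\ov b l-l)$ is the same direction scaled by $l$.

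For statement~(2), Corollary~\ref{some properties of corners}(1) together with the hypothesis $[\ell_{\rho,\sigma}(P),\ell_{\rho,\sigma}(Q)]=0$ allows me to invoke Proposition~\ref{P y Q alineados}(2), producing a $(\rho,\sigma)$-homogeneous $R\in L^{(l)}$ with $\ell_{\rho,\sigma}(P)=\lambda_PR^m$ for some $\lambda_P\in K^\times$; the hypothesis $\#\factors(\mathfrak{p})>1$ forces $R$ to be non-monomial, and comparing end points yields $\en_{\rho,\sigma}(R)=(a/l,b)$. Since $R$ is a $(\rho,\sigma)$-homogeneous element of $L^{(l)}$, its support lies on the line $v_{\rho,\sigma}=v_{\rho,\sigma}(R)$ and consecutive lattice points of $\frac{1}{l}\mathds{Z}\times\mathds{N}_0$ on this line differ by $(-\sigma/\gcd(\rho,l),e)$ with $e:=\rho/\gcd(\rho,l)$; the vertical extent of the support of $R$ is therefore a positive multiple of $e$, and combined with $v_{0,1}(\st_{\rho,\sigma}(R))\ge 0$ this gives $b\ge e$.

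For statement~(4), the integrality $\mu\in\mathds{N}$ follows from $\en_{\rho,\sigma}(F)\in\frac{1}{l}\mathds{Z}\times\mathds{N}_0$, Proposition~\ref{pavadass}(2) (giving $\mu\ov b>0$), and $\gcd(\ov a,\ov b)=1$. For the bound~(4b), I would expand the formula $(\rho,\sigma)=\dir(\mu\ov a-l,l(\mu\ov b-1))$ from~(3) into the identity $\mu\ov b-1=\rho\mu(l\ov b-\ov a)/(l(\rho+\sigma))$; since $\gcd(\rho,l\sigma)=\gcd(\rho,l)$ (because $\gcd(\rho,\sigma)=1$), the positive integer $\rho\ov a+l\sigma\ov b$ lies in $\gcd(\rho,l)\mathds{Z}$ and so is at least $\gcd(\rho,l)$, giving $\mu\gcd(\rho,l)\le l^2(\rho+\sigma)$; combined with $\rho\le b\gcd(\rho,l)$ from~(2), this yields $\mu\ov b-1\le\ov b l(bl-a)$. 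To prove $d\nmid\mu$, assume by contradiction $\mu=dk$; then $\en_{\rho,\sigma}(F)=k\en_{\rho,\sigma}(R)$ and $v_{\rho,\sigma}(F)=kv_{\rho,\sigma}(R)$, so $F$ and $R^k$ are both $(\rho,\sigma)$-homogeneous supported on the line $v_{\rho,\sigma}=\rho+\sigma$; writing $F=xy\,\phi(t)$, $R^k=xy\,\psi(t)$ with $t:=x^{-\sigma/\gcd(\rho,l)}y^{e}$ and $\phi,\psi\in K[t]$, the identity $[F,R^k]=(k/m)R^k$ (consequence of $[F,\ell_{\rho,\sigma}(P)]=\ell_{\rho,\sigma}(P)$ and $\ell_{\rho,\sigma}(P)=\lambda_PR^m$) becomes $(e-c)t(\phi\psi'-\phi'\psi)=(k/m)\psi$ where $c:=-\sigma/\gcd(\rho,l)$ and $e-c=(\rho+\sigma)/\gcd(\rho,l)>0$. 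Rewriting as $(\phi/\psi)'=k/(m(e-c)t\psi(t))$ and observing that nonzero residues at nonzero roots of $\psi$ would produce logarithmic terms incompatible with $\phi/\psi\in K(t)$, the polynomial $\psi$ must be a monomial, hence $R$ is a monomial, contradicting $\#\factors(\mathfrak{p})>1$. The conclusion $d>1$ is immediate from $d\nmid\mu$.

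The main obstacle will be the step $d\nmid\mu$: one must carry out the partial-fraction analysis on the right-hand side of $(\phi/\psi)'=k/(m(e-c)t\psi(t))$ and argue that non-vanishing residues at nonzero roots of $\psi$ would yield logarithms incompatible with $\phi/\psi$ being rational, thus forcing $\psi$ to be a monomial.
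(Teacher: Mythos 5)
Your treatment of statements (1), (3), the integrality of $\mu$, and the upper bound on $\mu$ is essentially sound and close to the paper's, modulo some terse bookkeeping (the step ``$\mu\gcd(\rho,l)\le l^2(\rho+\sigma)$'' rests on the identity $\mu(\rho\ov a+l\sigma\ov b)=l(\rho+\sigma)$, which follows from $v_{\rho,\sigma}(\en_{\rho,\sigma}(F))=\rho+\sigma$ but you should state it; once you do, the bound actually comes out a factor of $l$ tighter than claimed, which is fine). Your proof of (2) via the lattice structure of $\Supp(R)$ is a legitimate alternative to the paper's, which instead works with $A_1'=\frac1m\st_{\rho,\sigma}(P)$ and shows $\ov\rho\mid(b-b')$; the two are nearly the same computation packaged differently.

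The genuine gap is in your proof of $d\nmid\mu$. Two problems. First, the normalization $F=xy\,\phi(t)$, $R^k=xy\,\psi(t)$ with $\phi,\psi\in K[t]$ is not available in general: $\st_{\rho,\sigma}(F)$ need not have $y$-coordinate $\ge 1$ (Proposition~\ref{pavadass}(2) only gives $v_{0,1}(\st_{\rho,\sigma}(F))<v_{0,1}(\en_{\rho,\sigma}(F))$), so you must allow $F=x^{\alpha}y^{\beta}\phi(t)$ with $(\alpha,\beta)=\st_{\rho,\sigma}(F)$, and the bracket computation changes accordingly. Second, and more seriously, the residue argument does not reach the stated conclusion. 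From $(\phi/\psi)'=-\tfrac{k/m}{(e-c)\,t\,\psi(t)}$, the fact that a derivative of a rational function has vanishing residues rules out only \emph{simple} nonzero roots of $\psi$: at a root $\lambda\ne0$ of multiplicity $m_\lambda\ge 2$ the residue of the right-hand side can vanish without any contradiction (the condition is a polynomial relation among the Taylor coefficients of $tg(t)$ at $\lambda$, where $\psi=(t-\lambda)^{m_\lambda}g$, and it is not automatically violated). So you cannot conclude from residues alone that $\psi$ is a monomial; the argument as written proves only that $\psi$ has no simple nonzero roots. In contrast, the paper disposes of $d\nmid\mu$ in one line: if $d\mid\mu$, then $R^{\mu/d}$ is a $(\rho,\sigma)$-homogeneous element with $v_{\rho,\sigma}(R^{\mu/d})=\rho+\sigma$ and $[R^{\mu/d},\ell_{\rho,\sigma}(P)]=0$ (since $[R,\ell_{\rho,\sigma}(P)]=0$ follows from $[\ell_{\rho,\sigma}(Q),\ell_{\rho,\sigma}(P)]=0$ and the Leibniz rule), and this is exactly what Proposition~\ref{pavadass}(5) rules out. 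Using that uniqueness statement directly, rather than re-deriving it via a Wronskian, is both shorter and correct.
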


\begin{proof}
Write $F=x^{1+\sigma/\rho}\mathfrak{f}(z)$, where $\mathfrak{f}(z)\in K[z]$. Note that
$\mathfrak{p}(z)=z^sp(z)$ and $\mathfrak{f}(z)=z^v f(z)$,
where $p$, $f$, $s$ and $v$ are the same as in Proposition~\ref{pavadass}. Moreover $s>0$ implies $v>0$ by
Remark~\ref{polinomio asociado f^{(l)}} and Theorem~\ref{central}(2), and so, by
Proposition~\ref{pavadass}(1), each irreducible
factor of $\mathfrak p$ divides $\mathfrak f$. Since $\# \factors(\mathfrak{p}(z))>1$, we have
$\deg(\mathfrak f)\ge 2$. Hence
$\en_{\rho,\sigma}(F)\ne (1,1)$ by Remark~\ref{polinomio asociado f^{(l)}}. Consequently, by
Theorem~\ref{central}(3), we have
$\en_{\rho,\sigma}(F)\sim \en_{\rho,\sigma}(P)$ which yields statement~(1).

Now we prove statement~(2). Let $A_1':=\frac 1m \st_{\rho,\sigma}(P)$. By Remark~\ref{a remark}(2) we have
$A_1'\in \frac 1l \mathds{Z}\times \mathds{N}_0$. Write $A_1' = (a'/l,b')$. Since $b'<b$ by
Remark~\ref{polinomio asociado f^{(l)}},
and $v_{\rho,\sigma}(a/l,b)=v_{\rho,\sigma}(a'/l,b')$, there exists $h\in\mathds{N}$, such that
$$
\left(-\frac{\sigma h l/\rho}{l},h\right)=h\left(-\frac{\sigma}{\rho},1\right) =
\left(\frac a l,b\right)-\left(\frac{a'}{l},b'\right)\in \frac 1l \mathds{Z}\times \mathds{N}_0,
$$
Hence $\rho$ divides $\sigma h l$. Set
$$
\ov \rho:=\frac{\rho}{\gcd(\rho,l)}\quad\text{and}\quad \ov l:= \frac{l}{\gcd(\rho,l)}.
$$
Clearly $\ov \rho$ divides $h \sigma \ov l$, and so $\ov \rho\mid h=b-b'$, which implies $\ov \rho\le b$, as
desired.

Statement~(3) follows from Remark~\ref{valuacion depende de extremos} and the fact that
$\en_{\rho,\sigma}(F)\ne (1,1)$.

It remains to  prove statement~(4). First note that $\mu\in \mathds{N}$, since $\ov{b}\in \mathds{N}$,
$\mu \ov{b}\in \mathds{N}$,
$\mu \ov{a}\in \mathds{Z}$ and $\gcd(\ov{a},\ov{b}) = 1$. On the other hand, by Remark~\ref{a remark} we know
that there exists
$\lambda_P,\lambda_Q\in K^{\times}$ and a $(\rho,\sigma)$-homogeneous element $R\in L^{(l)}$, such that
$$
\ell_{\rho,\sigma}(P)=\lambda_P R^m\qquad\text{and}\qquad \ell_{\rho,\sigma}(Q) = \lambda_Q R^n,
$$
which implies
$$
\en_{\rho,\sigma}(R) = (a/l,b) = d(\ov{a}/l,\ov{b}) = \frac{d}{\mu} \en_{\rho,\sigma}(F).
$$
Next we prove that $d\nmid \mu$. In fact, if we assume that $d|\mu$, then we have
$$
v_{\rho,\sigma}(R^{\mu/d}) = v_{\rho,\sigma}(F)=\rho+\sigma\qquad\text{and}\qquad
[R^{\mu/d},\ell_{\rho,\sigma}(P)]=0,
$$
where the last equality follows from the fact that $[-,-]$ is a Poisson bracket and
$[R^n,\ell_{\rho,\sigma}(P)]=0$. But this
contradicts Proposition~\ref{pavadass}(5) and proves $d\nmid \mu$. From this it follows immediately that $d>1$.
Finally we prove that $\mu\le l(bl-a)+1/\ov b$. Since
$$
(\mu \ov a -l) - (\mu \ov b l-l) = \mu (\ov a - \ov b l) = \frac{\mu}{d}(a-b l)< 0,
$$
from equalities~\eqref{val} and statement~(3) it follows that
$$
\rho=\frac{(\mu\ov b-1)l}{d_1},\quad\text{where $d_1:=\gcd(\mu \ov a -l,\mu \ov b l-l)$.}
$$
Now note that $d_1$ divides $\ov b l(\mu \ov a -l)-\ov a(\mu \ov b l-l)=l(\ov a-\ov b l)$, and therefore
$$
d_1\le l(\ov b l-\ov a),
$$
since $\ov b l-\ov a>0$. Hence, by statement~(2),
$$
b\ge \frac{\rho}{\gcd(\rho,l)}\ge \frac {\rho}{l}=\frac{(\mu\ov b-1)}{d_1}\ge
\frac{(\mu\ov b-1)}{l(\ov b l-\ov a)},
$$
which implies $\mu\ov b -1 \le bl(\ov b l -\ov a) = \ov b l(bl-a)$, as desired.
\end{proof}

\begin{figure}[htb]
\centering
\begin{tikzpicture}
%
\draw[step=.25cm,gray,very thin] (0,0) grid (3.1,3.6);
\draw [thick]  (0.625,0) -- (0.833,0.833) node[fill=white,right=2pt]{\tiny{$\lambda$(1,1)}}
--(1.25,2.5) node[fill=white,right=2pt]{\tiny{$m(a/l,b)$}} -- (1.3,3);
\draw [->] (0,0) -- (3.3,0) node[anchor=north]{$x$};
\draw [->] (0,0) --  (0,3.8) node[anchor=east]{$y$};
\draw[dotted] (0,0) -- (2.3,2.3);
\filldraw [gray]  (0.833,0.833)    circle (1.5pt)
                  (1.25,2.5)    circle (1.5pt);
\draw [->,thick] (1.5,2) -- (2.5,1.75) node[fill=white,anchor=west]{\tiny{$(\rho,\sigma)$}};
\end{tikzpicture}
\caption{Remark~\ref{lambda en la diagonal} with $\lambda=
\frac ml\left(\frac{a\rho+bl\sigma}{\rho+\sigma}\right)$.}
\end{figure}
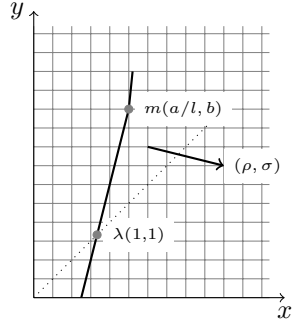

\begin{remark}\label{lambda en la diagonal}
Let $((a/l,b),(\rho,\sigma))$ be a regular corner of an $(m,n)$-pair $(P,Q)$ and let $L$ be the straight line
that includes
$\Supp(\ell_{\rho,\sigma}(P))$. The intersection of $L$ with the diagonal $x=y$ is the point
$$
\lambda(1,1),\quad\text{where}\quad \lambda=\frac ml\left(\frac{a\rho+bl\sigma}{\rho+\sigma}\right).
$$
In fact
$$
\lambda(\rho+\sigma)
=v_{\rho,\sigma}\bigl(\lambda(1,1)\bigr)=v_{\rho,\sigma}(\ell_{\rho,\sigma}(P))=v_{\rho,\sigma}(m(a/l,b))=\frac
ml(a\rho+ bl\sigma),
$$
from which the assertion follows.
\end{remark}

The following proposition about multiplicities can be traced back to~\cite{J}*{Corollary 2.6(2)}.
The algebraic parallel is not so clear, but the geometric meaning, which will be proved in
Proposition~\ref{encima de la diagonal}, is that one can cut the support of
$\ell_{\rho,\sigma}(P)$ above the diagonal.

\begin{proposition}[\textrm{Case II.b)}]\label{case IIb}
Let $(P,Q)$ and $((a/l,b),(\rho,\sigma))$ be as in Proposition~\ref{case II}. Assume that
$[\ell_{\rho,\sigma}(P),\ell_{\rho,\sigma}(Q)]= 0$ and write $\ell_{\rho,\sigma}(P)=x^{k/l}\mathfrak p(z)$
where $z:=x^{-\sigma/\rho}y$ and
$\mathfrak p(z)\in K[z]$. If $\# \factors(\mathfrak{p}(z))\!>\!1$ and  $v_{1,-1}(\st_{\rho,\sigma}(P))\!>\!0$,
then there exists $\lambda\in K^{\times}$ such that $z-\lambda$ has mul\-tiplicity
$$
m_\lambda\ge \frac ml\left(\frac{a\rho+bl\sigma}{\rho+\sigma}\right)
$$
in $\mathfrak{p}(z)$.
\end{proposition}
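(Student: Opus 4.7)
The plan is to extract a first-order linear ODE in $z$ from the identity $[F,\ell_{\rho,\sigma}(P)]=\ell_{\rho,\sigma}(P)$ given by Theorem~\ref{central}, and then read off the desired multiplicity inequality via a partial-fraction/residue argument. Writing $F=x^{(\rho+\sigma)/\rho}\mathfrak{F}(z)$ and $\ell_{\rho,\sigma}(P)=x^{v_{\rho,\sigma}(P)/\rho}\mathfrak{p}(z)$, and computing the bracket exactly as in the proof of Proposition~\ref{pavadass}, a direct calculation gives
\[
\mathfrak{F}(z)\,\mathfrak{p}'(z) - M\,\mathfrak{F}'(z)\,\mathfrak{p}(z) = \frac{\rho}{\rho+\sigma}\,\mathfrak{p}(z),
\]
where $M:=v_{\rho,\sigma}(P)/(\rho+\sigma)=\frac{m}{l}\cdot\frac{a\rho+bl\sigma}{\rho+\sigma}$ is precisely the multiplicity appearing in the statement and, by Remark~\ref{lambda en la diagonal}, equals the $y$-coordinate of the intersection of the support line of $\ell_{\rho,\sigma}(P)$ with the diagonal.

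Dividing by $\mathfrak{F}\mathfrak{p}$ produces the logarithmic-derivative identity $\mathfrak{p}'/\mathfrak{p}-M\,\mathfrak{F}'/\mathfrak{F}=\rho/((\rho+\sigma)\mathfrak{F})$. Writing $\mathfrak{p}(z)=z^{s}p(z)$ and $\mathfrak{F}(z)=z^{v}f(z)$ with $p(0),f(0)\neq 0$, I let $\lambda_{1},\dots,\lambda_{N}$ be the distinct nonzero roots of $p$ with multiplicities $m_{1},\dots,m_{N}$ in $\mathfrak{p}$ and $\mu_{1},\dots,\mu_{E}$ the extra nonzero simple roots of $f$. By Proposition~\ref{pavadass}(1), $f$ is separable and each $\lambda_{i}$ is a simple root of $f$, so the only possibly non-simple root of $\mathfrak{F}$ is $z=0$. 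Matching partial fractions at each pole then forces $v\leq 1$ (a pole of order $v$ on the right cannot match the simple poles on the left) and yields
\[
m_{i}-M=\frac{\rho}{(\rho+\sigma)\,\mathfrak{F}'(\lambda_{i})},\qquad -M=\frac{\rho}{(\rho+\sigma)\,\mathfrak{F}'(\mu_{j})},
\]
together with $s-Mv=\rho/((\rho+\sigma)f(0))$ at $z=0$ when $v=1$; when $v=0$ the same matching forces $s=0$.

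The conclusion will then follow by a sum-of-residues argument. The hypothesis $\#\factors(\mathfrak{p})>1$ forces $N\geq 2$ when $s=0$ and $N\geq 1$ when $s\geq 1$; in the latter case $v=1$ by the implication $s>0\Rightarrow v>0$ noted in the proof of Proposition~\ref{case II}, so in every case $\deg\mathfrak{F}=v+N+E\geq 2$. Consequently $1/\mathfrak{F}$ vanishes at infinity to order at least~$2$ and its sum of finite residues is zero, so equating with the sum of finite residues of the left-hand side gives
\[
\sum_{i=1}^{N}(m_{i}-M)=Mv-s+ME.
\]
The hypothesis $v_{1,-1}(\st_{\rho,\sigma}(P))>0$ places $\st_{\rho,\sigma}(P)=(r/l,s)$ strictly below the diagonal, so $s$ is strictly less than the $y$-coordinate $M$ of the diagonal-crossing of the support line; this makes the right-hand side strictly positive whenever $v=1$ or $E\geq 1$, forcing some $m_{i}>M$. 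In the only remaining subcase $s=v=E=0$ the sum vanishes, but then $N\geq 2$ and if every $m_{i}$ were strictly less than $M$ the sum would be strictly negative, so some $m_{i}\geq M$ as well. In all cases I obtain $\lambda\in K^{\times}$ with $m_{\lambda}\geq M$, which is the required conclusion. The main delicate point I foresee is the $z=0$ analysis that forces $v\leq 1$ and the clean handling of the three subcases of $(s,v)$, together with verifying $\deg\mathfrak{F}\geq 2$ so that the sum of finite residues of $1/\mathfrak{F}$ actually vanishes.
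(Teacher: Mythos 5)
Your proof is correct but takes a genuinely different route from the paper's. The paper's argument is a two-step pigeonhole: since each irreducible factor of $\mathfrak p$ divides $\mathfrak f$ (recorded in the proof of Proposition~\ref{case II} via Proposition~\ref{pavadass}(1)), $\mathfrak p$ has at most $\deg\mathfrak f$ distinct roots and hence some root of multiplicity at least $\deg\mathfrak p/\deg\mathfrak f$; the paper then computes this quotient from the alignment $\en_{\rho,\sigma}(F)\sim\en_{\rho,\sigma}(P)$ of Proposition~\ref{case II}(1) together with $v_{\rho,\sigma}(F)=\rho+\sigma$. You instead extract the first-order relation $\mathfrak F\mathfrak p'-M\mathfrak F'\mathfrak p=\tfrac{\rho}{\rho+\sigma}\mathfrak p$ from $[F,\ell_{\rho,\sigma}(P)]=\ell_{\rho,\sigma}(P)$, pass to the logarithmic-derivative form, and use the vanishing of the global residue sum of $\rho/((\rho+\sigma)\mathfrak F)$. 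Your identity $\sum_i(m_i-M)=Mv-s+ME$ then packages both the degree relation $\deg\mathfrak p=M\deg\mathfrak F$ (which the paper reads off the endpoint alignment) and the averaging/pigeonhole step, and your case split on $(s,v,E)$ unwinds it. One genuine advantage of your route is that the exclusion of $\lambda=0$ is made fully explicit: you show $s<M$ from $v_{1,-1}(\st_{\rho,\sigma}(P))>0$ -- exactly the geometric content of Remark~\ref{lambda en la diagonal} -- whereas the paper's pigeonhole could a priori return the root $0$ and leaves that exclusion tacit. The cost is length: the detour through $v\le1$ and the three $(s,v,E)$ subcases is machinery the paper avoids by simply citing the alignment of Proposition~\ref{case II}(1) (or, equivalently, by comparing top-degree coefficients in the same ODE). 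Both arguments are sound.
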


\begin{proof}
Let $F$ be as in Theorem~\ref{central} and write $F=x^{1+\sigma/\rho}\mathfrak{f}(z)$. In the proof of
Proposition~\ref{case II} it was shown that
each irreducible factor of $\mathfrak p$ divides $\mathfrak f$. Hence, there is a linear factor of
$\mathfrak{p}$ with multiplicity greater than
or equal to $\deg(\mathfrak{p})/\deg(\mathfrak{f})$. Since $\en_{\rho,\sigma}(P)=(ma/l,mb)$, it follows
from Remark~\ref{polinomio asociado f^{(l)}}, that $\deg(\mathfrak{p}) = mb$. Similarly, if we write
$\en_{\rho,\sigma}(F)=(M_0,M)$, then $M= \deg(\mathfrak{f})$, an so $\deg(\mathfrak{p})/\deg(\mathfrak{f}) =
mb/M$. Consequently, in order to finish the proof it suffices to check that
\begin{equation}\label{cociente}
    \frac{mb}{M}=\frac ml\left(\frac{a\rho+bl\sigma}{\rho+\sigma}\right).
\end{equation}
Since
$$
\rho+\sigma=v_{\rho,\sigma}(F)=\rho M_0+\sigma M,
$$
we have
\begin{equation*}
    M_0=\frac{1}{\rho}(\rho+\sigma-\sigma M).
\end{equation*}
Hence, by Proposition~\ref{case II}(1),
\begin{equation*}
    \frac{a}{bl}=\frac{M_0}{M}=\frac{\rho+\sigma-\sigma M}{\rho M},
\end{equation*}
which implies
$$
M=\frac{bl(\rho+\sigma)}{a\rho +bl\sigma}.
$$
Therefore equality~\eqref{cociente} is true.
\end{proof}

\begin{proposition}[\textrm{Case III}]\label{case III}
Let $(P,Q)$ be an $(m,n)$-pair in $L^{(l)}$ and let $((a/l,b),(\rho,\sigma))$ be a regular corner
of $(P,Q)$.
Assume that $[\ell_{\rho,\sigma}(P),\ell_{\rho,\sigma}(Q)]= 0$ and write $\ell_{\rho,\sigma}(P)=x^{k/l}
\mathfrak p(z)$ where $z:=x^{-\sigma/\rho}y$
and $\mathfrak p(z)\in K[z]$. If there exist $\mu,\lambda\!\in\! K^{\times}$ and $r\!\in\! \mathds{N}$,
such that
$\mathfrak p(z)\!=\!\mu(z-\lambda)^r$, then $\rho\!\mid\! l$. Moreover, the  au\-to\-mor\-phism $\varphi$ of
$L^{(l)}$, defined by
$\varphi(x^{1/l}):=x^{1/l}$ and $\varphi(y):= y+\lambda x^{\sigma/\rho}$, satisfies
\begin{enumerate}

\smallskip

\item $\en_{\rho,\sigma}(\varphi(P))= \en_{\rho,\sigma}(P)$ and for all
    $(\rho,\sigma)<(\rho'',\sigma'') < (-\rho,-\sigma)$ the equalities
$$
\ell_{\rho'',\sigma''}(\varphi(P))=\ell_{\rho'',\sigma''}(P)\quad\text{and}\quad
\ell_{\rho'',\sigma''}(\varphi(Q))=\ell_{\rho'',\sigma''}(Q),
$$
hold.

\smallskip

\item $(\varphi(P),\varphi(Q))$ is an $(m,n)$-pair in $L^{(l)}$.

\smallskip

\item $((a/l,b),(\rho',\sigma'))$ is a regular corner of $(\varphi(P),\varphi(Q))$, where
    $(\rho',\sigma'):=\Pred_{\varphi(P)}(\rho,\sigma)$.

\smallskip

\item $(a/l,b) = \frac{1}{m}\st_{\rho,\sigma}(\varphi(P))$.

\end{enumerate}
\end{proposition}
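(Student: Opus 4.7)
The plan is to reduce the bulk of the claim to Proposition~\ref{pr ell por automorfismos} after proving $\rho\mid l$, and then to exploit the fact that $\ell_{\rho,\sigma}(\varphi(P))$ becomes a monomial, together with earlier results of Section~5, to establish the new regular corner.

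\emph{Divisibility and statements~(1), (4).} Expanding $\mathfrak{p}(z)=\mu(z-\lambda)^r$ by the binomial theorem, each coefficient is nonzero since $\lambda\ne 0$, so every monomial $x^{k/l-j\sigma/\rho}y^j$ with $0\le j\le r$ appears in $\ell_{\rho,\sigma}(P)\subseteq L^{(l)}$. Taking $j=1$ gives $\sigma/\rho\in\frac{1}{l}\mathds{Z}$, which combined with $\gcd(\rho,\sigma)=1$ forces $\rho\mid l$. Setting $k:=\sigma l/\rho\in\mathds{Z}$, the map $\varphi$ coincides with the automorphism of Proposition~\ref{pr ell por automorfismos} and its associated direction is exactly $(\rho,\sigma)$. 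Using $\varphi(z)=z+\lambda$ one computes $\varphi(\ell_{\rho,\sigma}(P))=\mu x^{k/l}z^r=\mu x^{ma/l}y^{mb}$, a monomial; Proposition~\ref{pr ell por automorfismos} yields statement~(1) directly, and the monomial form of $\ell_{\rho,\sigma}(\varphi(P))$ implies $\st_{\rho,\sigma}(\varphi(P))=\en_{\rho,\sigma}(\varphi(P))=m(a/l,b)$, which is statement~(4).

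\emph{Statement~(2).} The identity $[\varphi(x),\varphi(y)]=1$ and Proposition~\ref{varphi preserva el Jacobiano} give $[\varphi(P),\varphi(Q)]=[P,Q]\in K^{\times}$. Statement~(1) preserves $v_{1,1}$ since $(1,1)$ lies strictly in the arc $((\rho,\sigma),(-\rho,-\sigma))$, and preserves $v_{1,0}$ when $(\rho,\sigma)<(1,0)$; in the case $(\rho,\sigma)=(1,0)$ the explicit monomial $\ell_{1,0}(\varphi(P))=\mu x^{ma/l}y^{mb}$ handles $v_{1,0}$ directly. Finally $\en_{1,0}(\varphi(P))$ equals either $\en_{1,0}(P)$ or $m(a/l,b)$, and both satisfy $v_{1,-1}<0$ (the former by the $(m,n)$-pair hypothesis, the latter since $b>a/l$).

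\emph{Statement~(3) and the main obstacle.} Conditions~(1) and~(2) of Definition~\ref{def regular corner} are inherited and immediate. For condition~(3), the monomial $\ell_{\rho,\sigma}(\varphi(P))$ shows $(\rho,\sigma)\notin\Dir(\varphi(P))$, placing $(\rho,\sigma)$ strictly between the consecutive elements $(\rho',\sigma')$ and $\Succ_{\varphi(P)}(\rho,\sigma)$ of $\Dir(\varphi(P))$; Proposition~\ref{le basico} then gives $\en_{\rho',\sigma'}(\varphi(P))=m(a/l,b)$. The remaining and most delicate point is to verify $(\rho',\sigma')\in I$, which I would split into two cases. If either $(\rho,\sigma)=(1,0)$ or $\Succ_P(\rho,\sigma)>(1,0)$, then by statement~(1) and Proposition~\ref{le basico} applied in $P$, $\ell_{1,0}(\varphi(P))$ is a monomial supported at $m(a/l,b)$, so $\en_{1,0}(\varphi(P))=m(a/l,b)$; Remark~\ref{first component} applied to the $(m,n)$-pair $(\varphi(P),\varphi(Q))$ then produces a regular corner at $(a/l,b)$ whose direction must equal $\Pred_{\varphi(P)}(1,0)=(\rho',\sigma')$, since $\Dir(\varphi(P))\cap((\rho,\sigma),(1,0)]=\emptyset$. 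Otherwise $(\rho_1,\sigma_1):=\Succ_P(\rho,\sigma)\in I$ and by statement~(1), $(\rho_1,\sigma_1)\in\Dir(\varphi(P))$ with $\st_{\rho_1,\sigma_1}(\varphi(P))=m(a/l,b)$; the valuations $v_{0,-1}(m(a/l,b))=-mb\le-2$ and $v_{1,-1}(m(a/l,b))<0$ place $(\rho_1,\sigma_1)$ in $A(\varphi(P))$, so Remarks~\ref{estamos en Case IIa} and~\ref{Case IIa consecuencia} applied to $\varphi(P)$ produce the regular corner $((a/l,b),\Pred_{\varphi(P)}(\rho_1,\sigma_1))=((a/l,b),(\rho',\sigma'))$, the last equality following because $\Dir(\varphi(P))$ is empty in $((\rho,\sigma),(\rho_1,\sigma_1))$.
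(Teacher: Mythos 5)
Your proposal is correct and follows essentially the paper's proof: the divisibility argument, the reduction of statements (1) and (4) to Proposition~\ref{pr ell por automorfismos} and the explicit monomial form of $\ell_{\rho,\sigma}(\varphi(P))$, and the use of Proposition~\ref{varphi preserva el Jacobiano} for statement (2) all coincide with the paper. The one place you diverge slightly is in verifying $(\rho',\sigma')\in I$ in statement~(3): the paper reduces this to an inequality $(\rho_1,\sigma_1)\le\Succ_{\varphi(P)}(\rho,\sigma)$ via Proposition~\ref{esquinas regulares} (dismissing one subcase as ``evident''), whereas you argue directly by cases, invoking Remark~\ref{first component} when $\Succ_P(\rho,\sigma)>(1,0)$ or $(\rho,\sigma)=(1,0)$, and placing $\Succ_P(\rho,\sigma)$ in $A(\varphi(P))$ and using Remarks~\ref{estamos en Case IIa} and~\ref{Case IIa consecuencia} otherwise. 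Your route is a bit more explicit and self-contained in that subargument, but it rests on the same underlying facts and is not a genuinely different proof.
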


\begin{proof}
Clearly the conditions imply that
$$
\ell_{\rho,\sigma}(P)=x^{k/l}\mu\left(\lambda^r-\binom r1\lambda^{r-1}z+\cdots\right).
$$
Hence $(k/l-\sigma/\rho,1)\in \Supp(\ell_{\rho,\sigma}(P))\subseteq\frac 1l \mathds{Z}\times \mathds{N}_0$.
So $\sigma/\rho\in \frac 1l\mathds{Z}$,
which evidently implies $\rho|l$, because $\gcd(\rho,\sigma)=1$. From
Proposition~\ref{pr ell por automorfismos}
 we obtain statement~(1). Statement~(2)
follows easily from Proposition~\ref{varphi preserva el Jacobiano}, statement~(1) and the fact that by
Proposition~\ref{pr ell por automorfismos} we
know that $\en_{1,0}(\varphi(P))\!=\! \en_{1,0}(P)$, even in the case where $(\rho,\sigma)\! =\! (1,0)$.
Finally, statements~(3) and~(4) follow from
Pro\-po\-si\-tions~\ref{le basico}, \ref{pr ell por automorfismos} and~\ref{esquinas regulares}
\endnote{
Clearly items~(1) and~(2) of
Definition~\ref{def regular corner} are fulfilled. So, we have to prove only that
$$
\frac 1m\en_{\rho',\sigma'}(\varphi(P)) = (a/l,b) \qquad\text{and}\qquad (\rho',\sigma')\in I.
$$
By Proposition~\ref{pr ell por automorfismos},
$$
\ell_{\rho,\sigma}(\varphi(P)) = \varphi\bigl(\ell_{\rho,\sigma}(P)\bigr) = \mu x^{\frac
kl}\Bigl(x^{-\frac{\sigma}{\rho}} \bigl(y+\lambda
x^{\frac{\sigma}{\rho}}\bigr) - \lambda\Bigr)^r = \mu x^{\frac kl-\frac{\sigma r}{\rho}}y^r,
$$
and so, by Propositions~\ref{le basico} and~\ref{pr ell por automorfismos},
$$
\frac 1m\en_{\rho',\sigma'}(\varphi(P)) = \frac 1m\st_{\rho,\sigma}(\varphi(P)) = \frac
1m\en_{\rho,\sigma}(\varphi(P)) = \frac 1m\en_{\rho,\sigma}(P)
= (a/l,b),
$$
as desired. It remains to verify that $(\rho',\sigma')\in I$. Since
$(\rho',\sigma')=\Pred_{\varphi(P)}(\rho,\sigma)
<(\rho,\sigma)\le (1,0)$
we have only to prove that $(1,-1)< \Pred_{\varphi(P)}(\rho,\sigma)$. Let
$(\rho_2,\sigma_2):=\Succ_{\varphi(P)}(\rho,\sigma)$. By Proposition~\ref{esquinas regulares}, in order to
obtain this inequality it is enough to show that $(\rho_1,\sigma_1) \le (\rho_2,\sigma_2)$,
where
$$
(\rho_1,\sigma_1):=\begin{cases} \min\bigl(A(\varphi(P))\bigr) & \text{if $A(\varphi(P))\ne\emptyset$,}\\
\min\bigl(\Succ_{\varphi(P)}(1,0),\Succ_{\varphi(Q)}(1,0)\bigr) & \text{if
$A(\varphi(P))=\emptyset$.}\end{cases}
$$
When $(\rho_2,\sigma_2)\notin I$ this is evident. Assume that $(\rho_2,\sigma_2)\in I$ and set $J:=
\{(\rho'',\sigma''): (\rho,\sigma)< (\rho'',\sigma'')\le  (-\rho,-\sigma)\}$. By
Proposition~\ref{pr ell por automorfismos},
$$
(\rho_2,\sigma_2)=\Succ_{\varphi(P)}(\rho,\sigma) = \Succ_{P}(\rho,\sigma)\qquad\text{and}\qquad
A(P)\cap J = A(\varphi(P))\cap J,
$$
while by Proposition~\ref{le basico}
$$
\st_{\rho_2,\sigma_2}(P) = \en_{\rho,\sigma}(P) = m(a/l,b).
$$
Consequently $(\rho_2,\sigma_2)\in A(P)\cap J \subseteq A(\varphi(P))$, and so $(\rho_1,\sigma_1) \le
(\rho_2,\sigma_2)$ follows.
}.
\end{proof}

By Proposition~\ref{case IIb} the hypotheses of the next proposition are always fulfilled in Case~II.b).
Sometimes they are fulfilled in Case~II.a).

\begin{proposition}[\textrm{Case II}]\label{encima de la diagonal}

Let $(P,Q)$ and $((a/l,b),(\rho,\sigma))$ be as in Proposition~\ref{case II} and let $l'\!:=\lcm(\rho,l)$.
Assume that $[\ell_{\rho,\sigma}(P),\ell_{\rho,\sigma}(Q)]\!=\! 0$ and write $\ell_{\rho,\sigma}(P)\!=x^{k/l}
\mathfrak p(z)$ where $z\!:=x^{-\sigma/\rho}y$ and $\mathfrak p(z)\in K[z]$.
Assume also that $\# \factors(\mathfrak{p}(z))>1$
and that there exists $\lambda\in K^{\times}$ such that the multiplicity
$m_\lambda$ of $z-\lambda$ in $\mathfrak{p}(z)$, satisfies
\begin{equation}\label{mlambda}
m_\lambda\ge\frac ml\left(\frac{a\rho+bl\sigma}{\rho+\sigma}\right).
\end{equation}
Define $\varphi\in\Aut(L^{(l')})$ by $\varphi(x^{1/l'}):=x^{1/l'}$ and $\varphi(y):=y+\lambda x^{\sigma/\rho}$,
 and set
$$
A^{(1)}:=\frac 1m\st_{\rho,\sigma}(\varphi(P))\qquad\text{and}\qquad
(\rho',\sigma'):=\Pred_{\varphi(P)}(\rho,\sigma).
$$
Then

\begin{enumerate}

\smallskip

\item We have $\en_{\rho,\sigma}(\varphi(P))= \en_{\rho,\sigma}(P)$ and for all $(\rho,\sigma)
    <(\rho'',\sigma'') < (-\rho,-\sigma)$ the equalities
$$
\ell_{\rho'',\sigma''}(\varphi(P))=\ell_{\rho'',\sigma''}(P)\quad\text{and}\quad\ell_{\rho'',\sigma''}(\varphi(Q))=\ell_{\rho'',\sigma''}(Q)
$$
hold.

\smallskip

\item $(\varphi(P),\varphi(Q))$ is an $(m,n)$-pair in $L^{(l')}$.

\smallskip

\item $(\rho,\sigma)\in \Dir(\varphi(P))$, $\st_{\rho,\sigma}(\varphi(P))=\Bigl(\frac kl,0\Bigr)+
    m_\lambda\Bigl(-\frac{\sigma}{\rho},1\Bigr)$ and $m\mid m_\lambda$.

\smallskip

\item $(A^{(1)},(\rho',\sigma'))$ and $((a/l,b),(\rho,\sigma))$ are regular corners of $(\varphi(P),\varphi(Q))$.
The second one is of type~IIa).

\end{enumerate}
\end{proposition}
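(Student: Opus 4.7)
The plan is to transport everything to the ring $L^{(l')}$ and track the effect of the shear $\varphi(y)=y+\lambda x^{\sigma/\rho}$ step by step. Since $l'=\lcm(\rho,l)$ we have $\rho\mid l'$, so $\sigma/\rho=(\sigma l'/\rho)/l'$ with $\sigma l'/\rho\in\mathds{Z}$. Thus $\varphi$ is exactly an automorphism of the form treated in Proposition~\ref{pr ell por automorfismos} applied to $L^{(l')}$ with parameter $k=\sigma l'/\rho$; the associated primitive direction is $(\rho,\sigma)$ itself since $\gcd(\rho,\sigma)=1$. Statement~(1) is therefore immediate. Statement~(2) then follows at once: by Proposition~\ref{varphi preserva el Jacobiano},
\[
[\varphi(P),\varphi(Q)]=\varphi([P,Q])[\varphi(x),\varphi(y)]=[P,Q]\cdot 1\in K^{\times},
\]
while statement~(1), combined with $(\rho,\sigma)<(1,1)$, $(\rho,\sigma)\le(1,0)$ and the moreover clause $\en_{1,0}(\varphi(P))=\en_{1,0}(P)$, preserves $v_{1,1}$ and $v_{1,0}$ of both $P$ and $Q$ and the sign of $v_{1,-1}(\en_{1,0}(\varphi(P)))$ required by Definition~\ref{Smp}.

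For statement~(3), I would compute $\ell_{\rho,\sigma}(\varphi(P))=\varphi(\ell_{\rho,\sigma}(P))=x^{k/l}\mathfrak{p}(z+\lambda)$ using $\varphi(z)=z+\lambda$. Writing $\mathfrak{p}(z)=(z-\lambda)^{m_\lambda}q(z)$ with $q(\lambda)\neq 0$, the hypothesis $\#\factors(\mathfrak{p})>1$ forces $\deg q\ge 1$, so $\mathfrak{p}(z+\lambda)=z^{m_\lambda}q(z+\lambda)$ where $q(z+\lambda)$ is a nonconstant polynomial with nonzero constant term $q(\lambda)$. Hence $\ell_{\rho,\sigma}(\varphi(P))$ is not a monomial, so $(\rho,\sigma)\in\Dir(\varphi(P))$, and its monomial of lowest $y$-degree yields $\st_{\rho,\sigma}(\varphi(P))=(k/l,0)+m_\lambda(-\sigma/\rho,1)$. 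For $m\mid m_\lambda$: statement~(1) gives $v_{\rho,\sigma}(\varphi(P))=v_{\rho,\sigma}(P)>0$ and likewise for $\varphi(Q)$, so since $[\varphi(P),\varphi(Q)]\in K^{\times}$ Proposition~\ref{pr v de un conmutador} forces $[\ell_{\rho,\sigma}(\varphi(P)),\ell_{\rho,\sigma}(\varphi(Q))]=0$; then Remark~\ref{a remark}(2) yields $\st_{\rho,\sigma}(\varphi(P))=\tfrac{m}{n}\st_{\rho,\sigma}(\varphi(Q))$, and comparing second coordinates together with $\gcd(m,n)=1$ gives $m\mid m_\lambda$.

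The main obstacle is the strict inequality $v_{1,-1}(\st_{\rho,\sigma}(\varphi(P)))<0$ needed for statement~(4). By Remark~\ref{lambda en la diagonal} the line carrying $\Supp(\ell_{\rho,\sigma}(P))$ meets the diagonal at $\lambda_0(1,1)$ with $\lambda_0=\frac{m}{l}\cdot\frac{a\rho+bl\sigma}{\rho+\sigma}$, and since $k/l$ is the $x$-intercept of this same line one computes $\lambda_0(\rho+\sigma)/\rho=k/l$. Hence the hypothesis $m_\lambda\ge\lambda_0$ rewrites as
\[
v_{1,-1}(\st_{\rho,\sigma}(\varphi(P)))=\frac{k}{l}-m_\lambda\frac{\rho+\sigma}{\rho}\le 0.
\]
Equality would give $\st_{\rho,\sigma}(\varphi(P))\sim(1,1)$, contradicting Theorem~\ref{central}(4) applied to the Jacobian pair $(\varphi(P),\varphi(Q))$ in direction $(\rho,\sigma)$, so the inequality is strict.

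With this in hand, statement~(4) is routine. For the corner $((a/l,b),(\rho,\sigma))$: conditions~(1) and~(2) of Definition~\ref{def regular corner} are unchanged, condition~(3) is $\en_{\rho,\sigma}(\varphi(P))=\en_{\rho,\sigma}(P)=m(a/l,b)$ from statement~(1), and the type~IIa classification holds because the bracket vanishes, $\mathfrak{p}(z+\lambda)=z^{m_\lambda}q(z+\lambda)$ has more than one distinct irreducible factor (as $\deg q\ge 1$ and $q(\lambda)\ne 0$), and $v_{1,-1}(\st_{\rho,\sigma}(\varphi(P)))<0$. For $(A^{(1)},(\rho',\sigma'))$: the second coordinate of $A^{(1)}$ equals $m_\lambda/m\ge 1$ using $m\mid m_\lambda$, the inequality $b^{(1)}>a^{(1)}/l'$ is exactly $v_{1,-1}(\st_{\rho,\sigma}(\varphi(P)))<0$, Proposition~\ref{le basico} yields $\en_{\rho',\sigma'}(\varphi(P))=\st_{\rho,\sigma}(\varphi(P))$, and $(\rho',\sigma')\in I$ follows because $(\rho,\sigma)\in A(\varphi(P))$: either $(\rho,\sigma)=\min A(\varphi(P))$ and Proposition~\ref{esquinas regulares} applies, or some smaller element of $A(\varphi(P))\subseteq I$ already forces $(\rho',\sigma')>(1,-1)$.
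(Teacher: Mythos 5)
Your proof is correct and follows essentially the same route as the paper's: transport of leading forms via Proposition~\ref{pr ell por automorfismos}, the explicit factorization $\mathfrak{p}(z)=(z-\lambda)^{m_\lambda}q(z)$ giving $\st_{\rho,\sigma}(\varphi(P))$, the inequality $v_{1,-1}(\st_{\rho,\sigma}(\varphi(P)))\le 0$ from~\eqref{mlambda} (you phrase it via Remark~\ref{lambda en la diagonal} whereas the paper substitutes $k\rho/l=v_{\rho,\sigma}(P)=\tfrac{m}{l}(a\rho+bl\sigma)$ directly, but this is the same computation), Theorem~\ref{central}(4) to exclude equality, and Remark~\ref{a remark}(2) for $m\mid m_\lambda$. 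The only cosmetic differences are that the paper derives $[\ell_{\rho,\sigma}(\varphi(P)),\ell_{\rho,\sigma}(\varphi(Q))]=0$ by transporting the vanishing bracket directly (you instead invoke Proposition~\ref{pr v de un conmutador}, which is fine once one notes that $v_{\rho,\sigma}(P)+v_{\rho,\sigma}(Q)>\rho+\sigma$ already follows from the original hypothesis $[\ell_{\rho,\sigma}(P),\ell_{\rho,\sigma}(Q)]=0$), and that for statement~(4) the paper reduces everything to checking $(\rho,\sigma)\in A(\varphi(P))$ and then cites Remarks~\ref{estamos en Case IIa} and~\ref{Case IIa consecuencia}, whereas you unfold those remarks and verify Definition~\ref{def regular corner} by hand.
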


\begin{proof} Statements~(1) and~(2) follows as in the proof of Proposition~\ref{case III}. Now we are going
to prove item~(3). For this we write $\mathfrak{p}(z)=(z-\lambda)^{m_\lambda}\ov p(z)$ with
$\ov p(\lambda)\ne 0$. Since
$$
\varphi(z)=\varphi(x^{-\sigma/\rho})\varphi(y)=x^{-\sigma/\rho}(y+\lambda x^{\sigma/\rho}) =z+\lambda,
$$
by Proposition~\ref{pr ell por automorfismos}, we have
$$
\ell_{\rho,\sigma}(\varphi(P))=\varphi(\ell_{\rho,\sigma}(P))=\varphi(x^{k/l}\mathfrak{p}(z))=
x^{k/l}\varphi((z-\lambda)^{m_\lambda}\ov p(z))= x^{k/l}z^{m_\lambda}\ov p(z+\lambda),
$$
which implies that $(\rho,\sigma)\in \Dir(\varphi(P))$, because
$$
\#\factors(z^{m_\lambda}\ov p(z+\lambda)) = \#\factors(\mathfrak{p}(z))>1.
$$
Moreover, since $\ov p(\lambda)\ne 0$, from the first equality in~\eqref{eq57} it follows that
$$
\st_{\rho,\sigma}(\varphi(P))=(k/l,0)+m_\lambda(-\sigma/\rho,1),
$$
and so statement~(3) holds. By statement~(2) and Remarks~\ref{estamos en Case IIa}
and~\ref{Case IIa consecuencia}, in order to prove statement~(4) it suffices to verify that $(\rho,\sigma)\in
A(\varphi(P))$. Since $(\rho,\sigma)\in \Dir(\varphi(P))\cap I$ we only must check that
\begin{equation}
v_{1,-1}(\st_{\rho,\sigma}(\varphi(P)))<0\qquad\text{and}\qquad
v_{0,-1}(\st_{\rho,\sigma}(\varphi(P)))<-1.\label{e3}
\end{equation}
Since
$$
\frac{k\rho}{l}=v_{\rho,\sigma}\Bigl(\frac kl,0\Bigr)=
v_{\rho,\sigma}(P)=v_{\rho,\sigma}\Bigl(\frac{ma}{l},mb\Bigr)=\frac ml(a\rho+bl\sigma),
$$
by inequality~\eqref{mlambda}, we have
$$
v_{1,-1}(\st_{\rho,\sigma}(\varphi(P)))=\frac kl-m_\lambda\left(\frac{\sigma}{\rho}+1\right)\le
\frac{m}{\rho l}(a\rho+bl\sigma)-\frac
ml\left(\frac{a\rho+bl\sigma}{\rho+\sigma}\right)\left(\frac{\rho+\sigma}{\rho}\right)=0.
$$
But $v_{1,-1}(\st_{\rho,\sigma}(\varphi(P)))=0$ is impossible by Theorem~\ref{central}(4), and hence the first
inequality in~\eqref{e3} holds. We next deal with the second one. By
Proposition~\ref{varphi preserva el Jacobiano},
$$
[\ell_{\rho,\sigma}(\varphi(P)),\ell_{\rho,\sigma}(\varphi(Q))] = 0,
$$
while by Proposition~\ref{pr ell por automorfismos} and Corollary~\ref{some properties of corners}(1),
$$
v_{\rho,\sigma}(\varphi(P)) = v_{\rho,\sigma}(P) >0 \qquad\text{and}\qquad v_{\rho,\sigma}(\varphi(Q)) =
v_{\rho,\sigma}(Q) >0.
$$
Hence, by Remark~\ref{a remark}(2), we have $\frac{1}{m}\st_{\rho,\sigma}(\varphi(P))\in
\frac{1}{l'}\mathds{Z}\times \mathds{N}_0$, and so $m\mid m_\lambda$ and
$$
v_{0,-1}(\st_{\rho,\sigma})(\varphi(P)) \le -m < -1,
$$
since $v_{0,1}(\st_{\rho,\sigma}(\varphi(P))) = m_{\lambda}\ge 1$, by statement~(3).
\end{proof}

\begin{proposition}[First criterion for regular corners]\label{criterion}
If $(a/l,b)$ is the first entry of a regular corner of an $(m,n)$-pair in $L^{(l)}$, then it is
the first entry of a regular corner of an (possibly different) $(m,n)$-pair in $L^{(l)}$ of type~I or type~II.
Moreover, in the first case $l-a/b>1$, while in the second one $\gcd(a,b)>1$.
If $l=1$ then necessarily case~II holds.
\end{proposition}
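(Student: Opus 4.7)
My plan is the following. Starting from the given regular corner $(A, (\rho_0, \sigma_0))$ of $(P_0,Q_0) := (P,Q)$ at $A = (a/l, b)$, if it is already of type~I or~II I am done. Otherwise it is of type~III, and I would apply Proposition~\ref{case III}: this forces $\rho_0 \mid l$ and yields an automorphism $\varphi_0 \in \Aut(L^{(l)})$ such that $(P_1,Q_1):=(\varphi_0(P_0), \varphi_0(Q_0))$ is again an $(m,n)$-pair in \emph{the same} $L^{(l)}$, with a regular corner $(A, (\rho_1, \sigma_1))$ at the same point $A$, where $(\rho_1, \sigma_1) = \Pred_{P_1}(\rho_0, \sigma_0) < (\rho_0, \sigma_0)$. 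I would iterate this construction as long as the corner obtained is of type~III, producing $(m,n)$-pairs in $L^{(l)}$ together with strictly decreasing directions $(\rho_0, \sigma_0) > (\rho_1, \sigma_1) > \cdots$ in~$I$.

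The main point will be termination of this iteration. While we remain in case~III, Proposition~\ref{case III} forces $\rho_i \mid l$ at every step, so the directions $(\rho_i,\sigma_i)$ stay within $\{(\rho,\sigma) \in I : \rho \mid l\}$. This set is finite, since $\rho$ can only take finitely many values (positive divisors of $l$) and, within $I$, one has $-\rho < \sigma \le 0$ with $\gcd(\rho,\sigma)=1$. A strictly decreasing sequence in a finite set is finite, so after finitely many iterations we must exit case~III and land in case~I or~II, with the corner still at $A = (a/l, b)$. This establishes the first assertion.

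For the additional statements, if the final corner is of type~I, Proposition~\ref{extremosfinales} gives $l - a/b > 1$; if it is of type~II, Proposition~\ref{case II}(4) gives $\gcd(a, b) > 1$. For the last assertion, suppose $l = 1$. Then the inequality $l - a/b > 1$ would force $a/b < 0$; but $b \ge 1$ by Definition~\ref{def regular corner}(1) and $a > 0$ by Remark~\ref{a>0}, so case~I is impossible and the final corner must be of type~II. The main (mild) obstacle I foresee is the termination argument; everything else reduces to a direct invocation of the case-by-case Propositions~\ref{extremosfinales}, \ref{case II} and~\ref{case III} together with the basic inequalities on $a$ and $b$ already recorded.
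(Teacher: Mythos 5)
Your proposal is correct and takes essentially the same approach as the paper's own proof: iterate Proposition~\ref{case III} while case~III persists (keeping the corner at $A$ and strictly decreasing the direction), terminate by noting that $\rho_i \mid l$ and $0 < -\sigma_i < \rho_i$ restrict the case-III directions to a finite set, and then read off the inequalities from Propositions~\ref{extremosfinales} and~\ref{case II}(4). The handling of $l=1$ via $1-a/b<1$ from $a,b>0$ also matches the paper exactly.
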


\begin{proof}
Assume that we are in case~III. By Proposition~\ref{case III}(3), there exists $\varphi\in \Aut(L^{(l)})$ such
that
$((a/l,b),(\rho_1,\sigma_1))$ is a regular corner of $(\varphi(P),\varphi(Q))$, where
$(\rho_1,\sigma_1):=\Pred_{\varphi(P)}(\rho,\sigma)$. If Case III holds for this corner, then we  can find
$(\rho_2,\sigma_2)<(\rho_1,\sigma_1)$ such that $((a/l,b),(\rho_2,\sigma_2))$ is a regular corner.
As long as Case III occurs, we can find $(\rho_{k+1},\sigma_{k+1})<(\rho_k,\sigma_k)$ such that
$((a/l,b),(\rho_{k+1},\sigma_{k+1}))$ is a regular corner.
But there are only finitely many $\rho_k$'s with $\rho_k|l$. Moreover,
$0<-\sigma_k<\rho_k$, since $(1,-1)<(\rho_k,\sigma_k)<(1,0)$, and so there are only finitely many
$(\rho_k,\sigma_k)$ possible, which
proves that eventually cases I or II must occur. In case~I Proposition~\ref{extremosfinales}
gives $l-a/b>1$ and in case~II, by Proposition~\ref{case II}(4), we have $\gcd(a,b)>1$.
The last statement is clear, since $1-a/b<1$, because $a,b>0$.
\end{proof}

\begin{proposition}\label{todos son Smp} For each $(m,n)$-pair $(P,Q)$ in $L^{(1)}$, there exists an
automorphism $\varphi$ of $L^{(1)}$ such that $(\varphi(P),\varphi(Q))$ is a standard $(m,n)$-pair with
$$
v_{1,1}(\varphi(P))=v_{1,1}(P),\quad v_{1,1}(\varphi(Q))=v_{1,1}(Q)\quad\text{and} \quad
\en_{1,0}(\varphi(P))=\en_{1,0}(P).
$$
Moreover, if $(-1,1)<\Succ_{P}(1,0),\Succ_{Q}(1,0)<(-1,0)$, then
$$
(-1,1)<\Succ_{\varphi(P)}(1,0),\Succ_{\varphi(Q)}(1,0)<(-1,0).
$$
Furthermore, if $P,Q\in L$, then we can take $\varphi\in \Aut(L)$.
\end{proposition}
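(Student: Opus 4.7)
The plan is to reduce everything to producing, from an $(m,n)$-pair $(P,Q)$ in $L^{(1)}$, a translation $\varphi(y)=y+\lambda$ that forces $v_{1,-1}(\st_{1,0}(\varphi(P)))<0$; all remaining bookkeeping (preservation of $v_{1,1}$, of $\en_{1,0}$, of the $\Succ$-range, and of membership in $L$) will drop out of Proposition~\ref{pr ell por automorfismos} applied with $k=0$. If $(1,0)\notin\Dir(P)$, then $\st_{1,0}(P)=\en_{1,0}(P)$ and $v_{1,-1}(\st_{1,0}(P))<0$ already by the $(m,n)$-pair definition, so $\varphi=\ide$ works. Otherwise $(1,0)\in\Dir(P)$, and since $v_{1,0}(P)>0$ Theorem~\ref{central}(4) forbids $v_{1,-1}(\st_{1,0}(P))=0$; if this quantity is already negative we again take $\varphi=\ide$. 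So the only interesting case is $(1,0)\in\Dir(P)$ with $v_{1,-1}(\st_{1,0}(P))>0$, i.e., the starting point of the $(1,0)$-edge lies strictly below the diagonal.

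In that situation I will look at the regular corner $((a,b),(1,0))$ with $(a,b):=\frac{1}{m}\en_{1,0}(P)$ (which exists by Remark~\ref{first component}) and classify it according to the five cases introduced after~\eqref{definicion de p}. Cases~I.a and~I.b are ruled out because $m,n\ge 2$ give
$$
v_{1,0}(P)+v_{1,0}(Q)-1=(m+n)a-1\ge 3>0=v_{1,0}([P,Q]),
$$
so Proposition~\ref{pr v de un conmutador} forces $[\ell_{1,0}(P),\ell_{1,0}(Q)]=0$. Case~II.a is ruled out by the present hypothesis $v_{1,-1}(\st_{1,0}(P))>0$. Thus only Cases~II.b and~III remain.

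In Case~III we have $\ell_{1,0}(P)=\mu x^{ma}(y-\lambda)^{mb}$ with $\lambda\in K^{\times}$ (since $\lambda=0$ would make $\ell_{1,0}(P)$ a monomial, contradicting $(1,0)\in\Dir(P)$). Proposition~\ref{case III} then applies with $(\rho,\sigma)=(1,0)$: the translation $\varphi(y):=y+\lambda$ lies in $\Aut(L)\subset\Aut(L^{(1)})$, $(\varphi(P),\varphi(Q))$ is an $(m,n)$-pair, and $\ell_{1,0}(\varphi(P))=\mu x^{ma}y^{mb}$ is a monomial, whence $(1,0)\notin\Dir(\varphi(P))$ and $\st_{1,0}(\varphi(P))=\en_{1,0}(\varphi(P))=\en_{1,0}(P)$, giving $v_{1,-1}(\st_{1,0}(\varphi(P)))<0$. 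In Case~II.b, Proposition~\ref{case IIb} furnishes a $\lambda\in K^{\times}$ whose multiplicity in $\mathfrak{p}(y)$ is at least $\frac{m}{1}\cdot\frac{a\cdot 1+b\cdot 1\cdot 0}{1+0}=ma$; since $\lcm(1,1)=1$, Proposition~\ref{encima de la diagonal} then produces $\varphi(y):=y+\lambda\in\Aut(L^{(1)})$ such that $(\varphi(P),\varphi(Q))$ is again an $(m,n)$-pair in $L^{(1)}$ and $((a,b),(1,0))$ is a regular corner of type~II.a, which by definition means $v_{1,-1}(\st_{1,0}(\varphi(P)))<0$.

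Finally, because $\varphi$ is the identity on $x$ and translates $y$ by a constant, Proposition~\ref{pr ell por automorfismos} (with $k=0$) yields $\ell_{\rho_1,\sigma_1}(\varphi(P))=\ell_{\rho_1,\sigma_1}(P)$ for every $(1,0)<(\rho_1,\sigma_1)<(-1,0)$ and $\en_{1,0}(\varphi(P))=\en_{1,0}(P)$; identical statements hold for $Q$ via Proposition~\ref{varphi preserva el Jacobiano}. This delivers the preservation of $v_{1,1}$ and of $\en_{1,0}$, and, since the Newton polygons in $\,](1,0),(-1,0)[\,$ coincide, it also transports the strict range condition $\Succ_P(1,0),\Succ_Q(1,0)\in\,](-1,1),(-1,0)[\,$ to $\varphi(P)$ and $\varphi(Q)$. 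For the very last claim, $\varphi(y)=y+\lambda$ with $\lambda\in K$ restricts to an automorphism of $L=K[x,y]$, so whenever $P,Q\in L$ one has $\varphi(P),\varphi(Q)\in L$. The most delicate step is Case~II.b: one must feed the existence output of Proposition~\ref{case IIb} into the hypothesis of Proposition~\ref{encima de la diagonal} and then read off from item~(4) of the latter that the new corner is necessarily of type~II.a rather than II.b again; this is what guarantees a strict, not merely weak, inequality $v_{1,-1}(\st_{1,0}(\varphi(P)))<0$.
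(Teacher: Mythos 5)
Your proof is correct and follows the same route as the paper's: reduce to the case where $v_{1,-1}(\st_{1,0}(P))\ge 0$, observe that $\bigl(\frac 1m\en_{1,0}(P),(1,0)\bigr)$ is then a regular corner, rule out Cases I and II.a, and in the remaining Cases II.b/III invoke Propositions~\ref{case IIb}/\ref{encima de la diagonal} resp.~\ref{case III} to produce a single translation $\varphi(y)=y+\lambda$ that forces $v_{1,-1}(\st_{1,0}(\varphi(P)))<0$, with all preservation claims handled by Proposition~\ref{pr ell por automorfismos} with $k=0$. The one point where you diverge slightly is in ruling out Case~I: the paper applies Proposition~\ref{extremosfinales} to get $l-a/b>1$, i.e.\ $1-a/b>1$, contradicting $a,b>0$; you instead use $m,n\ge 2$ and Proposition~\ref{pr v de un conmutador} to force $[\ell_{1,0}(P),\ell_{1,0}(Q)]=0$ directly. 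Both are valid, and yours is arguably more elementary since it bypasses the machinery of Proposition~\ref{extremosfinales}, though the paper's version produces a slightly stronger statement (the contradiction in Case~I) that it reuses elsewhere. Everything else, including the crucial observation that the output corner in Case~II.b is of type~II.a by Proposition~\ref{encima de la diagonal}(4), matches the paper's intended argument.
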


\begin{proof} If $v_{1,-1}(\st_{1,0}(P))<0$, then we can take $\varphi:=\ide$.
Otherwise $\bigl(\frac 1m \en_{1,0}(P),(1,0)\bigr)$ is a regular corner\endnote{
By Proposition~\ref{propiedades de los pares}(3) we know that $\frac 1m
\en_{1,0}(P)\in \mathds{N}\times \mathds{N}$.
Moreover condition~(3) of Definition~\ref{def regular corner} is
trivially fulfilled and condition~(2) holds
because $v_{1,-1}(\en_{1,0}(P))<0$. It remains to prove that if $(a,b) := \frac 1m \en_{1,0}(P)$, then $b>a$
and $b\ge 1$. But this follows from the
fact that $a-b = v_{1,-1}(\en_{1,0}(P))<0$ and $a\in \mathds{N}$.}.
Write $(a,b):= \frac 1m \en_{1,0}(P)$. Case I is impossible because $a,b>0$ and
Proposition~\ref{extremosfinales}
gives $1-a/b>1$, and the Case II.a) is discarded, because $v_{1,-1}(\st_{1,0}(P))\ge 0$. By
Propositions~\ref{case IIb}
and~\ref{encima de la diagonal} in Case~II.b) and by Proposition~\ref{case III} in Case~III, we can find a
$\varphi\in\Aut(L^{(1)})$ such that

\begin{itemize}

\smallskip

\item[-] $\bigl(\frac 1m\st_{1,0}(\varphi(P)),(\rho',\sigma')\bigr)$ is a regular corner, for some
    $(\rho',\sigma')$,

\smallskip

\item[-] $\ell_{1,1}(\varphi(P))=\ell_{1,1}(P)$, $\ell_{1,1}(\varphi(Q))=\ell_{1,1}(Q)$ and
    $\en_{1,0}(\varphi(P))=\en_{1,0}(P)$,

\smallskip

\item[-] If $\Succ_{P}(1,0),\Succ_{Q}(1,0)<(-1,0)$, then
$$
\quad\qquad\Succ_{\varphi(P)}(1,0) = \Succ_{P}(1,0)\qquad\text{and}\qquad \Succ_{\varphi(Q)}(1,0) =
\Succ_{Q}(1,0).
$$

\smallskip

\item[-] $\varphi(x)=x$ and $\varphi(y)=y+\lambda$, for some $\lambda\in K^{\times}$.

\smallskip

\end{itemize}
The assertions in the statement follow immediately from these facts.
\end{proof}

\begin{corollary}\label{B finito}
If $B<\infty$ (i. e., if the
Jacobian conjecture is false), then there exists a Jacobian pair $(P,Q)$ and $m,n\in \mathds{N}$ coprime with $m,n>1$, such that
\begin{enumerate}

\smallskip

  \item $(P,Q)$ is a stan\-dard $(m,n)$-pair  in $L$,

\smallskip

  \item $(P,Q)$ is a minimal pair (i. e., $\gcd\bigl(v_{1,1}(P),v_{1,1}(Q)\bigr)=B$),

\smallskip

  \item $\st_{1,1}(P)=\en_{1,0}(P)$,

\smallskip

  \item $(-1,1)<\Succ_{P}(1,0),\Succ_{Q}(1,0)<(-1,0).$

\smallskip

\end{enumerate}
\end{corollary}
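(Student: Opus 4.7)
The plan is to combine Propositions~\ref{primera condicion estandar} and~\ref{todos son Smp} to assemble the desired pair, and then verify condition~(3) by a short argument using Proposition~\ref{le basico}.

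First I would fix a counterexample $(P_0,Q_0)$ realizing $B=\gcd(v_{1,1}(P_0),v_{1,1}(Q_0))$, whose existence is guaranteed by $B<\infty$. Applying Proposition~\ref{primera condicion estandar}, I obtain coprime $m,n\in\mathds{N}$ and $\varphi_0\in\Aut(L)$ such that $(P_1,Q_1):=(\varphi_0(P_0),\varphi_0(Q_0))$ is an $(m,n)$-pair in $L$ with
$$
v_{1,1}(P_1)=v_{1,1}(P_0),\quad v_{1,1}(Q_1)=v_{1,1}(Q_0),\quad (-1,1)<\Succ_{P_1}(1,0),\Succ_{Q_1}(1,0)<(-1,0).
$$
In particular $(P_1,Q_1)$ is still a minimal pair, so by Remark~\ref{no se dividen} we must have $m,n>1$.

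Next I would apply Proposition~\ref{todos son Smp} to $(P_1,Q_1)$, using its ``Furthermore'' clause to keep the automorphism $\varphi_1$ in $\Aut(L)$ and its ``Moreover'' clause to preserve condition~(4). The result is a standard $(m,n)$-pair $(P,Q):=(\varphi_1(P_1),\varphi_1(Q_1))$ in $L$ satisfying $v_{1,1}(P)=v_{1,1}(P_1)$, $v_{1,1}(Q)=v_{1,1}(Q_1)$ and $(-1,1)<\Succ_P(1,0),\Succ_Q(1,0)<(-1,0)$. At this stage conditions~(1), (2) and~(4) are immediate: (1) by construction, (4) as stated, and (2) because the $v_{1,1}$-values have been preserved throughout, so $\gcd(v_{1,1}(P),v_{1,1}(Q))=B$.

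All that remains is condition~(3), $\st_{1,1}(P)=\en_{1,0}(P)$. Combining $(-1,1)<\Succ_P(1,0)$ with the elementary counterclockwise inequality $(1,1)<(-1,1)$ gives $\Succ_P(1,0)>(1,1)$, so $(1,1)\notin\Dir(P)$. Setting $(\rho_2,\sigma_2):=\Succ_P(1,0)$ and $(\rho_1,\sigma_1):=\Pred_P(\rho_2,\sigma_2)\le(1,0)$, Proposition~\ref{le basico} applied to the direction $(1,1)$, which lies strictly between the consecutive elements $(\rho_1,\sigma_1),(\rho_2,\sigma_2)\in\Dir(P)$, gives
$$
\st_{1,1}(P)=\Supp(\ell_{1,1}(P))=\en_{\rho_1,\sigma_1}(P).
$$
Treating $(1,0)$ analogously (it either equals $(\rho_1,\sigma_1)$ or lies strictly between the same two consecutive directions, in which case $\ell_{1,0}(P)$ is a monomial by Proposition~\ref{le basico}) yields $\en_{1,0}(P)=\en_{\rho_1,\sigma_1}(P)$, and the two equalities combine to give~(3). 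The main obstacle is really only this last case-distinction for condition~(3); everything else is a direct invocation of prior results.
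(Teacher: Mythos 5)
Your proof is correct and follows essentially the same route as the paper, which compresses the entire argument into the one-line citation ``By Propositions~\ref{le basico}, \ref{primera condicion estandar} and~\ref{todos son Smp}.'' You have simply made explicit what the paper leaves to the reader: chaining the two propositions while tracking that $v_{1,1}$-values and condition~(4) survive both automorphisms, invoking Remark~\ref{no se dividen} for $m,n>1$, and unwinding Proposition~\ref{le basico} to get $\st_{1,1}(P)=\en_{1,0}(P)$ from the fact that $(1,1)$ and (possibly) $(1,0)$ fall strictly between consecutive directions of $\Dir(P)$.
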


\begin{proof}
By Propositions~\ref{le basico}, \ref{primera condicion estandar} and~\ref{todos son Smp}.
\end{proof}

\begin{proposition}\label{esquina regular unica} Each $(m,n)$-pair $(P,Q)$ in $L^{(l)}$ has
a unique regular corner $((a/l,b),(\rho,\sigma))$ with $(\rho,\sigma)\notin A(P)$.
\end{proposition}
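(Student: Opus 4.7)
The plan is to handle existence and uniqueness separately, with each split into the two subcases $A(P)=\emptyset$ and $A(P)\ne\emptyset$.

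For existence, if $A(P)=\emptyset$ I simply invoke Remark~\ref{first component}, which already produces a regular corner at $\frac{1}{m}\en_{1,0}(P)$ whose direction lies trivially outside the empty set $A(P)$. If $A(P)\ne\emptyset$, I would set $(\rho_1,\sigma_1):=\min(A(P))$, $(\rho,\sigma):=\Pred_P(\rho_1,\sigma_1)$, and $(a/l,b):=\frac{1}{m}\st_{\rho_1,\sigma_1}(P)$. The last assertion of Proposition~\ref{esquinas regulares} guarantees $(\rho,\sigma)\in\Dir(P)\cap I$; Proposition~\ref{esquinas regulares}(2) combined with $\gcd(m,n)=1$ puts $(a/l,b)$ in $\frac{1}{l}\mathds{Z}\times\mathds{N}_0$; the defining inequalities $v_{0,-1}(\st_{\rho_1,\sigma_1}(P))<-1$ and $v_{1,-1}(\st_{\rho_1,\sigma_1}(P))<0$ for $(\rho_1,\sigma_1)\in A(P)$ translate immediately into $b\ge 1$ and $a/l<b$; and Proposition~\ref{le basico} delivers $\en_{\rho,\sigma}(P)=\st_{\rho_1,\sigma_1}(P)=m(a/l,b)$. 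Thus $((a/l,b),(\rho,\sigma))$ is a regular corner, and $(\rho,\sigma)<(\rho_1,\sigma_1)=\min(A(P))$ shows $(\rho,\sigma)\notin A(P)$.

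For uniqueness, fix any regular corner $((a/l,b),(\rho,\sigma))$ with $(\rho,\sigma)\notin A(P)$; by Proposition~\ref{cases of corners} we must be in case (b) or case (c). Suppose first that $A(P)\ne\emptyset$, and set $(\ov\rho,\ov\sigma):=\max(A(P))$. Propositions~\ref{A(P) vs Dir(P)} and~\ref{A(P) vs A(Q)} jointly exclude any direction of $\Dir(P)$ strictly between $(\ov\rho,\ov\sigma)$ and $(1,0)$, so by Proposition~\ref{le basico}, $\en_{\ov\rho,\ov\sigma}(P)=\en_{1,0}(P)$. If case (b) held, then $\en_{\rho,\sigma}(P)=\en_{\ov\rho,\ov\sigma}(P)$; however, the upward closure of $A(P)$ in $\Dir(P)\cap I$ (Proposition~\ref{A(P) vs Dir(P)}) combined with $(\rho,\sigma)\notin A(P)$ forces $(\rho,\sigma)<(\ov\rho,\ov\sigma)$, and the two corresponding edges of $\HH(P)$ then terminate at different vertices, a contradiction. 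Hence case (c) must hold, which uniquely pins down both $(\rho,\sigma)=\Pred_P(\min A(P))$ and $(a/l,b)=\frac{1}{m}\st_{\min A(P)}(P)$ (the latter via Proposition~\ref{le basico}). If instead $A(P)=\emptyset$, case (c) is vacuous, so (b) applies: $(a/l,b)=\frac{1}{m}\en_{1,0}(P)$, and the direction is the unique element of $\Dir(P)\cap I$ with $\en_{\rho,\sigma}(P)=\en_{1,0}(P)$, namely $(1,0)$ if $(1,0)\in\Dir(P)$ and $\Pred_P(1,0)$ otherwise.

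The only nontrivial step is the geometric observation, used in case (b) with $A(P)\ne\emptyset$, that two distinct elements of $\Dir(P)$ cannot share the same counterclockwise endpoint $\en$ on $\HH(P)$; everything else is bookkeeping --- translating the $v_{0,-1}$ and $v_{1,-1}$ bounds into the definition of a regular corner, and chasing consecutive edges via Proposition~\ref{le basico}.
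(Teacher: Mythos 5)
Your proof is correct, and it rests on the same two pillars as the paper's: Remark~\ref{estamos en Case IIa}/\ref{Case IIa consecuencia} (equivalently, Proposition~\ref{esquinas regulares}) for existence, and Proposition~\ref{A(P) vs Dir(P)} for uniqueness. The main presentational difference is on the uniqueness side: the paper argues directly that for any regular corner $(A,(\rho,\sigma))$, if $\Succ_P(\rho,\sigma)\in I$ then $\Succ_P(\rho,\sigma)\in A(P)$ (by Proposition~\ref{le basico} and the inequalities in Definition~\ref{def regular corner}), whence Proposition~\ref{A(P) vs Dir(P)} pins down $(\rho,\sigma)$ as either $\Pred_P(\min A(P))$ or $\max(\Dir(P)\cap I)$; you instead route through Proposition~\ref{cases of corners} and geometrically rule out alternative~(b) when $A(P)\ne\emptyset$. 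Both arguments are sound and interchangeable. Two small remarks: the geometric fact you flag as the only nontrivial step---that two distinct directions of $\Dir(P)\cap I$ cannot share the same $\en$---follows from Remark~\ref{starting vs end} applied with $(\rho_0,\sigma_0)=(1,-1)$, since $v_{1,-1}$ strictly decreases across every edge in $\mathfrak{V}_{>0}$; and in the step excluding directions in $](\ov\rho,\ov\sigma),(1,0)]$ you cite Proposition~\ref{A(P) vs A(Q)}, but Proposition~\ref{A(P) vs Dir(P)} alone already gives this (the former concerns $\Dir(Q)$, which is irrelevant here).
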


\begin{proof}
If $A(P)\ne \emptyset$, then the existence follows from Remarks~\ref{estamos en Case IIa}
and~\ref{Case IIa consecuencia}, since
$$
\Pred_P(\min(A(P)))\notin A(P).
$$
Otherwise, by Proposition~\ref{esquinas regulares}, we know that $(\rho,\sigma):=\Pred_P(\rho_1,\sigma_1)
\in I$,
where
$$
(\rho_1,\sigma_1):= \min\bigl(\Succ_P(1,0),\Succ_Q(1,0)\bigr).
$$
Clearly item~(2) of Definition~\ref{def regular corner} is fulfilled for $\bigl(\frac 1m \en_{\rho,\sigma}(P),
(\rho,\sigma)\bigr)$. Moreover, by Proposition~\ref{le basico} and Proposition~\ref{propiedades de los pares}(3),
$$
\frac1m \en_{\rho,\sigma}(P) = \frac1m \en_{1,0}(P)\in \frac{1}{l}\mathds{Z}\times \mathds{N},
$$
and so item~(3) is also satisfied. In order to prove item~(1) we write $(a/l,b): = \frac 1m
\en_{\rho,\sigma}(P)$. By Definition~\ref{Smp},
$$
a/l-b=v_{1,-1}\left(\frac1m \en_{\rho,\sigma}(P)\right)=v_{1,-1}\left(\frac1m \en_{1,0}(P)\right)<0,
$$
while by Proposition~\ref{propiedades de los pares}(4), we have $b=-v_{0,-1}(\en_{1,0}(P))>1$. This ends the proof
of the
existence. The uniqueness follows from Proposition~\ref{A(P) vs Dir(P)} and the fact that, by
Proposition~\ref{le basico}
and Definition~\ref{def regular corner},
if $(A,(\rho,\sigma))$ is a regular corner of $(P,Q)$, then $\Succ_P(\rho,\sigma)\in I$ implies that
$\Succ_P(\rho,\sigma)\in A(P)$.
\end{proof}

\section{Lower bounds}\label{Lower bounds}

\setcounter{equation}{0}

By Corollary~\ref{B finito}, if $B<\infty$ (i. e., if the
Jacobian conjecture is false), then there exists a stan\-dard $(m,n)$-pair $(P,Q)$ in $L$, which is also
 a minimal pair (i. e., $\gcd(v_{1,1}(P),v_{1,1}(Q))=B$). In this
section we will first prove that $B\ge 16$. The argument is nearly the same as in~\cite{G-G-V1},
but we will also need lower bounds for $(m,n)$-pairs in $L^{(1)}$, and not only in $L$.
The reason is the following: One technical result, Proposition~\ref{impossibles}, says something about
$(m,n)$-pairs in $L$ with $\frac 1m v_{2,-1}(P)\le 4$.
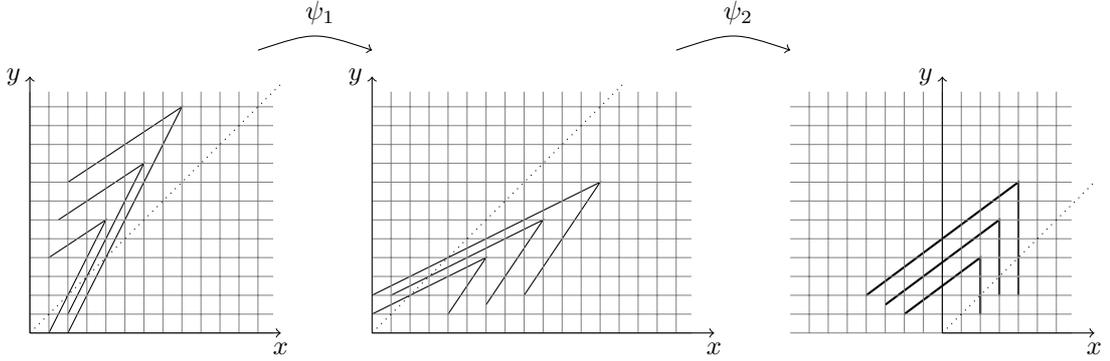
\begin{figure}[htb]
\centering
\begin{tikzpicture}
%
%
%
\draw [thick]  (12.5,0.25) -- (12.5,1) -- (11.5,0.25);
\draw [thick]  (12.75,0.5) -- (12.75,1.5) -- (11.25,0.375);
\draw [thick]  (13,0.5) -- (13,2) -- (11,0.5);
\draw[step=.25cm,gray,very thin] (10,0) grid (13.7,3.2);
\draw [->] (12,0) -- (14,0) node[anchor=north]{$x$};
\draw [->] (12,0) --  (12,3.4) node[anchor=east]{$y$};
\draw[dotted] (12,0) -- (14,2);
\draw   (0.25,0) -- (1,1.5) -- (0.25,1);
\draw   (0.5,0.25) -- (1.5,2.25) -- (0.375,1.5);
\draw   (0.5,0) -- (2,3) -- (0.5,2);
\draw[step=.25cm,gray,very thin] (0,0) grid (3.2,3.2);
\draw [->] (0,0) -- (3.3,0) node[anchor=north]{$x$};
\draw [->] (0,0) --  (0,3.4) node[anchor=east]{$y$};
\draw[dotted] (0,0) -- (3.3,3.3);
\draw   (4.5,0.25) -- (6,1) -- (5.5,0.25);
\draw   (4.75,0.5) -- (6.75,1.5) -- (6,0.375);
\draw   (4.5,0.5) -- (7.5,2) -- (6.5,0.5);
%
\draw[step=.25cm,gray,very thin] (4.5,0) grid (8.7,3.2);
\draw [->] (4.5,0) -- (9,0) node[anchor=north]{$x$};
\draw [->] (4.5,0) --  (4.5,3.4) node[anchor=east]{$y$};
\draw[dotted] (4.5,0) -- (7.8,3.3);
\draw[->] (8.5,3.75) .. controls (9.25,4) .. (10,3.75);
\draw (9,4.25) node[right,text width=2cm]{$\psi_2$};
\draw[->] (3,3.75) .. controls (3.75,4) .. (4.5,3.75);
\draw (3.5,4.25) node[right,text width=2cm]{$\psi_1$};
\end{tikzpicture}
\caption{Applying $\psi_1$ and $\psi_2$ to elements $P$ with $v_{2,-1}(P)\le 4$.}
\end{figure}
\noindent Via the flip $\psi_1$ this is the same as saying something about Jacobian pairs in $L$ with
$\frac 1m v_{-1,2}(P)\le 4$. Applying the automorphism $\psi_2$ defined by $\psi_2(x):=x$ and
$\psi_2(y):=  x^2y$,
this amounts to
proving facts about $(m,n)$-pairs in $L^{(1)}$ with $\frac 1m v_{1,0}(P)\le 4$, which we will do in the
sequel.

\begin{proposition}\label{primitivo} Let $(P,Q)$ be a standard $(m,n)$-pair. There exists exactly
one regular corner $((a,b),(\rho,\sigma))$ of $(P,Q)$ of type~II.b).
Moreover,
\begin{enumerate}

\smallskip

\item $\sigma<0$,

\smallskip

\item $v_{\rho,\sigma}(P)>0$ and $v_{\rho,\sigma}(Q)>0$,

\smallskip

\item $\frac{v_{\rho,\sigma}(P)}{v_{\rho,\sigma}(Q)}= \frac mn$,

\smallskip

\item $[\ell_{\rho,\sigma}(P),\ell_{\rho,\sigma}(Q)]=0$,

\smallskip

\item $(\rho,\sigma)\in \Dir(P)\cap\Dir(Q)$,

\smallskip

\item There exists $\mu\in \mathds{Q}$ greater than $0$ such that
$$
\en_{\rho,\sigma}(F) = \frac{\mu}{m}\en_{\rho,\sigma}(P),
$$
where $F\in L^{(1)}$ is the $(\rho,\sigma)$-homogeneous element obtained
in Theorem~\ref{central},

\smallskip

\item $v_{1,-1} \bigl(\en_{\rho,\sigma}(P) \bigr)<0$ and
    $v_{1,-1} \bigl(\en_{\rho,\sigma}(Q)\bigr)<0$,

\smallskip

\item $\frac{v_{\rho',\sigma'}(P)}{v_{\rho',\sigma'}(Q)}=\frac mn$ for all
    $(\rho,\sigma)<(\rho',\sigma')<(1,0)$,

\smallskip

\item $v_{1,1}(\en_{\rho,\sigma}(P))\le v_{1,1}(\en_{1,0}(P))$,

\end{enumerate}

\end{proposition}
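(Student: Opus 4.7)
My plan is to first identify the unique type~II.b) corner via Proposition~\ref{esquina regular unica} and then derive the nine properties from results already established in Sections~2 and~5. Proposition~\ref{esquina regular unica} gives exactly one regular corner $((a,b),(\rho,\sigma))$ of $(P,Q)$ with $(\rho,\sigma)\notin A(P)$; since any type~II.b) corner has $v_{1,-1}(\st_{\rho,\sigma}(P))>0$ and therefore cannot lie in $A(P)$, at-most-one is immediate. To establish existence I would rule out the remaining four cases for this specific corner, using crucially that $l=1$. Cases~I.a) and~I.b) are excluded by Proposition~\ref{extremosfinales}, since $l-a/b>1$ fails with $a,b\ge 1$ (Remark~\ref{a>0}). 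For case~II.a), Remark~\ref{a remark}(2) together with $\gcd(m,n)=1$ forces $\st_{\rho,\sigma}(P)=m(u_1,u_2)$ with $(u_1,u_2)\in\mathds{Z}\times\mathds{N}_0$, while $v_{\rho,\sigma}(P)>0$ (Corollary~\ref{some properties of corners}(1)) combined with $\rho>0$ and $\sigma\le 0$ gives $u_1\ge 1$; thus either $u_2=0$ and $v_{1,-1}(\st)=mu_1\ge m>0$ contradicts the defining inequality of~II.a), or $u_2\ge 1$ and $v_{0,-1}(\st)\le -m<-1$ together with $v_{1,-1}(\st)<0$ places $(\rho,\sigma)$ in $A(P)$, again a contradiction. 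Finally, case~III yields $\rho\mid l=1$ by Proposition~\ref{case III} and hence $(\rho,\sigma)=(1,0)$, but then $\st_{1,0}(P)=(ma,0)$ with $ma\ge 1$ contradicts the standard hypothesis $v_{1,-1}(\st_{1,0}(P))<0$.

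With the corner pinned down, the easier items fall out quickly. Item~(2) is Corollary~\ref{some properties of corners}(1); item~(4) is the definition of type~II.b); item~(5) combines the regular-corner condition with Corollary~\ref{some properties of corners}(3); and item~(3) follows by applying $v_{\rho,\sigma}$ to the equality $\en_{\rho,\sigma}(P)=\frac{m}{n}\en_{\rho,\sigma}(Q)$ of Corollary~\ref{some properties of corners}(2). Item~(1) is immediate: $\sigma=0$ would force $(\rho,\sigma)=(1,0)$, contradicting $v_{1,-1}(\st_{\rho,\sigma}(P))>0$ against the standard condition. Item~(7) follows because $\en_{\rho,\sigma}(P)=m(a,b)$ with $a<b$ gives $v_{1,-1}(\en_{\rho,\sigma}(P))=m(a-b)<0$, and the proportionality $\en_{\rho,\sigma}(Q)=\frac{n}{m}\en_{\rho,\sigma}(P)$ handles $Q$. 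For~(6), Proposition~\ref{case II}(1) gives $\en_{\rho,\sigma}(F)\sim(a,b)$; since Proposition~\ref{pavadass}(2) forces $v_{0,1}(\en_{\rho,\sigma}(F))>v_{0,1}(\st_{\rho,\sigma}(F))\ge 0$ and $b\ge 1>0$, Remark~\ref{relacion de equivalencia} supplies $\mu>0$ with $\en_{\rho,\sigma}(F)=\mu(a,b)=\frac{\mu}{m}\en_{\rho,\sigma}(P)$.

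For~(8), let $(\rho',\sigma')$ satisfy $(\rho,\sigma)<(\rho',\sigma')<(1,0)$. If $(\rho',\sigma')\in\Dir(P)$, then using the identification $(\rho,\sigma)=\Pred_P(\min A(P))$ provided by Proposition~\ref{esquinas regulares}, we have $\min A(P)\le(\rho',\sigma')$; Proposition~\ref{A(P) vs Dir(P)} gives $(\rho',\sigma')\in A(P)$, and then Proposition~\ref{esquinas regulares}(1) yields $\en_{\rho',\sigma'}(P)=\frac{m}{n}\en_{\rho',\sigma'}(Q)$, producing the desired ratio upon applying $v_{\rho',\sigma'}$. (The case $A(P)=\emptyset$ makes this range vacuously empty in $\Dir(P)$.) If $(\rho',\sigma')\notin\Dir(P)$, Proposition~\ref{le basico} forces $\ell_{\rho',\sigma'}(P)$ and $\ell_{\rho',\sigma'}(Q)$ to be monomials at a vertex pair that inherits the $m:n$ proportionality from the adjacent edges. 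For~(9), as $(\rho',\sigma')$ ranges over $[(\rho,\sigma),(1,0)]$ the vertex $\en_{\rho',\sigma'}(P)$ traces a counterclockwise arc of $\HH(P)$; between consecutive vertices $A_i,A_{i+1}$ on this arc with outward normal $(\rho'',\sigma'')\in I$, the edge vector $A_{i+1}-A_i$ is a positive multiple of $(-\sigma'',\rho'')$, and since $\rho''>0$ and $-\sigma''\ge 0$ we obtain $v_{1,1}(A_{i+1}-A_i)=\rho''-\sigma''>0$; telescoping yields $v_{1,1}(\en_{\rho,\sigma}(P))\le v_{1,1}(\en_{1,0}(P))$.

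The main obstacle will be the first step, ruling out cases~I, II.a) and~III at the unique regular corner with $(\rho,\sigma)\notin A(P)$. Exclusion of type~III hinges on the divisibility $\rho\mid l$ from Proposition~\ref{case III}, and the argument against type~II.a) relies on the integrality of $\frac{1}{m}\st$ coming from Remark~\ref{a remark}(2) combined with $\gcd(m,n)=1$; both arguments use the standard hypothesis $v_{1,-1}(\st_{1,0}(P))<0$ in an essential way. Once the type is fixed, the remaining nine properties are largely routine consequences of earlier results.
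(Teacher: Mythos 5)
Your proposal is correct and follows essentially the same route as the paper's proof: both rely on Proposition~\ref{esquina regular unica} to pin down the unique regular corner with $(\rho,\sigma)\notin A(P)$, use Remark~\ref{a>0} with Proposition~\ref{extremosfinales} to exclude type~I when $l=1$, exploit the integrality of $\frac{1}{m}\st_{\rho,\sigma}(P)$ from Remark~\ref{a remark}(2) plus membership in $A(P)$ to rule out type~II.a), invoke $\rho\mid l$ from Proposition~\ref{case III} to dispose of type~III, and then read off items (2)--(9) from Corollary~\ref{some properties of corners}, Proposition~\ref{case II}, and Propositions~\ref{esquinas regulares}/\ref{A(P) vs Dir(P)}/\ref{le basico}. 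The only cosmetic difference is one of packaging: the paper first proves the inequality $v_{1,-1}(\st_{\rho,\sigma}(P))>0$ (its equation~(5.2)) and then deduces $\sigma<0$ and the exclusion of cases~II.a) and~III from it, whereas you exclude each of the four unwanted cases directly and recover the inequality as a byproduct; and for item~(9) you telescope along edge vectors while the paper cites Remark~\ref{starting vs end}, but these are the same convexity observation. For item~(8), when $(\rho',\sigma')\notin\Dir(P)$ it would be cleaner to invoke explicitly that $\Dir(P)\cap\,](\rho,\sigma),(1,0)]=\Dir(Q)\cap\,](\rho,\sigma),(1,0)]$ (which follows from $A(P)=A(Q)$ in Proposition~\ref{esquinas regulares} together with the containment $\Dir(P)\cap\,](\rho,\sigma),(1,0)]\subseteq A(P)$), so that both leading forms are simultaneously monomials at proportional vertices; your phrasing ``inherits the $m:n$ proportionality from the adjacent edges'' leaves this implicit but is readily filled in.
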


\begin{proof} The uniqueness follows immediately from the definition of $A(P)$ and
Proposition~\ref{esquina regular unica}. The same proposition yields a regular corner
$((a,b),(\rho,\sigma))$  such that
\begin{equation}\label{not in A(P)}
(\rho,\sigma)\not\in A(P).
\end{equation}
Statements~(2), (3) and~(5) follow now from Corollary~\ref{some properties of corners}.
By Remark~\ref{a>0} we have $1-a/b<1$. Hence, by Proposition~\ref{extremosfinales}
we are in case II or in case III, and so statement~(4) holds. Fur\-thermore, by
Remark~\ref{a remark} we have
$$
\frac 1m \st_{\rho,\sigma}(P)=\frac 1n \st_{\rho,\sigma}(Q),
$$
which implies
$\frac 1m \st_{\rho,\sigma}(P)\in\mathds{Z}\times \mathds{N}_0$. We will prove that
\begin{equation}\label{condicion caso IIb}
v_{1,-1}(\st_{\rho,\sigma}(P))>0.
\end{equation}
 Assume by contradiction
that $v_{1,-1}(\st_{\rho,\sigma}(P))\le0$, which implies
$v_{1,-1}(\st_{\rho,\sigma}(P))<0$, by Theorem~\ref{central}(4).
If $v_{0,-1}(\frac 1m \st_{\rho,\sigma}(P))\le -1$ then $(\rho,\sigma)\in A(P)$, which
contradicts~\eqref{not in A(P)}.
Hence $v_{0,-1}(\frac 1m \st_{\rho,\sigma}(P))=0$, and so $\st_{\rho,\sigma}(P)=(k,0)$,
for some $k<0$. But then
$$
0<v_{\rho,\sigma}(P)=\rho k<0,
$$
a contradiction which proves~\eqref{condicion caso IIb}. This implies
statement~(1) since, by the definition of standard
$(m,n)$-pair, $v_{1,-1}(\st_{1,0}(P))<0$.

Assume that $((a,b),(\rho,\sigma))$ is of type~III.
Then, by Proposition~\ref{case III}, we know that $\rho|l=1$, and so
$(\rho,\sigma)=(1,0)$, which contradicts statement~(1). Hence, by
inequality~\eqref{condicion caso IIb}, we are in case~II.b).

Statement~(6) follows from items~(1) and~(4) of Proposition~\ref{case II}. Statement~(7) for $P$
follows from Definition~\ref{def regular corner}, and then it follows for $Q$, by
Corollary~\ref{some properties of corners}(2).

By Proposition~\ref{le basico} and Definition~\ref{def regular corner}, if $\Succ_P(\rho,\sigma)\in I$,
then
$\Succ_P(\rho,\sigma)\in A(P)$. Consequently, by Proposition~\ref{A(P) vs Dir(P)},
$$
\Dir(P)\cap \,\, ](\rho,\sigma),(1,0)]\subseteq A(P).
$$
Statement~(8) now
follows easily from Proposition~\ref{le basico}, Remark~\ref{a remark} and the fact that, by
Proposition~\ref{esquinas regulares},
statement~(3) holds for all
$(\rho_j,\sigma_j)\in A(P)$. Finally, by Proposition~\ref{le basico} and
Remark~\ref{starting vs end},
$$
v_{1,1}(\en_{\rho',\sigma'}(P)) = v_{1,1}(\st_{\rho'',\sigma''}(P)) < v_{1,1}(\en_{\rho'',\sigma''}(P))
$$
for consecutive directions $(\rho',\sigma')<(\rho'',\sigma'')$ in $\Dir(P)\cap I$,
from which statement~(9) follows.
\end{proof}

\begin{definition}\label{starting triple}
The  {\it starting triple} of a  standard $(m,n)$-pair $(P,Q)$ is
$(A_0,A_0',(\rho,\sigma))$, where $(A_0,(\rho,\sigma))$
is the unique regular corner of $(P,Q)$ with $(\rho,\sigma)\notin A(P)$,
and $A_0'=\frac 1m \st_{\rho,\sigma}(P)$.  The point $A_0$ is called the
{\it primitive corner} of $(P,Q)$.
\end{definition}

\begin{remark}\label{starting triple es IIb}
By  Propositions~\ref{esquina regular unica} and~\ref{primitivo} and Remark~\ref{estamos en Case IIa},
in the previous definition $(A_0,(\rho,\sigma))$ is the unique regular corner of type~II.b). Consequently
$v_{1,-1}(\st_{\rho,\sigma}(P))>0$.
\end{remark}

Let $(P,Q)$ be a standard $(m,n)$-pair and $(A_0,A_0',(\rho,\sigma))$ its
starting triple. Let $\lambda$ and $m_\lambda$ be as in
Proposition~\ref{case IIb}, let $\varphi\in\Aut(L^{(\rho)})$
and $A^{(1)}$ be as in Proposition~\ref{encima de la diagonal} and let $F$
be as in Proposition~\ref{case II}.
Note that $F\in L$. In fact, for $(i,j)\in \Supp(F)$, we have
$$
\rho i+\sigma j = v_{\rho,\sigma}(i,j) = v_{\rho,\sigma}(F) = \rho+\sigma > 0,
$$
which implies that $i\ge 0$, since $\rho>0$, $\sigma <0$ and $j\ge 0$. Write
$$
(f_1,f_2):=\en_{\rho,\sigma}(F),\quad (u,v):=A_0,\quad (r',s'):=A_0' \quad\text{and}\quad
\gamma:=\frac{m_\lambda}{m}.
$$
\begin{figure}[htb]
\centering
\begin{tikzpicture}[scale=0.25]
\begin{scope}[xshift=0cm, yshift=0cm]
\draw [dash pattern=on 2pt off 1.5pt, ultra thin] (9,12) -- (9,0) (9,12) -- (0,12) (7.5,6) -- (7.5,0)
(7.5,6) -- (0,6) (1,1.5) -- (1,0) (1,1.5) -- (0,1.5);
\draw (8,-0.7) node[ above=0pt, right=0pt]{$\scriptstyle u$};
\draw (-1.7,1.5) node[
 above=0pt, right=0pt]{$\scriptstyle f_2$};
\draw (0.35,-0.6) node[
 above=0pt, right=0pt]{$\scriptstyle f_1$};
\draw (5.8,-0.5) node[
 above=0pt, right=0pt]{$\scriptstyle  r'$};
\draw (-2.5,12) node[
 above=0pt, right=0pt]{$\scriptstyle v$};
\draw (-2.5,6) node[
 above=0pt, right=0pt]{$\scriptstyle  s'$};
\draw (8.5,12) node[
 above=0pt, right=0pt]{$\scriptstyle  A_0$};
\draw (14,9) node[
above=0pt, right=0pt]{$\scriptstyle  A^{(1)}$};
\draw (7.2,6) node[
above=0pt, right=0pt]{$\scriptstyle A'_0$};
\draw [->] (-0.5,0)--(24,0) node[anchor=north]{$x$};
\draw [->] (0,-0.5)--(0,24) node[anchor=east]{$y$};
\draw[dotted,  thin] (-0.5,-0.5) --(23.6,23.6);
\draw[dotted,  thin] (-0.5,-0.5) --(9,12);
\draw [-] (0.833,0.833) -- (1,1.5);
\draw [-,thick] (3,18) -- (6,18) -- (9,15) -- (9,12) -- (8.5,10) -- (5.5,4);
\draw [densely dashdotted] (8.5,10) -- (7.5,6);
\draw[->] (14,9) ..controls (12,9) and (10.6,9.5) .. (8.6,10);
\end{scope}
\end{tikzpicture}
\caption{Illustration of Proposition~\ref{final}.}
\end{figure}
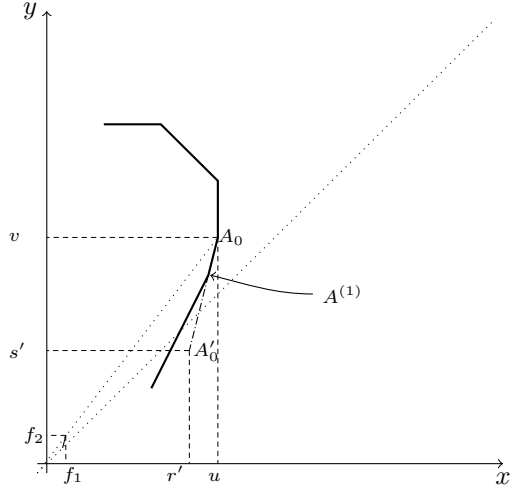

\begin{proposition}\label{final} It is true that $A_0,A_0'\in\mathds{N}_0\times \mathds{N}_0$ and
$v_{\rho,\sigma}(A_0) = v_{\rho,\sigma}(A_0')$. Moreover,

\begin{enumerate}

\smallskip

\item $s'<r'<u<v$,

\smallskip

\item $2\le f_1<u$,

\smallskip

\item $\gcd(u,v)>1$,

\smallskip

\item $\en_{\rho,\sigma}(F) = \mu A_0$ for some $0<\mu<1$,

\smallskip

\item $uf_2 = vf_1$ and $\rho\le u$,

\smallskip

\item $(\rho,\sigma)=\dir(f_1-1,f_2-1)=\left(\frac{f_2-1}{d},\frac{1-f_1}{d}\right)$, where
    $d:=\gcd(f_1-1,f_2-1)$,

\smallskip

\item $A^{(1)}=A_0'+(\gamma-s')\left(-\frac{\sigma}{\rho},1 \right)$,

\smallskip

\item If $A^{(1)}=(a'/\rho,b')$, then $\rho-a'/b'>1$ or $\gcd(a',b')>1$,

\smallskip

\item $\gamma\le (v-s')/\rho$. Moreover, if $d=\gcd(f_1-1,f_2-1)=1$, then $\gamma=(v-s')/\rho$.

\smallskip

\end{enumerate}
\end{proposition}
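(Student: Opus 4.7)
The plan is to verify the nine statements roughly in the order listed, grouped into four clusters; the two delicate steps will be establishing $f_1<u$ (equivalently $\mu<1$) in~(2)/(4) and $\rho\le u$ in~(5). For the preliminary containments and statements~(1) and~(3): $A_0\in\mathds{N}_0\times\mathds{N}_0$ is immediate from Definition~\ref{def regular corner} and Remark~\ref{a>0} with $l=1$; for $A_0'$, Proposition~\ref{primitivo}(4) places us in the situation $[\ell_{\rho,\sigma}(P),\ell_{\rho,\sigma}(Q)]=0$, so Remark~\ref{a remark}(2) yields $A_0'\in\mathds{Z}\times\mathds{N}_0$, and $r'\ge 0$ since $P\in L$. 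The equality $v_{\rho,\sigma}(A_0)=v_{\rho,\sigma}(A_0')=v_{\rho,\sigma}(P)/m$ is immediate. For~(1), the inequalities $r'<u$ and $s'<v$ come from Remark~\ref{polinomio asociado f^{(l)}} (since $-\sigma/\rho>0$ by Proposition~\ref{primitivo}(1)), $s'<r'$ is Remark~\ref{starting triple es IIb}, and $u<v$ is built into Definition~\ref{def regular corner}(1) via $b>a/l=a$ at $l=1$. Statement~(3) is exactly Proposition~\ref{case II}(4) for $l=1$.

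Statements~(2) and~(4) rely on the rigid support of $F$: since $F\in L$ by Theorem~\ref{central}(1) is $(\rho,\sigma)$-homogeneous with $v_{\rho,\sigma}(F)=\rho+\sigma$ and $(\rho,\sigma)\ne(1,-1)$, a direct check shows $\Supp(F)\subseteq\{(1+k|\sigma|,1+k\rho):k\ge 0\}$, so $f_1=1+K|\sigma|$ for some $K\ge 0$. The case $K=0$ is ruled out: if $f_2=1$ then $\en_{\rho,\sigma}(F)=(1,1)$, contradicting $\en_{\rho,\sigma}(F)\sim A_0\nsim(1,1)$ (Proposition~\ref{case II}(1) and Theorem~\ref{central}(4)); if $f_2>1$ then statement~(6) forces $(\rho,\sigma)=(1,0)$, contradicting Proposition~\ref{primitivo}(1). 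Hence $f_1\ge 2$. The bound $f_1<u$ is equivalent to $\mu:=(\rho+\sigma)/v_{\rho,\sigma}(A_0)<1$, i.e.\ $v_{\rho,\sigma}(A_0')>\rho+\sigma$, which I establish by cases on $s'$: for $s'\in\{0,1\}$ use $r'\ge 1$ or $r'\ge 2$; for $s'\ge 2$, assuming $\rho(r'-1)\le|\sigma|(s'-1)$ and cross-multiplying with the genuine $\rho r'>|\sigma|s'$ (Corollary~\ref{some properties of corners}(1)) yields $r'<s'$, contradicting~(1). Statement~(4) then combines $\en_{\rho,\sigma}(F)\sim A_0$ (Proposition~\ref{case II}(1)) with $\mu<1$.

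For~(5)--(7): $uf_2=vf_1$ is just $\en_{\rho,\sigma}(F)\sim A_0$. For $\rho\le u$, I write $A_0-A_0'=t(|\sigma|,\rho)$ with $t\in\mathds{N}$ (so $v-s'=t\rho$ and $u=r'+t|\sigma|$), strengthen Proposition~\ref{case II}(2) to $\rho\le v-s'$, and use $r'>s'\ge 0$ together with the fact that $A_0'$ is the actual starting point of the $(\rho,\sigma)$-edge to force the conclusion; this is the most technical step and will require careful tracking of the integrality constraints on $r'$, $s'$ and $t$. Statement~(6) follows because $\en_{\rho,\sigma}(F)\ne(1,1)$ by~(2), so Remark~\ref{valuacion depende de extremos} gives $(\rho,\sigma)=\dir(f_1-1,f_2-1)$, and the explicit coordinates come from equation~\eqref{val} applied with $f_1-1<f_2-1$ (a consequence of $u<v$ and $\mu>0$). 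Statement~(7) is a direct substitution: Proposition~\ref{encima de la diagonal}(3) gives $\st_{\rho,\sigma}(\varphi(P))=(k,0)+m_\lambda(-\sigma/\rho,1)$ with $k/m=v_{\rho,\sigma}(A_0')/\rho=r'+s'\sigma/\rho$, and dividing by $m$ with $\gamma=m_\lambda/m$ yields $A^{(1)}=A_0'+(\gamma-s')(-\sigma/\rho,1)$.

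Finally, statement~(8) is Proposition~\ref{criterion} applied to the regular corner $(A^{(1)},(\rho',\sigma'))$ of $(\varphi(P),\varphi(Q))$ in $L^{(l')}$ with $l'=\rho$ (Proposition~\ref{encima de la diagonal}(2),(4)); cases~I and~II of that criterion give the two stated alternatives. For statement~(9), I write $\mathfrak{p}(z)=z^{ms'}\bar{\mathfrak{p}}(z^\rho)$ with $\deg\bar{\mathfrak{p}}=m(v-s')/\rho$, so that for $\lambda\ne 0$ the multiplicity of $(z-\lambda)$ in $\mathfrak{p}$ equals the multiplicity of $(w-\lambda^\rho)$ in $\bar{\mathfrak{p}}$, giving $\gamma=m_\lambda/m\le(v-s')/\rho$. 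When $d=1$ the argument used for~(2) forces $K=1$, hence $\st_{\rho,\sigma}(F)=(1,1)$ and $v_{0,1}(\en_{\rho,\sigma}(F))-v_{0,1}(\st_{\rho,\sigma}(F))=\rho$; Proposition~\ref{pavadass}(4) then applies and yields multiplicity $\deg(p)/\rho=m(v-s')/\rho$ for each linear factor of $p$, so $\gamma=(v-s')/\rho$. The principal obstacle is the case analysis for $\mu<1$ in~(2)/(4), and the integrality-constrained argument for $\rho\le u$ in~(5), both of which require keeping track of several interlocking inequalities.
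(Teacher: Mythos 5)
Your proposal parallels the paper's proof for the preliminaries and for statements (1), (2), (3), (4), (6), (7), (8), and (9), though with some local variations: for $\mu<1$ you do a three-way case split on $s'$ instead of the paper's one-line estimate $v_{\rho,\sigma}(A_0')=(r'-s')\rho+s'(\rho+\sigma)\ge\rho>\rho+\sigma$, and for $f_1\ge 2$ you parametrize $\Supp(F)\subseteq\{(1,1)+k(-\sigma,\rho):k\ge 0\}$ directly rather than arguing $v_{1,0}(\st_{\rho,\sigma}(F))\ge 1$; both work. A small caveat: in ruling out $K=0$ you invoke statement~(6), which itself is proven using (1) and (4); since (4) depends on (2), that reference is circular as written. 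It is, however, harmless, because the branch $f_1=1$, $f_2>1$ simply cannot occur once $\sigma<0$ is known (Proposition~\ref{primitivo}(1)), so you should cite that directly.

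The genuine gap is statement~(5), $\rho\le u$. You write $A_0-A_0'=t(-\sigma,\rho)$ with $t\in\mathds{N}$ and propose to ``strengthen Proposition~\ref{case II}(2) to $\rho\le v-s'$, and use $r'>s'\ge 0$ together with the fact that $A_0'$ is the actual starting point to force the conclusion, tracking integrality constraints.'' This does not close the argument: from $u=r'+t(-\sigma)$, $v=s'+t\rho$, $t\ge 1$, and $r'>s'\ge 0$ alone, the inequality $u\ge\rho$ does not follow (take $\rho$ large relative to $|\sigma|$, $r'$, $t$; one then needs the integrality of $f_1=\mu u$ and of $A_0'$ to exclude the bad range of $t$, but you never show how). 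The paper's proof rests on a sharper observation: since $u(f_1,f_2)=f_1(u,v)$, one gets $v_{\rho,\sigma}(f_1A_0')=u(\rho+\sigma)=v_{\rho,\sigma}(u,u)$, hence $f_1A_0'=(u,u)-t'(-\sigma,\rho)$ for some $t'\in\mathds{Z}$; applying $v_{0,1}$ gives $u-t'\rho=f_1 s'\ge 0$, while applying $v_{1,-1}$ gives $t'(\rho+\sigma)=f_1(r'-s')>0$, forcing $t'\ge 1$ and therefore $u\ge\rho$. This trick of translating $f_1A_0'$ against $(u,u)$ is the missing ingredient, and without it (or an equivalent) your sketch for~(5) does not go through.
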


\begin{proof} By statements~(2), (3) and~(4) of Proposition~\ref{primitivo} and statement~(2b) of
Proposition~\ref{P y Q alineados}, there exist $\lambda_P,\lambda_Q\in K^{\times}$ and a
$(\rho,\sigma)$-homogeneous element $R\in L^{(1)}$ such that
\begin{equation}\label{R existe}
\ell_{\rho,\sigma}(P) = \lambda_P R^m\quad\text{and}\quad\ell_{\rho,\sigma}(Q) = \lambda_P R^n.
\end{equation}
This implies that
$$
A_0 =\en_{\rho,\sigma}(R)\qquad\text{and}\qquad A'_0 =\st_{\rho,\sigma}(R).
$$
Hence, $v_{\rho,\sigma}(A_0) = v_{\rho,\sigma}(A_0')$. Moreover, the same
argument given above for $F$ shows that $R\in L$, and so $A_0,A'_0\in
\mathds{N}_0\times \mathds{N}_0$

Statement~(1) follows from the fact that, by inequality~\eqref{condicion caso IIb} and
Proposition~\ref{primitivo}(7)
$$
v_{1,-1}(\en_{\rho,\sigma}(P))<0\qquad\text{and}\qquad v_{1,-1}(\st_{\rho,\sigma} (P))>0,
$$
and, by Remark~\ref{starting vs end},
\begin{equation}\label{dat2}
v_{1,0}(\st_{\rho,\sigma}(P)) < v_{1,0}(\en_{\rho,\sigma}(P)),
\end{equation}
since $(1,-1)<(\rho,\sigma)\!<\!(1,0)$. Proposition~\ref{primitivo}(6) gives statement~(4) except the
inequality
$\mu<1$. But this is true because $\mu\ge1$ implies
$$
v_{\rho,\sigma}(A_0')=v_{\rho,\sigma}(A_0)=\frac{1}{\mu}v_{\rho,\sigma}(F)\le v_{\rho,\sigma}(F)=\rho+\sigma,
$$
which is is impossible since, by statement~(1) and Proposition~\ref{primitivo}(1),
$$
v_{\rho,\sigma}(A_0')=r'\rho+s'\sigma=(r'-s')\rho+s'(\rho+\sigma)\ge(r'-s')\rho\ge \rho>\rho+\sigma.
$$
We claim that $v_{1,0}(\st_{\rho,\sigma}(F))\!\ge\! 1$. In fact, otherwise
$\st_{\rho,\sigma}(F)\!=\!(0,h)$ for some $h\!\in\!\mathds{N}_0$, which
implies $v_{\rho,\sigma}(F)\!=\!\sigma h\!\le\! 0$. But this is impossible
since $v_{\rho,\sigma}(F) = \rho+\sigma>0$. Hence, by Remark~\ref{starting vs end},
$$
f_1=v_{1,0}(\en_{\rho,\sigma}(F)) > v_{1,0}(\st_{\rho,\sigma}(F))\ge 1,
$$
which combined with $f_1\!=\!\mu u$ and $0<\mu<1$ proves statement~(2). Moreover,
if $\gcd(u,v)\!=\!1$, then there is no
$\mu\!\in\,\, ]0,1[$ such that $\mu(u,v)\in\mathds{N}_0\times\mathds{N}_0$, and so statement~(3) is true. Next
we prove statement~(5). From statement~(4) it follows that
$uf_2 = vf_1$. Equivalently $u(f_1,f_2) = f_1(u,v)$, and so
$$
v_{\rho,\sigma}(f_1A_0')= f_1 v_{\rho,\sigma}(A'_0)= f_1 v_{\rho,\sigma}(A_0)= u v_{\rho,\sigma}(F) = u
v_{\rho,\sigma}(1,1)=v_{\rho,\sigma}(u,u).
$$
Hence there exists $t\in\mathds{Z}$ such that $f_1A_0'=(u,u)-t(-\sigma,\rho)$. Thus
$$
u-t\rho = v_{0,1}(f_1A_0') = f_1v_{0,1}(A_0') \ge 0,
$$
and so $u\ge t\rho$. Therefore, in order to finish the proof of statement~(5) we only must note that $t\le 0$
is impossible, because it implies $f_1 v_{1,-1}(A_0')\le 0$, contradicting Remark~\ref{starting triple es IIb}.

The first equality in statement~(6) follows from the fact that
$v_{\rho,\sigma}(f_1,f_2)=v_{\rho,\sigma}(1,1)$ and Remark~\ref{valuacion depende de extremos}. So,
by~\eqref{val}
$$
(\rho,\sigma) = \pm \left(\frac{f_2-1}{d},\frac{1-f_1}{d}\right),
$$
where $d:=\gcd(f_1-1,f_2-1)$. Since $\rho+\sigma>0$ and, by statements~(1) and~(4), we have $f_2-f_1>0$,
necessarily
$$
(\rho,\sigma) = \left(\frac{f_2-1}{d},\frac{1-f_1}{d}\right),
$$
which ends the proof of statement~(6). Next we prove statement~(7). The point $A^{(1)}$ is completely
determined
by $v_{0,1}(A^{(1)})$ and $v_{\rho,\sigma}(A^{(1)})$. Let $\varphi$ be as in
Proposition~\ref{encima de la diagonal}.
Since
$$
v_{0,1}\left(A_0'+(\gamma-s')\left(-\frac{\sigma}{\rho},1 \right)\right)=\gamma=v_{0,1}(A^{(1)})
$$
by Proposition~\ref{encima de la diagonal}(3), and
$$
v_{\rho,\sigma}\left(A_0'+(\gamma-s')\left(-\frac{\sigma}{\rho},1 \right)\right)
=v_{\rho,\sigma}(A_0')=v_{\rho,\sigma}(A^{(1)})
$$
because $\varphi$ is $(\rho,\sigma)$-homogeneous,
statement~(7) is true. Statement~(8) follows directly from
Propositions~\ref{encima de la diagonal}(4) and~\ref{criterion}. It remains
to prove statement~(9). Write $\ell_{\rho,\sigma}(P)=x^r y^s p(z)$, where $z=x^{-\sigma/\rho}y$ and $p(0)
\ne 0$.
Since $R\in L$ and $\ell_{\rho,\sigma}(P) = \lambda_P R^m$, we have $\ell_{\rho,\sigma}(P)\in L$,
 which implies $p(z)\in K[x,y]$. Hence
$$
\ell_{\rho,\sigma}(P)=x^r y^s \ov p(z^\rho),\quad\text{where $\ov p(z^{\rho})\in K[z^{\rho}]$}.
$$
By Proposition~\ref{case IIb} we know that $\lambda\ne 0$, so $m_\lambda$ is the multiplicity of a root of
$p(z)$. Since the multiplicities of the roots of $p(z)$ are the same as the multiplicities of the roots of
$\ov p(z)$, we have $m_\lambda\le \deg(\ov p)$. Combining this with Remark~\ref{polinomio asociado f^{(l)}},
we obtain
$$
m_\lambda\le \deg(\ov p)= \frac{v_{0,1}(\en_{\rho,\sigma}(P))-v_{0,1} (\st_{\rho,\sigma}(P))} {\rho}=
m\left(\frac{v-s'}{\rho}\right),
$$
which proves the first part of statement~(9). We claim that $\st_{\rho,\sigma}(F)\!=\!(1,1)$. In fact,
other\-wise
$$
(\alpha,\beta):=\st_{\rho,\sigma} (F)=(1-\sigma i,1+\rho i),\qquad\text{with $i>0$.}
$$
Note that $\alpha<\beta$, since $\rho>-\sigma$. But this is impossible because $r'>s'$, and, by
Theorem~\ref{central}(2),
 we have
$$
\st_{\rho,\sigma}(F) \sim \st_{\rho,\sigma}(P)= m(r',s').
$$
Consequently, if $d=1$, then by statement~(6)
$$
v_{0,1}(\en_{\rho,\sigma}(F))- v_{0,1}(\st_{\rho,\sigma}(F)) = f_2-1 = \rho,
$$
and so, by Proposition~\ref{pavadass}(4),
$$
m_\lambda = \frac{1}{\rho}\left(v_{0,1}(\en_{\rho,\sigma}(P))- v_{0,1}(\st_{\rho,\sigma} (P))\right)=
\frac{m(v-s')}{\rho},
$$
as desired.
\end{proof}

\begin{proposition}\label{primera cota para primitivos} If $A_0$ is as before Proposition~\ref{final}, then
$v_{1,1}(A_0)\ge 16$.
\end{proposition}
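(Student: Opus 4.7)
The plan is to argue by contradiction, assuming $u+v=v_{1,1}(A_0)\le 15$. Statements~(1)--(3) of Proposition~\ref{final} give $u\ge 3$, $v>u$, and $\gcd(u,v)>1$, restricting $(u,v)$ to the nine candidates $(3,6)$, $(3,9)$, $(3,12)$, $(4,6)$, $(4,8)$, $(4,10)$, $(5,10)$, $(6,8)$, $(6,9)$. For each, Proposition~\ref{final}(4) forces $(f_1,f_2)=\mu_0(u_0,v_0)$ with $(u_0,v_0):=(u,v)/\gcd(u,v)$ and $1\le\mu_0<\gcd(u,v)$, subject to $\mu_0 u_0\ge 2$ by Proposition~\ref{final}(2); Proposition~\ref{final}(6) then determines $(\rho,\sigma)$, and the bound $\rho\le u$ of Proposition~\ref{final}(5) discards several sub-cases outright (e.g.\ $(3,9)$ and $(3,12)$).

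In the surviving configurations one checks $d=\gcd(f_1-1,f_2-1)=1$, so Proposition~\ref{final}(9) yields $\gamma=(v-s')/\rho\in\mathds{N}$; combined with the collinearity $\rho(u-r')=-\sigma(v-s')$ and $s'<r'<u$ this pins down $(r',s')$, and hence $A^{(1)}=(a'/\rho,b')$ via Proposition~\ref{final}(7). I would then invoke Proposition~\ref{criterion} in $L^{(\rho)}$: some regular corner at $A^{(1)}$ must be of type~I or~II, forcing $\rho-a'/b'>1$ or $\gcd(a',b')>1$. A direct arithmetic check closes eight of the nine candidates immediately, leaving only $(u,v)=(5,10)$ with $\mu_0=2$, where $(\rho,\sigma)=(3,-1)$, $A_0'=(2,1)$, $\gamma=3$, $A^{(1)}=(3,3)$, and $\gcd(9,3)=3>1$ passes the test.

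For this borderline case I would iterate, applying Proposition~\ref{case II} to the forced type~II regular corner at $A^{(1)}$ in $L^{(3)}$: here $(a,b,l)=(9,3,3)$, $d=\gcd(a,b)=3$, $(\bar a,\bar b)=(3,1)$, and the bound in Proposition~\ref{case II}(4) collapses to $\mu\le l(bl-a)+1/\bar b=1$; hence $\mu=1$ and $\en_{\rho'',\sigma''}(F'')=\mu(\bar a/l,\bar b)=(1,1)$, contradicting the step in the proof of Proposition~\ref{case II} that $\en_{\rho,\sigma}(F)\ne(1,1)$ whenever $\#\factors(\mathfrak p)>1$. The main obstacle is purely administrative, namely the bookkeeping across the nine candidate pairs and their induced triples $(r',s',\gamma,A^{(1)})$; the underlying mechanism mirrors the one-dimensional Dixmier analogue~\cite{G-G-V1}, in that the combined constraints of Propositions~\ref{final} and~\ref{criterion} leave no room for a primitive corner of $(1,1)$-degree below $16$.
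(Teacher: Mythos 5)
Your overall approach is identical to the paper's: enumerate the finitely many $(u,v)$ with $u+v\le 15$, $v>u\ge 3$, $\gcd(u,v)>1$, then for each admissible $(f_1,f_2)$ compute $(\rho,\sigma)$, $A'_0$, $\gamma$, $A^{(1)}$, and discard every configuration using Proposition~\ref{final}. The bookkeeping matches the paper's Table~\ref{Tabla1} on all cases except one, where you have a computational error that leads you to invent an unnecessary extra step.

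The error is in the case $A_0=(5,10)$, $(f_1,f_2)=(2,4)$, $(\rho,\sigma)=(3,-1)$, $A'_0=(2,1)$, $\gamma=3$. Proposition~\ref{final}(7) gives
$$
A^{(1)} = A'_0 + (\gamma - s')\Bigl(-\tfrac{\sigma}{\rho},1\Bigr) = (2,1) + 2\Bigl(\tfrac13,1\Bigr) = \Bigl(\tfrac83,3\Bigr),
$$
not $(3,3)$. A quick sanity check: $A^{(1)}$ must satisfy $v_{\rho,\sigma}(A^{(1)})=v_{\rho,\sigma}(A'_0)$ (it is obtained by moving along the $(\rho,\sigma)$-level line), but $v_{3,-1}(3,3)=6\ne 5=v_{3,-1}(2,1)$. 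With the correct $A^{(1)}=(8/3,3)$, writing $A^{(1)}=(a'/\rho,b')$ gives $(a',b')=(8,3)$, so $\gcd(a',b')=1$ and $\rho-a'/b'=3-\tfrac83=\tfrac13\le 1$; thus Proposition~\ref{final}(8) already discards this case, exactly as in the paper. There is no ``borderline'' configuration that survives the arithmetic screen.

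Consequently the entire final paragraph of your proposal --- the iteration via Proposition~\ref{criterion} and Proposition~\ref{case II}(4) with $(a,b,l)=(9,3,3)$ --- is built on the incorrect $A^{(1)}=(3,3)$ and is superfluous. The mechanism you invoke there (bounding $\mu$ from Proposition~\ref{case II}(4) and deriving a contradiction from $\en_{\rho,\sigma}(F)=(1,1)$) is a legitimate tool in principle, and something very much like it is deployed later in Theorem~\ref{cota para primitivos}, but it simply is not needed here. Once the computation of $A^{(1)}$ is corrected your argument collapses into the paper's: a single uniform pass through the table, with Proposition~\ref{final}(8) closing every case.
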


\begin{proof} By Proposition~\ref{final} it suffices to prove that there is no pair $A_0=(u,v)$ with $u+v
\le 15$,
for which there exist $(f_1,f_2)$, $A_0'=(r',s')$, $\gamma$ and $A^{(1)}$, such that all the conditions of that
proposition are satisfied.
\begin{table}[htb]
\begin{center}
\setlength{\tabcolsep}{8pt}
\ra{1.2}
\begin{tabular}{ccccccc}
\toprule
$A_0$ &$(f_1,f_2)$ & $(\rho,\sigma)$& $A_0'$ & d & $\gamma$ & $A^{(1)}$\\
\midrule
(3,6) & (2,4) & (3,-1) & (1,0) & 1& 2& $\left(\frac 53,2\right) $\\
              (3,9) & (2,6) & (5,-1) & $\times$ & &&\\
              (3,12) & (2,8) & (7,-1) & $\times$ &&&\\
              (4,6) & (2,3) & (2,-1) & (1,0) & 1& 3& $\left(\frac 52,3\right) $\\
              (4,8) & (2,4) & (3,-1) & $\times$ &&&\\
              (4,8) & (3,6) & (5,-2) & $\times$ &&&\\
              (4,10) & (2,5) & (4,-1) & $\times$ &&&\\
              (5,10) & (2,4) & (3,-1) & (2,1) &1&3&  $\left(\frac 83,3\right) $\\
              (5,10) & (3,6) & (5,-2) & (1,0) &1&2&  $\left(\frac{9}5,2\right) $\\
              (5,10) & (4,8) & (7,-3) & $\times$ &&&\\
              (6,8) & (3,4) & (3,-2) & $\times$ &&&\\
              (6,9) & (2,3) & (2,-1) & (2,1) &1&4& $\left(\frac 72,4\right) $\\
              (6,9) & (4,6) & (5,-3) & $\times$ &&&\\
\bottomrule
\end{tabular}
\end{center}
\caption{}
\label{Tabla1}
\end{table}
\noindent In Table~\ref{Tabla1} we first list all possible pairs $(u,v)$ with $v>u>2$, $\gcd(u,v)>1$ and
$u+v\le 15$. We also list all the possible $(f_1,f_2)=\mu (u,v)$ with $f_1\ge 2$ and $0<\mu<1$. Then we
compute the co\-rres\-ponding $(\rho,\sigma)$ using Proposition~\ref{final}(6) and we verify if there is
an~$A_0':=(r',s')$ with $s'<r'<u$ and $v_{\rho,\sigma}(u,v) = v_{\rho,\sigma}(r',s')$. This happens in
five cases. In all these cases $d:=\gcd(f_1-1,f_2-1)=1$. Then, by Proposition~\ref{final}(9), we
have $\gamma=(v-s')/\rho$. Using these values, we compute $A^{(1)}$ in each of the five cases using
statement~(7) of the same
proposition. Finally we verify that in none of this cases condition~(8) of Proposition~\ref{final}
is satisfied, concluding the proof.
\end{proof}

\begin{corollary} We have $B\ge 16$.
\end{corollary}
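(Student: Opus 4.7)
The plan is to chain together the results just established: Corollary~\ref{B finito} produces a minimal standard $(m,n)$-pair in $L$ when $B<\infty$, Proposition~\ref{primitivo} together with Definition~\ref{starting triple} extracts its primitive corner $A_0=(u,v)$, and Proposition~\ref{primera cota para primitivos} gives $u+v\ge 16$. The only real content is translating the geometric inequality $u+v\ge 16$ into an inequality about $B=\gcd(v_{1,1}(P),v_{1,1}(Q))$.

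The argument would go as follows. Assume $B<\infty$, since otherwise there is nothing to prove. By Corollary~\ref{B finito} there exist coprime $m,n\in\mathds{N}$ with $m,n>1$ and a standard $(m,n)$-pair $(P,Q)$ in $L$ which is also a minimal pair. Let $(A_0,A_0',(\rho,\sigma))$ be its starting triple, so that $A_0=(u,v)=\tfrac{1}{m}\en_{\rho,\sigma}(P)$ is the primitive corner (Proposition~\ref{primitivo} and Remark~\ref{starting triple es IIb}). Proposition~\ref{primera cota para primitivos} yields $u+v=v_{1,1}(A_0)\ge 16$.

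It remains to show $v_{1,1}(A_0)\le B$. By Proposition~\ref{primitivo}(9) we have $v_{1,1}(\en_{\rho,\sigma}(P))\le v_{1,1}(\en_{1,0}(P))$, and Corollary~\ref{B finito}(3) gives $\en_{1,0}(P)=\st_{1,1}(P)$, so
\[
v_{1,1}(\en_{1,0}(P))=v_{1,1}(\st_{1,1}(P))=v_{1,1}(P).
\]
Since $\en_{\rho,\sigma}(P)=mA_0$, combining these yields $m(u+v)\le v_{1,1}(P)$. Because $(P,Q)$ is an $(m,n)$-pair with $\gcd(m,n)=1$ and $\tfrac{v_{1,1}(P)}{v_{1,1}(Q)}=\tfrac{m}{n}$, we have $v_{1,1}(P)=mB$ and $v_{1,1}(Q)=nB$, and hence $u+v\le B$. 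Together with $u+v\ge 16$ this gives $B\ge 16$.

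There is no real obstacle here: everything has been assembled in the preceding sections, and the step to watch is simply the identification $v_{1,1}(\en_{1,0}(P))=v_{1,1}(P)$ coming from Corollary~\ref{B finito}(3), which is what makes $u+v$ a lower bound for the ratio $v_{1,1}(P)/m=B$ rather than merely an abstract invariant of the corner.
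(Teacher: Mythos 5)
Your proof is correct and follows the same overall strategy as the paper: invoke Corollary~\ref{B finito} to obtain a minimal standard $(m,n)$-pair, extract the primitive corner $A_0$, apply Proposition~\ref{primera cota para primitivos} to get $v_{1,1}(A_0)\ge 16$, and translate to $B$ via $v_{1,1}(P)=mB$. The only difference is that you route the inequality $v_{1,1}(\en_{\rho,\sigma}(P))\le v_{1,1}(P)$ through Proposition~\ref{primitivo}(9) and Corollary~\ref{B finito}(3); the paper observes it directly, since $\en_{\rho,\sigma}(P)\in\Supp(P)$ and $v_{1,1}(P)$ is the maximum of $v_{1,1}$ over the support, so no appeal to those results is needed for this step.
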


\begin{proof} Suppose $B<\infty$ and take $(P,Q)$ and $(m,n)$ as in Corollary~\ref{B finito}. Assume that
$(\rho,\sigma)$ and $A_0$ are as above Proposition~\ref{final}.
By Proposition~\ref{primera cota para primitivos},
$$
B =\gcd\bigl(v_{1,1}(P),v_{1,1}(Q)\bigr)= \frac{1}{m} v_{1,1}(P)\ge \frac{1}{m}v_{1,1}(\en_{\rho,\sigma}(P))
= v_{1,1}(A_0) \ge 16,
$$
as desired.
\end{proof}

\begin{proposition}\label{u(u-1)} Let $(P,Q)$ be a standard $(m,n)$-pair and let $A_0=(u,v)$ be as before
Proposi\-tion~\ref{final}. Then $v\le u(u-1)$ and $u\ge 4$.
\end{proposition}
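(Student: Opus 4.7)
The plan is to derive both inequalities from the numerical constraints already collected in Proposition~\ref{final}. Specifically I will combine the five facts: (i)~$\rho\le u$, (ii)~$2\le f_1<u$, (iii)~$uf_2=vf_1$, (iv)~$(\rho,\sigma)=\bigl(\tfrac{f_2-1}{d},\tfrac{1-f_1}{d}\bigr)$ with $d:=\gcd(f_1-1,f_2-1)$, and (v)~the lower bound $v_{1,1}(A_0)=u+v\ge 16$ obtained in Proposition~\ref{primera cota para primitivos}.

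First I would bound $d$. Since $d$ divides $f_1-1$ and $f_1-1\ge 1$ by Proposition~\ref{final}(2), we have $d\le f_1-1$. Using (iv) in the form $f_2-1=\rho d$ together with $\rho\le u$ then yields
\begin{equation*}
f_2-1=\rho d\le u(f_1-1),\qquad\text{hence}\qquad f_2\le uf_1-u+1.
\end{equation*}
Multiplying by $u$ and using $uf_2=vf_1$ gives $vf_1\le u^2f_1-u^2+u$. Since $f_1>0$, dividing by $f_1$ produces
\begin{equation*}
v\le u^2-\frac{u(u-1)}{f_1}.
\end{equation*}
Finally, as $f_1<u$ with $f_1,u\in\mathds{N}$ we have $f_1\le u-1$, so $\tfrac{u(u-1)}{f_1}\ge u$, and therefore $v\le u^2-u=u(u-1)$.

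For the second inequality, combine $v\le u(u-1)$ with $u+v\ge 16$ coming from Proposition~\ref{primera cota para primitivos}: one obtains $u^2=u+u(u-1)\ge u+v\ge 16$, and hence $u\ge 4$.

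The whole argument is essentially a short numerical manipulation; no geometric step is actually delicate here, because all the structural input has already been packaged into Proposition~\ref{final}. The only point where one must be slightly careful is to use the strict integer inequality $f_1\le u-1$ (rather than the real inequality $f_1<u$) in the final step, since otherwise one would only get $v<u^2$, which is too weak to conclude $u\ge 4$ from the bound $u+v\ge 16$.
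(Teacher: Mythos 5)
Your proof is correct and follows essentially the same route as the paper's: bounding $d\le f_1-1$, combining $\rho\le u$ with $uf_2=vf_1$ to obtain $v\le u^2-\tfrac{u(u-1)}{f_1}$, and then using $f_1\le u-1$ to conclude $v\le u(u-1)$. Your derivation of $u\ge 4$ from $u^2\ge u+v\ge 16$ is a slightly tidier packaging of the paper's step (which rules out $u=3$ explicitly), but it is the same estimate.
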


\begin{proof} Let $F$, $(f_1,f_2)=\en_{\rho,\sigma}(F)$ and $d=\gcd(f_1-1,f_2-1)$ be as before
Proposition~\ref{final}. By statements~(5) and~(6) of Proposition~\ref{final},
$$
\frac{f_1v-u}{du} = \frac{f_2-1}{d} = \rho \le u.
$$
Hence
$$
v \le \frac{du^2+u}{f_1} = u\frac{du+1}{f_1} \le u\frac{(f_1-1)u+1}{f_1} = u\frac{f_1u-(u-1)}{f_1} =
u\left(u-\frac{u-1}{f_1}\right)\le u(u-1),
$$
where the last inequality follows from Proposition~\ref{final}(2). Again by Proposition~\ref{final}(2),
we know that $u\ge 3$, so we must only check that the case $u=3$ is impossible.
But if $A_0=(3,v)$, then by the first statement necessarily
$v\le 6$, which contradicts Proposition~\ref{primera cota para primitivos}.
\end{proof}

\begin{remark}
The inequality $u\ge 4$ is related to~\cite{H}*{Proposition 2.22}. It shows that for a standard $(m,n)$-pair $(P,Q)$, the greatest common divisor of
$\deg_x(P)=v_{1,0}(P)$ and $\deg_x(Q)=v_{1,0}(Q)$ is greater than or equal to $4$. Using similar techniques as in the proof of
Proposition~\ref{primera condicion estandar}, one can prove that this inequality holds for any counterexample.
\end{remark}

Let $\psi_1\in \Aut(L)$ be the map defined by $\psi_1(x):=y$ and
$\psi_1(y):=-x$. Since $[\psi_1(x),\psi_1(y)] = 1$,
by Proposition~\ref{varphi preserva el Jacobiano}, this map preserves
Jacobian pairs. Moreover, the action induced by $\psi_1$
on the Newton polygon of a polynomial $P$ is the orthogonal reflection at
the main diagonal, and so, it maps edges of the convex
hull of $\Supp(P)$ into edges of the convex hull of $\Supp(\psi_1(P))$,
interchanging $\st$ and $\en$.

Similarly the automorphism
$\psi_2$ of $L^{(1)}$, defined by $\psi_2(x):=-x^{-1}$ and $\psi_2(y):=x^2
y$ preserves Jacobian pairs and it induces on the Newton polygon of
each $P\in L^{(1)}$ a
reflection at the main diagonal, parallel to the $X$-axis . Hence it also maps edges of the convex hull of
$\Supp(P)$ into edges of the convex hull of $\Supp(\psi_2(P))$,
interchanging $\st$ and $\en$.

Moreover, an elementary computation shows that
if we define
\begin{equation}\label{cambio de direccion}
\ov \psi_1(\rho,\sigma):=(\sigma,\rho)\quad\text{and}\quad \ov \psi_2(\rho,\sigma):= (-\rho,2\rho+\sigma),
\end{equation}
and set $(\rho_k,\sigma_k):=\ov\psi_k(\rho,\sigma)$ for  $k=1,2$, then
\begin{equation}\label{polinomios por cambio de direccion}
v_{\rho_k,\sigma_k}(\psi_k(P))=v_{\rho,\sigma}(P)\quad\text{and}\quad
\ell_{\rho_k,\sigma_k}(\psi_k(P))=\psi_k(\ell_{\rho,\sigma}(P)),
\end{equation}
for all  $(\rho,\sigma)\in\mathfrak{V}$ and $P\in L^{(1)}$ (when $k=1$ we assume $P\in L$).

\begin{proposition}\label{impossibles} Let $(P,Q)$ be a standard $(m,n)$-pair in $L$ and let $(\rho,\sigma)$
and $A_0$ be as before Proposition~\ref{final}. If $(\rho,\sigma)=(2,-1)$, then it is impossible
that $v_{\rho,\sigma}(A_0)\le 3$ or that $A_0=(8,12)$.
\end{proposition}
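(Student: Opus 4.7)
The strategy is to parameterize the primitive corner using direction $(\rho,\sigma)=(2,-1)$ via Proposition~\ref{final}, reducing the hypothesis to a finite set of discrete parameters, and then to eliminate each configuration by combining the divisibility condition of Proposition~\ref{final}(8) with the $\gamma$-bound of Proposition~\ref{final}(9). Since $(\rho,\sigma)=(2,-1)$, Proposition~\ref{final}(6) forces $f_1=d+1$ and $f_2=2d+1$ for some integer $d\ge 1$; combining with Proposition~\ref{final}(5) and $\gcd(d+1,2d+1)=1$ yields $A_0=(u,v)=(k(d+1),k(2d+1))$ with $k=u/(d+1)\in\mathds{N}$, so that $v_{2,-1}(A_0)=2u-v=k$. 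Proposition~\ref{final}(3) gives $k=\gcd(u,v)>1$. Thus the hypothesis $v_{2,-1}(A_0)\le 3$ reduces to $k\in\{2,3\}$, while $A_0=(8,12)$ corresponds uniquely to $(d,k)=(1,4)$.

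Next I compute the remaining data. From $v_{2,-1}(A_0')=k$ together with $0\le s'<r'$ (Proposition~\ref{final}(1) and Remark~\ref{starting triple es IIb}), the only possibilities for $A_0'=(r',s')$ are $(1,0)$ for $k=2$, $(2,1)$ for $k=3$, and $\{(2,0),(3,2)\}$ for $k=4$. By Proposition~\ref{case IIb} combined with Proposition~\ref{encima de la diagonal}(3), the integer $\gamma:=m_\lambda/m$ satisfies $\gamma\ge k$, and Proposition~\ref{final}(9) gives $\gamma\le (v-s')/2$, with equality forced when $d=1$. From Proposition~\ref{final}(7), $A^{(1)}=(a'/2,\gamma)$ with $a'=k+\gamma$. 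Proposition~\ref{final}(8) applied at $A^{(1)}$ then becomes decisive: since $a'>\gamma=b'$, the inequality $2-a'/b'>1$ is impossible, so $\gcd(a',b')=\gcd(k,\gamma)>1$, i.e. $k\mid\gamma$ whenever $k$ is prime.

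For $(d,k)=(1,4)$ with $A_0'=(3,2)$, the forced value $\gamma=(v-s')/2=5$ fails $\gcd(4,5)>1$, giving an immediate contradiction. For $(d,k)=(1,4)$ with $A_0'=(2,0)$, $\gamma=6$ yields $A^{(1)}=(5,6)$, and I dispose of it by iterating the regular-corner machinery on $(\varphi(P),\varphi(Q))\in L^{(2)}$: treating $A^{(1)}$ as the entry of a new corner, Propositions~\ref{case II}, \ref{case IIb}, \ref{case III} and~\ref{criterion} apply again and produce incompatible divisibility constraints after one further step.

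The main obstacle is the two infinite families $k=2$, $d\ge 2$ and $k=3$, $d\ge 2$, where $k\mid\gamma$ still leaves infinitely many admissible $\gamma$ for each $d$. The cleanest resolution, foreshadowed in the introductory remarks of this section, is to apply the composition $\Phi:=\psi_2\xcirc\psi_1\colon L\to L^{(1)}$, with $\psi_1$ the flip and $\psi_2$ the automorphism of $L^{(1)}$ defined by $\psi_2(x):=-x^{-1}$, $\psi_2(y):=x^2y$; by~\eqref{cambio de direccion}, $\ov\Phi(2,-1)=(1,0)$, so $v_{1,0}(\Phi(P))=v_{2,-1}(P)=mk\le 4m$. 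Propositions~\ref{primitivo}(3) and~\ref{primitivo}(8) guarantee the ratio condition $m/n$ throughout the interval $[(2,-1),(1,0)]$, so $(\Phi(P),\Phi(Q))$ is an $(m,n)$-pair in $L^{(1)}$, and the statement follows at once from the $L^{(1)}$-analog of Proposition~\ref{primera cota para primitivos}, to be established later in this section.
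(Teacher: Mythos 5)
Your final paragraph points at the correct approach (the composition $\Phi = \psi_2 \xcirc \psi_1$), which is essentially what the paper does, but the argument stops short of a proof. Two things are missing. First, the verification that $(\Phi(P),\Phi(Q))$ is an $(m,n)$-pair is incomplete: the ratio condition is only part of Definition~\ref{Smp}; one also needs $v_{1,-1}(\en_{1,0}(\Phi(P)))<0$. This follows by computing $\en_{1,0}(\Phi(P)) = (2a-b,a)$ with $(a,b):=\en_{2,-1}(P)$ and then invoking Proposition~\ref{primitivo}(7), which gives $a-b<0$. Second, the closing appeal to an ``$L^{(1)}$-analog of Proposition~\ref{primera cota para primitivos}, to be established later in this section'' is a genuine gap: Proposition~\ref{primera cota para primitivos} already applies to standard $(m,n)$-pairs, which by definition live in $L^{(1)}$, so there is no analog to establish later, and in any case a bare $v_{1,1}(\widetilde A_0)\ge 16$ bound would not by itself dispose of the hypothesis $v_{2,-1}(A_0)\le 3$. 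What is actually needed is Proposition~\ref{todos son Smp} to convert $(\Phi(P),\Phi(Q))$ into a standard $(m,n)$-pair $(\widetilde P,\widetilde Q)$ with $\frac1m\en_{1,0}(\widetilde P)=\frac1m(2a-b,a)$; then the first coordinate $u$ of the new primitive corner satisfies $u\le\frac1m v_{1,0}(\widetilde P)=v_{2,-1}(A_0)$, and $v_{2,-1}(A_0)\le 3$ contradicts Proposition~\ref{u(u-1)} (which gives $u\ge 4$ — not Proposition~\ref{primera cota para primitivos}), while for $A_0=(8,12)$ one uses Proposition~\ref{primitivo}(9) to get $v_{1,1}(\widetilde A_0)\le\frac1m v_{1,1}(\en_{1,0}(\widetilde P))=v_{1,1}(4,8)=12$, which does contradict Proposition~\ref{primera cota para primitivos}. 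Both conclusions are needed, drawn from two different propositions, and neither is deferred to later in the section.

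The first half of your proposal (the $(d,k)$-parameterization via Proposition~\ref{final}(5),(6) and the divisibility criteria from Proposition~\ref{final}(8)) is sound as far as it goes, but, as you concede, it fails to close the infinite families $k\in\{2,3\}$, $d\ge2$, and even for $(d,k)=(1,4)$ with $A_0'=(2,0)$ it relies on an unexecuted ``one further step.'' Since the $\Phi$-route handles all cases uniformly and without reference to $A_0'$, this entire half is a digression that can be dropped.
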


\begin{proof} Let $\varphi\colon L\to L^{(1)}$ be the morphism defined by $\varphi:=\psi_2\circ \psi_1$.
Write $\en_{2,-1}(P)=(a,b)$, so that $A_0 = \frac 1m (a,b)$. We claim that $\en_{1,0}(\varphi(P))=(2a-b,a)$.
In order to prove the claim, note first that
$$
\ov\psi_1(-1,1)=(1,-1),\quad\ov\psi_1(2,-1)=(-1,2),\quad\ov\psi_2(-1,2)=(1,0)\quad \text{and} \quad
\ov\psi_2(1,-1)=(-1,1).
$$
Since, by Remark~\ref{starting vs end},
$$
\Supp\bigl(\ell_{-1,1}(\ell_{2,-1}(P))\bigr) = \en_{2,-1}(P) = (a,b),
$$
and, by the second equality in~\eqref{polinomios por cambio de direccion},
$$
\ell_{-1,1}(\ell_{1,0}(\varphi(P))) = \ell_{-1,1}(\psi_2(\ell_{-1,2}(\psi_1(P)))) =
\psi_2(\ell_{1,-1}(\psi_1(\ell_{2,-1}P)))= \varphi(\ell_{-1,1}(\ell_{2,-1}(P))),
$$
we have, again by Remark~\ref{starting vs end},
$$
\en_{1,0}(\varphi(P))=\Supp(\varphi(x^a y^b))= (2a-b,a),
$$
which proves the claim. Moreover, $(\varphi(P),\varphi(Q))$ is an $(m,n)$-pair, because
by Proposition~\ref{varphi preserva el Jacobiano}, we have $[\varphi(P),\varphi(Q)]=1$;
it is true that
$$
v_{1,-1}(\en_{1,0}(\varphi(P)))=a-b=v_{1,-1}(\en_{2,-1}(P))<0;
$$
and, by the first equality in~\eqref{polinomios por cambio de direccion}, statements~(3) and~(8) of
Proposition~\ref{primitivo}, and the fact that $\ov\psi_2(\ov\psi_1(2,-1))=(1,0)$ and
$\ov\psi_2(\ov\psi_1(3,-1))=(1,1)$, we have
$$
\frac{v_{1,0}(\varphi(P))}{v_{1,0}(\varphi(Q))}=\frac{v_{2,-1}(P)}{v_{2,-1}(Q)}=\frac mn\quad\text{and}\quad
\frac{v_{1,1}(\varphi(P))} {v_{1,1}(\varphi(Q))} = \frac{v_{3,-1}(P)}{v_{3,-1}(Q)}=\frac mn.
$$
Applying Proposition~\ref{todos son Smp} we obtain a standard $(m,n)$-pair $(\wt{P},\wt{Q})$ with
$$
\frac 1m \en_{1,0}(\wt{P}) = \frac 1m \en_{1,0}(\varphi(P)) = \frac 1m (2a-b,a).
$$
Hence,
$$
\frac 1m v_{1,0}(\wt{P}) = \frac 1m v_{1,0}(\en_{1,0}(\wt{P})) = \frac 1m(2a-b) = \frac 1m
v_{2,-1}(\en_{2,-1}(P))=v_{\rho,\sigma}(A_0).
$$
Let $\wt{A}_0 = (u,v)$ be the primitive corner of $(\wt{P},\wt{Q})$. Since
$m(u,v)\in \Supp(\wt{P})$, we have
$$
u\le \frac 1m v_{1,0}(\wt{P}) = v_{\rho,\sigma}(A_0).
$$
So, if $v_{\rho,\sigma}(A_0)\le 3$, then $u\le 3$, which contradicts Proposition~\ref{u(u-1)}. If $\frac 1 m
(a,b) = A_0=(8,12)$, then, by Proposition~\ref{primitivo}(9), we have
$$
v_{1,1}(\wt{A}_0)\le \frac 1m v_{1,1}(\en_{1,0}(\wt{P})) = v_{1,1}(4,8)=12,
$$
which is impossible by Proposition~\ref{primera cota para primitivos}.
\end{proof}

%

\section{More conditions on $\bm{B}$}\label{More conditions on B}

\setcounter{equation}{0}
In this section we prove that $B=16$ or $B>20$, and that $B\ne 2p$ for all prime $p$. The first result can be
inferred from~\cite{H}*{Theorem~2.24}, whose proof is hidden behind a computer search. We give a complete
proof, without the use of a computer. Abhyankar allegedly developed a proof of the second result according
to~\cite{H}*{Page~50}, but we could not find any published article of Abhyankar with such proof. Heitmann says
that it is possible to adapt the proof of~\cite{H}*{Proposition~2.21} to prove $B\ne 2p$, however we were not
able to do this. On the other hand this is also claimed to be proven in~\cite{Z}*{Theorem~4.12}. But the proof
relies on~\cite{Z}*{Lemma~4.10}, which has a gap, since it claims without proof that $I_2 \subseteq \frac 1m
\Gamma(f_2)$, an assertion which cannot be proven to be true. The main technical results in this section are
Propositions~\ref{proporcionalidad de direcciones mayores} and~\ref{proporcionalidad de direcciones menores},
together with its Corollaries~\ref{fracciones de F} and~\ref{fracciones de F1}. They are closely related
to~\cite{M}*{Propositions~6.3 and~6.4} and seem to be a generalization of them. These results are
interesting on their own, but they also allow to establish a very strong criterion for the possible
regular corners (Theorem~\ref{divisibilidad}) which leads to the proof of
$B\ne 2p$. Another application of these results is the generalization of the reduction of degree
technique of Moh, which we apply to the case $B=16$ in the last section. In the present section we also
describe in Corollary~\ref{forma final en L} the shape of a
possible counterexample to the Jacobian conjecture in the case $B=16$.

\setcounter{equation}{0}

\begin{proposition}\label{proporcionalidad de direcciones mayores} Let $m,n\in \mathds{N}$ be coprime with
$m,n>1$ and let $P,Q\in L^{(l)}$ with
$$
[P,Q]\in K^{\times}\quad\text{and}\quad \frac{v_{1,1}(P)}{v_{1,1}(Q)}=\frac{v_{1,0}(P)}{v_{1,0}(Q)} =\frac mn,
$$
Take $T_0\in K[P,Q]$ and set $T_j:=[T_{j-1},P]$ for $j\ge 1$. Assume that $(\rho_0,\sigma_0)\in
\mathfrak{V}_{\ge0}$ satisfies
\begin{enumerate}

\smallskip

  \item $(\rho_0,\sigma_0)\in\Dir(P)$ and $v_{\rho_0,\sigma_0}(P)>0$,

\smallskip

  \item $\en_{\rho_0,\sigma_0}(T_j)\sim \en_{\rho_0,\sigma_0}(P)$ for all $j$ with $T_j\ne 0$,

\smallskip

  \item $\frac{1}{m}\en_{\rho_0,\sigma_0}(P) = \frac{1}{n}\en_{\rho_0,\sigma_0}(Q) \in \frac 1l
\mathds{Z}\times\mathds{N}$,

\smallskip

  \item $b>a/l$, where $(a/l,b):=\frac{1}{m}\en_{\rho_0,\sigma_0}(P)$.

\smallskip

\end{enumerate}
Let $I_0:=[(\rho_0,\sigma_0),(0,-1)[$ and
$$
(\tilde\rho,\tilde\sigma):=\max\{(\rho,\sigma)\in \Dir(P)\cap I_0: v_{\rho',\sigma'}(P)>0 \text{ for all }
(\rho_0,\sigma_0)\le(\rho',\sigma')\le(\rho,\sigma) \}
$$
Then for all $(\rho,\sigma)\in \mathfrak{V}$ with $(\rho_0,\sigma_0)<(\rho,\sigma)\le(\tilde\rho,\tilde\sigma)$
and all $j\ge 0$ we have
\begin{equation}\label{cocientes}
[\ell_{\rho,\sigma}(T_j),\ell_{\rho,\sigma}(P)]=0\quad\text{and}\quad
\frac{v_{\rho,\sigma}(T_j)}{v_{\rho,\sigma}(P)}= \frac{v_{\rho_0,\sigma_0}(T_j)}{v_{\rho_0,\sigma_0}(P)}.
\end{equation}
\end{proposition}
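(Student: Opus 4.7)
The plan is to establish both parts of~\eqref{cocientes} by induction on the consecutive elements of $\Dir(P)$ in the half-open interval $\,](\rho_0,\sigma_0),(\tilde\rho,\tilde\sigma)]$. Since $[P,Q]\in K^{\times}$, the derivation $[-,P]$ maps $K[P,Q]$ into itself and strictly reduces the $Q$-degree of every monomial $P^aQ^b$, so $T_j\in K[P,Q]$ for all $j$ and $T_j=0$ for $j$ sufficiently large. For each $j$ with $T_j\ne 0$, put $\lambda_j:=v_{\rho_0,\sigma_0}(T_j)/v_{\rho_0,\sigma_0}(P)\ge 0$; hypothesis~(2) and Remark~\ref{relacion de equivalencia} then give $\en_{\rho_0,\sigma_0}(T_j)=\lambda_j\,\en_{\rho_0,\sigma_0}(P)$.

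The core is the following structural computation, to be verified at every $(\rho,\sigma)\in\,](\rho_0,\sigma_0),(\tilde\rho,\tilde\sigma)]$. Suppose at such a direction the three conditions
\begin{equation*}
v_{\rho,\sigma}(P)>0,\qquad n\,v_{\rho,\sigma}(P)=m\,v_{\rho,\sigma}(Q),\qquad [\ell_{\rho,\sigma}(P),\ell_{\rho,\sigma}(Q)]=0
\end{equation*}
hold. Then Proposition~\ref{P y Q alineados}(2b) furnishes a $(\rho,\sigma)$-homogeneous $R$ and constants with $\ell_{\rho,\sigma}(P)=\lambda_P R^{m}$ and $\ell_{\rho,\sigma}(Q)=\lambda_Q R^{n}$. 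Writing $T_j=\sum c^{(j)}_{a,b}P^aQ^b$ (a unique expression, since $P,Q$ are algebraically independent as $[P,Q]\ne 0$) and grouping the terms of maximal $v_{\rho,\sigma}$-weight gives $\ell_{\rho,\sigma}(T_j)=\kappa_jR^{k_j}$ for some $k_j\in\mathds{N}_{0}$ (if the top scalar cancels, one passes to a lower-weight $v_{\rho,\sigma}$-homogeneous component with the same shape). Proposition~\ref{P y Q alineados}(1) then yields both $[\ell_{\rho,\sigma}(T_j),\ell_{\rho,\sigma}(P)]=0$ and $v_{\rho,\sigma}(T_j)/v_{\rho,\sigma}(P)=k_j/m$; as $(\rho,\sigma)$ varies, the ratio $k_j/m$ is independent of $(\rho,\sigma)$ and hence equals $\lambda_j$.

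To carry out the induction, enumerate $\Dir(P)\cap\,](\rho_0,\sigma_0),(\tilde\rho,\tilde\sigma)]$ as $(\rho_1,\sigma_1)<\cdots<(\rho_N,\sigma_N)=(\tilde\rho,\tilde\sigma)$. Positivity $v_{\rho,\sigma}(P)>0$ holds throughout by the definition of $(\tilde\rho,\tilde\sigma)$. For directions strictly between two consecutive elements of $\Dir(P)$, $\ell_{\rho,\sigma}(P)$ reduces to the shared vertex, and the remaining two conditions follow from the preceding step (treating $T_0:=Q$ in parallel, whose hypotheses are satisfied by condition~(3)). At a new edge $(\rho_i,\sigma_i)\in\Dir(P)$, the ratio carries over from the induction via Proposition~\ref{le basico}; the vanishing of $[\ell_{\rho_i,\sigma_i}(P),\ell_{\rho_i,\sigma_i}(Q)]$ is the main obstacle and is obtained from Propositions~\ref{extremosnoalineados}(1) and~\ref{pr v de un conmutador} as follows: the alignment $\st_{\rho_i,\sigma_i}(P)\sim\st_{\rho_i,\sigma_i}(Q)$ (inherited from the end of the previous edge), combined with $(1,1)\nsim\st_{\rho_i,\sigma_i}(P)$ (propagated from hypothesis~(4) along the boundary of $\HH(P)$ via its convexity), rules out Case~I and forces the commutator to vanish in a way compatible with $[P,Q]\in K^{\times}$.
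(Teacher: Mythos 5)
Your approach takes as its central computational device the claim that, writing $T_j=\sum c^{(j)}_{a,b}P^aQ^b$ and using $\ell_{\rho,\sigma}(P)=\lambda_PR^m$, $\ell_{\rho,\sigma}(Q)=\lambda_QR^n$, one obtains $\ell_{\rho,\sigma}(T_j)=\kappa_jR^{k_j}$ for some $k_j$. This is where the argument breaks. When the top-weight scalar $\sum_{(a,b)\in S}c^{(j)}_{a,b}\lambda_P^a\lambda_Q^b$ cancels (which certainly can happen, e.g.\ for two pairs $(a_1,b_1)\ne(a_2,b_2)$ with $a_1m+b_1n=a_2m+b_2n$), the next $v_{\rho,\sigma}$-homogeneous component of $T_j$ is \emph{not} ``of the same shape'': it receives contributions from the non-leading parts $P-\ell_{\rho,\sigma}(P)$ and $Q-\ell_{\rho,\sigma}(Q)$ entering the expansions of the top-tier monomials $P^aQ^b$, and these are not scalar multiples of powers of $R$. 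In that situation neither $[\ell_{\rho,\sigma}(T_j),\ell_{\rho,\sigma}(P)]=0$ nor the constancy of the exponent $k_j$ across $(\rho,\sigma)$ follows; in fact constancy of the ratio $v_{\rho,\sigma}(T_j)/v_{\rho,\sigma}(P)$ is precisely what collapses when $v_{\rho,\sigma}(T_j)$ drops because of cancellation. Hypothesis~(2) constrains only the endpoint at $(\rho_0,\sigma_0)$ and does not rule cancellation out at other directions, so nothing in the hypotheses saves the step.

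The same circularity infects the last paragraph: to derive $\st_{\rho_i,\sigma_i}(Q)=\frac nm\st_{\rho_i,\sigma_i}(P)$ from the alignment of endpoints along the previous edge you would need to invoke Remark~\ref{a remark}, whose hypothesis is precisely $[\ell_{\rho_i,\sigma_i}(P),\ell_{\rho_i,\sigma_i}(Q)]=0$, i.e.\ what you are trying to establish; and Proposition~\ref{extremosnoalineados}(1) is an equivalence, so knowing $\st_{\rho_i,\sigma_i}(P)\sim\st_{\rho_i,\sigma_i}(Q)$ together with $(1,1)\nsim\st_{\rho_i,\sigma_i}(P)$ produces no contradiction with a nonzero commutator --- it only tells you that $\st(P)+\st(Q)\ne(1,1)$, which is perfectly compatible with $[\ell(P),\ell(Q)]\ne 0$. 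The paper's proof avoids all of this by never asserting that $\ell_{\rho,\sigma}(T_j)$ has a preferred form: it argues by contradiction, showing that if the required commutator were nonzero (or if $\Succ_{T_j}$ jumped too early), then by Propositions~\ref{extremosnoalineados} and~\ref{pr v de un conmutador} the endpoints $\en_{\hat\rho,\hat\sigma}(T_{j+k})$ would march off linearly in $k$ into a half-plane where they cannot realign with $\en(P)$, forcing $T_{j+k}\ne 0$ for all $k$, which contradicts eventual nilpotence of the sequence $(T_j)$ --- the very fact you correctly observe at the outset but then do not use at the key point. You would need to replace the $K[P,Q]$-expansion argument with an endpoint-tracking contradiction of this kind to close the gap.
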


\noindent {\bf Idea of the proof:} We must prove that there is a partial homothety between $P$ and $T_j$
for $(\rho,\sigma)>(\rho_0,\sigma_0)$. The basic idea is that otherwise $\en (T_{j+n})\nsim \en(P)$ for some
direction and all $n>0$, and then $T_{j+n}\ne 0$ for all $n>0$, which is impossible.

\begin{proof}  Let
$$
(\rho_1,\sigma_1)<\dots<(\rho_k,\sigma_k)=(\tilde\rho,\tilde\sigma)
$$
be the directions in $\Dir(P)$ between  $(\rho_0,\sigma_0)$ and $(\tilde\rho,\tilde\sigma)$.
We will use freely that $v_{\rho',\sigma'}(P)>0$ for all
$(\rho_0,\sigma_0)\le(\rho',\sigma')\le(\tilde\rho,\tilde\sigma)$.
By Remark~\ref{relacion de equivalencia} and conditions~(2), (3) and~(4), we have
\begin{equation}\label{condini}
v_{1,-1}(\en_{\rho_0,\sigma_0}(P))<0\qquad\text{and}\qquad \en_{\rho_0,\sigma_0}(T_j) = \mu_j
\en_{\rho_0,\sigma_0}(P) \quad\text{with $\mu_j\ge 0$,}
\end{equation}
for all $j$ with $T_j\ne 0$.
We claim that if  there exists $0\le i<k$ such that
\begin{equation}\label{condind}
v_{1,-1}(\en_{\rho_i,\sigma_i}(P))<0\qquad\text{and}\qquad \en_{\rho_i,\sigma_i}(T_j) = \mu_j
\en_{\rho_i,\sigma_i}(P) \quad\text{with $\mu_j\ge 0$,}
\end{equation}
for all $j$ with $T_j\ne 0$, then
\begin{enumerate}

\smallskip

\item[(a)] If $T_j\ne 0$, then
$\en_{\rho_{i},\sigma_{i}}(T_j)=\st_{\rho_{i+1},\sigma_{i+1}}(T_j)$.

\smallskip

\item[(b)] $[\ell_{\rho_{i+1},\sigma_{i+1}}(T_j),\ell_{\rho_{i+1},\sigma_{i+1}}(P)]=0$, for all $j$.

\smallskip

\end{enumerate}
In order to check this, we write
$$
\en_{\rho_i,\sigma_i}(P) = r_i (a_i/l,b_i)\quad\text{with $r_i\ge 0$ and $\gcd(a_i,b_i)=1$.}
$$
We define the auxiliary direction
$$
(\ov\rho,\ov\sigma):=\frac 1d(lb_i,-a_i),\quad\text{where $d:=\gcd(lb_i,a_i)$.}
$$
By~\eqref{val} and the inequality in~\eqref{condind}, we have $(\ov\rho,\ov\sigma)=\dir(a_i/l,b_i)$.
Furthermore $r_i\in\mathds{N}$, because $\gcd(a_i,b_i)=1$ and
$\en_{\rho_i,\sigma_i}(P)\ne(0,0)$.
Note that
\begin{equation}\label{alineado con cruz}
    (c,d)\sim (a_i/l,b_i)\quad\text{if and only if}\quad v_{\ov\rho,\ov\sigma}(c,d)=0.
\end{equation}
Since $v_{\rho_i,\sigma_i}(a_i/l,b_i),v_{\rho_{i+1},\sigma_{i+1}}(a_i/l,b_i)>0$, by Remarks~\ref{180 grados} and~\ref{intervalo de direcciones}
 we know that
\begin{equation}\label{donde esta rho sigma}
    (\ov\rho,\ov\sigma)<    (\rho_i,\sigma_i)<    (\rho_{i+1},\sigma_{i+1})<    (-\ov\rho,-\ov\sigma).
\end{equation}
\smallskip
Next we prove condition~(a). For this it suffices to prove that if $T_j\ne
0$, then
$$
\Dir(T_j)\cap\, ](\rho_{i},\sigma_{i}),(\rho_{i+1},\sigma_{i+1})[\, =\emptyset.
$$
In order to check this fact, assume by contradiction that it is false and set $(\hat\rho,\hat\sigma):=\Succ_{T_j}(\rho_i,\sigma_i)$. Since
$(\hat\rho,\hat\sigma)\in ](\rho_{i},\sigma_{i}),(\rho_{i+1},\sigma_{i+1})[$, by~\eqref{donde esta rho sigma} we have
\begin{equation}\label{dirab nueva}
(\ov\rho,\ov\sigma)<(\hat\rho,\hat\sigma)<(-\ov\rho,-\ov\sigma).
\end{equation}
By Remark~\ref{starting vs end} and~\eqref{alineado con cruz}, we have
\begin{equation}\label{v de T menor que cero}
   v_{\ov\rho,\ov\sigma}(\en_{\hat\rho,\hat\sigma}(T_{j}))<
    v_{\ov\rho,\ov\sigma}(\st_{\hat\rho,\hat\sigma}(T_{j}))=0,
\end{equation}
since $(a_i/l,b_i)\sim \en_{\rho_i,\sigma_i}(T_j)=\st_{\hat\rho,\hat\sigma}(T_{j})$, by~\eqref{condind}.
We assert that
\begin{equation}\label{ch4}
T_{j+k}\ne0\quad\text{and}\quad\en_{\hat\rho,\hat\sigma}(T_{j+k})=\en_{\hat\rho,\hat\sigma}(T_{j})+k\en_{\hat\rho,\hat\sigma}(P)-k(1,1),
\end{equation}
for all $k\in \mathds{N}_0$. We will prove this by induction on $k$.
For $k=0$ this is trivial. Assume that~\eqref{ch4} is true for some $k$.
Then,
\begin{align*}
v_{\ov\rho,\ov\sigma}(\en_{\hat\rho,\hat\sigma}(T_{j+k}))&=v_{\ov\rho,\ov\sigma}(\en_{\hat\rho,\hat\sigma}(T_{j}))
+kv_{\ov\rho,\ov\sigma}(\en_{\hat\rho,\hat\sigma}(P))-kv_{\ov\rho,\ov\sigma}(1,1)\\
&= v_{\ov\rho,\ov\sigma}(\en_{\hat\rho,\hat\sigma}(T_{j}))-k(\ov\rho+\ov\sigma)\\
&<0,
\end{align*}
since
$v_{\ov\rho,\ov\sigma}(\en_{\hat\rho,\hat\sigma}(P))=v_{\ov\rho,\ov\sigma}(\en_{\rho_i,\sigma_i}(P))=0$
by Proposition~\ref{le basico} and~\eqref{alineado con cruz},
 $v_{\ov\rho,\ov\sigma}(\en_{\hat\rho,\hat\sigma}(T_{j}))<0$
by~\eqref{v de T menor que cero},
and $\ov\rho+\ov\sigma>0$. But then, again by~\eqref{alineado con cruz},
$$
\en_{\hat\rho,\hat\sigma}(T_{j+k})\nsim\en_{\hat\rho,\hat\sigma}(P)=r_i(a_i/l,b_i).
$$
Hence, by Propositions~\ref{pr v de un conmutador} and~\ref{extremosalineados}
$$
\ell_{\hat\rho,\hat\sigma}(T_{j+k+1})=[\ell_{\hat\rho,\hat\sigma}(T_{j+k}),\ell_{\hat\rho,\hat\sigma}(P)]\ne 0.
$$
Consequently, by Proposition~\ref{extremosnoalineados}(2) and~\eqref{ch4} for $k$,
\begin{align*}
\en_{\hat\rho,\hat\sigma}(T_{j+k+1})&=
\en_{\hat\rho,\hat\sigma}(T_{j+k})+\en_{\hat\rho,\hat\sigma}(P)-(1,1)\\
&= \en_{\hat\rho,\hat\sigma}(T_{j})+(k+1)\en_{\hat\rho,\hat\sigma}(P)-(k+1)(1,1),
\end{align*}
which ends the proof of the assertion. But
$T_{j+k}\ne0$ for all~$k$ is impossible, since from $[P,Q] \in K^{\times}$ and $T_0\in K[P,Q]$ it follows
easily that $T_n = 0$ for $n$ large enough. Therefore statement~(a) is true.

Now we are going to prove statement~(b). Assume by contradiction that
$$
[\ell_{\rho_{i+1},\sigma_{i+1}}(T_j),\ell_{\rho_{i+1},\sigma_{i+1}}(P)]\ne 0,
$$
which by Proposition~\ref{pr v de un conmutador} implies
\begin{equation}\label{rata2 nueva}
[\ell_{\rho_{i+1},\sigma_{i+1}}(T_j),\ell_{\rho_{i+1},\sigma_{i+1}}(P)] =
\ell_{\rho_{i+1},\sigma_{i+1}}([T_j,P]) = \ell_{\rho_{i+1},\sigma_{i+1}}(T_{j+1}).
\end{equation}
By~\eqref{donde esta rho sigma} we have $(\ov\rho,\ov\sigma)< (\rho_{i+1},\sigma_{i+1})<
(-\ov\rho,-\ov\sigma)$ and so, by Remark~\ref{starting vs end}
\begin{align*}
\st_{\rho_{i+1},\sigma_{i+1}}(P) &=
 \Supp(\ell_{\ov\rho,\ov\sigma}(\ell_{\rho_{i+1},\sigma_{i+1}}(P))), \\
\st_{\rho_{i+1},\sigma_{i+1}}(T_j) &=
 \Supp(\ell_{\ov\rho,\ov\sigma}(\ell_{\rho_{i+1},\sigma_{i+1}}(T_j))), \\
\st_{\rho_{i+1},\sigma_{i+1}}(T_{j+1}) &=
 \Supp(\ell_{\ov\rho,\ov\sigma}(\ell_{\rho_{i+1},\sigma_{i+1}}(T_{j+1}))).
\end{align*}
But then, by Proposition~\ref{pr v de un conmutador} and equivalence~\eqref{alineado con cruz},
\begin{align*}
 v_{\ov \rho,\ov\sigma} (\st_{\rho_{i+1},\sigma_{i+1}}(T_{j+1}))&=
  v_{\ov \rho,\ov\sigma} (\ell_{\rho_{i+1},\sigma_{i+1}}(T_{j+1}))\\
  &\le v_{\ov \rho,\ov\sigma} (\ell_{\rho_{i+1},\sigma_{i+1}}(T_{j}))
  +v_{\ov \rho,\ov\sigma} (\ell_{\rho_{i+1},\sigma_{i+1}}(P))
  -(\ov \rho+\ov\sigma)\\
  &=v_{\ov \rho,\ov\sigma} (\st_{\rho_{i+1},\sigma_{i+1}}(T_{j}))
  +v_{\ov \rho,\ov\sigma} (\st_{\rho_{i+1},\sigma_{i+1}}(P))
  -(\ov \rho+\ov\sigma)\\
  &=-(\ov \rho+\ov\sigma)<0,
\end{align*}
since by item~(a), Proposition~\ref{le basico} and~\eqref{condind},
$$
\st_{\rho_{i+1},\sigma_{i+1}}(T_{j}) = \en_{\rho_i,\sigma_i}(T_{j})\sim
(a_i/l,b_i)\sim \en_{\rho_i,\sigma_i}(P) =
\st_{\rho_{i+1},\sigma_{i+1}}(P).
$$
Hence,
by item~(a), Proposition~\ref{le basico} and~\eqref{alineado con cruz},
$$
\en_{\rho_{i},\sigma_{i}}(T_{j+1})=\st_{\rho_{i+1},\sigma_{i+1}}(T_{j+1})\nsim
(a_i/l,b_i)\sim \en_{\rho_{i},\sigma_{i}}(P),
$$
which contradicts~\eqref{condind}, thus proving~(b) and finishing the proof of the claim.

In order to prove~\eqref{cocientes}, we must check that
\begin{equation}\label{ch0}
[\ell_{\rho,\sigma}(T_j),\ell_{\rho,\sigma}(P)]=0\qquad\text{and}\qquad
\frac{v_{\rho,\sigma}(T_j)}{v_{\rho,\sigma}(P)}= \frac{v_{\rho_i,\sigma_i}(T_j)}{v_{\rho_i,\sigma_i}(P)}
\end{equation}
hold for all $(\rho,\sigma)$ with $(\rho_{i+1},\sigma_{i+1})\ge (\rho,\sigma)>(\rho_i,\sigma_i)$ and all $i$.
We proceed by induction, using the claim and~\eqref{condini}. More precisely, we are going to prove
 for any $i$, that~\eqref{condind} implies that~\eqref{ch0}
hold for all $(\rho,\sigma)$ with $(\rho_{i+1},\sigma_{i+1})\ge (\rho,\sigma)>(\rho_i,\sigma_i)$, and that
condition~\eqref{condind} is true for $i+1$.

 In fact, if
$(\rho_{i+1},\sigma_{i+1})> (\rho,\sigma)>(\rho_i,\sigma_i)$, then by Proposition~\ref{le basico},
\begin{equation}\label{ch1}
\en_{\rho_i,\sigma_i}(P) =\Supp(\ell_{\rho,\sigma}(P)) = \st_{\rho_{i+1},\sigma_{i+1}}(P),
\end{equation}
while, again by Proposition~\ref{le basico} and statement~(a), for the same $(\rho,\sigma)$
\begin{equation}\label{ch2}
\en_{\rho_i,\sigma_i}(T_j) =\Supp(\ell_{\rho,\sigma}(T_j)) = \st_{\rho_{i+1},\sigma_{i+1}}(T_j).
\end{equation}
Consequently, since $\en_{\rho_i,\sigma_i}(T_j)\sim \en_{\rho_i,\sigma_i}(P)$,
\begin{align*}
&[\ell_{\rho,\sigma}(T_j),\ell_{\rho,\sigma}(P)]=0 && \text{for all $(\rho,\sigma)$ with
$(\rho_{i+1},\sigma_{i+1})> (\rho,\sigma)>(\rho_i,\sigma_i)$}
\shortintertext{and}
&\frac{v_{\rho,\sigma}(T_j)}{v_{\rho,\sigma}(P)}= \frac{v_{\rho_i,\sigma_i}(T_j)}{v_{\rho_i,\sigma_i}(P)} &&
\text{for all $(\rho,\sigma)$ with $(\rho_{i+1},\sigma_{i+1})\ge (\rho,\sigma)>(\rho_i,\sigma_i)$.}
\end{align*}
Hence the equalities in~\eqref{ch0} hold for all required $(\rho,\sigma)$'s. Next we prove that
condition~\eqref{condind} is true for $i+1$.
We first prove that
\begin{equation}\label{condind2}
v_{1,-1}(\en_{\rho_{i+1},\sigma_{i+1}}(P))<0.
\end{equation}
If $\rho_{i+1}+\sigma_{i+1}\ge 0$, then by Proposition~\ref{le basico}, Remark~\ref{starting vs end} and the
inequality in~\eqref{condind},
$$
v_{1,-1}(\en_{\rho_{i+1},\sigma_{i+1}}(P)) \le v_{1,-1}(\st_{\rho_{i+1},\sigma_{i+1}}(P)) =
v_{1,-1}(\en_{\rho_i,\sigma_i}(P))< 0,
$$
as desired. Assume that $\rho_{i+1}+\sigma_{i+1}< 0$ and set $A:=\en_{\rho_{i+1},\sigma_{i+1}}(P)$.
First we are going to prove that
$v_{1,-1}(A)\ne 0$. Otherwise $A=k(1,1)$ for some $k\in\mathds{N}_0$, which is impossible, since then
$$
v_{\rho_{i+1},\sigma_{i+1}}(P)=v_{\rho_{i+1},\sigma_{i+1}}(A)=k(\rho_{i+1}+\sigma_{i+1})\le 0,
$$
contradicting the definition of $(\tilde\rho,\tilde\sigma)$.
Assume that
\begin{equation}\label{va de A mayor que cero}
v_{1,-1}(A)>0=v_{1,-1}(0,0).
\end{equation}
Since $\rho_{i+1}+\sigma_{i+1}<0$,  $(\rho_{i+1},\sigma_{i+1})\in I_0$ and $A\in\Supp(P)$, we have
$$
(-1,1)<(\rho_{i+1},\sigma_{i+1})<(0,-1)\quad\text{and}\quad v_{0,1}(A)\ge 0 = v_{0,1}(0,0).
$$
Thus, by Corollary~\ref{formula basica de orden'} (which we can apply because
$({-\rho_{i+1},-\sigma_{i+1}})<(0,1)$ in $\mathfrak{V}_{>0}$), we have
$$
v_{\rho_{i+1},\sigma_{i+1}}(P)=v_{\rho_{i+1},\sigma_{i+1}}(A)=
-v_{-\rho_{i+1},-\sigma_{i+1}}(A)< -v_{-\rho_{i+1},-\sigma_{i+1}}(0,0)=0,
$$
which contradicts again the definition of $(\tilde\rho,\tilde\sigma)$ and ends the proof of~\eqref{condind2}.

It remains to check that the second assertion in~\eqref{condind} holds for $i+1$.
By equalities~\eqref{ch1} and~\eqref{ch2},
$$
\st_{\rho_{i+1},\sigma_{i+1}}(T_j) = \en_{\rho_i,\sigma_i}(T_j) = \mu_j \en_{\rho_i,\sigma_i}(P) = \mu_j
\st_{\rho_{i+1},\sigma_{i+1}}(P),
$$
which implies $v_{\rho_{i+1},\sigma_{i+1}}(T_j) = \mu_jv_{\rho_{i+1},\sigma_{i+1}}(P) \ge 0$.
Therefore, by~(b), we can
apply Remark~\ref{a remark} in order to obtain that
$$
\en_{\rho_{i+1},\sigma_{i+1}}(T_j) = \mu_j \en_{\rho_{i+1},\sigma_{i+1}}(P),
$$
as desired. This proves~\eqref{cocientes} and concludes the proof.
\end{proof}

\begin{corollary}\label{fracciones de F} Let $m,n\in \mathds{N}$ be coprime with $m,n>1$ and let
$P,Q\in L^{(l)}$ with
$$
[P,Q]\in K^{\times}\quad\text{and}\quad \frac{v_{1,1}(P)}{v_{1,1}(Q)}=\frac{v_{1,0}(P)}{v_{1,0}(Q)} =\frac mn.
$$
Assume that $(\rho_0,\sigma_0)\in
\mathfrak{V}_{\ge0}$ satisfies
\begin{enumerate}

\smallskip

  \item $(\rho_0,\sigma_0)\in\Dir(P)$ and $v_{\rho_0,\sigma_0}(P)>0$,

\smallskip

  \item $\frac{1}{m}\en_{\rho_0,\sigma_0}(P) = \frac{1}{n}\en_{\rho_0,\sigma_0}(Q) \in \frac 1l
\mathds{Z}\times\mathds{N}$,

\smallskip

  \item $b>a/l$, where $(a/l,b):=\frac{1}{m}\en_{\rho_0,\sigma_0}(P)$.

\smallskip

\end{enumerate}
Let $(\tilde\rho,\tilde\sigma)$ be as in
Proposition~\ref{proporcionalidad de direcciones mayores} and let $F\in L^{(l)}$ be the
$(\rho_0,\sigma_0)$-ho\-mo\-geneous element obtained in Theorem~\ref{central}.
If there exist $p,q\in \mathds{N}$ coprime such that $\en_{\rho_0,\sigma_0}(F) =
\frac pq (a/l,b)$,
then for all $(\rho,\sigma)\in \mathfrak{V}$ with $(\rho_0,\sigma_0)<(\rho,\sigma)\le(\tilde\rho,\tilde\sigma)$
 there exists  a
    $(\rho,\sigma)$-ho\-mo\-geneous element $R\in L^{(l)}$ such that $\ell_{\rho,\sigma}(P)= R^{qm}$.
\end{corollary}

\begin{proof} Let $G_0$ and $G_1$ be as in Theorem~\ref{central}. Since $G_0,P\ne 0$, by the last equality
in~\eqref{eq central} we have
$$
[\ell_{\rho_0,\sigma_0}(G_0),\ell_{\rho_0,\sigma_0}(P)]\ne 0,
$$
which, by Proposition~\ref{pr v de un conmutador}, implies that
$$
\ell_{\rho_0,\sigma_0}(G_1)=[\ell_{\rho_0,\sigma_0}(G_0),\ell_{\rho_0,\sigma_0}(P)]\ne 0.
$$
By Proposition~\ref{extremosalineados} and Theorem~\ref{central}(5) there exists $g_1\in \mathds{Q}$ such that
\begin{equation}\label{ck2}
\en_{\rho_0,\sigma_0}(G_1) = g_1\en_{\rho_0,\sigma_0}(P).
\end{equation}
Moreover,
\begin{equation}\label{ck3}
\en_{\rho_0,\sigma_0}(F) = \frac pq (a/l,b) = \frac p{qm}\en_{\rho_0,\sigma_0}(P).
\end{equation}
On the other hand, using again the last equality in~\eqref{eq central}, we obtain
$$
\ell_{\rho_0,\sigma_0}(G_1)F = \ell_{\rho_0,\sigma_0}(G_0) \ell_{\rho_0,\sigma_0}(P),
$$
and hence
$$
\en_{\rho_0,\sigma_0}(G_1)+\en_{\rho_0,\sigma_0}(F) = \en_{\rho_0,\sigma_0}(G_0) + \en_{\rho_0,\sigma_0}(P).
$$
Consequently, by~\eqref{ck2} and~\eqref{ck3},
$$
\en_{\rho_0,\sigma_0}(G_0) = \en_{\rho_0,\sigma_0}(G_1)+\en_{\rho_0,\sigma_0}(F)- \en_{\rho_0,\sigma_0}(P) =
\Bigl(g_1+\frac p{qm}-1\Bigr)\en_{\rho_0,\sigma_0}(P).
$$
Set $g_0:=g_1+\frac p{qm}-1$ and take $r\in \mathds{Z}$ and $s\in\mathds{N}$ coprime, such that $g_0=r/s$. Note
that by~\eqref{ck2}, \eqref{ck3} and the fact that $g_1=\frac rs+1-\frac p{qm}$, we have
\begin{equation}\label{valuaciones de G}
\frac{1}{v_{\rho_0,\sigma_0}(P)}\bigl(v_{\rho_0,\sigma_0}(G_0), v_{\rho_0,\sigma_0}(G_1),
v_{\rho_0,\sigma_0}(P),v_{\rho_0,\sigma_0}(Q) \bigr)= \left(\frac rs,\frac rs+1-\frac {p}{qm},1,
\frac nm\right).
\end{equation}
Let $(\rho,\sigma)>(\rho_0,\sigma_0)$. Applying
Proposition~\ref{proporcionalidad de direcciones mayores} with $T_0:=G_0$, with $T_0:=G_1$ and with
$T_0:=Q$, we obtain that
$$
[\ell_{\rho,\sigma}(G_0),\ell_{\rho,\sigma}(P)]=0,\quad
[\ell_{\rho,\sigma}(G_1),\ell_{\rho,\sigma}(P)]=0\quad\text{and}\quad
[\ell_{\rho,\sigma}(Q),\ell_{\rho,\sigma}(P)]=0.
$$
Hence, by Proposition~\ref{P y Q alineados}(2b), there exist $\gamma_0,\gamma_1,\gamma_2,\gamma_3\in
K^{\times}$, a $(\rho,\sigma)$-homogeneous element $R_0\in L$ and $u_0,u_1,u_2,u_3 \in \mathds{N}$, such that
$$
\ell_{\rho,\sigma}(G_0) = \gamma_0 R_0^{u_0},\quad \ell_{\rho,\sigma}(G_1) = \gamma_1 R_0^{u_1},\quad
\ell_{\rho,\sigma}(P) = \gamma_2 R_0^{u_2}\quad \text{and} \quad \ell_{\rho,\sigma}(Q) =
\gamma_3 R_0^{u_3},\endnote{
Write $\ell_{\rho,\sigma}(P)=P_1^{m_1}\cdot\dots\cdot P_r^{m_r}$ where the $P_j$'s
are irreducible. Let $d:=\gcd(m_1,\dots,m_r)$. Then we can take
$R_0:=P_1^{m_1/d}\cdot\dots\cdot P_r^{m_r/d}$.}
$$
and clearly we can assume that $\gcd(u_0,u_1,u_2,u_3)\!=\!1$\endnote{
Replace $R_0$ by $R_0^{\gcd(u_0,u_1,u_2,u_3)}$.}.
But then, by
Proposition~\ref{proporcionalidad de direcciones mayores} and e\-qua\-li\-ty~\eqref{valuaciones de G},
\begin{align*}
v_{\rho,\sigma}(R_0)(u_0,u_1,u_2,u_3) & = \bigl(v_{\rho,\sigma}(G_0), v_{\rho,\sigma}(G_1),
v_{\rho,\sigma}(P),v_{\rho,\sigma}(Q) \bigr)\\
& = \frac{v_{\rho,\sigma}(P)}{v_{\rho_0,\sigma_0}(P)}\bigl(v_{\rho_0,\sigma_0}(G_0), v_{\rho_0,\sigma_0}(G_1),
v_{\rho_0,\sigma_0}(P),v_{\rho_0,\sigma_0}(Q) \bigr)\\
& = \frac{v_{\rho,\sigma}(P)}{sqm}(rqm,rqm+sqm-ps,sqm,sqn),
\end{align*}
and so we have $u_2 = \frac{sqm}{d}$, where $d:=\gcd(rqm,rqm+sqm-ps,sqm,sqn)$. Since
$$
d=\gcd(rqm,ps,sqm,sqn)=\gcd(qm,s),
$$
we obtain that $d|s$. Consequently we can write $\ell_{\rho,\sigma}(P) = \gamma_2 R_0^{sqm/d}= \gamma_2
(R_0^{s/d})^{qm}$. We conclude the proof setting  $R:=\gamma R_0^{s/d}$, where we choose $\gamma\in K^{\times}$ such that
$\gamma^{qm}=\gamma_2$. \end{proof}

\begin{proposition}\label{proporcionalidad de direcciones menores} Let $m,n\in \mathds{N}$ be coprime with
$m,n>1$ and let $P,Q\in L^{(l)}$ with
$$
[P,Q]\in K^{\times}\quad\text{and}\quad \frac{v_{1,1}(P)}{v_{1,1}(Q)}=\frac{v_{0,1}(P)}{v_{0,1}(Q)} =\frac mn.
$$
Take $T_0\in K[P,Q]$ and set $T_j:=[T_{j-1},P]$ for $j\ge 1$. Assume that
$(\rho_0,\sigma_0)\in\mathfrak{V}_{\ge 0}$ satisfies
\begin{enumerate}

\smallskip

  \item $(\rho_0,\sigma_0)\in\Dir(P)$ and $v_{\rho_0,\sigma_0}(P)>0$,

\smallskip

  \item $\st_{\rho_0,\sigma_0}(T_j)\sim \st_{\rho_0,\sigma_0}(P)$ for all $j$ with $T_j\ne 0$,

\smallskip

  \item $\frac{1}{m}\st_{\rho_0,\sigma_0}(P) = \frac{1}{n}\st_{\rho_0,\sigma_0}(Q) \in \frac 1l
\mathds{Z}\times\mathds{N}$,

\smallskip

  \item $b<a/l$, where $(a/l,b):=\frac{1}{m}\st_{\rho_0,\sigma_0}(P)$.

\smallskip

\end{enumerate}
Let $I_1:=[(0,-1),(\rho_0,\sigma_0)]$ and
$$
(\tilde\rho,\tilde\sigma):=
\min\{(\rho,\sigma)\in \Dir(P)\cap I_1 : v_{\rho',\sigma'}(P)>0
\text{ for all } (\rho_0,\sigma_0)\ge(\rho',\sigma')\ge(\rho,\sigma)\}.
$$
Then for all $(\rho,\sigma)\in\mathfrak{V}$ with $(\tilde\rho,\tilde\sigma)
\le (\rho,\sigma)<(\rho_0,\sigma_0)$ and all $j\ge 0$ we have
$$
[\ell_{\rho,\sigma}(T_j),\ell_{\rho,\sigma}(P)]=0\quad\text{and}\quad
\frac{v_{\rho,\sigma}(T_j)}{v_{\rho,\sigma}(P)}= \frac{v_{\rho_0,\sigma_0}(T_j)}{v_{\rho_0,\sigma_0}(P)}.
$$
\end{proposition}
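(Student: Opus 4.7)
The strategy is to mirror the proof of Proposition~\ref{proporcionalidad de direcciones mayores}, exchanging the roles of $\st$ and $\en$ and reversing the sense of traversal. Enumerate
$(\rho_k,\sigma_k)=(\tilde\rho,\tilde\sigma)<\cdots<(\rho_1,\sigma_1)<(\rho_0,\sigma_0)$
the directions of $\Dir(P)\cap I_1$ between $(\tilde\rho,\tilde\sigma)$ and $(\rho_0,\sigma_0)$, and proceed by descending induction on $i$, proving that
\[
[\ell_{\rho,\sigma}(T_j),\ell_{\rho,\sigma}(P)]=0 \quad\text{and}\quad
\frac{v_{\rho,\sigma}(T_j)}{v_{\rho,\sigma}(P)}=\frac{v_{\rho_i,\sigma_i}(T_j)}{v_{\rho_i,\sigma_i}(P)}
\]
hold for every $(\rho,\sigma)$ with $(\rho_{i+1},\sigma_{i+1})\le(\rho,\sigma)<(\rho_i,\sigma_i)$. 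The inductive hypothesis at step $i$ is the symmetric analog of~\eqref{condind}: $v_{1,-1}(\st_{\rho_i,\sigma_i}(P))>0$ and $\st_{\rho_i,\sigma_i}(T_j)=\mu_j\st_{\rho_i,\sigma_i}(P)$ with $\mu_j\ge 0$ for every $j$ with $T_j\ne 0$. This holds at $i=0$ thanks to hypotheses~(2)--(4) and the inequality $b<a/l$.

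Write $\st_{\rho_i,\sigma_i}(P)=r_i(a_i/l,b_i)$ with $r_i\in\mathds{N}$ and $\gcd(a_i,b_i)=1$; since $b_i<a_i/l$, equation~\eqref{val} produces the auxiliary direction $(\ov\rho,\ov\sigma):=\dir(a_i/l,b_i)=\frac{1}{d}(-lb_i,a_i)$, where $d:=\gcd(lb_i,a_i)$. Cross-product computations analogous to~\eqref{productocruz} and~\eqref{productocruz2} yield the chain
\[
(-\ov\rho,-\ov\sigma)<(\rho_{i+1},\sigma_{i+1})<(\rho_i,\sigma_i)<(\ov\rho,\ov\sigma).
\]
The core claim, analogous to items~(a) and~(b) in the mayores proof, is that $\Pred_{T_j}(\rho_i,\sigma_i)\le(\rho_{i+1},\sigma_{i+1})$ for every $j$ with $T_j\ne 0$, and that $[\ell_{\rho_{i+1},\sigma_{i+1}}(T_j),\ell_{\rho_{i+1},\sigma_{i+1}}(P)]=0$ for every $j$. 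To establish the first, suppose $(\hat\rho,\hat\sigma):=\Pred_{T_j}(\rho_i,\sigma_i)>(\rho_{i+1},\sigma_{i+1})$. The first case of Remark~\ref{starting vs end} applied with reference direction $(-\ov\rho,-\ov\sigma)$, together with the nonmonomiality of $\ell_{\hat\rho,\hat\sigma}(T_j)$, yields $v_{\ov\rho,\ov\sigma}(\st_{\hat\rho,\hat\sigma}(T_j))<v_{\ov\rho,\ov\sigma}(\en_{\hat\rho,\hat\sigma}(T_j))=0$; since Proposition~\ref{le basico} forces $\ell_{\hat\rho,\hat\sigma}(P)$ to be the monomial at $r_i(a_i/l,b_i)$, we get $\st_{\hat\rho,\hat\sigma}(T_j)\nsim\st_{\hat\rho,\hat\sigma}(P)$. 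Propositions~\ref{extremosalineados},~\ref{pr v de un conmutador} and~\ref{extremosnoalineados}(1) then produce, inductively in $k$, the formula
\[
\st_{\hat\rho,\hat\sigma}(T_{j+k})=\st_{\hat\rho,\hat\sigma}(T_j)+k\,\st_{\hat\rho,\hat\sigma}(P)-k(1,1)
\]
with $T_{j+k}\ne 0$ for every $k\ge 0$, contradicting the eventual vanishing of the sequence $(T_j)$. Statement~(b) follows by the analogous contradiction: a nonzero bracket at $(\rho_{i+1},\sigma_{i+1})$ would force $\st_{\rho_i,\sigma_i}(T_{j+1})\nsim\st_{\rho_i,\sigma_i}(P)$, violating the inductive hypothesis.

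Combining (a) and (b) with Proposition~\ref{le basico} and Remark~\ref{a remark} yields both conclusions for every $(\rho,\sigma)$ with $(\rho_{i+1},\sigma_{i+1})\le(\rho,\sigma)<(\rho_i,\sigma_i)$, and gives $\st_{\rho_{i+1},\sigma_{i+1}}(T_j)=\mu_j\st_{\rho_{i+1},\sigma_{i+1}}(P)$. Propagation of the inductive hypothesis then reduces to showing $v_{1,-1}(\st_{\rho_{i+1},\sigma_{i+1}}(P))>0$: Proposition~\ref{le basico} and Remark~\ref{starting vs end} settle the case $\rho_{i+1}+\sigma_{i+1}\ge 0$, and Corollary~\ref{formula basica de orden'} together with the definition of $(\tilde\rho,\tilde\sigma)$ exclude $v_{1,-1}(\st_{\rho_{i+1},\sigma_{i+1}}(P))\le 0$ when $\rho_{i+1}+\sigma_{i+1}<0$. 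The main obstacle is the sign bookkeeping around the mirror direction $(\ov\rho,\ov\sigma)$; once the strict inequality chain above is secured, the rest of the argument transcribes, mutatis mutandis, from the proof of Proposition~\ref{proporcionalidad de direcciones mayores}.
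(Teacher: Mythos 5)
Your proof is correct and takes exactly the approach the paper takes: the paper's entire proof of this proposition is the single sentence ``Mimic the proof of Proposition~\ref{proporcionalidad de direcciones mayores}.'' In fact you have spelled out the necessary bookkeeping --- exchanging $\st$ and $\en$, reversing the inequality $b>a/l$ to $b<a/l$, mirroring the auxiliary direction to $(\ov\rho,\ov\sigma)=\frac{1}{d}(-lb_i,a_i)$ via the other branch of~\eqref{val}, and flipping the chain of cross-product inequalities --- in more detail than the paper itself provides.
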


\begin{proof} Mimic the proof of Proposition~\ref{proporcionalidad de direcciones mayores}.
\end{proof}

\begin{corollary}\label{fracciones de F1} Let $m,n\in \mathds{N}$ be coprime with $m,n>1$ and let
$P,Q\in L^{(l)}$ with
$$
[P,Q]\in K^{\times}\quad\text{and}\quad \frac{v_{1,1}(P)}{v_{1,1}(Q)}=\frac{v_{0,1}(P)}{v_{0,1}(Q)} =\frac mn.
$$
Assume that $(\rho_0,\sigma_0)\in
\mathfrak{V}_{\ge0}$ satisfies
\begin{enumerate}

\smallskip

  \item $(\rho_0,\sigma_0)\in\Dir(P)$ and $v_{\rho_0,\sigma_0}(P)>0$,

\smallskip

  \item $\frac{1}{m}\st_{\rho_0,\sigma_0}(P) = \frac{1}{n}\st_{\rho_0,\sigma_0}(Q) \in \frac 1l
\mathds{Z}\times\mathds{N}$,

\smallskip

\item $b<a/l$, where $(a/l,b):=\frac{1}{m}\st_{\rho_0,\sigma_0}(P)$.

\smallskip

\end{enumerate}
Let $(\tilde\rho,\tilde\sigma)$ be as in
Proposition~\ref{proporcionalidad de direcciones menores} and let $F\in L^{(l)}$ be the
$(\rho_0,\sigma_0)$-ho\-mo\-geneous element obtained in Theorem~\ref{central}.
If there exist $p,q\in \mathds{N}$ coprime, such that $\st_{\rho_0,\sigma_0}(F) = \frac pq
(a/l,b)$, then for all
$(\rho,\sigma)\in \mathfrak{V}$ with $(\tilde\rho,\tilde\sigma)\le(\rho,\sigma)<(\rho_0,\sigma_0)$ there exists
a $(\rho,\sigma)$-ho\-mo\-geneous element $R\in L^{(l)}$ such that $\ell_{\rho,\sigma}(P)= R^{qm}$.
\end{corollary}

\begin{proof} Mimic the proof of Corollary~\ref{fracciones de F}.
\end{proof}

\begin{remark}\label{direcciones intermedias positivas}
Let $P\in L^{(l)}\setminus\{0\}$ and let $(\rho',\sigma')$ and
$(\rho'',\sigma'')$ be consecutive elements in $\Dir(P)$.
It follows from  Remarks~\ref{180 grados} and~\ref{intervalo de direcciones} that if
$v_{\rho',\sigma'}(P),v_{\rho'',\sigma''}(P)>0$, then
$v_{\rho,\sigma}(P)>0$ for all
$(\rho',\sigma')<(\rho,\sigma)<(\rho'',\sigma'')$.
\end{remark}

The following theorem is related to~\cite{H}*{Proposition 1.10} and also to~\cite{Z}*{Remark 5.12}.
In this theorem and in Proposition~\ref{factores}, we consider the order in $I=\,\, ](1,-1),(1,0)]$.

\begin{theorem}\label{divisibilidad}
Let $(A_0,(\rho_0,\sigma_0)),(A_1,(\rho_1,\sigma_1)),\dots,(A_k,(\rho_k,\sigma_k))$ be the regular corners
of an $(m,n)$-pair $(P,Q)$ in $L^{(l)}$, where $(\rho_i,\sigma_i)<(\rho_{i+1},\sigma_{i+1})$ for all $i<k$.
The following facts hold:
\begin{enumerate}

\smallskip

\item $A(P)=\{(\rho_1,\sigma_1),\dots,(\rho_k,\sigma_k)\}$.  In particular, if $(P,Q)$ is a standard $(m,n)$-pair, then
$(A_0,A'_0,(\rho_0,\sigma_0))$ is the starting triple of $(P,Q)$, where $A'_0:=\frac{1}{m}
\st_{\rho_0,\sigma_0}(P)$.

\smallskip

\item For all $j\ge 1$ there exists $d_j\in\mathds{N}$ maximum such that
 $\ell_{\rho_j,\sigma_j}(P)=R_j^{md_j}$ for some $(\rho_j,\sigma_j)$-
ho\-mo\-geneous $R_j\in L^{(l)}$. If $A_0$ is of type~II, then this holds also for $j=0$.

\smallskip

\item
For all $j>0$ the element $F_j$ constructed via Theorem~\ref{central} satisfies
$$
\en_{\rho_j,\sigma_j}(F_j)=\frac{p_j}{q_j}\frac 1m \en_{\rho_j,\sigma_j}(P),
$$
where $p_j$ and $q_j$ are coprime. If $A_0$ is of type~II, then this holds also for $j=0$.

\smallskip

    \item $q_i\nmid d_i$ for all $i>0$.

\smallskip

    \item $q_j \mid d_i$ for all $i>j>0$.

\smallskip

    \item $q_i\nmid q_j$ for all $i>j>0$.
\end{enumerate}
\smallskip

\noindent Set $D_j:=\gcd(a_j,b_j,a_{j-1},b_{j-1})$, where $A_j=(a_j/l,b_j)$ and
    $A_{j-1}=(a_{j-1}/l,b_{j-1})$. Then
\smallskip
\begin{enumerate}[resume]
    \item $d_j\mid D_j$ and $\Omega(D_j)\ge \Omega(d_j)\ge j-1$ for all $j>0$,
    where for $n\in\mathds{N}$ we let $\Omega(n)$ denote the number of prime factors of $n$,
     counted with multiplicity.

\smallskip

    \item If $A_0$ is of type~II, then $q_0\nmid d_0$ and for all $i>0$, we have
    $$
   q_0\mid d_i,\quad q_i\nmid q_0,\quad \text{and}\quad \Omega(d_i)\ge i.
    $$
  \end{enumerate}
\end{theorem}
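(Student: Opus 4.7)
The plan is to dispatch the eight statements in the order given, since each builds on those before.

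Statement~(1) is structural. By Proposition~\ref{cases of corners}, every regular corner has direction either in $A(P)$ or of one of the two exceptional types (b) and (c); by Remark~\ref{estamos en Case IIa} every $(\rho,\sigma)\in A(P)$ is in fact the direction of a regular corner; and by Proposition~\ref{esquina regular unica} exactly one regular corner has direction outside $A(P)$. Ordering by direction puts that exceptional corner at the bottom, which is $(\rho_0,\sigma_0)$, and for a standard pair it coincides with the starting triple of Definition~\ref{starting triple}. For (2) and (3): if $j\ge 1$, then $(\rho_j,\sigma_j)\in A(P)$ and Remark~\ref{estamos en Case IIa} places us in Case~II.a, while the hypothesis of (2) and (3) for $j=0$ is exactly that $A_0$ be of type~II. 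In each case $[\ell_{\rho_j,\sigma_j}(P),\ell_{\rho_j,\sigma_j}(Q)]=0$, Corollary~\ref{some properties of corners} guarantees positive valuations, and Proposition~\ref{P y Q alineados}(2b) (as reformulated in Remark~\ref{a remark}) writes $\ell_{\rho_j,\sigma_j}(P)=\lambda R^m$; absorbing $\lambda$ and extracting a maximal perfect-power root yields $\ell_{\rho_j,\sigma_j}(P)=R_j^{md_j}$ with $d_j$ maximal. Proposition~\ref{case II}(1) then says $\en_{\rho_j,\sigma_j}(F_j)\sim\en_{\rho_j,\sigma_j}(P)$; writing the proportionality constant in lowest terms as $p_j/q_j$ gives~(3).

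Statements (4)--(6) encode the uniqueness and propagation of $F_j$. For (4) I argue by contradiction: if $q_j\mid d_j$, then $R':=R_j^{d_jp_j/q_j}$ is $(\rho_j,\sigma_j)$-homogeneous with $[R',\ell_{\rho_j,\sigma_j}(P)]=0$ tautologically, and a direct computation using~(3) shows
\[
v_{\rho_j,\sigma_j}(R')=\frac{d_jp_j}{q_j}\cdot\frac{v_{\rho_j,\sigma_j}(P)}{md_j}=\frac{p_j}{q_jm}v_{\rho_j,\sigma_j}(P)=\rho_j+\sigma_j.
\]
Such an $R'$ contradicts the uniqueness clause of Proposition~\ref{pavadass}(5), whose hypotheses hold in both Case~II.a (where $s>0$) and Case~II.b (where $\#\factors(\mathfrak{p})>1$). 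For (5) I apply Corollary~\ref{fracciones de F} at the direction $(\rho_j,\sigma_j)$ with $p/q=p_j/q_j$: the needed positivity of $v_{\rho',\sigma'}(P)$ on the interval $[(\rho_j,\sigma_j),(\rho_i,\sigma_i)]$ for $i>j$ comes from Corollary~\ref{some properties of corners}(1) combined with Remark~\ref{direcciones intermedias positivas}, so the corollary delivers $\ell_{\rho_i,\sigma_i}(P)=R^{q_jm}$; unique factorization in $L^{(l)}$ together with the maximality of $d_i$ then forces $q_j\mid d_i$. Statement~(6) is immediate: if $q_i\mid q_j$ with $i>j$, then (5) gives $q_j\mid d_i$, hence $q_i\mid d_i$, contradicting~(4).

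The first part of (7) is geometric: $\en_{\rho_j,\sigma_j}(P)=md_j\en_{\rho_j,\sigma_j}(R_j)$ gives $d_j\mid\gcd(a_j,b_j)$, and $\st_{\rho_j,\sigma_j}(P)=md_j\st_{\rho_j,\sigma_j}(R_j)=\en_{\rho_{j-1},\sigma_{j-1}}(P)=m(a_{j-1}/l,b_{j-1})$ (via Proposition~\ref{le basico}) gives $d_j\mid\gcd(a_{j-1},b_{j-1})$, so $d_j\mid D_j$ and hence $\Omega(D_j)\ge\Omega(d_j)$. For the lower bound on $\Omega(d_j)$ I induct on $j$: (5) gives $\lcm(q_1,\dots,q_{j-1})\mid d_j$ while (4) gives $q_j\nmid d_j$, so $q_j\nmid\lcm(q_1,\dots,q_{j-1})$; consequently $\lcm(q_1,\dots,q_j)/\lcm(q_1,\dots,q_{j-1})$ is an integer $\ge 2$, forcing $\Omega$ to strictly increase at every step. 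Starting from $\Omega(1)=0$ this yields $\Omega(\lcm(q_1,\dots,q_{j-1}))\ge j-1$, and thus $\Omega(d_j)\ge j-1$. Statement~(8) is exactly parallel: the arguments for (4), (5), (6) apply equally at $j=0$ (with $q_0>1$ forced by $q_0\nmid d_0$), and prepending $q_0$ to the list shifts the induction base to $\Omega(q_0)\ge 1$, giving $\Omega(d_i)\ge i$. The main obstacle is the $\Omega$-inequality, which relies on the sharp combination of (4) and (5): although the individual $q_j$'s can share prime factors, the combined fact that all smaller $q_i$ divide $d_j$ while $q_j$ itself does not forces each $q_j$ to introduce a new prime-power factor into the lcm at every step, ensuring the strictly monotone growth of $\Omega$.
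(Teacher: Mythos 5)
Your proof is correct and follows essentially the same route as the paper's: statement~(1) via Remark~\ref{estamos en Case IIa} and Proposition~\ref{esquina regular unica}; (2)--(3) via Remark~\ref{estamos en Case IIa}, Corollary~\ref{some properties of corners}, Proposition~\ref{P y Q alineados}(2b) and Proposition~\ref{case II}(1); (4) by the contradiction with Proposition~\ref{pavadass}(5) using $R_j^{p_jd_j/q_j}$; (5) by Corollary~\ref{fracciones de F}, with the required positivity supplied by Corollary~\ref{some properties of corners}(1) and Remark~\ref{direcciones intermedias positivas}; (6) as an immediate corollary of (4) and (5); and (7)--(8) by the divisibility/lcm counting argument, which you actually spell out in more detail than the paper's ``a straightforward computation using~(4), (5) and~(6)''. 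One minor imprecision: in justifying the hypotheses of Proposition~\ref{pavadass}(5) you split into ``Case~II.a (where $s>0$)'' and ``Case~II.b (where $\#\factors(\mathfrak{p})>1$)''; the cleaner observation (used in the paper) is that in all of Case~II one has $\#\factors(\mathfrak{p})>1$ by definition, and this single condition is equivalent to ``$s>0$ or $\#\factors(p)>1$'', so the hypothesis holds uniformly.
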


\begin{proof}
By Remark~\ref{estamos en Case IIa} and Propositions~\ref{A(P) vs Dir(P)} and~\ref{esquina regular unica} statement~(1) is true.
By Corollary~\ref{some properties of corners}(1) we know that $v_{\rho_j,\sigma_j}(P)>0$ for all $j$. If $A_0$ is of type~II,
then $[\ell_{\rho_0,\sigma_0}(P),\ell_{\rho_0,\sigma_0}(Q)]=0$.
In the general case, when $j\ge 1$, by
Remark~\ref{estamos en Case IIa}, we are in Case II.a), and so
$[\ell_{\rho_j,\sigma_j}(P),\ell_{\rho_j,\sigma_j}(Q)]=0$. Hence, by
Proposition~\ref{P y Q alineados}(2b), statement~(2) holds.
Statement~(3) follows from Remark~\ref{estamos en Case IIa} and Proposition~\ref{case II}(1).

In order to prove statement~(4), assume by contradiction that $q_j\mid d_j$. Then
$\widetilde R:=R_j^{p_j d_j/q_j}$ satisfies
\begin{equation}\label{existencia de R}
[\widetilde R,\ell_{\rho_j,\sigma_j}(P)]=0\quad\text{and}\quad
v_{\rho_j,\sigma_j}(\widetilde R)=v_{\rho_j,\sigma_j}(F_j)=\rho_j+\sigma_j,
\end{equation}
where the second equality follows from the fact that
$$
\en_{\rho_j,\sigma_j}(F_j)=\frac{p_j}{q_j}\frac 1m \en_{\rho_j,\sigma_j}(P)=\en_{\rho_j,\sigma_j}(\widetilde R).
$$
But the existence of $\widetilde R$ satisfying~\eqref{existencia de R} contradicts Proposition~\ref{pavadass}(5)
(The condition $s>0$ or $\# \factors(p)>1$ required in
Proposition~\ref{pavadass}(5) is satisfied if and only if $\# \factors(\mathfrak{p}(z))>1$,
which holds because we are in case II).

By Corollary~\ref{some properties of corners}(1) we have $v_{\rho_j,\sigma_j}(P)>0$ for all $j\ge 0$, and
hence,
by Remark~\ref{direcciones intermedias positivas}, we have
$v_{\rho,\sigma}(P)>0$ if $(\rho,\sigma)$ lies between $(\rho_0,\sigma_0)$ and
$(\rho_{k},\sigma_{k})$. Let $(\tilde\rho,\tilde\sigma)$ be as in
Proposition~\ref{proporcionalidad de direcciones mayores}. By its very
definition $(\tilde\rho,\tilde\sigma)\ge
(\rho_i,\sigma_i)>(\rho_j,\sigma_j)$.
Thus the hypotheses of Corollary~\ref{fracciones de F} are satisfied with
$(\rho_0,\sigma_0)=(\rho_j,\sigma_j)$ and $(\rho,\sigma)=(\rho_i,\sigma_i)$, and hence we have
$$
R_i^{md_i}=\ell_{\rho_i\sigma_i}(P)=R^{mq_j}\quad\text{for some $R\in L^{(l)}$,}
$$
which gives statement~(5) by the maximality of $d_i$.

Statement~(6) follows from~(4) and~(5). In order to prove statement~(7), note that $d_j|D_j$ since
$$
A_j=d_j\en_{\rho_j\sigma_j}(R_j)\quad\text{and}\quad A_{j-1}=d_j\st_{\rho_j\sigma_j}(R_j),
$$
and a straightforward computation using~(4), (5) and~(6) proves the last assertion of~(7). The proof of statement~(8) follows along the lines of the
proofs of~(4), (5), (6) and~(7).
\end{proof}

\begin{remark}\label{multiplicidad de la potencia} Let $f,\ov f\in K[x]$ be polynomials. If $f(x)=\ov f(x^n)$, then $\lambda$ is a root of $f$ if
and only if $\lambda^n$ is a root of $\ov f$. Moreover, if $\lambda\ne 0$, then the multiplicity $m_\lambda$ of $\lambda$ in $f$ is the same as the
multiplicity $\ov m_{\lambda^n}$ of $\lambda^n$ in $\ov f$ and consequently, if $f(0)\ne 0$ and $f$ is a $d$th power, then $\ov f$ is also a $d$th
power.
\end{remark}

\begin{proposition}\label{factores}
With the notations of Theorem~\ref{divisibilidad}, we have:
\begin{enumerate}

\smallskip

  \item If $\Omega(d_{j_0})= j_0-1$ for some $j_0>0$, then $l-\frac{a_{j_0}}{b_{j_0}}>1$.

\smallskip

  \item Assume that $(A_0,(\rho_0,\sigma_0))$ is of type~II.b) and that
$\ell_{\rho_0,\sigma_0} (P)$ is at
most an $m$th power in $L^{(l')}$, where $l'\!:=\!\lcm(\rho_0,l)$. Write
$A^{(1)}\!=\!(a/l',b)$, where $A^{(1)}$ is constructed via
Pro\-positions~\ref{case IIb} and~\ref{encima de la diagonal}. Then
$$
l'-\frac ab>1.
$$

\smallskip

  \item Let $(P,Q)$ be a standard $(m,n)$-pair. Assume that $A_0=(1,0)+r(1,\rho_0)$ for
   some $r\ge 1$, $\sigma_0=-1$ and $A_0'=(1,0)$.
    If $\gcd(r,\rho_0)=1$ or $\rho_0$ is a prime number, then
  $\ell_{\rho_0,\sigma_0} (P)$ is at most an $m$th power in $L^{(\rho_0)}$. Moreover,
  in both cases $\gamma(\rho_0-2)>\rho_0$, where $\gamma=\frac{m_\lambda}{m}$
      is as before Proposition~\ref{final}.

\smallskip

  \item If $(P,Q)$ is a standard $(m,n)$-pair, then $\gcd(a_0,b_0)\ne 2$ and
$q_0\ne 2$.
\end{enumerate}
\end{proposition}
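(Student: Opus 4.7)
If $A_0$ were of type~II, Theorem~\ref{divisibilidad}(8) would give $\Omega(d_j)\ge j$ for every $j>0$, contradicting $\Omega(d_{j_0})=j_0-1$. Hence $A_0$ is of type~I, and Proposition~\ref{extremosfinales} gives $l-a_0/b_0>1$.

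\textbf{Statement (2).} Pass to the $(m,n)$-pair $(\varphi(P),\varphi(Q))\in L^{(l')}$ furnished by Proposition~\ref{encima de la diagonal}, in which $(A^{(1)},(\rho',\sigma'))$ is a regular corner. Applying Proposition~\ref{case III} iteratively to remove any type~III behaviour at $A^{(1)}$, Proposition~\ref{criterion} delivers an $(m,n)$-pair $(\wt P,\wt Q)$ in $L^{(l')}$ whose regular corner at $A^{(1)}$, with direction $(\rho^{*},\sigma^{*})$, is of type~I or type~II. Type~I directly gives $l'-a/b>1$ by Proposition~\ref{extremosfinales}. In type~II, Theorem~\ref{divisibilidad}(3) provides coprime $p^{*},q^{*}\in\mathds{N}$ with $\en_{\rho^{*},\sigma^{*}}(F^{*})=(p^{*}/q^{*})A^{(1)}$ and $q^{*}\ge 2$ (since $p^{*}/q^{*}=\mu/d<1$ by Proposition~\ref{case II}(4)); Corollary~\ref{fracciones de F} then produces a $(\rho_0,\sigma_0)$-homogeneous $R\in L^{(l')}$ with $\ell_{\rho_0,\sigma_0}(\wt P)=R^{q^{*}m}$. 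The intervening case~III reductions preserve $\ell_{\rho_0,\sigma_0}$ by Proposition~\ref{pr ell por automorfismos}, and $\ell_{\rho_0,\sigma_0}(\varphi(P))=\varphi(\ell_{\rho_0,\sigma_0}(P))$, so $\ell_{\rho_0,\sigma_0}(P)$ would be a $q^{*}m$-th power in $L^{(l')}$, contradicting the hypothesis.

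\textbf{Statement (3).} Write $\ell_{\rho_0,-1}(P)=x^m\mathfrak{p}(z)$ with $z=x^{1/\rho_0}y$. Since $P\in L$ one has $\mathfrak{p}\in K[z^{\rho_0}]$, so $\mathfrak{p}(z)=\bar p(z^{\rho_0})$; moreover $\ell_{\rho_0,-1}(P)=\lambda_P R'^m$ with $R'\in L$ (Proposition~\ref{P y Q alineados}(2b)) forces $\bar p=\lambda_P g^m$ for a polynomial $g$ of degree $r$. Suppose for contradiction that $\ell_{\rho_0,-1}(P)=R^{md_0}$ in $L^{(\rho_0)}$ for some $d_0\ge 2$ and a $(\rho_0,\sigma_0)$-homogeneous $R$. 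Normalising $R=x^{1/d_0}\tilde h(z)$ with $\tilde h(0)\ne 0$, the support constraint $\st(R)\in\tfrac1{\rho_0}\mathds{Z}\times\mathds{N}_0$ forces $d_0\mid\rho_0$, while $\tilde h(z)^{md_0}=\bar p(z^{\rho_0})$ combined with the substitution $z\mapsto\omega z$ (for a primitive $\rho_0$-th root of unity $\omega$) forces $\tilde h\in K[z^{\rho_0}]$, hence $\rho_0\mid\deg\tilde h=r\rho_0/d_0$, i.e.\ $d_0\mid r$. Therefore $d_0\mid\gcd(r,\rho_0)$, which already gives a contradiction when $\gcd(r,\rho_0)=1$. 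When $\rho_0$ is prime, a direct calculation via Proposition~\ref{final}(6) in the setting $A_0=(1+r,r\rho_0)$, $\sigma_0=-1$ gives $p_0/q_0=(\rho_0-1)/\rho_0$, so $q_0=\rho_0$; Theorem~\ref{divisibilidad}(8) then forbids $d_0=\rho_0$, again yielding a contradiction. For the ``moreover'' part, Proposition~\ref{final}(7) with $A_0'=(1,0)$, $s_0'=0$, $\sigma_0=-1$ yields $A^{(1)}=(1+\gamma/\rho_0,\gamma)$, so $a/b=1+\rho_0/\gamma$; applying statement~(2) gives $\rho_0-a/b>1$, which rearranges into $\gamma(\rho_0-2)>\rho_0$. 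The borderline $\rho_0=2$ is excluded by Proposition~\ref{impossibles}, since then $v_{2,-1}(A_0)=2$.

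\textbf{Statement (4).} Assume $q_0=2$. Then $p_0=1$ and $\en(F_0)=A_0/2$, so $a_0,b_0$ are both even. Proposition~\ref{final}(6) gives $(\rho_0,\sigma_0)=\dir(a_0-2,b_0-2)$, and substituting these $\rho_0,\sigma_0$ into $v_{\rho_0,\sigma_0}(A_0)=v_{\rho_0,\sigma_0}(A_0')$ simplifies to
\begin{equation*}
(b_0-2)(r_0'-2)=(a_0-2)(s_0'-2).
\end{equation*}
Combining $r_0',s_0'\ge 0$ with $s_0'<r_0'$ (Proposition~\ref{final}(1)), a case analysis on the sign of $r_0'-2$ shows that the only admissible solution is $r_0'=1,s_0'=0$, which forces $b_0=2a_0-2$ and $(\rho_0,\sigma_0)=(2,-1)$; but then $v_{2,-1}(A_0)=2$, contradicting Proposition~\ref{impossibles}. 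Hence $q_0\ne 2$. Since $\gcd(a_0,b_0)=q_0d'$ with $q_0>1$, the assumption $\gcd(a_0,b_0)=2$ would force $q_0=2$, also excluded. I expect the hardest step to be the roots-of-unity analysis in statement~(3): while $d_0\mid\rho_0$ is immediate from the support of $R$, the matching divisibility $d_0\mid r$ hinges on exploiting that $P$ actually lives in $L$ rather than merely in $L^{(\rho_0)}$.
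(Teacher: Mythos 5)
In statement (1), the dichotomy you set up --- ``not type~II, hence type~I'' --- omits the possibility that the regular corner $(A_0,(\rho_0,\sigma_0))$ is of type~III, which can occur whenever $l>1$ (this is exactly why Proposition~\ref{criterion} has three cases). Neither Proposition~\ref{extremosfinales} nor Theorem~\ref{divisibilidad}(8) says anything directly in the type~III case, so your proof of (1) has a real hole. The paper closes it by applying the automorphism of Proposition~\ref{case III} and then arguing, crucially using the very hypothesis $\Omega(d_{j_0})=j_0-1$ together with Theorem~\ref{divisibilidad}(7), that the list of regular corners of the transformed pair is unchanged (no new corners appear below $(\rho',\sigma')$), so the indexing does not shift and the hypothesis persists; one then iterates, reducing to type~I. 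This persistence step is exactly what lets one rule out type~II again after each type~III reduction and makes the iteration terminate. Simply invoking Proposition~\ref{criterion} on $A_0$ does not work here because a priori the case~III automorphisms could introduce new corners and shift $j_0$, destroying the equality $\Omega(d_{j_0})=j_0-1$.

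Your proofs of (2), (3) and (4) are sound but take somewhat different routes from the paper. For (2), the paper avoids the type~I/type~II dichotomy at $A^{(1)}$ entirely: it notes that $\tilde d_r=1$ (since $\ell_{\rho_0,\sigma_0}(P)$ is at most an $m$-th power) and reads off $r=1$ from Theorem~\ref{divisibilidad}(7), then invokes part~(1) directly, which is shorter than your argument combining Proposition~\ref{criterion}, Theorem~\ref{divisibilidad}(3) and Corollary~\ref{fracciones de F}. (Incidentally, you write ``$p^{*}/q^{*}=\mu/d<1$''; the inequality $<1$ is not established, but you only need $d\nmid\mu$ to get $q^{*}\ge2$, so the conclusion is still fine.) For (3), your roots-of-unity computation for $d_0\mid r$ is just an explicit version of Remark~\ref{multiplicidad de la potencia}, and for the prime case you route through $q_0=\rho_0$ and Theorem~\ref{divisibilidad}(8) where the paper applies Proposition~\ref{pavadass}(5) directly to $R_1^{\rho_0+\sigma_0}$; both are valid (the mention of Proposition~\ref{impossibles} for $\rho_0=2$ is unnecessary, since $\gamma(\rho_0-2)>\rho_0>0$ already forces $\rho_0>2$). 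For (4), your explicit case analysis of $(b_0-2)(r_0'-2)=(a_0-2)(s_0'-2)$ replaces the paper's cleaner use of Lemma~\ref{formula basica de orden}, and you finish by $v_{2,-1}(A_0)=2$ and Proposition~\ref{impossibles} rather than the paper's appeal to part~(3) with $\rho_0=2$; both work.
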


\begin{proof}
\noindent(1)\enspace Note that $(A_0,(\rho_0,\sigma_0))$ cannot be of type~II by Theorem~\ref{divisibilidad}(8).
If $(A_0,(\rho_0,\sigma_0))$ is of type~I,
then Proposition~\ref{extremosfinales} yields $l-\frac{a_0}{b_0}>1$. If $(A_0,(\rho_0,\sigma_0))$
is of type~III, then
we take $\varphi$ as in Proposition~\ref{case III}. By statement~(3) of that proposition, we know that
$((a_0/l,b_0),(\rho',\sigma'))$ is a regular corner of $(\varphi(P),\varphi(Q))$, where
$(\rho',\sigma'):=\Pred_{\varphi(P)}(\rho_0,\sigma_0)$, while by statement~(1), we have
\begin{equation}\label{phi no cambia A}
\ell_{\rho_j,\sigma_j}(P)=\ell_{\rho_j,\sigma_j}(\varphi(P))\quad\text{and}\quad
\ell_{\rho_j,\sigma_j}(Q)=\ell_{\rho_j,\sigma_j}(\varphi(Q)),\quad\text{for $j>0$}.
\end{equation}
Hence $(A_0,(\rho',\sigma')),(A_1,(\rho_1,\sigma_1)),\dots,(A_k,(\rho_k,\sigma_k))$ are regular corners of
$(\varphi(P),\varphi(Q))$. We claim that there are no other. In fact, given a new regular corner
$(A'',(\rho'',\sigma''))$, it is
impossible that $(\rho'',\sigma'')>(\rho',\sigma')$, since otherwise $(1,0)\ge(\rho'',\sigma'')>(\rho_0,\sigma_0)$,
by Proposition~\ref{case III}\endnote{
It is impossible that $(\rho'',\sigma'')=(\rho_0,\sigma_0)$ since by statements~(1) and~(4) of Proposition~\ref{case III},
$$
\en_{(\rho_0,\sigma_0)}(\varphi(P)) = \en_{(\rho_0,\sigma_0)}(P) = m(a/l,b) = \st_{(\rho_0,\sigma_0)}(\varphi(P)).
$$
}
and
$(\rho'',\sigma'')\in A(\varphi(P))$, by
Theorem~\ref{divisibilidad}(1), which contradicts
the fact that, by~\eqref{phi no cambia A}, Remark~\ref{mayores estan en A(P)} and Proposition~\ref{le bbasico}, we have
$$
A(P)\cap\ ](\rho_0,\sigma_0),(1,0)]=A(\varphi(P))\cap \ ](\rho_0,\sigma_0),(1,0)].
$$
On the other hand, if $(\rho'',\sigma'')<(\rho',\sigma')$, then there exists $r\ge 1$, such that
the regular corners of $(\varphi(P),\varphi(Q))$ form a set
$$
\{(\tilde A_0,(\tilde \rho_0,\tilde\sigma_0)),\dots,(\tilde A_{k+r},(\tilde\rho_{k+r},\tilde\sigma_{k+r}))\},
$$
where the first $r$ corners are new,
$$
(\tilde
A_{r},(\tilde\rho_{r},\tilde\sigma_{r}))=(A_0,(\rho',\sigma'))\quad\text{and}\quad
(\tilde A_{j+r},(\tilde\rho_{j+r},\tilde\sigma_{j+r}))=
(A_j,(\rho_j,\sigma_j))\quad\!\text{for $j=1,\dots,k$.}
$$
For each $0<j\le k+r$, let $\tilde d_j$ be as $d_j$, but for the
$(m,n)$-pair $(\varphi(P),\varphi(Q))$. Then, by~\eqref{phi no cambia A}, we have
$d_{j}=\tilde d_{j+r}$ for all $j$, and the fact that $\tilde d_{j_0+r}=j_0-1$
contradicts Theorem~\ref{divisibilidad}(7). This proves the claim.

Once again, $(A_0,(\rho',\sigma'))$ cannot be of type~II by Theorem~\ref{divisibilidad}(8), and
if $(A_0,(\rho',\sigma'))$ is of type~I,
then Proposition~\ref{extremosfinales} yields $l-\frac{a_0}{b_0}>1$. If it is of type~III,
then we repeat the same proceeding as above. The same argument as in the proof of
Proposition~\ref{criterion} shows that case~III can occur only finitely many times,
finishing the proof of~(1).

\smallskip

\noindent (2)\enspace Using Proposition~\ref{encima de la diagonal} it is easy to check that
$(A_0,(\rho_0,\sigma_0))$ and $(A^{(1)},(\rho',\sigma'))$ are regular corners of the $(m,n)$-pair
$(\varphi(P),\varphi(Q))$,
where $(\rho',\sigma'):=\Pred_{\varphi(P)}(\rho_0,\sigma_0)$. But then there exists $r\ge 1$ such that the regular corners of
$(\varphi(P),\varphi(Q))$ are
$$
(\tilde A_0,(\tilde \rho_0,\tilde \sigma_0)),\dots,(\tilde
A_{k+r},(\tilde\rho_{k+r},\tilde\sigma_{k+r})),
$$
where for $j\ge 0$ we have $\tilde A_{j+r}= A_j$. Let
$\tilde d_r$ be maximum such that
 $\ell_{\tilde \rho_r,\tilde \sigma_r}(\varphi(P))=R^{m\tilde d_r}$ for some $R\in L^{(l')}$.
Since $\ell_{\rho_0,\sigma_0} (\varphi(P))$ is at
 most an $m$th power in $L^{(l')}$ and $(\rho_0,\sigma_0)=(\tilde \rho_r,\tilde \sigma_r)$, we obtain
$\tilde d_r=1$. Hence,
by Theorem~\ref{divisibilidad}(7), we have $r=1$, and so
$(\tilde A_0,(\tilde \rho_0,\tilde \sigma_0))=(A^{(1)},(\rho',\sigma'))$. By statement~(1) we have
$l'-a/b>1$, as desired.

\smallskip

\noindent (3)\enspace By Corollary~\ref{some properties of corners}(1) and
 Remark~\ref{a remark}, we have $\ell_{\rho_0,\sigma_0} (P)=\mu R^m$ for some $\mu\in K^{\times}$ and $R\in L^{(1)}$.
Furthermore, since
$$
\st_{\rho_0,\sigma_0}(R)=A'_0=(1,0),\quad \en_{\rho_0,\sigma_0}(R)=A_0= (1,0)+ r(1,\rho_0),\quad\sigma_0=-1\quad\text{and}\quad R\in L^{(1)},
$$
it follows from Remark~\ref{polinomio asociado f^{(l)}}, that there exist $f,\ov f \in K[x]$ with $f(0)\ne 0$ and $\deg( f)=r\rho_0$, such that
$$
R  =x f(z)= x \ov{f}(z^{\rho_0}),
$$
where $z:=x^{1/\rho_0}y$.
We must prove that if $R=R_1^d$, for some $R_1\in L^{(\rho_0)}$, then $d=1$. But, under this assumption, $\st_{\rho_0,\sigma_0}(R_1)=(1/d,0)$,
hence $d\mid\rho_0$. Using
Remark~\ref{multiplicidad de la potencia} we obtain
$d\mid r$, and so, if $\gcd(r,\rho_0)=1$, then $d=1$, as desired.
On the other hand, since $\st_{\rho_0,\sigma_0}(R)=(1,0)$,
if $R=\mu_1 R_1^{\rho_0}$, then
$$
v_{\rho_0,\sigma_0}(R_1^{\rho_0+\sigma_0}) =\frac{\rho_0+\sigma_0}{\rho_0}v_{\rho_0,\sigma_0}(R)
=\rho_0+\sigma_0\qquad\text{and}\qquad [R_1^{\rho_0+\sigma_0},\ell_{\rho_0,\sigma_0}(P)]=0,
$$
which contradicts Proposition~\ref{pavadass}(5). Consequently $d\ne \rho_0$, and  so, for $\rho_0$ prime we have $d=1$.
Hence, if $\gcd(r,\rho_0)=1$ or $\rho_0$ is prime, then statement~(2) applies\endnote{
By Theorem~\ref{central}(1) and Remark~\ref{starting triple es IIb} we know that
$(A_0,(\rho_0,\sigma_0))$ is of type~IIb).
}
 and yields
$l'-a/b>1$, where $l'=\rho_0$. By Propositions~\ref{case IIb} and~\ref{encima de la diagonal}(3)
$$
\Bigl(\frac{a}{l'},b\Bigr)=A^{(1)}=(1,0)+\gamma(1/\rho_0,1)=\Bigl(\frac{\rho_0+\gamma}{\rho_0},\gamma\Bigl),
$$
from which we deduce
$\rho_0-\frac{\rho_0+\gamma}{\gamma}=l'-\frac ab >1$ and then $\gamma(\rho_0-2)>\rho_0$, as desired.

\smallskip

\noindent (4)\enspace
Note that $(\rho,\sigma)=(\rho_0,\sigma_0)$, $A_0'=(r',s')$ and $F=F_0$ are as before Proposition~\ref{final}. By
Proposition~\ref{primitivo}(1) we know that $(1,-1)<(\rho,\sigma)<(1,0)$, and by Proposition~\ref{final}(1)
we have
\begin{equation}\label{dat1}
 v_{1,-1}(A_0') = r'-s'>0 = v_{1,-1}(2,2),
\end{equation}
which implies $A'_0\ne (2,2)$.
Assume by contradiction that $q_0=2$ or $\gcd(a_0,b_0)=2$\endnote{
$\gcd(a_0,b_0) = 2$ implies $q_0=2$. In fact, since $F\in L$, by
Theorem~\ref{divisibilidad}(2) we have $q_0\mid \gcd(a_0,b_0) =2$. But
$q_0 = 1$ is impossible by Proposition~\ref{final}(4).
}. Let $\mu$ be as in
Proposition~\ref{final}(4). Since $0<\mu<1$ and $\mu=p_0/q_0$, this assumption implies $\mu=1/2$.
We have
$$
v_{\rho,\sigma}(A'_0) = v_{\rho,\sigma}(A_0) = 2v_{\rho,\sigma}(F)= 2v_{\rho,\sigma}(1,1)=v_{\rho,\sigma}(2,2).
$$
Then, by Remark~\ref{valuacion depende de extremos}
$$
\dir (A_0'-(2,2))=(\rho,\sigma)<(1,0).
$$
From this, the second inequality in~\eqref{dat1} and Lemma~\ref{formula basica de orden}, it follows that
$$
0\le s'<r' = v_{1,0}(A_0')<v_{1,0}(2,2) = 2.
$$
Hence necessarily $A_0'=(1,0)$. Therefore $(\rho,\sigma)=(2,-1)$, and we are in the situation of statement~(3)
with $\rho_0=\rho=2$. The contradiction $0=\gamma(\rho_0-2)>\rho_0$ concludes the proof.
\end{proof}

In the proof the next corollary, nearly all facts were more or less known,
except statement~(4) of Proposition~\ref{factores}, which is the missing piece of the puzzle. This statement relies on the impossibility
of $A_0'=(1,0)$ and $(\rho,\sigma)=(2,-1)$, which we prove via Proposition~\ref{factores}(3)\endnote{
Specializing the proof of Proposition~\ref{factores}(3), the direct proof of this fact runs as follows:

\noindent Assume $A_0'=(1,0)$ and $(\rho,\sigma)=(2,-1)$. Write $\ell_{2,-1}(P)=R^{dm}$ with $R\in L^{(2)}$ and $d$ maximal.

\noindent If $d>1$, then $d=2$,
since $A_0'=(1,0)$. But then $v_{2,-1}(R)=v_{2,-1}(F)$ and $[R,\ell_{2,-1}(P)]=0$, which contradicts Proposition~\ref{pavadass}(5).
Hence $d=1$.

\noindent Take $\varphi$ as in Proposition~\ref{case IIb} and take $\left(\frac a2,b\right):=\frac 1m \st_{2,-1}(\varphi(P))$. By Proposition~\ref{divisibilidad}(8)
it cannot be a corner of type II, hence, by Proposition~\ref{criterion}, we should have $2-\frac ab>1$. But this is not possible, since
the equality $\left(\frac a2,b\right)= (1,0)+k\left(\frac 12,1\right)=\left(\frac{k+2}{2},k\right)$ implies $2-\frac ab=2-\frac{k+2}{k}=1-\frac 2k<1$.
}, but which can also be proven
using Proposition~\ref{impossibles}, since in that case $v_{2,-1}(A_0)=v_{2,-1}(A_0')=2$, which is impossible by Proposition~\ref{impossibles}.

\begin{corollary} Let $(P,Q)$ be a standard $(m,n)$-pair in $L$. Write $(a,b):=\frac 1m \en_{1,0}(P)$.
Then $(a,b)\in\mathds{N}\times \mathds{N}$ and $\gcd(a,b)>2$. Furthermore $B\ne p$ and $B\ne 2p$ for any
prime $p$, where $B$ is as at the beginning of Section~\ref{section4}.
\end{corollary}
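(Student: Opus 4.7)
The plan is to verify the three claims in order, using the dictionary between $(a,b)=\frac{1}{m}\en_{1,0}(P)$ and the regular corners of $(P,Q)$ set up in Sections~5 and~7.

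For the membership $(a,b)\in\mathds{N}\times\mathds{N}$, Remark~\ref{propiedades de los pares} directly yields, for $l=1$, that $(a,b)\in\mathds{N}\times\mathds{N}_0$ together with $v_{0,-1}(\en_{1,0}(P))<-1$, i.e., $mb\ge 2$; since $b\in\mathds{N}_0$ and $m\ge 2$, this forces $b\ge 1$.

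The main obstacle is the inequality $\gcd(a,b)>2$. The strategy is to observe, via Remark~\ref{first component}, that $(a,b)$ is the first component of one of the regular corners $A_0,A_1,\dots,A_k$ in the enumeration of Theorem~\ref{divisibilidad}, and then show that $\gcd(A_j)\ge 3$ for every $j$. If $j=0$, then $(a,b)=A_0$ is the primitive corner, and Proposition~\ref{final}(3) gives $\gcd(A_0)>1$ while Proposition~\ref{factores}(4) forbids $\gcd(A_0)=2$, so $\gcd(A_0)\ge 3$. If $j\ge 1$, one exploits that, by Remark~\ref{starting triple es IIb}, the primitive corner $A_0$ is of type~II.b), so Theorem~\ref{divisibilidad}(8) applies and gives $q_0\mid d_j$; combining this with $d_j\mid\gcd(A_j)$ from Theorem~\ref{divisibilidad}(7) yields $q_0\mid\gcd(A_j)$. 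Since $0<\mu<1$ in Proposition~\ref{final}(4) forces $q_0\ge 2$, and Proposition~\ref{factores}(4) excludes $q_0=2$, we obtain $q_0\ge 3$ and hence $\gcd(A_j)\ge 3$.

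Finally, for $B\ne p$ and $B\ne 2p$, Corollary~\ref{B finito}(3) gives $\st_{1,1}(P)=\en_{1,0}(P)$, so $v_{1,1}(P)=m(a+b)$. Since $v_{1,1}(Q)=(n/m)v_{1,1}(P)$ from the definition of an $(m,n)$-pair and $\gcd(m,n)=1$, we obtain $B=a+b$. Writing $d:=\gcd(a,b)\ge 3$, $a=da'$, $b=db'$ with $\gcd(a',b')=1$, and noting that $v_{1,-1}(\en_{1,0}(P))<0$ forces $a<b$, hence $a'<b'$ and $a'+b'\ge 3$, we see that $B=d(a'+b')$ is a product of two integers each $\ge 3$. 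No prime $p$ admits such a factorization, so $B\ne p$; and the only factorizations of $2p$ into two factors are $1\cdot 2p$ and $2\cdot p$, neither of which has both factors $\ge 3$, so $B\ne 2p$.
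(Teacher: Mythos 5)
Your proposal is correct and follows essentially the same route as the paper: the divisibility chain $q_0\mid d_j\mid D_j\mid\gcd(A_j)$ from Theorem~\ref{divisibilidad}(7)--(8) together with $q_0\notin\{1,2\}$ from Propositions~\ref{final}(4) and~\ref{factores}(4), then the passage to a minimal pair via Corollary~\ref{B finito}. Your treatment of the last part is a slight streamlining (writing $B=\gcd(a,b)\,(a'+b')$ as a product of two factors each $\ge 3$, rather than the paper's case analysis on divisors of $p$ and $2p$), but it rests on the same facts $\gcd(a,b)>2$, $a<b$, and $\gcd(a,b)\mid B$; the only expository remark is that one should state explicitly that $(P,Q)$ is being replaced by the minimal standard pair supplied by Corollary~\ref{B finito} before asserting $\st_{1,1}(P)=\en_{1,0}(P)$ and $B=a+b$.
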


\begin{proof}
By Remark~\ref{first component} we know that $(a,b)$ is the first component of a regular corner of $(P,Q)$. Hence,
by Remark~\ref{a>0} we have $(a,b)\in\mathds{N}\times \mathds{N}$ and by Proposition~\ref{criterion}
we know $\gcd(a,b)>1$. Next we discard $\gcd(a,b)=2$. Let $k$ be the number of regular corners in $A(P)$.
If $k=0$, then $\gcd(a,b)=2$ contradicts Proposition~\ref{factores}(4).
Assume $k>0$. Then $(a,b)=(a_k,b_k)$ and the very definitions of $q_k$ and $d_k$ show that
$q_k|\gcd(a,b)$ and $d_k|\gcd(a,b)$. Moreover, by Theorem~\ref{divisibilidad}(8) we have
$q_0| d_k$, $q_0\nmid d_0$ and $q_k\nmid q_0$. Hence $\gcd(a,b)$ is a composite number
and so $\gcd(a,b)>2$.

Now assume that $(P,Q)$ is as in Corollary~\ref{B finito}. In particular,
$$
B=\gcd(v_{1,1}(P),v_{1,1}(Q))=\frac{1}{m}v_{1,1}(P)\quad\text{and}\quad
(a,b) = \frac 1m \en_{1,0}(P) = \frac{1}{m} \st_{1,1}(P),
$$
and so $a + b = B$, which implies $\gcd(a,b)\mid B$. Now, if $B=p$ or $B=2p$ for some prime $p$, then
$\gcd(a,b)\in\{1,2,p,2p\}$. Since $\gcd(a,b)>2$ and $\gcd(a,b)=2p$ is impossible, we have to discard only the
case $\gcd(a,b)=p$. But in that case $a=b=p$, which contradicts $a<b$ and finishes the proof.
\end{proof}

In the following proposition we give a condition under which the Newton polygon of $P$
has no vertical edge at the right hand side.

\begin{corollary}\label{no hay lados verticales} Let $(P,Q)$ be a standard $(m,n)$-pair and let
$(\rho,\sigma)$, $A_0$ and $F$ be as in the discussion above Proposition~\ref{final}. By
Proposition~\ref{final}(4) there exist $p,q\in \mathds{N}$ coprime, such that
$\en_{\rho,\sigma}(F)=\frac pq A_0$. Assume that
$A_0=\frac 1m \st_{1,0}(P)$.
If $A_0=(q,b)$, then $\st_{1,0}(P)=\en_{1,0}(P)$.
\end{corollary}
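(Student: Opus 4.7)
I will argue by contradiction, supposing $\st_{1,0}(P)\ne\en_{1,0}(P)$, or equivalently $(1,0)\in\Dir(P)$. From the hypothesis $A_0=\frac{1}{m}\st_{1,0}(P)$ and the standing identity $A_0=\frac{1}{m}\en_{\rho,\sigma}(P)$ we obtain $\en_{\rho,\sigma}(P)=\st_{1,0}(P)$; since $\sigma<0$ by Proposition~\ref{primitivo}(1) we have $(\rho,\sigma)<(1,0)$, and then Proposition~\ref{le bbasico} forces $(\rho,\sigma)$ and $(1,0)$ to be consecutive directions in $\Dir(P)$.

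Next I apply Corollary~\ref{fracciones de F} with $(\rho_0,\sigma_0):=(\rho,\sigma)$, targeting the direction $(1,0)$. The three numbered hypotheses of that corollary are immediate from Corollary~\ref{some properties of corners}(1)--(2) and Definition~\ref{def regular corner}(1), while the requirement on $\en_{\rho,\sigma}(F)$ is exactly the assumption of the statement being proved. The one nontrivial point is $(1,0)\le(\tilde\rho,\tilde\sigma)$. By Proposition~\ref{le basico} and consecutiveness, for every $(\rho',\sigma')$ strictly between $(\rho,\sigma)$ and $(1,0)$ the polynomial $\ell_{\rho',\sigma'}(P)$ is a monomial concentrated at $mA_0=(mq,mb)$, so $v_{\rho',\sigma'}(P)=m(q\rho'+b\sigma')$; since this linear functional is positive at both $(\rho,\sigma)$ and $(1,0)$, it stays positive on the whole short arc joining them (which lies inside the open half-plane $q\rho'+b\sigma'>0$). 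The corollary therefore produces a $(1,0)$-homogeneous element $R\in L$ with $\ell_{1,0}(P)=R^{qm}$.

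To finish, I contradict Proposition~\ref{pavadass}(5) at the direction $(1,0)$. All its prerequisites hold: $(1,0)\in\mathfrak{V}^0$, $v_{1,0}(P)>0$ by Proposition~\ref{para11}, and Theorem~\ref{central} supplies an $F$ with $[F,\ell_{1,0}(P)]=\ell_{1,0}(P)$; moreover $(1,0)\in\Dir(P)$ by the reductio hypothesis and $s:=v_{0,1}(\st_{1,0}(P))=mb\ge 1$, so the trigger $s>0$ of Proposition~\ref{pavadass}(5) is satisfied. However, the element $R$ just constructed is $(1,0)$-homogeneous, has $v_{1,0}(R)=v_{1,0}(\ell_{1,0}(P))/(qm)=1=\rho+\sigma$, and satisfies $[R,\ell_{1,0}(P)]=[R,R^{qm}]=0$, which is exactly the sort of element that Proposition~\ref{pavadass}(5) forbids. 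The only delicate step in the plan is the positivity argument needed to verify $(1,0)\le(\tilde\rho,\tilde\sigma)$; once this is secured, the contradiction is immediate.
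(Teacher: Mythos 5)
Your proof is correct, and it takes a genuinely more direct route than the paper's. The paper works entirely through Theorem~\ref{divisibilidad}: having set up the two regular corners $(A_0,(\rho,\sigma))$ and $\bigl(A_1,(1,0)\bigr)$ with $A_1=\frac 1m\en_{1,0}(P)$, it invokes statement~(3) of that theorem to write $\en_{1,0}(F_1)=\frac{p_1}{q_1}A_1$, computes $v_{1,0}(F_1)=1$ to force $q_1=q=q_0$, and contradicts the divisibility constraint $q_1\nmid q_0$ of statement~(8). You instead unfold the machinery: you apply Corollary~\ref{fracciones de F} directly at the primitive direction to produce a $(1,0)$-homogeneous $R$ with $\ell_{1,0}(P)=R^{qm}$, then observe that $R$ has $v_{1,0}(R)=1$ and $[R,\ell_{1,0}(P)]=0$, exactly what Proposition~\ref{pavadass}(5) forbids (the hypothesis $s>0$ holds because $s=v_{0,1}(\st_{1,0}(P))=mb\ge m>0$, and Theorem~\ref{central} supplies the auxiliary $F$ that Proposition~\ref{pavadass} assumes). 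Since Theorem~\ref{divisibilidad}(4)--(8) are themselves proved from Corollary~\ref{fracciones de F} and Proposition~\ref{pavadass}(5), the two proofs rest on the same foundations; what you gain is concreteness and avoiding the bookkeeping of identifying $k$, $q_0$, $q_1$, $d_1$, while what the paper's abstraction buys is reusability across the several divisibility arguments in Section~7. One small point worth stating explicitly in your write-up: the verification that $(1,0)\le(\tilde\rho,\tilde\sigma)$ is exactly Remark~\ref{direcciones intermedias positivas} applied to the consecutive pair $(\rho,\sigma)<(1,0)$, both of which have positive $v$-value on $P$; you gave the underlying computation, but citing that remark would shorten it.
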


\begin{proof} Along the proof we use the notations of Theorem~\ref{divisibilidad}.
Assume that $\st_{1,0}(P)\ne\en_{1,0}(P)$.
Note that $(\rho_0,\sigma_0) = (\rho,\sigma)$, $q_0 = q$, $k=1$ and
$(A_1,(\rho_1,\sigma_1))= \left(\frac 1m \en_{1,0}(P),(1,0)\right)$. By
Theorem~\ref{divisibilidad}(3),
$$
\en_{1,0}(F_1)=\frac {p_1}{q_1}\frac 1m\en_{1,0}(P),
$$
and so $1 = v_{1,0}(F_1) = p_1v_{1,0}(A_0)/q_1 = p_1q/q_1$. Consequently,
$q_1 = q = q_0$, which contradicts Theorem~\ref{divisibilidad}(6) and
concludes the proof.
\end{proof}

In the following theorem we do not assume that $K$ is algebraically closed.

\begin{theorem}\label{cota para primitivos} Let $(P,Q)$ be a standard $(m,n)$-pair and let $(\rho,\sigma)$,
$A_0$, $F$ and $\gamma$ be as before Proposition~\ref{final}. We have either $v_{1,1}(A_0)>20$, or
$A_0=(4,12)$, $(\rho,\sigma)=(4,-1)$, $\gamma=3$ and $\en_{\rho,\sigma}(F)=(3,9)$.
\end{theorem}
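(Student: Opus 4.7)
The plan is to reduce the statement to a finite case analysis on the primitive corner $A_0=(u,v)$ with $u+v \leq 20$, and then to rule out all candidates except the stated exceptional one by repeated use of the structural results of Sections~5--7.

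First I would collect all available constraints on $A_0=(u,v)$: Propositions~\ref{primera cota para primitivos} and~\ref{u(u-1)} give $16 \leq u+v$, $u \geq 4$ and $v \leq u(u-1)$; Proposition~\ref{final}(3) gives $\gcd(u,v)>1$; and Proposition~\ref{factores}(4) excludes $\gcd(u,v)=2$. A direct enumeration in the range $16 \leq u+v \leq 20$ leaves exactly four candidates for $A_0$: namely $(4,12)$, $(5,15)$, $(6,12)$ and $(8,12)$.

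For each candidate I would enumerate the admissible values $(f_1,f_2)=(p_0/q_0)(u,v)$, with $\gcd(p_0,q_0)=1$, $0<p_0<q_0$, $f_1 \geq 2$ (by Proposition~\ref{final}(2)), and $q_0 \neq 2$ (by Proposition~\ref{factores}(4) together with Theorem~\ref{divisibilidad}(3)). Each such $(f_1,f_2)$ determines $(\rho,\sigma)=\dir(f_1-1,f_2-1)$ via Proposition~\ref{final}(6); one then tests for an $A_0'=(r',s') \in \mathds{N}_0\times\mathds{N}_0$ with $s'<r'<u$ and $v_{\rho,\sigma}(A_0')=v_{\rho,\sigma}(u,v)$. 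Proposition~\ref{impossibles} eliminates the sub-cases where $(\rho,\sigma)=(2,-1)$ (relevant for $(8,12)$). For the remaining sub-cases I would compute $\gamma$ from Proposition~\ref{final}(10) when $d:=\gcd(f_1-1,f_2-1)=1$, or from its upper bound $\gamma \leq (v-s')/\rho$ otherwise; then compute $A^{(1)}$ via Proposition~\ref{final}(7), and check the criterion Proposition~\ref{final}(8): either $\rho-a'/b'>1$ or $\gcd(a',b')>1$ for $A^{(1)}=(a'/\rho,b')$. This mechanical tabulation, in the spirit of Table~\ref{Tabla1}, disposes entirely of $(5,15)$ and $(8,12)$, and of all sub-cases of $(4,12)$ except $(f_1,f_2)=(3,9)$.

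For this surviving sub-case of $(4,12)$ one has $(\rho,\sigma)=(4,-1)$, $A_0'=(1,0)$, $\gcd(f_1-1,f_2-1)=2$, and the upper bound $\gamma \leq (v-s')/\rho = 3$. Proposition~\ref{factores}(3) applies here because $A_0=(1,0)+3(1,4)$ with $\gcd(3,4)=1$, and it yields $\gamma(\rho_0-2)>\rho_0$, i.e.\ $2\gamma>4$. Combined with the integrality of $\gamma$ (Proposition~\ref{encima de la diagonal}(3)) and the upper bound $\gamma \leq 3$, this forces $\gamma=3$ and hence $\en_{\rho,\sigma}(F)=(3,9)$, giving exactly the exceptional case of the statement.

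The hard part is the remaining case $A_0=(6,12)$ with $(f_1,f_2)=(2,4)$: it yields $(\rho,\sigma)=(3,-1)$, $A_0'=(2,0)$, $\gamma=4$ and $A^{(1)}=(10/3,4)$; it survives Proposition~\ref{final}(8) because $\gcd(10,4)=2>1$, and Proposition~\ref{factores}(3) is inapplicable as $A_0' \neq (1,0)$. To rule it out I would argue as follows: if $\ell_{\rho_0,\sigma_0}(P)$ is at most an $m$-th power in $L^{(3)}$, then $l'-a/b=3-10/4=1/2<1$ contradicts Proposition~\ref{factores}(2). Otherwise $\ell_{\rho_0,\sigma_0}(P)=\lambda R_1^{km}$ in $L^{(3)}$ for some $k>1$, and I would then apply Theorem~\ref{divisibilidad}(4)--(8) to the deformed $(m,n)$-pair $(\varphi(P),\varphi(Q))$ in $L^{(3)}$, whose regular corners include both $A^{(1)}$ and $A_0$, to derive a divisibility relation incompatible with $q_0=3$. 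Reconciling the divisibility constraints at the new corner $A^{(1)}$ in the deformed setting with the value $q_0=3$ coming from the original primitive corner is the delicate step of the argument.
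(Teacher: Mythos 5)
The overall structure of your proposal — enumerate all admissible primitive corners with $v_{1,1}(A_0)\le 20$, then eliminate them one by one using the arsenal of Propositions~\ref{final}, \ref{impossibles}, \ref{criterion}, \ref{factores} — is exactly the paper's strategy, and your treatment of the candidates $(5,15)$ and $(8,12)$, as well as of the surviving sub-case $(4,12)$, $(f_1,f_2)=(3,9)$, follows the paper's proof essentially verbatim (including the use of Proposition~\ref{factores}(3) with $r=3$, $\rho_0=4$ to pin down $\gamma=3$). Up to that point your reconstruction is correct.

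The genuine gap is in your disposal of the case $A_0=(6,12)$, $(f_1,f_2)=(2,4)$, $(\rho_0,\sigma_0)=(3,-1)$, $A^{(1)}=(10/3,4)$. You propose to split on whether $\ell_{\rho_0,\sigma_0}(P)$ is at most an $m$th power in $L^{(3)}$, apply Proposition~\ref{factores}(2) in one branch, and ``apply Theorem~\ref{divisibilidad}(4)--(8) \ldots to derive a divisibility relation incompatible with $q_0=3$'' in the other. The first branch is fine, but the second is not carried out and, as stated, does not obviously close: if $\ell_{\rho_0,\sigma_0}(P)=\lambda R_1^{2m}$ (so $d_0=2$), the constraint $q_0\nmid d_0$ from Theorem~\ref{divisibilidad}(4) is $3\nmid 2$, which is satisfied, so no contradiction arises from that statement; and you do not specify which of (5)--(8) would fail or why. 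You yourself flag this as ``the delicate step of the argument,'' which is to say you have not proved it. The paper's route for this case is quite different and fully explicit: it treats $A^{(1)}=(10/3,4)$ as a candidate regular corner of type~II via Proposition~\ref{criterion}, uses Proposition~\ref{case II}(4) to constrain the associated $\mu$ to an odd integer $\le 6$ (so $\mu\in\{1,3,5\}$), discards $\mu=3,5$ by Proposition~\ref{case II}(2) (since $\rho_1/\gcd(\rho_1,l')>b$), so $\mu=1$ with $(\rho_1,\sigma_1)=(3,-2)$, reduces to type~II.a) via Propositions~\ref{case IIb} and~\ref{encima de la diagonal}, and then shows via Remark~\ref{Case IIa consecuencia} and Definition~\ref{def regular corner}(1) that the forced predecessor corner $(8/3,3)$ is excluded by Proposition~\ref{criterion}. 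To make your proof complete you would need to either carry out a divisibility argument of the type you gesture at (and verify it actually yields a contradiction), or adopt the paper's Proposition~\ref{case II}-based analysis of the corner at $A^{(1)}$.
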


\begin{proof} Without loss of generality we can assume that $K$ is algebraically closed. Let $A'_0$ and
$A^{(1)}$
be as in the above  Proposition~\ref{final}. We will analyze all possible pairs $A_0 = (u,v)$ with $u+v\le 20$,
satisfying all the conditions of Propositions~\ref{final}, \ref{primera cota para primitivos}, \ref{u(u-1)}
and Proposition~\ref{factores}(4).
\begin{table}[htb]
\begin{center}
\ra{1.2}
\begin{tabular}[t]{cccc}
\toprule
$A_0$ &$(f_1,f_2)$ & $(\rho,\sigma)$& $A'_0$\\
\midrule
              (4,12) & (2,6) & (5,-1) & $\times$ \\
              (4,12) & (3,9) & (4,-1) & (1,0) \\
              (5,15) & (2,6) & (5,-1) & (2,0) \\
              (5,15) & (3,9) & (4,-1) & $\times$ \\
              (5,15) & (4,12) & (11,-3) & $\times$\\
              (6,12) & (2,4) & (3,-1) & (2,0) \\

\bottomrule
\end{tabular}
\qquad
\begin{tabular}[t]{cccc}
\toprule
$A_0$ &$(f_1,f_2)$  & $(\rho,\sigma)$& $A'_0$ \\
\midrule
              (6,12) & (3,6) & (5,-2) & $\times$\\
              (6,12) & (4,8) & (7,-3) & $\times$ \\
              (6,12) & (5,10) & (9,-4) & $\times$ \\
              (8,12) & (2,3) & (2,-1) & (2,0) or (3,2) \\
              (8,12) & (4,6) & (5,-3) & $\times$ \\
              (8,12) & (6,9) & (8,-5) & $\times$ \\
\bottomrule
              \end{tabular}
\end{center}
\caption{}
\label{tabla2}
\end{table}
In Table~\ref{tabla2} we list all possible pairs $(u,v)$ with $3<u<v\le u(u-1)$, $\gcd(u,v)>2$ and
$16\le u+v\le 20$. We also list all the possible $(f_1,f_2)=\mu (u,v)$ with $f_1\ge 2$ and $0<\mu<1$. Then we
compute the corresponding $(\rho,\sigma)$ using Proposition~\ref{final}(6) and we verify if there can be an
$A'_0=(r',s')$ with $s'<r'<u$ and $v_{\rho,\sigma}(u,v)=v_{\rho,\sigma}(r',s')$. This happens in four cases.
By Proposition~\ref{impossibles} the case with $(\rho,\sigma)=(2,-1)$ is impossible. By
Proposition~\ref{final}(9), we know that in the remaining three cases
$\gamma\!\le\!\frac{v-s'}{\rho}$. Moreover, in two of these
cases $d\!:=\! \gcd(f_1 - 1, f_2 - 1)\!=\!1$, and so $\gamma=\frac{v-s'}{\rho}$.
By statements~(2) and~(4) of Proposition~\ref{encima de la diagonal} we know that $A^{(1)}\in L^{(l')}$ is
the first entry of a regular corner of an $(m,n)$ pair in $L^{(l')}$,
where $l'=\lcm(l,\rho)$. In the cases we are considering $l=1$ and so $l'=\rho$. We write
$A^{(1)}=(a/l',b)$.
In the case $A_0=(4,12)$, $(\rho,\sigma)=(4,-1)$ and $A_0'=(1,0)$ we are in the framework of
Proposition~\ref{factores}(3) with $r=3$ and $(\rho_0,\sigma_0)=(\rho,\sigma)$. Since
$\gcd(\rho_0,r)=\gcd(4,3)=1$, we have
$\gamma(\rho_0-2)>\rho_0$, which implies $\gamma\ge 3$. Hence, in this case $\gamma=3$.
In Table~\ref{Tabla3} we list the three cases
with the $\gamma$'s, the corresponding $A^{(1)}$, $d$ and $l'-a/b$.
\begin{table}[htb]
\begin{center}
\setlength{\tabcolsep}{8pt}
\ra{1.2}
\begin{tabular}[t]{cccccccc}
\toprule
$A_0$ &$(f_1,f_2)$ & $(\rho,\sigma)$& $A'_0$ & d & $\gamma$ & $A^{(1)}$ &$l'-\frac{a}{b}$ \\
\midrule
              (4,12) & (3,9) & (4,-1) & (1,0) &2&3& $\left (\frac 74,3\right)$&$1+\frac 23$ \\
              (5,15) & (2,6) & (5,-1) & (2,0) &1&3&$\left(\frac{13}{5},3\right)$& $1-\frac 13$ \\
              (6,12) & (2,4) & (3,-1) & (2,0) &1&4&$\left(\frac {10}3,4\right) $ &$1-\frac 12$ \\
\bottomrule
\end{tabular}
\end{center}
\caption{}
\label{Tabla3}
\end{table}
In the first case, where
$A_0=(4,12)$, $(\rho,\sigma)=(4,-1)$, $\gamma=3$ and $\en_{\rho,\sigma}(F)=(3,9)$, by Proposition~\ref{criterion}, we may have a
regular corner of type~I at $A^{(1)}$. In order to finish
the proof, we have to discard the remaining cases.

By Proposition~\ref{criterion} the second case is impossible, since $\gcd(13,3)=1$ and $1-1/3<1$.

Again by Proposition~\ref{criterion}, in the third case there could be a regular corner
of type~II with
\begin{equation}
 A^{(1)} =\left(\frac{a}{l'},b\right)= \left(\frac{10}3,4\right).\label{ve}
\end{equation}
Note that, with the notation of Proposition~\ref{case II}, we have
$$
d:=\gcd(a,b)=2,\quad \ov a:=\frac{a}d=5,\quad \ov b:=\frac{b}{d}=2\quad\text{and}
\quad\left\lfloor l'(bl'-a)+
\frac 1{\ov b} \right\rfloor=6.
$$
According to Proposition~\ref{case II}(4) the only possible $\mu$'s are $1$, $3$ or  $5$.
Let
$$
(\rho_1,\sigma_1):= \dir(\mu \ov{a}-\rho,\mu\ov{b}\rho-\rho) = \dir(\mu
5-3,\mu 3) = \begin{cases} (3,-2) & \text{if $\mu = 1$,}\\ (5,-4) &
\text{if $\mu = 3$,}\\ (27,-22) & \text{if $\mu = 5$.}\end{cases}
$$
For $\mu=3$
we obtain $\frac{\rho_1}{\gcd(\rho_1,l')}=5>4=b$ and for $\mu=5$
we obtain $\frac{\rho_1}{\gcd(\rho_1,l')}=9>4=b$, which contradict Proposition~\ref{case II}(2).
Hence we are left with
$\mu=1$ and $(\rho_1,\sigma_1)=(3,-2)$.
Since $\lcm(l',\rho_1)=\lcm(3,3)=3=l'$, applying Propositions~\ref{case
IIb} and~\ref{encima de la diagonal} we can assume that
$\left(\left(\frac{10}3,4\right),(3,-2)\right)$ is a regular corner of $(P,Q)$ of
type~II.a) in $L^{(l')}$.
 But then,
by Remark~\ref{Case IIa consecuencia} we must have
a regular corner at $\frac 1m\st_{3,-2}(P)$.
By Definition~\ref{def regular corner}(1) the only possible choice for $\frac 1m\st_{3,-2}(P)$ is
$(8/3,3)$, which is discarded by Proposition~\ref{criterion}, concluding the proof.
\end{proof}

\begin{corollary}\label{cota para B} Let $L=K[x,y]$ be the polynomial algebra over a non necessarily
algebraically closed characteristic zero field $K$. We have $B=16$ or $B>20$.
\end{corollary}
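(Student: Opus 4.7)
The plan is to combine Theorem~\ref{cota para primitivos} with the divisibility constraints of Theorem~\ref{divisibilidad} and the preceding corollary ruling out $B=p$ and $B=2p$ for $p$ prime. Since a counterexample to JC defined over $K$ remains a counterexample after base change to $\bar K$, with the same gcd of degrees, it suffices to prove the conclusion for $K$ algebraically closed. Then, by Corollary~\ref{B finito}, if $B<\infty$ I may fix a minimal standard $(m,n)$-pair $(P,Q)$ in $L$ with $\st_{1,1}(P)=\en_{1,0}(P)$, so that $B=v_{1,1}(A_k)$, where $A_0,\dots,A_k$ are its regular corners in the order of~$I$ and $A_k=\tfrac1m\en_{1,0}(P)$ (Remark~\ref{first component}). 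Proposition~\ref{primitivo}(9) yields $v_{1,1}(A_0)\le v_{1,1}(A_k)=B$, and Theorem~\ref{cota para primitivos} leaves exactly two possibilities: $v_{1,1}(A_0)>20$, whence $B>20$; or $A_0=(4,12)$ with $(\rho_0,\sigma_0)=(4,-1)$, $\gamma=3$ and $\en_{\rho_0,\sigma_0}(F)=(3,9)$. The remainder of the plan addresses this second case, which gives $B\ge 16$.

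In that case, from $(3,9)=\tfrac34(4,12)$ in lowest terms I read off $p_0=3$, $q_0=4$. The primitive corner is of type~IIb (Remark~\ref{starting triple es IIb}), so Theorem~\ref{divisibilidad}(8) applies, giving $q_0=4\mid d_i$ for every $i>0$; combined with item~(7) of the same theorem, $4\mid D_i\mid\gcd(a_i,b_i)$. The preceding corollary eliminates $B=17,19$ (primes), while $B=18$ is impossible since $A_k=(a,b)$ would need $a+b=18$ with $4\mid a,b$, contradicting $4\nmid 18$.

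The main obstacle is the remaining value $B=20$; the candidates for $A_k$ with $a+b=20$, $a<b$ and $4\mid\gcd(a,b)$ are $(4,16)$ and $(8,12)$. By the strict monotonicity of $v_{1,1}$ along consecutive corners (Proposition~\ref{primitivo}(9)) any intermediate $A_i$ ($0<i<k$) would satisfy $a_i+b_i\in\{17,18,19\}$ and $4\mid a_i,b_i$, again impossible, so $k=1$. If $A_k=(8,12)$, the edge of $mP$'s Newton polygon joining $(4m,12m)$ to $(8m,12m)$ is horizontal, with outward normal $(0,1)\notin I$, contradicting $(\rho_1,\sigma_1)\in I$. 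If $A_k=(4,16)$, the edge is vertical, $(\rho_1,\sigma_1)=(1,0)\in I$, and Theorem~\ref{divisibilidad}(2) gives $\ell_{1,0}(P)=R_1^{md_1}$ with $4=q_0\mid d_1\mid D_1=\gcd(4,12,4,16)=4$, so $d_1=4$. The element $F_1$ from Theorem~\ref{central} is $(1,0)$-homogeneous with $v_{1,0}(F_1)=1$, so $F_1=xh(y)$ and $\en_{1,0}(F_1)=(1,\deg h)$; matching with $\tfrac{p_1}{q_1}(4,16)$ from Theorem~\ref{divisibilidad}(3) forces $p_1/q_1=1/4$ in lowest terms, hence $q_1=4$, so $q_1\mid q_0$, contradicting Theorem~\ref{divisibilidad}(8). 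The delicate divisibility bookkeeping in this last subcase — reading $q_0$ off the geometry and then pinning down $q_1$ from the shape of the final edge — is the heart of the argument; all remaining exclusions are elementary combinatorial checks.
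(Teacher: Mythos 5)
Your proof is correct and follows essentially the same strategy as the paper: reduce to an algebraically closed field, produce a minimal standard $(m,n)$-pair via Corollary~\ref{B finito}, read off $A_0=(4,12)$ and $q_0=4$ from Theorem~\ref{cota para primitivos}, and then use the divisibility constraints of Theorem~\ref{divisibilidad}(7)--(8) together with the location of the last corner $A_k=\frac1m\en_{1,0}(P)$ to force $B=16$. The only noteworthy differences are presentational: the paper rules out $(a_1,b_1)=(8,12)$ via the inequality $b_1\ge 4(a_1-1)$ (i.e.\ $v_{4,-1}(m(a_1,b_1))\le v_{4,-1}(m(4,12))$), whereas you argue directly that the edge from $(4m,12m)$ to $(8m,12m)$ would be horizontal with outward normal $(0,1)\notin I$; and the paper dispatches the case $A_1=(4,16)$ by citing Corollary~\ref{no hay lados verticales}, whereas you re-derive that corollary's argument inline ($F_1=xh(y)$ forces $q_1=4=q_0$, contradicting Theorem~\ref{divisibilidad}(8)). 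Your separate elimination of $B\in\{17,18,19\}$ is also slightly more verbose than necessary -- once $4\mid\gcd(a_k,b_k)$ whenever $k>0$, the divisibility $4\mid a_k+b_k=B$ already rules out everything strictly between $16$ and $20$, without needing the $B\ne p$, $B\ne 2p$ corollary -- but all of these are routine substitutions of equivalent sub-arguments, not a genuinely different route.
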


\begin{proof} Assume that $B\le 20$. By Corollary~\ref{B finito}
there exist $m,n\in \mathds{N}$ coprime with $m,n>1$ and a standard $(m,n)$-pair $(P,Q)$ which is also a minimal pair, such that
$\gcd(v_{1,1}(P),v_{1,1}(Q))\le 20$.
Let $(\rho,\sigma)$, $A_0=(a,b)$ and $F$ be as above of Proposition~\ref{final}. Since
\begin{equation}\label{B menor igual a 20}
v_{1,1}(A_0)=\frac{1}{m}v_{1,1}(\en_{\rho,\sigma}(P))\le \frac{1}{m}v_{1,1}(P) = B\le 20,
\end{equation}
by Theorem~\ref{cota para primitivos} we have
$$
(\rho,\sigma)=(4,-1),\quad A_0 = (4,12)\quad\text{and}\quad
\en_{4,-1}(F) = (3,9) = \frac 34 (4,12) = \frac 34 A_0,
$$
and consequently $q_0=4$, according to Theorem~\ref{divisibilidad}(3). If $A_0$ is the only regular corner, then
$B=v_{1,1}(A_0)=16$. Otherwise assume by contradiction that there exists $(A_1,(\rho_1,\sigma_1))$ as in Theorem~\ref{divisibilidad} and write
$A_1=(a_1,b_1)$. Then
\begin{itemize}

\smallskip

  \item[-] $4|\gcd(a_1,b_1)$, by Theorem~\ref{divisibilidad}(8),

\smallskip

  \item[-] $a_1\ge a=4$, since $(-1,0)<(\rho_1,\sigma_1)\le (1,0)$ (see Remark~\ref{starting vs end})\endnote{
If $(-1,0)<(\rho_1,\sigma_1)< (1,0)$, then by Remark~\ref{starting vs end} and
Proposition~\ref{le basico}, we have
$$
a = \frac{1}{m}v_{1,0}\bigl(\en_{\rho_0,\sigma_0}(P)\bigr) = \frac{1}{m}
v_{1,0}\bigl(\st_{\rho_1,\sigma_1}(P)\bigr) < \frac{1}{m}
v_{1,0}\bigl(\en_{\rho_1,\sigma_1}(P)\bigr) = a_1.
$$
as desired. If $(\rho_1,\sigma_1)= (1,0)$, then trivially $a=a_1$.
},

\smallskip

  \item[-] $a_1+b_1\le 20$ (similar to \eqref{B menor igual a 20}),

\smallskip

  \item[-] $b_1\ge 4(a_1-1)$ since $v_{4,-1}(m(a_1,b_1))\le v_{4,-1}(P)=v_{4,-1}(m(4,12))$.

\smallskip

\end{itemize}
This leaves us with
the only possibility that $(a_1,b_1)=(4,16)$, hence $(\rho_1,\sigma_1) =(1,0)$.
But this contradicts Corollary~\ref{no hay lados verticales} and concludes the proof.
\end{proof}

\begin{corollary}\label{forma final en L} If $B=16$, then there exist coprime integers $m,n>1$, a standard $(m,n)$-pair $(P,Q)$
that is a minimal pair, and
$\lambda_0,\lambda_1,\lambda_P,\lambda_Q \in K^{\times}$ such that
\begin{equation}\label{dir de P}
\Dir(P)=\Dir(Q)=\{(4,-1),(-2,1),(-1,0),(0,-1)\},
\end{equation}
the $(4,-1)$-homogeneous element $R_0:=x(xy^4-\lambda_0)^3$ satisfies
\begin{equation}
\ell_{4,-1}(P)=\lambda_P R_0^m\quad\text{and}\quad \ell_{4,-1}(Q)=\lambda_Q R_0^n,\label{chh7}
\end{equation}
and the $(-2,1)$-homogeneous element $R_1:=y(xy^2-\lambda_1)$ satisfies
\begin{equation}\label{chh7'}
\ell_{-2,1}(P)=\lambda_P R_1^{4m}\quad\text{and}\quad \ell_{-2,1}(Q)=\lambda_Q R_1^{4n}.
\end{equation}
Moreover, the $(4,-1)$-homogeneous element $F$ of Theorem~\ref{central} satisfies $\en_{\rho,\sigma}(F)=(3,9)$.
\end{corollary}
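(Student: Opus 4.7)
My plan is to assemble the claim from the results already established. First I invoke Corollary~\ref{B finito} to obtain a standard $(m,n)$-pair $(P,Q)$ that is a minimal pair. Since $B\le 20$, Theorem~\ref{cota para primitivos} together with the proof of Corollary~\ref{cota para B} forces $A_0=(4,12)$, $(\rho_0,\sigma_0)=(4,-1)$, $A_0'=(1,0)$, $\en_{4,-1}(F)=(3,9)$, and also that $A_0$ is the unique regular corner, so $A(P)=\emptyset$; the last of these equalities is precisely the final assertion of the corollary.

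Next I pin down $\ell_{4,-1}(P)$. Corollary~\ref{some properties of corners}(2) and Proposition~\ref{P y Q alineados}(2b) produce a $(4,-1)$-homogeneous $R_0\in L$ with $\ell_{4,-1}(P)=\lambda_P R_0^m$ and $\ell_{4,-1}(Q)=\lambda_Q R_0^n$; Theorem~\ref{divisibilidad}(2) with $l=1$ forces $\st_{4,-1}(R_0)=(1,0)$, $\en_{4,-1}(R_0)=(4,12)$, so $R_0=x\bar r_0(xy^4)$ for some monic $\bar r_0$ of degree~$3$ with $\bar r_0(0)\ne 0$. I then apply Proposition~\ref{factores}(3)---whose hypotheses $A_0=(1,0)+3(1,4)$, $\sigma_0=-1$, $A_0'=(1,0)$, $\gcd(3,4)=1$ are all met---to obtain $\gamma\ge 3$; combined with $\gamma\le 3$ from Proposition~\ref{final}(9), this gives $\gamma=3$, so some linear factor of $\mathfrak p(z)=\bar r_0(z^4)^m$ attains multiplicity $3m$. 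Proposition~\ref{pavadass}(3) with $k=4$ then says $\bar f$ is separable of degree at most~$2$ and every irreducible factor of $\bar p=\bar r_0^m$ divides $\bar f$, and since $\deg\bar r_0=3$ the only factorization compatible with the maximum multiplicity $3m$ is $\bar r_0=(t-\lambda_0)^3$. This establishes~\eqref{chh7}.

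For the second edge I set $(\rho_1,\sigma_1):=\min(\Succ_P(1,0),\Succ_Q(1,0))$; by Corollary~\ref{B finito}(4) it lies in $](-1,1),(-1,0)[$ and by Proposition~\ref{esquinas regulares} its predecessor in both $\Dir(P)$ and $\Dir(Q)$ is $(4,-1)$. Granting that $(\rho_1,\sigma_1)=(-2,1)$ (the alternative being excluded in the last step), Corollary~\ref{fracciones de F} applied at $(\rho_0,\sigma_0)=(4,-1)$ with $p/q=3/4$ yields $\ell_{-2,1}(P)=R^{4m}$ for a $(-2,1)$-homogeneous $R\in L$ with $\st_{-2,1}(R)=(1,3)$. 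The lattice points of $\mathds{N}_0\times\mathds{N}_0$ on the line $-2x+y=1$ whose $v_{-1,1}$-maximum is $(1,3)$ are just $(0,1)$ and $(1,3)$, so $R=a_1 y(xy^2-\lambda_1)$ for some $a_1,\lambda_1\in K^\times$. Matching coefficients at the shared vertex $(4m,12m)$ absorbs $a_1^{4m}$ into $\lambda_P$ and produces~\eqref{chh7'}; the corresponding formulas for $Q$ follow by the same argument.

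To conclude, I replace $P$ by $P+c$ for a suitable $c\in K^\times$---a standard move which preserves $[P,Q]$, standardness, minimality, and every leading form computed so far---so that $(0,0)\in\Supp(P)$. Then $(0,0),(m,0),(4m,12m),(0,4m)\in\Supp(P)$ and $\Supp(P)$ is contained in the quadrilateral $\{(x,y):x,y\ge 0,\ v_{4,-1}(x,y)\le 4m,\ v_{-2,1}(x,y)\le 4m\}$, so the Newton polygon of $P$ is exactly this quadrilateral and $\Dir(P)=\{(4,-1),(-2,1),(-1,0),(0,-1)\}$; the same argument settles~$Q$ and establishes~\eqref{dir de P}. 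The hard part of the whole proof is the exclusion of the alternative $(\rho_1,\sigma_1)=(-3,1)$ used in the previous paragraph: the analogous lattice-point analysis at $(-3,1)$ also permits a non-monomial $R=c_0+c_1xy^3$, and Corollary~\ref{fracciones de F} does not apply there because $v_{-3,1}(P)=0$. Ruling it out rigorously requires observing that such a configuration would force the Newton polygon (after the constant shift) to collapse to a triangle with only the three normals $\{(0,-1),(4,-1),(-3,1)\}$, incompatible with the claimed~\eqref{dir de P}, and then showing that a minimal pair with the quadrilateral shape exists---most plausibly via the dual analysis encoded in Corollary~\ref{fracciones de F1} combined with the $\psi_1$-flip technique used in Proposition~\ref{impossibles}.
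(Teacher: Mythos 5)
Your proposal matches the paper's strategy step by step for most of the argument---fixing $A_0=(4,12)$, $(\rho_0,\sigma_0)=(4,-1)$, $\en_{4,-1}(F)=(3,9)$, extracting $R_0$ via Remark~\ref{a remark} / Proposition~\ref{P y Q alineados}(2b), pinning $\gamma=3$ by Propositions~\ref{factores}(3) and~\ref{final}(9) (which is exactly what the proof of Theorem~\ref{cota para primitivos} does), deducing $\bar r_0=(t-\lambda_0)^3$ from the forced multiplicity, applying Corollary~\ref{fracciones de F}, and concluding $\Dir(P)$ by adding a constant. These are all sound and essentially identical to the paper's route.

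The real gap is precisely the one you flag: ruling out $(\rho',\sigma')=(-3,1)$, or more exactly $(-3,1)\le(\rho',\sigma')<(-1,0)$. Your proposed route does not work. You suggest that the configuration is ``incompatible with the claimed~\eqref{dir de P}''---but~\eqref{dir de P} is the very thing being proved, so this is circular. And the appeal to Corollary~\ref{fracciones de F1} together with ``the $\psi_1$-flip technique \ldots most plausibly'' is speculative and not carried out; you have not produced a contradiction. The paper's actual argument is short and uses an external input you never invoke: if $(\rho',\sigma')\ge(-3,1)$ then $\st_{-3,1}(P)=\en_{4,-1}(P)=m(4,12)$, so $v_{-3,1}(P)=v_{-3,1}(m(4,12))=0$; hence every $(i,j)\in\Supp(P)$ with $i=0$ has $j\le 0$, i.e.\ $\max\{0,\deg_y(P(0,y))\}=0$, which contradicts van den Essen's Theorem~10.2.6 for Jacobian pairs. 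Without this (or an equivalent) ingredient, the exclusion of $(-3,1)$---and hence the identification of the second edge as $(-2,1)$---is missing. You correctly observe that Corollary~\ref{fracciones de F} cannot be pushed up to $(-3,1)$ because $v_{-3,1}(P)=0$; but noticing that the tool fails is not the same as ruling the case out.

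Two smaller comments. First, you invoke Proposition~\ref{pavadass}(3) to bound $\deg\bar f$ and constrain the factorization of $\bar r_0$, but once you know $m_\lambda=3m$ (hence the factor corresponding to $\lambda$ has multiplicity $3$ in $\bar r_0$) and $\deg\bar r_0=3$, the conclusion $\bar r_0=(t-\lambda_0)^3$ is immediate and the detour through Proposition~\ref{pavadass}(3) is unnecessary; the paper goes directly via Remark~\ref{multiplicidad de la potencia}. Second, the paper applies Corollary~\ref{fracciones de F} \emph{before} knowing $(\rho',\sigma')=(-2,1)$: having established $(-1,1)<(\rho',\sigma')<(-3,1)$, it uses the resulting $4m$-th power structure and a lattice argument ($\en_{\rho',\sigma'}(\ov R)=(0,k)$, $k=1$ forced) to derive $(\rho',\sigma')=(-2,1)$, rather than granting it as you do. This order matters: the conclusion of Corollary~\ref{fracciones de F} is what pins down the direction.
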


\begin{proof} By Corollary~\ref{B finito},
there exist $m,n\in \mathds{N}$ coprime with $m,n>1$, a standard $(m,n)$-pair $(P,Q)$ that is a minimal pair, such that
$\gcd(v_{1,1}(P),v_{1,1}(Q))=16$, and
\begin{equation}\label{sucesor de 10}
(-1,1)<\Succ_{P}(1,0),\Succ_{Q}(1,0)<(-1,0).
\end{equation}
Let $(\rho,\sigma)$, $A_0$, $\lambda$,
$m_\lambda$ and $\gamma$ be as above of Proposition~\ref{final}. By
Theorem~\ref{cota para primitivos} we know that
\begin{equation}\label{F con mu}
(\rho,\sigma)=(4,-1),\quad A_0=(4,12),\quad \gamma=3\quad\text{and}\quad
\en_{\rho,\sigma}(F)=(3,9)=\frac 34 A_0.
\end{equation}
By statements~(2), (3) and~(4) of
Proposition~\ref{primitivo} and Remark~\ref{a remark}, there exist
$\lambda_P,\lambda_Q\in K^{\times}$ and a $(4,-1)$-homogeneous polynomial $R$ such that
\begin{equation}
\ell_{4,-1}(P)=\lambda_P R^m\quad\text{and}\quad \ell_{4,-1}(Q)=\lambda_Q R^n.\label{chh1}
\end{equation}
Moreover, since $\en_{4,-1}(R)=A_0=(4,12)$ and $R$ is $(4,-1)$-homogeneous
$$
\Supp(R) \subseteq \{(4,12),(3,8),(2,4),(1,0)\}.
$$
Thus, $R = x \mathfrak{r}(z)= x \ov{\mathfrak{r}}(z^4)$ for some $r,\mathfrak{r}\in K[x]$, where $z := x^{1/4}y$ and
$\deg(\ov{\mathfrak{r}}) = 3$.
Now, since $m_\lambda = 3m$, the
multiplicity of the factor $z-\lambda$ in $\mathfrak{r}(z)$ is $3$, and then, by
Remark~\ref{multiplicidad de la potencia} the
multiplicity of the factor $z^4-\lambda^4$ in $\ov{\mathfrak{r}}(z^4)$ is $3$. Hence there exists $\mu\in K^{\times}$, such that
$$
R=\mu x(z^4-\lambda^4)^3=\mu x(xy^4-\lambda^4)^3.
$$
Then we obtain~\eqref{chh7} with $\lambda_0:=\lambda^4$, $\lambda_P$ replaced by $\lambda_P \mu^m$ and $\lambda_Q$ replaced by $\lambda_Q \mu^n$.

$$
R=\mu x(z^4-\lambda^4)^3=\mu x(xy^4-\lambda^4)^3 = \mu
x(xy^4-\lambda_0)^3\qquad\text{where $\lambda_0:=\lambda^4$.}
$$
Combining this with~\eqref{chh1}, and replacing $\lambda_P\mu^m$ by
$\lambda_P$ and $\lambda_Q \mu^n$ by $\lambda_Q$, we obtain~\eqref{chh7}.

Note that there are no directions in $\Dir(P)\cup\Dir(Q)$ in $](4,-1),(1,0)]\subset ](4,-1),(1,1)[$, since
$$
v_{1,1}(\en_{4,-1}(P))=v_{1,1}(m A_0)=16m = v_{1,1}(P)\quad\text{and}\quad v_{1,1}(\en_{4,-1}(Q))= v_{1,1}(Q)\endnote{
In
fact, this implies $mA_0\in \ell_{1,1}(P)$. Hence $mA_0\in
\Supp(\ell_{4,-1}(\ell_{1,1}(P)))$, because $mA_0\in \ell_{4,-1}(P)$.
Thus, by Remark~\ref{starting vs end} we have $\en_{4,-1}(P) = mA_0 =
\st_{1,1}(P)$. Consequently, by Proposition~\ref{le bbasico}, there is no
direction in $\Dir(P)$ between $(4,-1)$ and $(1,1)$. Similarly there is no
direction in $\Dir(Q)$ between $(4,-1)$ and $(1,1)$.
}.
$$
Hence, by~\eqref{sucesor de 10},
$$
(-1,1)<\Succ_{P}(4,-1),\Succ_{Q}(4,-1)<(-1,0).
$$
Let $(\rho',\sigma'):=\min\{\Succ_{P}(4,-1),\Succ_{Q}(4,-1)\}$. We claim that
$(-3,1)\le (\rho',\sigma')<(-1,0)$ is impossible. Otherwise
$$
v_{-3,1}(P)=v_{-3,1}(\st_{-3,1}(P))=v_{-3,1}(\en_{4,-1}(P))=v_{-3,1}(mA_0)=0,
$$
from which we obtain $\max\{0,\deg_y(P(0,y))\}= 0$, contradicting~\cite{vdE}*{Theorem 10.2.6}.
Therefore $(-1,1)<(\rho',\sigma')<(-3,1)$, and it is also clear
 that $v_{\rho,\sigma}(P)>0$ for
$(4,-1)<(\rho,\sigma)\le(\rho',\sigma')$.\endnote{
Note that
$v_{\rho,\sigma}(\en_{4,-1}(P))=v_{\rho,\sigma}(m(4,12))>0$ if and only if
$(3,-1)<(\rho,\sigma)<(-3,1)$.}
Then, by Corollary~\ref{fracciones de F}\endnote{
We
apply Corollary~\ref{fracciones de F} with $(\rho_0,\sigma_0) = (4,-1)$.
Conditions~(1) and~(2) of this corollary
follow from statements~(1) and~(2) of Corollary~\ref{some properties of
corners}.
}
and~\eqref{F con mu}, there exist a $(\rho',\sigma')$-homogeneous polynomial
$\ov{R}$ such that $\ell_{\rho',\sigma'}(P) = \ov{R}^{4m}$. Since
$\st_{\rho',\sigma'}(P)=m(4,12)$, we know that
$\st_{\rho',\sigma'}(\ov{R})=(1,3)$. By Remark~\ref{starting vs end},
$$
v_{1,0}(\en_{\rho',\sigma'}(\ov{R}))<v_{1,0}(\st_{\rho',\sigma'}(\ov{R}))=1.
$$
So $\en_{\rho',\sigma'}(\ov{R})=(0,k)$ for some $k\in\mathds{N}_0$. But
$k=0$ is impossible since it implies $(\rho',\sigma')=(-3,1)$. Moreover
$$
2=v_{-1,1}(\st_{\rho',\sigma'}(\ov{R}))>v_{-1,1}(\en_{\rho',\sigma'}(\ov{R}))=k,
$$
and therefore $k=1$, which implies $(\rho',\sigma')=(-2,1)$. Consequently
$\ov{R}=\mu y(xy^2-\lambda_1)$ for some $\mu,\lambda_1\in K^{\times}$.
Hence, $\ell_{-2,1}(P) = \mu^m R_1^m$. But since $\en_{4,-1}(P) =
\st_{-2,1}(P)$, necessarily $\mu^m = \lambda_P$. This shows that the first
equality in~\eqref{chh7'} is true. The same argument shows that the second
one is also true. It remains to achieve~\eqref{dir de P}.
But $\st_{4,-1}(P)=m\st_{4,-1}(R_0)=(m,0)$ and
$\en_{-2,1}(P)=4m\en_{-2,1}(R_1)=(0,4m)$. Adding eventually a constant, we
can assume that $P(0,0)\ne 0$ and then an elementary geometric argument
shows that~\eqref{dir de P} holds for $P$. Similarly it also holds for
$Q$.
\end{proof}

\begin{remark} As long as we are not able to discard the possibility $B\!=\!16$, there can be expected no
real progress in proving or disproving the JC just by describing the admissible $A_0$'s. However we submit
without proof a complete list of small values. Let
$$
B_0:=\frac 1m \st_{1,0}(P)\qquad\text{and}\qquad B_1:=\frac 1m \en_{1,0}(P).
$$
If $B\le 50$, then necessarily

\begin{enumerate}
\item[a)] $A_0$ belongs to the following set:
\begin{align*}
\qquad\mathcal{X}:=\{&(4,12),(5,20),(6,15),(6,30),(7,21),(7,35),(7,42),(8,24),(8,28),(9,21),\\
&(9,24),(9,36),(10,25),(10,30),(10,40),(11,33),(12,28),(12,30), (12,33),\\
&(12,36),(14,35),(15,35),(18,30)\}.
\end{align*}

\item[b)] $B_0\in \mathcal{X}$ or $B_0=(8,40)$ and $A_0=(4,12)$.

\item[c)] $B_1\in\mathcal{X}$ or $B_1\in\{(8,32),(8,40),(6,18),(6,24),(6,36), (6,42),(9,27)\}$.
    Furthermore,

\begin{itemize}

\item[-] if $B_1=(8,32)$, then $B_0=(8,28)$,

\item[-] if $B_1=(8,40)$ then $B_0=B_1$ or $B_0=(8,28)$,

\item[-] if $B_1=(6,18+6k)$, then $B_0=(6,15)$,

\item[-] if $B_1=(9,27)$, then $B_0=(9,21)$ or $B_0=(9,24)$.

\end{itemize}
\end{enumerate}
The cases listed in a) and b) coincide with the list for $B_0$ given in~\cite{H}*{Theorem~2.24(1)}, where
$B_0=(E_1,D_1)$ is written as $(D_1,E_1)$ and $B_1=(E,D)$ is written as
$(D,E)$. However, our list in c), in addition to the cases considered in~\cite{H}, contains the pairs
$\{(6,18),(6,24),(6,36),(6,42),(9,27)\}$. His result follows from a computer search and some
computations on a parameter $\alpha$, which should be the same as our $q$. Our list was found using the same
techniques as
in Theorem~\ref{cota para primitivos} and Corollary~\ref{cota para B}.
\end{remark}

\section{The case $B=16$}

\setcounter{equation}{0}

In this section we will analyze the case $B=16$. Applying the flip automorphism $\psi_1$ of $L$, given by
$\psi_1(x):=y$ and $\psi_1(y):=-x$, to the standard $(m,n)$-pair $(P,Q)$ obtained in Corollary~\ref{forma final en L}
(which is also a minimal pair), we obtain a minimal pair $(P_0,Q_0)$ in $L$, with
\begin{equation}\label{et1}
\Dir(P_0)=\Dir(Q_0)=\{(-1,4),(1,-2),(0,-1),(-1,0)\}.
\end{equation}
Moreover, by the same corollary there exist $\lambda_0,\lambda_1,\lambda_P,\lambda_Q\in K^{\times}$ such that
$R_0:=y(x^4y-\lambda_0)^3$ satisfies
\begin{equation}\label{et2}
\ell_{-1,4}(P_0)=\lambda_P R_0^m\quad\text{and}\quad \ell_{-1,4}(Q_0)=\lambda_Q R_0^n,
\end{equation}
and $R_1:=x(x^2y-\lambda_1)$ satisfies
\begin{equation}\label{et3}
\ell_{1,-2}(P_0)=\lambda_P R_1^{4m}\quad\text{and}\quad \ell_{1,-2}(Q_0)=\lambda_Q R_1^{4n},
\end{equation}
where $n,m\in\mathds{N}$ are coprime such that
$$
\frac{v_{1,1}(P_0)} {v_{1,1}(Q_0)}=\frac{v_{0,1}(P_0)}{v_{0,1}(Q_0)}=\frac mn.
$$
In this section we start with $(P_0,Q_0)$ and we use the following two types of operations:
\begin{itemize}

\smallskip

\item[-] add constants or multiply by non-zero constants,

\smallskip

\item[-] apply automorphisms of $L$ or $L^{(1)}$,

\smallskip

\end{itemize}
in order to obtain successively $(P_1,Q_1),\dots,(P_6,Q_6)\in L^{(1)}$, such that $(P_6,Q_6)$ satisfies
$$
P_6,Q_6 \in L,\quad \ell_{1,0}(P_6)=x^3 y,\quad \ell_{1,0}(Q_6)=x^2 y\quad\text{and}\quad [P_6,Q_6]=x^4 y+\mu_3 x^3+\mu_2 x^2+\mu_1 x+\mu_0
$$
for some $\mu_0$, $\mu_1$, $\mu_2$, $\mu_3$ in $K$, with $\mu_0\ne 0$.

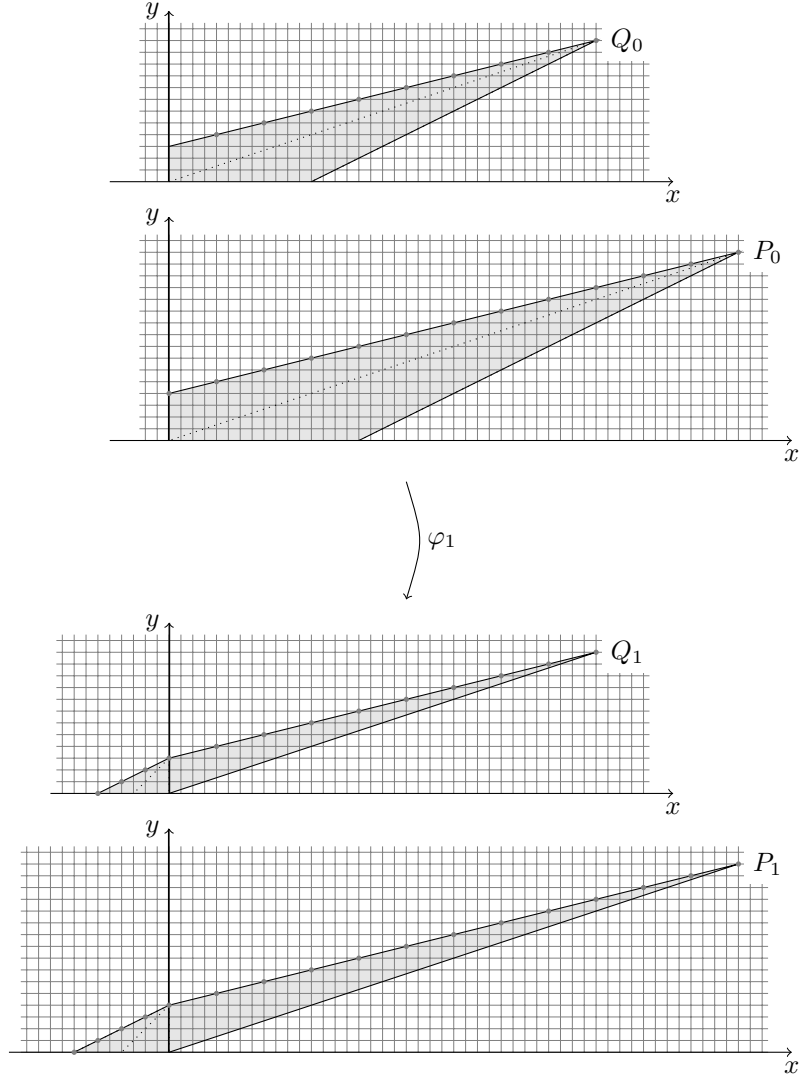
\begin{figure}[htb]
\centering
\begin{tikzpicture}[scale=0.78]
\fill[gray!20] (0,10.01) -- (3.2,10.01) -- (9.6,13.2)--(0,10.8);
\fill[gray!20] (0,-0.4) -- (9.6,2.8)--(0,0.4)--(-1.6,-0.4);
\fill[gray!20] (0,14.4) -- (2.4,14.4) -- (7.2,16.8)--(0,15);
\fill[gray!20] (0,4) -- (7.2,6.4)--(0,4.6)--(-1.2,4);
\draw[step=.2cm,gray,very thin] (-0.5,10) grid (10.1,13.5);
\draw [->] (-1,10) -- (10.5,10) node[anchor=north]{$x$};
\draw [->] (0,10) --  (0,13.8) node[anchor=east]{$y$};
\draw [-] (-0.01,10) --  (-0.01,13.8) node[anchor=east]{};
\draw (3.2,10) --  (9.6,13.2) node[fill=white,right=2pt]{$P_0$} -- (0,10.8);
\draw[dotted] (9.6,13.2) -- (0,10);
\draw[dotted] (9.6,13.2) -- (3.2,10);
\draw[step=.2cm,gray,very thin] (-2.5,-0.4) grid (10.1,3.1);
\draw [->] (-2.7,-0.4) -- (10.5,-0.4) node[anchor=north]{$x$};
\draw [->] (0,-0.4) --  (0,3.4) node[anchor=east]{$y$};
\draw [-] (-0.01,-0.4) --  (-0.01,3.4) node[anchor=east]{};
\draw (0,-0.4) --  (9.6,2.8) node[fill=white,right=2pt]{$P_1$} -- (0,0.4)--(-1.6,-0.4);
\draw[dotted] (0,0.4) -- (-1.6,-0.4);
\draw[dotted] (0,0.4) -- (-0.8,-0.4);
\draw[dotted] (0,0.4) -- (0,-0.4);
\draw[step=.2cm,gray,very thin] (-0.5,14.4) grid (8.1,17.1);
\draw [->] (-1,14.4) -- (8.5,14.4) node[anchor=north]{$x$};
\draw [->] (0,14.4) --  (0,17.3) node[anchor=east]{$y$};
\draw [-] (-0.01,14.4) --  (-0.01,17.3) node[anchor=east]{};
\draw (2.4,14.4) --  (7.2,16.8) node[fill=white,right=2pt]{$Q_0$} -- (0,15);
\draw[dotted] (7.2,16.8) -- (0,14.4);
\draw[dotted] (7.2,16.8) -- (2.4,14.4);
\draw[step=.2cm,gray,very thin] (-1.9,4) grid (8.1,6.7);
\draw [->] (-2,4) -- (8.5,4) node[anchor=north]{$x$};
\draw [->] (0,4) --  (0,6.9) node[anchor=east]{$y$};
\draw [-] (0.01,4) --  (0.01,6.9) node[anchor=east]{};
\draw (0,4) --  (7.2,6.4) node[fill=white,right=2pt]{$Q_1$} -- (0,4.6)--(-1.2,4);
\draw[dotted] (0,4.6) -- (-0.6,4);
\draw[dotted] (0,4.6) -- (0,4);
\filldraw [gray]  (0,10.8)    circle (1pt)
                  (0.8,11)      circle (1pt)
                  (1.6,11.2)      circle (1pt)
                  (2.4,11.4)      circle (1pt)
                  (3.2,11.6)    circle (1pt)
                  (4,11.8)  circle (1pt)
                  (4.8,12)  circle (1pt)
                  (5.6,12.2)      circle (1pt)
                  (6.4,12.4)    circle (1pt)
                  (7.2,12.6)      circle (1pt)
                  (8,12.8)      circle (1pt)
                  (8.8,13)    circle (1pt)
                  (9.6,13.2)  circle (1pt)
                  (0,0.4)    circle (1pt)
                  (0.8,0.6)      circle (1pt)
                  (1.6,0.8)      circle (1pt)
                  (2.4,1)      circle (1pt)
                  (3.2,1.2)    circle (1pt)
                  (4,1.4)  circle (1pt)
                  (4.8,1.6)  circle (1pt)
                  (5.6,1.8)      circle (1pt)
                  (6.4,2)    circle (1pt)
                  (7.2,2.2)      circle (1pt)
                  (8,2.4)      circle (1pt)
                  (8.8,2.6)    circle (1pt)
                  (9.6,2.8)  circle (1pt)
                  (-0.4,0.2)    circle (1pt)
                  (-0.8,0)      circle (1pt)
                  (-1.2,-0.2)      circle (1pt)
                  (-1.6,-0.4)      circle (1pt);
\filldraw [gray]  (7.2,6.4)     circle (1pt)
                  (6.4,6.2)      circle (1pt)
                  (5.6,6)      circle (1pt)
                  (4.8,5.8)      circle (1pt)
                  (4,5.6)    circle (1pt)
                  (3.2,5.4)  circle (1pt)
                  (2.4,5.2)  circle (1pt)
                  (1.6,5)      circle (1pt)
                  (0.8,4.8)    circle (1pt)
                  (0,4.6)      circle (1pt)
                  (-0.4,4.4)      circle (1pt)
                  (-0.8,4.2)    circle (1pt)
                  (-1.2,4)  circle (1pt)
                  (7.2,16.8)     circle (1pt)
                  (6.4,16.6)      circle (1pt)
                  (5.6,16.4)      circle (1pt)
                  (4.8,16.2)      circle (1pt)
                  (4,16)    circle (1pt)
                  (3.2,15.8)  circle (1pt)
                  (2.4,15.6)  circle (1pt)
                  (1.6,15.4)      circle (1pt)
                  (0.8,15.2)    circle (1pt)
                  (0,15);
\draw (4.2,8.3) node[right,text width=3cm]{$\varphi_1$};
\draw[->] (4,9.3) .. controls (4.3,8.3) .. (4,7.3);
\end{tikzpicture}
\caption{Proposition~\ref{phi1} for $m=4$ and $n=3$.}
\end{figure}

\smallskip

Let $\varphi_1\in \Aut(L^{(1)})$ defined by $\varphi_1(x):= x$ and $\varphi_1(y):= y+\lambda_1 x^{-2}$.
Adding eventually constants to $P_0$ and $Q_0$,
we can assume that

\begin{equation}\label{et4}
(0,0)\in \Supp(P_0)\cap\Supp(Q_0)\cap \Supp(\varphi_1(P_0))\cap\Supp(\varphi_1(Q_0)).
\end{equation}
Conditions~\eqref{et1}, \eqref{et2} and~\eqref{et3} remain valid under this
change, since they imply that $(0,0)$ belongs to the convex hull of
$\Supp(P_0)$, but doesn't belong to any of the straight lines containing
$\Supp(\ell_{-1,4}(P_0))$ and $\Supp(\ell_{1,-2}(P_0))$, and similarly for
$Q_0$, $\varphi_1(P_0)$ and $\varphi_1(Q_0)$.

\begin{proposition}\label{phi1} The elements $P_1:=\varphi_1(P_0)$ and $Q_1:=\varphi_1(Q_0)$ of $L^{(1)}$ satisfy:

\begin{enumerate}

\smallskip

\item $\Dir(P_1)=\Dir(Q_1)=\{(-1,4),(-1,2),(0,-1),(1,-3)\}$,

\smallskip

\item $\ell_{-1,4}(P_1)=\ell_{-1,4}(P_0)$, $\ell_{-1,4}(Q_1)=\ell_{-1,4}(Q_0)$, $v_{1,-3}(P_1)=0$ and $v_{1,-3}(Q_1)=0$,

\smallskip

\item $v_{1,1}(P_1) = v_{1,1}(P_0)$, $v_{1,1}(Q_1) = v_{1,1}(Q_0)$  and $[P_1,Q_1]\in K^{\times}$.

\end{enumerate}

\end{proposition}

\begin{proof} By Proposition~\ref{varphi preserva el Jacobiano} we know that $[P_1,Q_1]=[\varphi_1(P_0),\varphi_1(Q_0)]\in K^{\times}$ and by hypothesis and Proposition~\ref{pr ell por automorfismos}, we have
\begin{equation}
\ell_{0,1}(P_1)=\ell_{0,1}(P_0),\qquad \ell_{1,1}(P_1)=\ell_{1,1}(P_0),\qquad\ell_{-1,4}(P_1)=\ell_{-1,4}(P_0) = \lambda_P R_0^m\label{ecua1}
\end{equation}
and
\begin{equation}
\begin{aligned}
\{(-1,4)\} &= \Dir(P_0)\cap\{(\rho,\sigma)\in \mathfrak{V} : (1,-2)<(\rho,\sigma)<(-1,2) \} \\
& = \Dir(P_1)\cap\{(\rho,\sigma)\in \mathfrak{V}:(1,-2)<(\rho,\sigma)<(-1,2) \}.
\end{aligned}\label{ecua2}
\end{equation}
A direct computation using the third equality in~\eqref{ecua1} shows that
\begin{equation}
\st_{-1,4}(P_1) = (12m,4m) \qquad\text{and}\qquad\en_{-1,4}(P_1) = (0,m).\label{ecua3}
\end{equation}
Furthermore, by Proposition~\ref{le basico},
$$
\Supp(\ell_{-1,2}(P_0))=\st_{-1,0}(P_0)=\en_{-1,4}(P_0)=m\en_{-1,4}(R_0)=m(0,1),
$$
and so there exists $\mu\in k^{\times}$ such that $\ell_{-1,2}(P_0)=\mu y^m$. Hence, again by Proposition~\ref{pr ell por automorfismos},
\begin{equation}
\ell_{-1,2}(P_1)=\varphi_1(\ell_{-1,2}(P_0))=\mu (y+\lambda_1 x^{-2})^m,\label{ecua4}
\end{equation}
which combined with the equality~\eqref{ecua2}, implies that $(-1,4)$ and $(-1,2)$ are consecutive directions of $\Dir(P_1)$. Moreover, since
$\en_{-1,2}(P_1) = (-2m,0)$ by equality~\eqref{ecua4}, and $(0,0)\in \Supp(P_1)$,
the next direction in $\Dir(P_1)$ is $(0,-1)$ and
\begin{equation}\label{e4}
(0,0)\in \Supp(\ell_{0,-1}(P_1)).
\end{equation}
The same argument shows that all the results already proved for $P_1$ are also valid for $Q_1$ but with $Q_0$ instead of $P_0$ and $n$ instead of $m$. Hence the first two equalities in items~(2) and~(3) are true.

We claim that in order to finish the computation of $\Dir(P_1)$ it suffices to verify that
\begin{equation}\label{predecesor de -14}
\Pred_{P_1}(-1,4)\le(1,-3)<(-1,4).
\end{equation}
In fact, by Proposition~\ref{le basico} and the first equality in~\eqref{ecua3} this implies that
$$
\en_{1,-3}(P_1) = \st_{-1,4}(P_1) = (12m,4m),
$$
and consequently
$$
v_{1,-3}(P_1) = v_{1,-3}(12m,4m) = 0
$$
(that is the third equality in item~(2)). Since $v_{1,-3}(0,0) = 0$, we also have $(0,0)\in\!\Supp(\ell_{1,-3}(P_1))$, and
so $(1,-3)\!\in\! \Dir(P_1)$. Moreover by~\eqref{e4} it is also clear that
$$
\en_{0,-1}(P_1) = \st_{1,-3}(P_1) = (0,0),
$$
which, by Proposition~\ref{le bbasico}, finishes the proof of the claim.

Now we prove~\eqref{predecesor de -14}. Since, by Proposition~\ref{pr ell por automorfismos}
$$
\Supp(\ell_{1,-2}(P_1))=\Supp(\varphi_1(\ell_{1,-2}(P_0)))=\Supp(\varphi_1(\lambda_P R_1^{4m}))= (12m,4m)=\st_{-1,4}(P_1),
$$
again by Proposition~\ref{le bbasico}, we are reduced to prove that
\begin{equation}\label{rho sigma impossible}
\Pred_{P_1}(-1,4)\notin \ ](1,-3),(1,-2)[.
\end{equation}
Assume by contradiction that $(\rho',\sigma'):= \Pred_{P_1}(-1,4)\in\, ](1,-3),(1,-2)[$. Then
\begin{equation}\label{e5}
v_{\rho,\sigma}(P_1)>0\qquad\text{for all $(\rho',\sigma')\le(\rho,\sigma)\le (-1,4)$.}\endnote{
Note that $v_{\rho,\sigma}(\st_{-1,4}(P))=v_{\rho,\sigma}(m(12,4))>0$ if and only if $(1,-3)<(\rho,\sigma)<(-1,3)$.}
\end{equation}
By~\eqref{ecua1}, \eqref{ecua3}, their analogous for $Q_1$, and~\eqref{e5}, the hypothesis of Corollary~\ref{fracciones de F1}
are satisfied with $(\rho_0,\sigma_0) = (-1,4)$, $l=1$ and $(a,b) = (12,4)$. Moreover the $(-1,4)$-homogeneous element $F_1$
of Theorem~\ref{central} satisfies
$$
\st_{-1,4}(F_1)=(9,3)=\frac 34 \frac 1m \st_{-1,4}(P_1).\endnote{
We claim that $\st_{-1,4}(F_1)\sim \st_{-1,4}(P_1)=m(12,4)$. If the claim is true,
then $\st_{-1,4}(F_1)=\alpha (3,1)$, and so $\alpha=v_{-1,4}(F_1)=-1+4=3$, which yields $\st_{-1,4}(F_1)=(9,3)$, as desired.

If we assume by contradiction that $\st_{-1,4}(F_1)\nsim \st_{-1,4}(P_1)$, then
$\st_{-1,4}(F_1)=(1,1)$ by Theorem~\ref{central}(2), and so, by Remark~\ref{F no es monomio}, we have
$\en_{-1,4}(F_1)\ne(1,1)$, which by Theorem~\ref{central}(3) implies
$$
\en_{-1,4}(F_1)\sim \en_{-1,4}(P_1)=(0,m).
$$
But then $v_{-1,4}(F_1)=3$ leads to $\en_{-1,4}(F_1)=(0,3/4)$, which is impossible.
}
$$
Thus, by Corollary~\ref{fracciones de F1} there exist a $(\rho',\sigma')$-homogeneous element $R\in L^{(1)}$ and $\zeta\in K^{\times}$, such that
$\ell_{\rho',\sigma'}(P_1)=\zeta R^{4m}$. Since by Proposition~\ref{le basico} and the first equality in~\eqref{ecua3}
$$
\en_{\rho',\sigma'}(P_1)=m(12,4),
$$
we obtain that $\en_{\rho',\sigma'}(R)=(3,1)$. Consequently, by Remark~\ref{starting vs end},
$$
v_{0,1}(\st_{\rho',\sigma'}(R))<v_{0,1}(\en_{\rho',\sigma'}(R))=1,
$$
and so $\st_{\rho',\sigma'}(R)=(k,0)$ for some $k\in\mathds{Z}$. But then, again by Remark~\ref{starting vs end},
$$
k=v_{1,-2}(\st_{\rho',\sigma'}(R))<v_{1,-2}(\en_{\rho',\sigma'}(R))=1,
$$
and
$$
k=v_{1,-3}(\st_{\rho',\sigma'}(R))>v_{1,-3}(\en_{\rho',\sigma'}(R))=0,
$$
which yields the desired contradiction, proving~\eqref{rho sigma impossible} and concluding the computation of $\Dir(P_1)$. In order to finish the proof it suffices to apply the same argument to $Q_1$.
\end{proof}

\begin{proposition}\label{esquina de 83} Let $(\rho_1,\sigma_1)\in\mathfrak{V}_{>0}$ with $\rho_1<0$ and let $P,Q\in
L^{(1)}$ such that $[P,Q]\in K^{\times}$. Assume
$$
\frac 1m\en_{\rho_1,\sigma_1}(P)=\frac 1n \en_{\rho_1,\sigma_1}(Q) = (a,b),
$$
with $a,b,m,n\in \mathds{N}$, $a>b$, $m\ne n$ and $\gcd(m,n) = 1$. Set
$$
(\rho_2,\sigma_2):=\min\{\Succ_P(\rho_1,\sigma_1),\Succ_Q(\rho_1,\sigma_1)\}\quad\text{and}\quad (a',b'):=\frac 1m
\en_{\rho_2,\sigma_2}(P).
$$
(By the same argument as in the proof of Proposition~\ref{propiedades de
los pares}(5), neither $P$ nor $Q$ are monomials).
The following facts hold:

\begin{enumerate}

\item If $\en_{\rho_2,\sigma_2}(P)\sim \en_{\rho_2,\sigma_2}(Q)$, then $(\rho_2,\sigma_2) \in \Dir(P) \cap
    \Dir(Q)$. Furthermore
\begin{equation}\label{caso alineado}
\quad (a',b')\in \mathds{Z}\times\mathds{N}_0,\quad v_{\rho_1,\sigma_1}(a',b') < v_{\rho_1,\sigma_1}(a,b)
\quad\text{and}\quad ab'-ba'>0.
\end{equation}

\item If $\en_{\rho_2,\sigma_2}(P)\nsim \en_{\rho_2,\sigma_2}(Q)$, then there exists $k\in\mathds{N}$, such
    that
$$
\quad (k+1)b<a\qquad\text{and}\qquad\{\en_{\rho_2,\sigma_2}(P),\en_{\rho_2,\sigma_2}(Q)\}= \{(-k,0),(k+1,1)\}.
$$

\item If $(\rho_1,\sigma_1)=(-1,4)$, $(a,b)=(8,3)$ and $\en_{\rho_2,\sigma_2}(P)\sim \en_{\rho_2,\sigma_2}(Q)$,
then $(\rho_2,\sigma_2)=(-1,3)$ and there exist $\mu,\mu_P,\mu_Q \in K^{\times}$ such that
$$
\quad \ell_{-1,3}(P)=\mu_P x^{-m}(x^3y-\mu)^{3m}\qquad\text{and}\qquad \ell_{-1,3}(Q) =
\mu_Q x^{-n}(x^3y-\mu)^{3n}.
$$
\end{enumerate}
\end{proposition}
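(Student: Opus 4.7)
My plan is to treat the three parts sequentially, using the Newton polygon tools of Sections 2--5.

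For part (1), I would assume WLOG that $(\rho',\sigma')=\Succ_P(\rho,\sigma)\in\Dir(P)$, so by Proposition~\ref{le basico} we have $\st_{\rho',\sigma'}(P)=m(a,b)$ and $\st_{\rho',\sigma'}(Q)=n(a,b)$. If $(\rho',\sigma')\notin\Dir(Q)$, then $\ell_{\rho',\sigma'}(Q)$ is a monomial supported at $n(a,b)$; the alignment of endpoints together with matching $(\rho',\sigma')$-valuations would force $\en_{\rho',\sigma'}(P)=m(a,b)=\st_{\rho',\sigma'}(P)$, contradicting $(\rho',\sigma')\in\Dir(P)$. With $(\rho',\sigma')\in\Dir(P)\cap\Dir(Q)$ established, Remark~\ref{a remark} gives $\frac 1m\en_{\rho',\sigma'}(P)=\frac 1n\en_{\rho',\sigma'}(Q)=(a',b')\in\mathds{Z}\times\mathds{N}_0$ (using $\gcd(m,n)=1$). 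The strict inequality $v_{\rho,\sigma}(a',b')<v_{\rho,\sigma}(a,b)$ is then Remark~\ref{starting vs end} applied with $(\rho_0,\sigma_0)=(\rho,\sigma)$ (strict because $\ell_{\rho',\sigma'}(P)$ is not a monomial), and $ab'-ba'>0$ is the cross product $(a,b)\times(a',b')$, whose positivity follows from Remark~\ref{valuacion depende de extremos} together with the formula~\eqref{val} for $\dir((a',b')-(a,b))$.

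For part (2), Proposition~\ref{extremosalineados} (contrapositive) forces $[\ell_{\rho',\sigma'}(P),\ell_{\rho',\sigma'}(Q)]\ne 0$; since $[P,Q]\in K^\times$ has $(\rho',\sigma')$-valuation zero, Proposition~\ref{pr v de un conmutador} identifies this bracket with $\ell_{\rho',\sigma'}([P,Q])$, a non-zero constant supported at $(0,0)$. Then Proposition~\ref{extremosnoalineados}(2) yields $\en_{\rho',\sigma'}(P)+\en_{\rho',\sigma'}(Q)=(1,1)$. Integrality and non-negativity of the $y$-coordinates give one endpoint $(-k,0)$ and the other $(k+1,1)$ for some $k\in\mathds{Z}$; the case $k=0$ is excluded because $(0,0)\sim(1,1)$. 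To derive $k\ge 1$ and $(k+1)b<a$, I would equate the two $(\rho',\sigma')$-valuation identities $v_{\rho',\sigma'}(\en(P))=m(\rho'a+\sigma'b)$ and the analogue for $Q$, eliminate $\rho'/\sigma'$ to obtain $k=m(a-b)/(b(m+n)-1)$ (or the analogous expression with $n$), and finally use $a>b$ with $b\ge 1$.

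For part (3), I would apply part (1) to enumerate the integer points $(a',b')$ satisfying $-a'+4b'<4$ and $8b'-3a'>0$, obtaining the finite list $\{(-3,0),(-2,0),(-1,0),(1,1),(2,1),(5,2)\}$ with candidate directions in $\{(-3,11),(-3,10),(-2,7),(-1,3)\}$. For each candidate the estimate $(m+n)(\rho'a+\sigma'b)>\rho'+\sigma'$ is easy to verify, so the bracket of leading forms at $(\rho',\sigma')$ vanishes, and Proposition~\ref{P y Q alineados}(2b) gives $\ell_{\rho',\sigma'}(P)=\lambda_P R^m$ with $R$ a $(\rho',\sigma')$-homogeneous polynomial. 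Theorem~\ref{central} produces an auxiliary $F$ at $(\rho',\sigma')$ satisfying Theorem~\ref{central}(2)--(3) and Proposition~\ref{pavadass}(1)--(2). Enumerating the finitely many integer points on the valuation line of $F$ for each candidate and testing compatibility with the aligned/monomial alternatives for $\st(F)$ and $\en(F)$ rules out every candidate except $(\rho',\sigma')=(-1,3)$ with $(a',b')=(-1,0)$; the cube form $\ell_{-1,3}(P)=\mu_P x^{-m}(x^3y-\mu)^{3m}$ then follows from the same $F$-analysis, forcing $\mathfrak{p}(z)$ (with $z=x^3y$, cf.~\eqref{definicion de p}) to be a single linear factor of multiplicity $3m$ via Proposition~\ref{pavadass}(1).

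The main obstacle is part (3), specifically ruling out the alternative values $(a',b')\in\{(2,1),(5,2)\}$ (which also produce the direction $(-1,3)$) and then establishing the pure-cube structure. Neither Corollary~\ref{fracciones de F} nor Corollary~\ref{fracciones de F1} applies here, since their hypotheses $b\gtrless a/l$ fail ($a=8$, $b=3$), so I expect the delicate step to be an \emph{ad hoc} case-by-case analysis of the $F$ produced by Theorem~\ref{central} at $(-1,3)$: matching the constraints $\st(F)\sim\st(P)$ or $\st(F)=(1,1)$ and $\en(F)\sim\en(P)$ or $\en(F)=(1,1)$ against the integer lattice on the valuation line $-x+3y=2$, and using Remark~\ref{F no es monomio} together with $\rho'+\sigma'=2\ne 0$ to exclude the monomial $F$ that the alternative cases would require, is what pins down the single surviving configuration.
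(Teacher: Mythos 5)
Your parts (1) and (2) match the paper's strategy in outline. Minor caveats: in part (1), your citation of Remark~\ref{a remark} presupposes $[\ell_{\rho',\sigma'}(P),\ell_{\rho',\sigma'}(Q)]=0$, which you have not established at that point; the paper instead derives $(a',b')\in\mathds{Z}\times\mathds{N}_0$ directly from the proportionality $v_{\rho',\sigma'}(P)=\frac mn v_{\rho',\sigma'}(Q)\ne 0$ together with Remark~\ref{relacion de equivalencia}. In part (2), the paper gets $k>0$ at once from $v_{\rho',\sigma'}(-k,0)=-k\rho'>0$ and $\rho'<0$, and $(k+1)b<a$ from $v_{\dir(a,b)}(k+1,1)>0$ via Remark~\ref{starting vs end}; your algebraic elimination of $\rho'/\sigma'$ would additionally need $bn>1$ and $bm>1$, which $a>b\ge 1$ alone does not supply.

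The genuine gap is in part (3). First, Proposition~\ref{pavadass} requires $(\rho,\sigma)\in\mathfrak{V}^0$, i.e.\ $\rho>0$, whereas here $\rho'=-1<0$, and the flip $\psi_1$ cannot repair this since it sends $x^{-1}\mapsto y^{-1}$ and so leaves $L^{(l)}$. Second, even if the relevant divisibility identity is re-derived by hand, Theorem~\ref{central}(2)--(3) do not control $\deg f$: the configuration $\st_{-1,3}(F)=(16,6)$, aligned with $\st_{-1,3}(P)=m(8,3)$, with $\en_{-1,3}(F)=(1,1)$ or $(4,2)$, is not excluded by lattice-point enumeration, so ``every factor of $\mathfrak{r}$ divides $f$'' is vacuous, and for the same reason your $F$-analysis does not single out $(a',b')=(-1,0)$ over $(2,1)$ or $(5,2)$. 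In the paper, Theorem~\ref{central} and Remark~\ref{F no es monomio} are used only to exclude the spurious directions $(-3,10)$ and $(-3,11)$, leaving $(\rho',\sigma')=(-1,3)$ but not pinning down $(a',b')$. The cube structure is then obtained by a completely different argument, which is the idea you are missing: write $\ell_{-1,3}(P)=\lambda_P R^m$ with $R=x^{-1}\mathfrak{r}(z)$, $z:=x^3y$, $\deg\mathfrak{r}=3$; suppose for contradiction that $\mathfrak{r}$ has a root $\mu$ of multiplicity exactly one; apply $\varphi(y):=y+\mu x^{-3}$ so that $\en_{-1,3}(\varphi(R))=(2,1)$; and re-apply parts (1) and (2) of the present proposition to $(\varphi(P),\varphi(Q))$ with $(a,b)=(2,1)$. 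Both branches fail (no $(a'',b'')\in\mathds{Z}\times\mathds{N}_0$ has $2b''-a''>0$ and $3b''-a''<1$; no $k\in\mathds{N}$ has $k+1<2$), giving the contradiction. The resulting $\mathfrak{r}=c(z-\mu)^3$ with $\mu\ne 0$ then yields both $(a',b')=(-1,0)$ and the stated form of $\ell_{-1,3}(P)$ and $\ell_{-1,3}(Q)$.
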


\begin{proof} By Proposition~\ref{le basico}, we know that
\begin{equation}\label{eq:proporcionalidad}
\st_{\rho_2,\sigma_2}(P)=\en_{\rho_1,\sigma_1}(P)=\frac mn \en_{\rho_1,\sigma_1}(Q)=\frac mn \st_{\rho_2,\sigma_2}(Q),
\end{equation}
which implies
\begin{equation}\label{eq:cociente}
v_{\rho_2,\sigma_2}(P) = \frac mn v_{\rho_2,\sigma_2}(Q)= mv_{\rho_2,\sigma_2}(a,b).
\end{equation}
Moreover, $(\rho_1,\sigma_1)<(\rho_2,\sigma_2)<(-1,1)$ since otherwise, by
Proposition~\ref{le basico},
$$
v_{-1,1}(P)=m(b-a)<0\quad\text{and}\quad v_{-1,1}(Q)=n(b-a)<0,
$$
which contradicts Proposition~\ref{pr v de un conmutador}. Consequently
$(\rho_2,\sigma_2)\in\mathfrak{V}_{>0}$, and so, by this same proposition
and equality~\eqref{eq:cociente},
$$
\frac{m+n}{m} v_{\rho_2,\sigma_2}(P)= v_{\rho_2,\sigma_2} (P)+v_{\rho_2,\sigma_2}(Q)\ge v_{\rho_2,\sigma_2} ([P,Q]) +
\rho_2+\sigma_2>0.
$$
Hence, again by equality~\eqref{eq:cociente},
\begin{equation}\label{desigualdades para rho' y sigma'}
v_{\rho_2,\sigma_2}(P)>0,\quad v_{\rho_2,\sigma_2}(Q)>0\quad\text{and}\quad \rho_2a+\sigma_2b=
v_{\rho_2,\sigma_2}(a,b) >0.
\end{equation}
Note also that, since $(\rho_1,\sigma_1)\in\mathfrak{V}_{>0}$,
$$
\rho<0 \Leftrightarrow (-1,1)>(\rho_1,\sigma_1)>(0,1) \Rightarrow (-1,1)>(\rho_2,\sigma_2)>(0,1) \Rightarrow \rho_2<0.
$$
Set
$$
(\rho_3,\sigma_3):=\dir(a,b)=\left(-\frac{b}{\gcd(a,b)},\frac{a}{\gcd(a,b)}\right)
$$
(Note that $(\rho_3,\sigma_3)\in\mathfrak{V}_{>0}$). By the last inequality in~\eqref{desigualdades para rho' y sigma'},
$$
\rho_2\sigma_3-\sigma_2\rho_3=\frac{1}{\gcd(a,b)}(\rho_2a+\sigma_2b)>0,
$$
which means that $(\rho_3,\sigma_3)>(\rho_2,\sigma_2)$ in $\mathfrak{V}_{>0}$.
But then from $\st_{\rho_2,\sigma_2}(P) =
\en_{\rho_1,\sigma_1}(P)=m(a,b)$ and Remark~\ref{starting vs end} it follows that
\begin{equation}\label{v del endpoint positivo}
v_{\rho_3,\sigma_3}(\en_{\rho_2,\sigma_2}(P))\ge v_{\rho_3,\sigma_3}(\st_{\rho_2,\sigma_2}(P))=m
v_{\rho_3,\sigma_3}(a,b)=0\quad\text{and}\quad v_{\rho_3,\sigma_3}(\en_{\rho_2,\sigma_2}(Q))\ge 0.
\end{equation}
Note also that the first inequality is strict if $(\rho_2,\sigma_2)\in \Dir(P)$ and the second
inequality is strict if $(\rho_2,\sigma_2)\in \Dir(Q)$.

\smallskip

\noindent Proof of statement~(1).\enspace By~\eqref{desigualdades para rho' y sigma'} we have
$v_{\rho_2,\sigma_2}(P)\ne 0$. So, by equality~\eqref{eq:cociente},
\begin{equation}\label{eqq13}
\en_{\rho_2,\sigma_2}(P)=\lambda \en_{\rho_2,\sigma_2}(Q) \Longrightarrow \lambda=m/n.
\end{equation}
Consequently, by~\eqref{eq:proporcionalidad},
$$
\en_{\rho_2,\sigma_2}(P)=\st_{\rho_2,\sigma_2}(P) \quad\text{if and only if}
\quad\en_{\rho_2,\sigma_2}(Q)=\st_{\rho_2,\sigma_2}(Q).
$$
By the definition of $(\rho_2,\sigma_2)$ this implies that
\begin{equation}\label{esta en dir}
(\rho_2,\sigma_2)\in\Dir(P)\cap\Dir(Q).
\end{equation}
Furthermore, since $m$ and $n$ are coprime, it
follows from~\eqref{eqq13}, that
$$
(a',b')=\frac 1m \en_{\rho_2,\sigma_2}(P) = \frac 1n \en_{\rho_2,\sigma_2}(Q)\in \mathds{Z}\times\mathds{N}_0.
$$
Using now Remark~\ref{starting vs end}, we obtain
$$
v_{\rho_1,\sigma_1}(a',b') = \frac 1m v_{\rho_1,\sigma_1}(\en_{\rho_2,\sigma_2}(P))< \frac 1m
v_{\rho_1,\sigma_1}(\st_{\rho_2,\sigma_2}(P))= \frac 1m v_{\rho_1,\sigma_1}(\en_{\rho_1,\sigma_1}(P)) =
v_{\rho_1,\sigma_1}(a,b).
$$
Finally, by the definition of $(\rho_3,\sigma_3)$, by the first inequality in~\eqref{v del endpoint positivo} and~\eqref{esta en dir},
$$
ab'-ba'= \gcd(a,b) v_{\rho_3,\sigma_3}(a',b')= \frac{\gcd(a,b)} m v_{\rho_3,\sigma_3}(\en_{\rho_2,\sigma_2}(P))>0.
$$

\smallskip

\noindent Proof of statement~(2).\enspace
By Propositions~\ref{pr v de un conmutador}, \ref{extremosalineados}, and~\ref{extremosnoalineados}(2) we have
$$
\en_{\rho_2,\sigma_2}(Q)+\en_{\rho_2,\sigma_2}(P)=(1,1).
$$
Thus
$$
\{\en_{\rho_2,\sigma_2}(P), \en_{\rho_2,\sigma_2}(Q)\}=\{(-k,0),(k+1,1)\}\quad\text{for some $k\in \mathds{Z}$.}
$$
Consequently,
\begin{equation}\label{rho prima en dir p}
(\rho_2,\sigma_2)\in \Dir(P)\cap\Dir(Q),
\end{equation}
 since $\st_{\rho_2,\sigma_2}(P)=m(a,b)$, $\st_{\rho_2,\sigma_2}(Q)=n(a,b)$, and
$nb,mb>1$.
Note that by~\eqref{desigualdades para rho' y sigma'}
$$
0<v_{\rho_2,\sigma_2}(-k,0)=-k\rho_2,
$$
which implies that $k\in\mathds{N}$, since $\rho_2<0$. Finally, by the definition of $(\rho_3,\sigma_3)$
and~\eqref{v del endpoint positivo} we have
$$
\frac{-b(k+1)+a}{\gcd(a,b)}=v_{\rho_3,\sigma_3}(k+1,1)>0,
$$
where the inequality is strict by~\eqref{rho prima en dir p}, and so $(k+1)b<a$.

\smallskip

\noindent Proof of statement~(3).\enspace We first prove that $(\rho_2,\sigma_2)=(-1,3)$. By statement~(1)
we know that
$$
(\rho_2,\sigma_2) =\Succ_P(\rho_1,\sigma_1)\qquad\text{and}\qquad (a',b'):=\frac 1m \en_{\rho_2,\sigma_2}(P)\in
\mathds{Z}\times\mathds{N}_0.
$$
Hence, since $(\rho_2,\sigma_2)>(0,1)$ in $\mathfrak{V}_{>0}$, from Remark~\ref{starting vs end} and
Proposition~\ref{le basico} it follows that
$$
b'=\frac 1m v_{0,1}(\en_{\rho_2,\sigma_2}(P))<\frac 1m v_{0,1}(\st_{\rho_2,\sigma_2}(P))=\frac 1m
v_{0,1}(\en_{\rho_1,\sigma_1}(P))=v_{01}(a,b)=3.
$$
Furthermore, by the inequalities in~\eqref{caso alineado}, and the fact that $(\rho_1,\sigma_1)=(-1,4)$
and $(a,b)=(8,3)$, we have
\begin{equation}\label{ocho b menos 3 a}
8b'-3a'>0\qquad\text{and}\qquad 4>4b'-a'.
\end{equation}
Now we consider several cases. In all of them we will use that, by Remark~\ref{valuacion depende de extremos},
$$
(\rho_2,\sigma_2)=\dir\bigl(\st_{\rho_2,\sigma_2}(P) - \en_{\rho_2,\sigma_2}(P)\bigr)=\dir\bigl((a,b) -
(a',b')\bigr)
= \dir\bigl((8,3) - (a',b')\bigr).
$$
If $b'=2$, then the inequalities in~\eqref{ocho b menos 3 a} imply $4<a'<16/3$, and so
$$
a'=5\qquad\text{and}\qquad (\rho_2,\sigma_2) = \dir\bigl((8,3) - (5,2)\bigr)=(-1,3),
$$
as desired. If $b'=1$, then the inequalities in~\eqref{ocho b menos 3 a} imply $0<a'<8/3$. Since by
Theorem~\ref{central}(4), the equality $(a',b')=(1,1)$ is impossible, necessarily $a'=2$ and
$(\rho_2,\sigma_2) = (-1,3)$, again as desired. Assume finally that $b'=0$. Then the inequalities
in~\eqref{ocho b menos 3 a} give $-4<a'<0$. If $a'=-1$ then $(\rho_2,\sigma_2)=(-1,3)$ as we want.
So, it will be sufficient to discard the cases $a'=-2$ and $a'=-3$. Consider the first one, in
which $(\rho_2,\sigma_2)=(-3,10)$. By Theorem~\ref{central} there exists a $(\rho_2,\sigma_2)$-homogeneous
element $F\in L^{(1)}$ satisfying the equalities in~\eqref{eq central}, with $(\rho_1, \sigma_1)$ replaced by
$(\rho_2,\sigma_2)$. Then there exists $k\in \mathds{Z}$ such that
$$
\st_{\rho_2,\sigma_2}(F)=(1,1)+k(10,3),
$$
and hence $v_{-3,8}(\st_{\rho_2,\sigma_2}(F))=5-6k\ne 0$. Consequently
$$
\st_{\rho_2,\sigma_2}(F)\sim \st_{\rho_2,\sigma_2}(P)=m(8,3)
$$
is impossible, since it implies $v_{-3,8}(\st_{\rho_2,\sigma_2}(F))=0$. So, by statement~(2) of
Theorem~\ref{central}, we have $\st_{\rho_2,\sigma_2}(F)=(1,1)$. Since by Remark~\ref{F no es monomio}
we know that $F$ is not a monomial, $\en_{\rho_2,\sigma_2}(F)\ne (1,1)$, which by Theorem~\ref{central}(3) and the fact that $b'=0$ and $a'<0$,
implies
$$
\en_{\rho_2,\sigma_2}(F)\sim \en_{\rho_2,\sigma_2}(P)\sim(-1,0).
$$
But by the first equality in~\eqref{eq central}, this leads to $\en_{\rho_2,\sigma_2}(F)=(-7/3,0)$, which is
impossible since $F\in L^{(1)}$. A similar computation proves that if $a'=-3$, then $\en_{\rho_2,\sigma_2}(F)=(-8/3,0)$, which is
also impossible. Thus, $(\rho_2,\sigma_2)=(-1,3)$ as we want.

\smallskip
\newpage
It remains to check that there exist $\mu,\mu_P,\mu_Q \in K^{\times}$ such that
$$
\quad \ell_{-1,3}(P)=\mu_P x^{-m}(x^3y-\mu)^{3m}\qquad\text{and}\qquad \ell_{-1,3}(Q) =
\mu_Q x^{-n}(x^3y-\mu)^{3n}.
$$
By equalities~\eqref{eq:cociente} we have
$$
v_{-1,3}(P)=mv_{-1,3}(8,3)=m>0\qquad\text{and}\qquad v_{-1,3}(Q)=nv_{-1,3}(8,3)=n>0.
$$
Since $m,n\in \mathds{N}$ and $m\ne n$ this implies that,
$$
v_{-1,3}(P)+v_{-1,3}(Q)-(-1+3)=m+n-2>0,
$$
and so, by Proposition~\ref{pr v de un conmutador} and Remark~\ref{a remark}, there exist
$\lambda_P, \lambda_Q\!\in\! K^{\times}$ and a $(-1,3)$-homogeneous e\-le\-ment $R\in L^{(1)}$ such that
\begin{equation}\label{de las ultimas}
\ell_{-1,3}(P)=\lambda_P R^m\qquad\text{and}\qquad \ell_{-1,3}(Q)=\lambda_Q R^n.
\end{equation}
Hence, by equalities \eqref{eq:proporcionalidad},
$$
\st_{-1,3}(R)=\frac 1m \st_{-1,3}(P)=(8,3).
$$
Consequently there exists a polynomial $\mathfrak{r}$ of degree $3$ (which can not be a monomial), such that
$$
R = x^{-1} \mathfrak{r}(z) \quad\text{where $z:=x^3 y$.}
$$
We are going to check that $\mathfrak{r}$ has no linear factor $z-\mu$ with multiplicity~1,
which implies that $\mathfrak{r}$ is
a cube of a linear polynomial, and so $\ell_{-1,3}(P)$ and $\ell_{-1,3}(Q)$ have the desired form. Assume
by contradiction that such a factor exists and consider $\varphi\in \Aut(L^{(1)})$ defined by
$$
\varphi(x) := x\qquad\text{and}\qquad \varphi(y) := y + \mu x^{-3}.
$$
Write $R=x^{-1}(z-\mu)\mathfrak{h}(z)$ with $\mathfrak{h}(\mu)\ne 0$. Then
$\varphi(R)=x^{-1}z\mathfrak{h}(z+\mu)$ and so $\en_{-1,3}(\varphi(R))=(2,1)$. Hence
$$
\en_{-1,3}(\varphi(P))= m(2,1)\quad\text{and}\quad \en_{-1,3}(\varphi(Q))= n(2,1).
$$
Set
$$
(\rho_4,\sigma_4):=\min\{\Succ_{\varphi(P)}(-1,3),\Succ_{\varphi(Q)}(-1,3)\}\quad\text{and}\quad
(a'',b''):=\frac 1m \en_{\rho_4,\sigma_4}(\varphi(P)).
$$
Since by Proposition~\ref{varphi preserva el Jacobiano} we have $[\varphi(P),\varphi(Q)] \in K^{\times}$,
we can apply statements~(1) and~(2) with
$$
(\varphi(P),\varphi(Q))\text{ instead of } (P,Q),\quad (\rho_1,\sigma_1):=(-1,3)\quad\text{and}\quad (a,b):=(2,1).
$$
So, if $\en_{\rho_4,\sigma_4} (\varphi(P)) \sim \en_{\rho_4,\sigma_4} (\varphi(Q))$, then by statement~(1)
we have
$$
(a'',b'')\in \mathds{Z}\times\mathds{N}_0,\quad 3b''-a''=v_{-1,3}(a'',b'') < v_{-1,3}(2,1) = 1
\quad\text{and}\quad 2b''-a''>0,
$$
which is impossible, while if $\en_{\rho_4,\sigma_4} (\varphi(P)) \nsim \en_{\rho_4,\sigma_4} (\varphi(Q))$,
then there exists $k\in\mathds{N}$, such that
$$
\quad k+1<2\qquad\text{and}\qquad\{\en_{\rho_4,\sigma_4}(\varphi(P)),\en_{\rho_4,\sigma_4}(\varphi(Q))\}=
\{(-k,0),(k+1,1)\},
$$
which is also impossible since there is no $k\in\mathds{N}$ with $k+1<2$. Hence we have a contradiction that
finishes the proof.
\end{proof}


\begin{proposition}\label{phi2} Let $P,Q\in L^{(1)}$ be such that $\frac{v_{1,1}(P)}{v_{1,1}(Q)}=\frac mn\ne
1$ with $m,n\in \mathds{N}$ coprime. Assume that there exist $\lambda, \lambda_P, \lambda_Q\in K^{\times}$ such that $R:=y(x^4 y-\lambda)^3$
satisfies
$$
\ell_{-1,4}(P)=\lambda_P R^m\qquad\text{and}\qquad\ell_{-1,4}(Q)=\lambda_Q R^n.
$$
Assume also that
$$
[P,Q]\in K^{\times},\quad v_{1,-3}(P)=v_{1,-3}(Q)=0\quad\text{and}\quad\Succ_{P}(1,-3)=
\Succ_{Q}(1,-3)=(-1,4).
$$

Then there exists $\mu\in K$ such that the images $\widetilde P:=\varphi(P)$ and $\widetilde Q:=\varphi(Q)$
of $P$
and $Q$ under the automorphism $\varphi$ of $L^{(1)}$ given by $\varphi(x):=x$ and $\varphi(y):=y+\lambda
x^{-4}+\mu x^{-3}$, satisfy
\begin{alignat*}{4}
&\ell_{-1,4}(\widetilde P)=\lambda_P x^{8m} y^{3m}(x^4 y+\lambda)^m,&&\quad
 \frac 1m \en_{-1,4}(\widetilde P)=  (8,3),&&\quad v_{1,1}(\widetilde P)=v_{1,1}(P),\quad
 v_{1,-3}(\widetilde P) =0,\\
&\ell_{-1,4}(\widetilde Q)=\lambda_Q x^{8n} y^{3n}(x^4 y+\lambda)^n,&&\quad
\frac 1n \en_{-1,4}(\widetilde Q)=  (8,3),&&\quad v_{1,1}(\widetilde Q)=v_{1,1}(Q),\quad
v_{1,-3}(\widetilde Q) =0,\\
&\Succ_{\widetilde P}(-1,4) = \Succ_{\widetilde Q}(-1,4),&&\quad [\widetilde P,\widetilde Q]\in K^{\times}
\hspace{0.55cm}\text{and}&&\quad \en_{\rho,\sigma}(\widetilde P)\nsim
\en_{\rho,\sigma}(\widetilde Q),
\end{alignat*}
where $(\rho,\sigma):=\Succ_{\widetilde P}(-1,4)$. Moreover
$$
\{\en_{\rho,\sigma}(\widetilde P),\en_{\rho,\sigma}(\widetilde
Q)\}=\{(-1,0),(2,1)\},\quad\st_{\rho,\sigma}(\widetilde P)
= m(8,3),\quad\st_{\rho,\sigma}(\widetilde Q) = n(8,3)
$$
and there exists $j\in \mathds{N}$ such that $(\rho,\sigma) = (-3j-1,8j+3)$ and

\begin{itemize}

\item[-] if $\en_{\rho,\sigma}(\widetilde P)=(-1,0)$, then $m =3j+1$ and $n =2j+1$,

\item[-] if $\en_{\rho,\sigma}(\widetilde Q)=(-1,0)$, then $m=2j+1$ and $n=3j+1$.

\end{itemize}

\end{proposition}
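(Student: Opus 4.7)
Write $\varphi=\varphi_b\circ\varphi_a$, where $\varphi_a(x)=x$, $\varphi_a(y)=y+\lambda x^{-4}$ is $(-1,4)$-homogeneous and $\varphi_b(x)=x$, $\varphi_b(y)=y+\mu x^{-3}$ is $(-1,3)$-homogeneous (hence also $(1,-3)$-homogeneous); the two commute. Using Proposition~\ref{pr ell por automorfismos} one checks that $\varphi_b$ preserves $\ell_{-1,4}$, $\ell_{1,1}$ and $\ell_{1,-3}$ (the three directions lie strictly between $(1,-3)$ and $(-1,3)$) while $\varphi_a$ preserves $\ell_{1,1}$ and $\ell_{1,-3}$ (they lie strictly between $(1,-4)$ and $(-1,4)$); this yields $v_{1,1}(\widetilde P)=v_{1,1}(P)$, $v_{1,-3}(\widetilde P)=0$, and their $\widetilde Q$-analogues, for every $\mu$. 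The explicit computation $\varphi_a(R)=(y+\lambda x^{-4})(x^4y)^3=x^8y^3(x^4y+\lambda)$ then gives
$$\ell_{-1,4}(\widetilde P)=\varphi_a(\ell_{-1,4}(P))=\lambda_P x^{8m}y^{3m}(x^4y+\lambda)^m,$$
together with the $\widetilde Q$-analogue; reading off the extremes yields $\en_{-1,4}(\widetilde P)=m(8,3)$ and $\en_{-1,4}(\widetilde Q)=n(8,3)$, and $[\widetilde P,\widetilde Q]\in K^{\times}$ follows from Proposition~\ref{varphi preserva el Jacobiano}.

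Setting $(\rho,\sigma):=\min\{\Succ_{\widetilde P}(-1,4),\Succ_{\widetilde Q}(-1,4)\}$, I would apply Proposition~\ref{esquina de 83} with $(a,b)=(8,3)$ (its hypotheses hold since $\rho_0=-1<0$, $[\widetilde P,\widetilde Q]\in K^{\times}$ and $m\ne n$ are coprime). In its case~(2) the constraint $(k+1)b<a$ forces $k=1$, so $\{\en_{\rho,\sigma}(\widetilde P),\en_{\rho,\sigma}(\widetilde Q)\}=\{(-1,0),(2,1)\}$; since both endpoints are distinct from $m(8,3)$ and $n(8,3)$, both $\Succ_{\widetilde P}(-1,4)$ and $\Succ_{\widetilde Q}(-1,4)$ equal $(\rho,\sigma)$, giving also $\st_{\rho,\sigma}(\widetilde P)=m(8,3)$ and $\st_{\rho,\sigma}(\widetilde Q)=n(8,3)$. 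Equating the slopes of the edges from $m(8,3)$ and $n(8,3)$ to their respective endpoints produces the Diophantine relation $3n-2m=1$ or $3m-2n=1$; using $\gcd(m,n)=1$, the coprime solutions are $(m,n)=(3j+1,2j+1)$ or $(2j+1,3j+1)$ with $j\in\mathds{N}$, and a direct outward-normal computation gives $(\rho,\sigma)=(-3j-1,8j+3)$.

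The main obstacle is to choose $\mu$ so that Proposition~\ref{esquina de 83}'s case~(2) does hold, i.e.\ $\en_{\rho,\sigma}(\widetilde P)\nsim\en_{\rho,\sigma}(\widetilde Q)$. Suppose for contradiction that alignment holds. By Proposition~\ref{esquina de 83}(3) we would have $(\rho,\sigma)=(-1,3)$ together with
$$\ell_{-1,3}(\widetilde P)=\mu_P x^{-m}(x^3y-\mu^*)^{3m},\qquad \ell_{-1,3}(\widetilde Q)=\mu_Q x^{-n}(x^3y-\mu^*)^{3n}$$
for some $\mu^*\in K^{\times}$. Since $\varphi_b$ is $(-1,3)$-homogeneous, $\ell_{-1,3}(\widetilde P)=\varphi_b(\ell_{-1,3}(P'))$ with $P':=\varphi_a(P)$, and applying $\varphi_b^{-1}$ (which sends $x^3y$ to $x^3y-\mu$) gives
$$\ell_{-1,3}(P')=\mu_P x^{-m}(x^3y-\nu)^{3m},\qquad \ell_{-1,3}(Q')=\mu_Q x^{-n}(x^3y-\nu)^{3n},$$
with the \emph{same} shift $\nu:=\mu+\mu^*$. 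This value $\nu$ depends only on $(P,Q)$, not on $\mu$. Consequently, either $(P',Q')$ admits no such common $\nu$, in which case alignment fails for every $\mu$ and I take $\mu=0$; or a common $\nu$ exists, in which case I set $\mu:=\nu$, forcing $\mu^*=0$, which contradicts $\mu^*\in K^{\times}$; moreover at this $\mu$ the equality $\mu=\nu$ collapses $\ell_{-1,3}(\widetilde P)$ to the monomial $\mu_P x^{8m}y^{3m}$, so $(-1,3)\notin\Dir(\widetilde P)$ and hence $(\rho,\sigma)\ne(-1,3)$, which by Proposition~\ref{esquina de 83}(3) is incompatible with alignment. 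Thus case~(2) holds, and the conclusions of the second paragraph complete the proof. The whole argument rests on this identification of the alignment parameter $\mu^*$ as an affine function of $\mu$ with intrinsic offset $\nu$, which is precisely the Moh-style reduction-of-degree mechanism in this setting.
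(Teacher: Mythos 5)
Your argument is correct and follows essentially the same route as the paper: both factor $\varphi=\varphi_b\circ\varphi_a$, compute $\ell_{-1,4}$ explicitly through $\varphi_a$, use Proposition~\ref{esquina de 83}(3) to pin the potential alignment to $(-1,3)$ with a specific shift, and then choose $\mu$ to neutralize that shift so that case~(2) of the same proposition applies; your ``intrinsic offset $\nu$'' is exactly the $\mu$ the paper extracts in its case~\eqref{eqq16}. One cosmetic slip: $(1,-3)$ does not lie strictly between $(1,-3)$ and $(-1,3)$, but $v_{1,-3}$ is still preserved because $\varphi_b$ is $(1,-3)$-homogeneous (Proposition~\ref{pr ell por automorfismos} handles the boundary direction), so the conclusion $v_{1,-3}(\widetilde P)=0$ stands.
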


\noindent {\bf Idea of the proof:} Using the automorphism $\varphi_a$ of $L^{(1)}$ defined by
$$
\varphi_a(x):=x\qquad\text{and}\qquad \varphi_a(y):=y+\lambda x^{-4},
$$
we transform $\ell_{-1,4}(P)=\lambda_P (y(x^4 y-\lambda)^3)^m$ into $\ell_{-1,4}(\varphi_a(P))=
\lambda_P x^{8m} y^{3m}(x^4 y+\lambda)^m$. By Proposition~\ref{esquina de 83} we have
$(\rho,\sigma)=(-1,3)$ or $[\ell_{\rho,\sigma}\varphi_a(P),\ell_{\rho,\sigma}(\varphi_a(Q))]\ne 0$. In the
first case we apply the automorphism $\varphi_b$ of $L^{(1)}$ defined by
$$
\varphi_b(x):=x\qquad\text{and}\qquad \varphi_b(y):=y+\mu x^{-3},
$$
for some $\mu\in K^{\times}$ and we arrive at the second case. Using Proposition~\ref{esquina de 83} again, we
check the required conditions.

\begin{proof} We define $\varphi_a\in\Aut(L^{(1)})$ by $\varphi_a(x):=x$ and $\varphi_a(y):=y+\lambda x^{-4}$.
Set
$$
P_1:=\varphi_a(P)\qquad\text{and}\qquad Q_1:=\varphi_a(Q).
$$
By Proposition~\ref{varphi preserva el Jacobiano} we have $[P_1,Q_1] \in K^{\times}$. Moreover, since
$\varphi_a$ is
$(-1,4)$-homogeneous,
\begin{equation}\label{eqq25}
\ell_{-1,4}(P_1)=\lambda_P x^{8m} y^{3m}(x^4 y+\lambda)^m\quad\text{and}\quad \ell_{-1,4}(Q_1)=
\lambda_Q x^{8n} y^{3n}(x^4 y+\lambda)^n,
\end{equation}
and so
\begin{equation}\label{eqq14}
\frac 1m \en_{-1,4}(P_1)=\frac 1n \en_{-1,4}(Q_1)=(8,3).
\end{equation}
Furthermore, since $(1,-4)\!<\!(1,-3)<(-1,4)$, it follows from Proposition~\ref{pr ell por automorfismos} and
the hypothesis, that
\begin{equation}\label{eqq27}
v_{1,-3}(P_1) = v_{1,-3}(P) =0 \qquad\text{and}\qquad v_{1,-3}(Q_1) = v_{1,-3}(Q) = 0.
\end{equation}
Let
$$
(\rho',\sigma'):=\min\{\Succ_{P_1}(-1,4),\Succ_{Q_1}(-1,4)\}.
$$
Applying Proposition~\ref{esquina de 83}(3) to $(P_1,Q_1)$ we obtain that
\begin{align}
& \en_{\rho',\sigma'}(P_1)\nsim \en_{\rho',\sigma'}(Q_1)\label{eqq15}
\shortintertext{or}
& \en_{\rho',\sigma'}(P_1)\sim \en_{\rho',\sigma'}(Q_1)\quad\text{and} \quad (\rho',\sigma')=(-1,3).
\label{eqq16}
\end{align}
Furthermore, in the second case there exist $\mu,\mu_P,\mu_Q\in K^{\times}$ such that
\begin{equation}\label{ell de P_1 y de Q_1}
\ell_{-1,3}(P_1)=\mu_P x^{-m}(x^3y-\mu)^{3m}\quad\text{and}\quad \ell_{-1,3}(Q_1)=\mu_Q x^{-n}(x^3y-\mu)^{3n}.
\end{equation}
Assume first that condition~\eqref{eqq16} is satisfied and define $\varphi_b\in\Aut(L^{(1)})$ by
$$
\varphi_b(x):=x\quad\text{and}\quad \varphi_b(y):=y+\mu x^{-3}.
$$
By Proposition~\ref{varphi preserva el Jacobiano}, we have $[\varphi_b(P_1),\varphi_b(Q_1)] \in K^{\times}$
 and, by
Proposition~\ref{pr ell por automorfismos}, we have
$$
\ell_{\rho_1,\sigma_1}(\varphi_b(P_1))=\ell_{\rho_1,\sigma_1}(P_1)\quad\text{and}\quad
\ell_{\rho_1,\sigma_1}(\varphi_b(Q_1))= \ell_{\rho_1,\sigma_1}(Q_1)
$$
for all $(-1,4)\le (\rho_1,\sigma_1)<(-1,3)$, which implies that
\begin{equation}\label{eqq26}
\ell_{-1,4}(\varphi_b(P_1))=\ell_{-1,4}(P_1),\qquad \ell_{-1,4}(\varphi_b(Q_1))=\ell_{-1,4}(Q_1)
\end{equation}
and
$$
(\rho'',\sigma''):=\min \{\Succ_{\varphi_b(P_1)}(-1,4),\Succ_{\varphi_b(Q_1)}(-1,4)\} \ge (-1,3).
$$
Combining equalities~\eqref{eqq25} and~\eqref{eqq26} we obtain
\begin{equation*}
\ell_{-1,4}(\varphi_b(P_1))=\lambda_P x^{8m} y^{3m}(x^4 y+\lambda)^m\quad\text{and}\quad
\ell_{-1,4}(\varphi_b(Q_1))=\lambda_Q x^{8n} y^{3n}(x^4
y+\lambda)^n,
\end{equation*}
which implies
\begin{equation}\label{eqq17}
\frac 1m \en_{-1,4}(\varphi_b(P_1))=\frac 1n \en_{-1,4}(\varphi_b(Q_1))=(8,3).
\end{equation}
On the other hand, since $\varphi_b$ is $(-1,3)$-homogeneous it follows from equalities~\eqref{ell de P_1 y de Q_1}, that
\begin{equation*}\label{ell de tilde P y de tilde Q}
\ell_{-1,3}(\varphi_b(P_1)) = \mu_P x^{8m}y^{3m}\quad\text{and}\quad \ell_{-1,3}(\varphi_b(Q_1))=
\mu_Q x^{8n}y^{3n},
\end{equation*}
and so $(\rho'',\sigma'') > (-1,3)$. Consequently, applying Proposition~\ref{esquina de 83}(3) to
$(\varphi_b(P_1),\varphi_b(Q_1))$, we obtain that
$$
\en_{\rho'',\sigma''}(\varphi_b(P_1))\nsim \en_{\rho'',\sigma''}(\varphi_b(Q_1)).
$$
Moreover, by Proposition~\ref{pr ell por automorfismos} and equalities~\eqref{eqq27}, we have
\begin{equation*}
v_{1,-3}(\varphi_b(P_1)) = v_{1,-3}(P_1) =0 \qquad\text{and}\qquad v_{1,-3}(\varphi_b(Q_1)) = v_{1,-3}(Q_1)
= 0.
\end{equation*}
Now in the case~\eqref{eqq15} we set
$$
\varphi:=\varphi_a,\quad \widetilde P:=P_1,\quad \widetilde Q:=Q_1\quad\text{and}\quad
(\rho,\sigma):=(\rho',\sigma'),
$$
while in the case~\eqref{eqq16} we set
$$
\varphi:=\varphi_b\circ \varphi_a,\quad \widetilde P:=\varphi_b(P_1),\quad\widetilde
Q:=\varphi_b(Q_1)\quad\text{and}\quad (\rho,\sigma):=(\rho'',\sigma'').
$$
The conditions required in Proposition~\ref{esquina de 83} are satisfied for $\widetilde P$, $\widetilde Q$ and $(\rho_1,\sigma_1)=(-1,4)$.
Moreover $\en_{\rho,\sigma}(\widetilde P)\nsim \en_{\rho,\sigma}(\widetilde Q)$.
Since $k\!=\!1$ is the unique positive integer satisfying $(k\!+\!1)3\!<\!8$,
Proposition~\ref{esquina de 83}(2) yields
\begin{equation}\label{eqq19}
\{\en_{\rho,\sigma}(\widetilde P), \en_{\rho,\sigma}(\widetilde Q)\}=\{(-1,0),(2,1)\}.
\end{equation}
On the other hand, since $(\rho,\sigma)=\min\{\Succ_{\widetilde P}(-1,4),\Succ_{\widetilde Q}(-1,4)\}$, by
Proposition~\ref{le basico} and equalities~\eqref{eqq14} and~\eqref{eqq17}, we have
\begin{equation}\label{eqq18}
\st_{\rho,\sigma}(\widetilde P)= \en_{-1,4}(\widetilde P)= m(8,3)\quad\text{and} \quad
\st_{\rho,\sigma}(\widetilde Q)=
\en_{-1,4}(\widetilde Q)= n(8,3).
\end{equation}
Combining~\eqref{eqq19} and~\eqref{eqq18}, we obtain that $(\rho, \sigma)\in \Dir(P) \cap \Dir(Q)$, and then
$$
(\rho,\sigma)=\Succ_{\widetilde P}(-1,4)=\Succ_{\widetilde Q}(-1,4),
$$
as desired. Since $(1,-4)<(1,1)<(-1,4)$ and $(1,-3)<(1,1)<(-1,3)$, from
Proposition~\ref{pr ell por automorfismos} it follows that
$$
v_{1,1}(\widetilde P) = v_{1,1}(P)\quad\text{and}\quad v_{1,1}(\widetilde Q) = v_{1,1}(Q).
$$
Now, by Remark~\ref{valuacion depende de extremos} and equalities~\eqref{eqq18}, we have
$$
\dir(n(8,3)-\en_{\rho,\sigma}(\widetilde Q))=(\rho,\sigma)= \dir(m(8,3)-\en_{\rho,\sigma}(\widetilde P)).
$$
If $\en_{\rho,\sigma}(\widetilde P) = (-1,0)$, then
$$
m(8,3)-(-1,0)\sim n(8,3)-(2,1)\Rightarrow (8m+1)(3n-1)=3m(8n-2) \Rightarrow 3(n-1)=2(m-1),
$$
and so $m\!=\!3j\!+\!1$ and $n\!=\!2j\!+\!1$ for some $j\!\in\! \mathds{Z}$. But necessarily $j\!\in\!
\mathds{N}$ since $m,n\!\in\! \mathds{N}$ and $m\!\ne\! n$. Moreover
$$
(\rho,\sigma) = \dir((3j+1)(8,3)-(-1,0)) = \dir(24j+9,9j+3) = (-3j-1,8j+3).
$$
Similarly, if $\en_{\rho,\sigma}(\widetilde Q) = (-1,0)$, then $m\!=\!2j\!+\!1$ and $n\!=\!3j\!+\!1$ for some
$j\!\in\! \mathds{N}$, and
$$
(\rho,\sigma) = \dir((3j+1)(8,3)-(-1,0)) = (-3j-1,8j+3),
$$
as desired.
\end{proof}

\begin{remark}\label{lista}
Let $(P_0,Q_0)$ be an $(m,n)$-pair satisfying conditions~\eqref{et1}, \eqref{et2}, \eqref{et3} and~\eqref{et4}.
Applying first Proposition~\ref{phi1} to $(P_0,Q_0)$ we obtain $(P_1,Q_1)$, and then applying Proposition~\ref{phi2} to
$(P_1,Q_1)$ we obtain $j\in \mathds{N}$, $\lambda, \lambda_p, \lambda_q\in K^{\times}$, $(\rho_1,\sigma_1)>(-1,4)$ and
a pair $(P_2,Q_2)$ in $L^{(1)}$ such that

\begin{enumerate}

\smallskip

\item $\frac{v_{1,1}(P_2)}{v_{1,1}(Q_2)}=\frac mn$,

\smallskip

\item $v_{1,-3}(P_2) = 0$ and $v_{1,-3}(Q_2) = 0$,

\smallskip

\item $[P_2,Q_2]\in K^{\times}$,

\smallskip

\item $(\rho_1,\sigma_1)=\Succ_{P_2}(-1,4)= \Succ_{Q_2}(-1,4)$,

\smallskip

\item $\{\en_{\rho_1,\sigma_1}(P_2),\en_{\rho_1,\sigma_1}(Q_2)\}=
\{(-1,0),(2,1)\}$,

\smallskip

\item $\st_{\rho_1,\sigma_1}(P_2)=m(8,3)$ and
$\st_{\rho_1,\sigma_1}(Q_2)=n(8,3)$,

\smallskip

\item $\ell_{-1,4}(P_2)=\lambda_p R^m$ and $\ell_{-1,4}(Q_2)=\lambda_q
R^n$ where $R:=x^8y^3(x^4y+\lambda)$,

\smallskip

\item $(\rho_1,\sigma_1)=(-3j-1,8j+3)$,

\smallskip

\item If $\en_{\rho_1,\sigma_1}(P_2) = (-1,0)$, then $m=3j+1$ and
$n=2j+1$, while if $\en_{\rho_1,\sigma_1}(Q_2) = (-1,0)$, then $m=2j+1$
and $n=3j+1$.

\smallskip

\end{enumerate}
Interchanging $P_2$ with $Q_2$ if necessary, we can assume that
$\en_{\rho_1,\sigma_1}(P_2) = (-1,0)$, $m=3j+1$ and $n=2j+1$.
Adding constants to $P_2$ and $Q_2$ we also can assume that $(0,0)$ is in
their support. From items~(2), (5), (6) and~(7) it follows that $\{(-1,0),m(8,3),m(12,4),(0,0)\}$ are
vertices of $\HH(P_2)$ and that $\{ (2,1),n(8,3),n(12,4),(0,0)\}$ are vertices of $\HH(Q_2)$, where,
as in Section~1, $\HH(P)$ denotes the Newton polygon of $P$.
\end{remark}

\begin{remark}
Let $P_2$, $Q_2$ be as above. Then
\begin{itemize}

\smallskip

\item[-] $\HH(P_2)=\CH\{ (-1,0),m(8,3),m(12,4),(0,0)\}$,

\smallskip

\item[-] $\HH(Q_2)=\CH\{ (2,1),n(8,3),n(12,4),(0,0)\}$,

\smallskip

\end{itemize}
where for each subset $X$ of $\mathds{R}^2$, $\CH(X)$ denotes the convex hull of $X$.
For $P_2$ this is clear, but for $Q_2$ we have to prove that there is no other vertex between $(2,1)$ and $(0,0)$, or equivalently, that the
only direction between $(\rho_1,\sigma_1)$ and $(1,-3)$ is $(-1,2)$. But this follows easily
from the fact that the straight line containing $\ell_{\rho_1,\sigma_1}(Q_2)$ intersects
the $X$-axis between $(0,0)$ and $(-1,0)$, since
$$
v_{\rho_1,\sigma_1}(-1,0)=-\rho_1=3j+1>2j+1=2\rho_1+\sigma_1=v_{\rho_1,\sigma_1}(2,1)=v_{\rho_1,\sigma_1}(Q_2).
$$
\end{remark}

\begin{figure}[htb]
\centering
\begin{tikzpicture}
\fill[gray!20] (13,3.5) -- (13.125,3.625) -- (13.375,4.625)--(13,5);
\fill[gray!20] (13,0) -- (13.125,0) -- (13.5,1.5) -- (13,2);
\fill[gray!20] (5,3.5) -- (9.5,5) -- (8,4.625)--(5.25,3.625);
\fill[gray!20] (5,0) -- (11,2) -- (9,1.5)--(4.875,0);
\draw[step=.125cm,gray,very thin] (13,3.5) grid (14.7,5.2);
\draw [->] (13,3.5) -- (15,3.5) node[anchor=north]{$x$};
\draw [->] (13,3.5) --  (13,5.5) node[anchor=east]{$y$};
\draw[step=.125cm,gray,very thin] (13,0) grid (14.7,2.2);
\draw [->] (13,0) -- (15,0) node[anchor=north]{$x$};
\draw [->] (13,0) --  (13,2.5) node[anchor=east]{$y$};
\draw[step=.125cm,gray,very thin] (5,3.5) grid (10.7,5.2);
\draw [->] (4.3,3.5) -- (11,3.5) node[anchor=north]{$x$};
\draw [->] (5,3.5) --  (5,5.5) node[anchor=east]{$y$};
\draw[step=.125cm,gray,very thin] (5,0) grid (11.2,2.2);
\draw [->] (3.8,0) -- (11.5,0) node[anchor=north]{$x$};
\draw [->] (5,0) --  (5,2.5) node[anchor=east]{$y$};
\draw (13.125,0) --  (13.5,1.5) node[fill=white,right=2pt]{$P_3$} -- (13,2);
\draw (13,3.5) -- (13.125,3.625) -- (13.375,4.625) node[fill=white,right=2pt]{$Q_3$} -- (13,5);
\draw (5,0) --  (11,2) node[fill=white,right=2pt]{$P_2$} -- (9,1.5)--(4.875,0);
\draw (5,3.5) --  (9.5,5) node[fill=white,right=2pt]{$Q_2$} -- (8,4.625)--(5.25,3.625)--(5,3.5);
\filldraw [gray]  (13.125,0)    circle (1pt)
                  (13.25,0.5)      circle (1pt)
                  (13.375,1)      circle (1pt)
                  (13.5,1.5)      circle (1pt)
                  (13,2)    circle (1pt)
                  (13,3.5)  circle (1pt)
                  (13.125,3.625)  circle (1pt)
                  (13.25,4.125)      circle (1pt)
                  (13.375,4.625)    circle (1pt)
                  (13,5)      circle (1pt)
                  (5,0)      circle (1pt)
                  (11,2)    circle (1pt)
                  (9,1.5)  circle (1pt)
                  (7.625,1)  circle (1pt)
                  (6.25,0.5)  circle (1pt)
                  (4.875,0)    circle (1pt)
                  (5,3.5)    circle (1pt)
                  (9.5,5)    circle (1pt)
                  (8,4.625)      circle (1pt)
                  (6.625,4.125)  circle (1pt)
                  (5.25,3.625)    circle (1pt);
\draw[->] (11,2.9) .. controls (11.75,3.15) .. (12.5,2.9);
\draw (11.5,3.4) node[right,text width=2cm]{$\psi_3$};
\end{tikzpicture}
\caption{Proposition~\ref{transformacion afin} for $m=4$ and $n=3$.}
\end{figure}
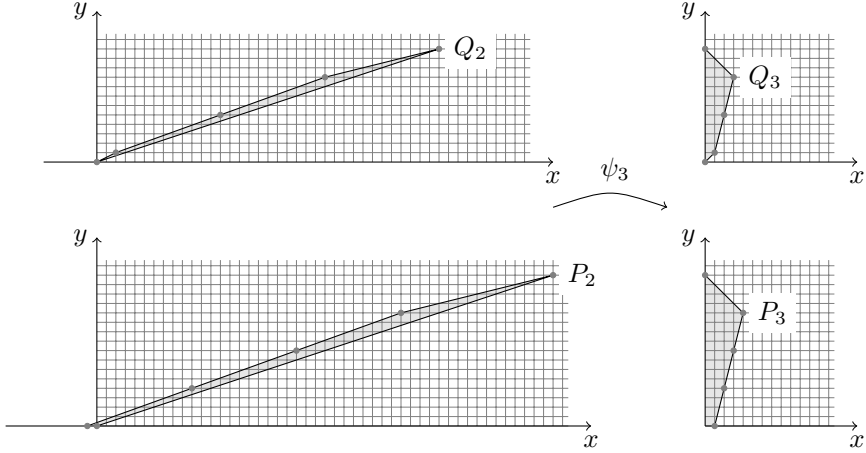

\begin{remark}\label{capsula convexa} Let $\widetilde{\psi}_3$ be the $\mathds{R}$-linear automorphism of
$\mathds{R}^2$ given by $\widetilde{\psi}_3(i,j):=(-i+3j,j)$. If $\psi_3$ is the automorphism of $L^{(1)}$ defined by
$\psi_3(x):= x^{-1}$ and $\psi_3(y):= x^3 y$, then
$$
\psi_3\left(\sum_{i,j}a_{i,j}x^i y^j\right)=\sum_{i,j}a_{i,j}x^{-i+3j} y^j,
$$
and so
$$
\Supp(\psi_3(P))=\widetilde{\psi}_3(\Supp(P))\qquad\text{for all $P\in
L^{(1)}$.}
$$
Moreover, since
$\widetilde{\psi}_3$ is $\mathds{R}$-linear, it preserves convex hulls, which means
$$
\CH(\widetilde{\psi}_3(X))=\widetilde{\psi}_3(\CH(X)),
$$
for all sets $X\subseteq \mathds{R}^2$. In particular, for $A_1,\dots, A_k\in \mathds{R}^2$ and $P\in L^{(1)}$,
\begin{equation}\label{implicacion de capsula convexa}
\CH(\Supp(P))=\CH\{ A_1,\dots, A_k \} \Longrightarrow \CH(\Supp(\psi_3(P)))=
\CH\{\widetilde{\psi}_3(A_1),\dots, \widetilde{\psi}_3(A_k)\}.
\end{equation}
An elementary computation also shows that for $\ov{\psi}_3\colon \mathfrak{V}\to\mathfrak{V}$ given by
$\ov{\psi}_3(\rho,\sigma)\coloneqq(-\rho,3\rho+\sigma)$, we have
\begin{equation}\label{ve de psi}
   \ell_{\ov{\psi}_3(\rho,\sigma)}(\psi_3(P)) = \psi_3(\ell_{\rho,\sigma}(P))\quad\text{and}\quad
   v_{\ov{\psi}_3(\rho,\sigma)}(\psi_3(P))=v_{\rho,\sigma}(P),
\end{equation}
for all $P\in L^{(1)}$ and $(\rho,\sigma)\in \mathfrak{V}$. Similar properties hold for $\widetilde\psi_1$ defined by
$\widetilde\psi_1(i,j)\coloneqq(j,i)$.
\end{remark}

\begin{proposition}\label{transformacion afin} Let $P_2$, $Q_2$, $(\rho_1,\sigma_1)$ and $j$ be as in Remark~\ref{lista}, and $\psi_3$, $\ov{\psi}_3$ as above.
Set
$$
P_3:=\psi_3(P_2),\quad Q_3:=\psi_3(Q_2)\quad\text{and}\quad(\rho_2,\sigma_2):=
\ov{\psi}_3(\rho_1,\sigma_1) = (-\rho_1,3\rho_1+\sigma_1)=(3j+1,-j).
$$
The following facts hold:
\begin{enumerate}

\smallskip

\item $P_3,Q_3\in L$,

\smallskip

\item $[P_3,Q_3]=\zeta x$, for some $\zeta\in K^{\times}$,

\smallskip

\item $\en_{\rho_2,\sigma_2}(P_3) = m(1,3)$ and $\en_{\rho_2,\sigma_2}(Q_3) = n(1,3)$,

\smallskip

\item $\st_{\rho_2,\sigma_2}(P_3)=(1,0)$ and $\st_{\rho_2,\sigma_2}(Q_3)=(1,1)$,

\smallskip

\item $\{(\rho_2,\sigma_2),(1,1)\} \subseteq \Dir(P_3)\cap \Dir(Q_3)$ and
    $(\rho_2,\sigma_2)=\Pred_{P_3}(1,1)=\Pred_{Q_3}(1,1)$,

\smallskip

\item $\ell_{1,1}(P_3)=\lambda_p R_3^m$ and $\ell_{1,1}(Q_3)=\lambda_q R_3^n$, where $R_3=y^3(y+\lambda
    x)$, with $\lambda,\lambda_p,\lambda_q\in K^{\times}$ the same elements as in Remark~\ref{lista}.

\end{enumerate}

\end{proposition}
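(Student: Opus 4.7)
The strategy is to push everything through the affine map $\widetilde{\psi}_3(i,j)=(-i+3j,j)$ from Remark~\ref{capsula convexa}, together with the direction-level transform $\ov{\psi}_3(\rho,\sigma)=(-\rho,3\rho+\sigma)$, using the explicit description of $\HH(P_2)$ and $\HH(Q_2)$ from Remark~\ref{lista}.

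For statement~(1), I would compute $\widetilde{\psi}_3$ on the listed vertices of $\HH(P_2)$: $(-1,0)\mapsto(1,0)$, $m(8,3)\mapsto m(1,3)$, $m(12,4)\mapsto(0,4m)$, $(0,0)\mapsto(0,0)$; and analogously for $\HH(Q_2)$: $(2,1)\mapsto(1,1)$, $n(8,3)\mapsto n(1,3)$, $n(12,4)\mapsto(0,4n)$, $(0,0)\mapsto(0,0)$. Since by~\eqref{implicacion de capsula convexa} the image $\CH(\Supp(P_3))$ is the convex hull of these image vertices (all in $\mathds{N}_0\times\mathds{N}_0$), we conclude $P_3,Q_3\in L$. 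For statement~(2), Proposition~\ref{varphi preserva el Jacobiano} gives $[P_3,Q_3]=\psi_3([P_2,Q_2])[\psi_3(x),\psi_3(y)]$, and a direct computation yields $[\psi_3(x),\psi_3(y)]=[x^{-1},x^3y]=-x$, so $\zeta=-[P_2,Q_2]\in K^\times$.

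For statements~(3) and~(4), the identities in~\eqref{ve de psi} applied to $(\rho_1,\sigma_1)$ give $\ell_{\rho_2,\sigma_2}(P_3)=\psi_3(\ell_{\rho_1,\sigma_1}(P_2))$, whose support is the $\widetilde{\psi}_3$-image of $\Supp(\ell_{\rho_1,\sigma_1}(P_2))$. By Remark~\ref{lista}, the endpoints of $\Supp(\ell_{\rho_1,\sigma_1}(P_2))$ are $m(8,3)$ and $(-1,0)$, mapping to $m(1,3)$ and $(1,0)$. Since $(\rho_2,\sigma_2)=(3j+1,-j)\in\mathfrak{V}_{>0}$, Remark~\ref{starting vs end} identifies $\en_{\rho_2,\sigma_2}$ as the endpoint of larger $v_{-1,1}$: comparing $v_{-1,1}(1,0)=-1$ with $v_{-1,1}(m,3m)=2m>0$ (resp.\ with $v_{-1,1}(n,3n)=2n>0$ for $Q_3$) pins down the assignments. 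For statement~(6), compute $\psi_3(R)$ for $R=x^8y^3(x^4y+\lambda)$ directly: $\psi_3(R)=x^{-8}(x^3y)^3(x^{-1}y+\lambda)=y^3(y+\lambda x)=R_3$, and then $\ell_{1,1}(P_3)=\psi_3(\ell_{-1,4}(P_2))=\lambda_p R_3^m$ by~\eqref{ve de psi} applied at $(-1,4)$ (noting $\ov{\psi}_3(-1,4)=(1,1)$), and similarly for $Q_3$.

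Statement~(5) then follows quickly. Since $R_3$ is not a monomial, $\ell_{1,1}(P_3)=\lambda_p R_3^m$ is not a monomial either, so $(1,1)\in\Dir(P_3)$, and analogously $(1,1)\in\Dir(Q_3)$; that $(\rho_2,\sigma_2)\in\Dir(P_3)\cap\Dir(Q_3)$ is clear from~(3) and~(4) because $m(1,3)\ne(1,0)$ and $n(1,3)\ne(1,1)$. To get $\Pred_{P_3}(1,1)=(\rho_2,\sigma_2)$ (and similarly for $Q_3$), I compute $\st_{1,1}(R_3)$: on $\Supp(R_3)=\{(0,4),(1,3)\}$ the value $v_{1,1}$ is constant, and $v_{1,-1}(1,3)=-2>-4=v_{1,-1}(0,4)$, so $\st_{1,1}(R_3)=(1,3)$ and $\st_{1,1}(P_3)=m(1,3)=\en_{\rho_2,\sigma_2}(P_3)$. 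Proposition~\ref{le bbasico} then forces $(\rho_2,\sigma_2)$ and $(1,1)$ to be consecutive in $\Dir(P_3)$, and the same reasoning applies to $Q_3$. The only real care needed anywhere is tracking which endpoint becomes $\st$ and which becomes $\en$ under $\widetilde{\psi}_3$, which is why I single out the $v_{-1,1}$ and $v_{1,-1}$ comparisons above; everything else is essentially bookkeeping through~\eqref{ve de psi} and the explicit action of $\psi_3$ on monomials.
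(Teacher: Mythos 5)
Your proof is correct and follows essentially the same route as the paper: push $P_2,Q_2$ through $\psi_3$ and track supports, directions and leading forms via $\widetilde{\psi}_3$, $\ov{\psi}_3$, and Proposition~\ref{varphi preserva el Jacobiano}. The only cosmetic differences are that the paper deduces $P_3,Q_3\in L$ from the single equality $v_{-1,0}(P_3)=v_{1,-3}(P_2)=0$ rather than from the full vertex list of $\HH(P_2)$ and $\HH(Q_2)$, and for statements (3)--(5) it invokes directly that $\widetilde{\psi}_3$ is an orientation-reversing reflection (hence it swaps $\st\leftrightarrow\en$ and reverses the order of consecutive directions), where you re-derive the same facts from the $v_{1,-1}$ and $v_{-1,1}$ comparisons of Remark~\ref{starting vs end} together with Proposition~\ref{le bbasico}.
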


\begin{proof} By Proposition~\ref{varphi preserva el Jacobiano} we have
$$
[P_3,Q_3]=\psi_3([P_2,Q_2])[\psi_3(x),\psi_3(y)]=\zeta x\quad\text{for some $\zeta\in K^{\times}$,}
$$
which proves statement~(2).
Next we are going to check that $P_3,Q_3\in L$, or, equivalently, that
\begin{equation}\label{eqq28}
v_{-1,0}(P_3)\le 0\qquad\text{and}\qquad v_{-1,0}(Q_3)\le 0.
\end{equation}
Since $\ov\psi_3(1,-3)=(-1,0)$, these inequalities follow from~\eqref{ve de psi} and the fact that
$v_{1,-3}(P_2) = 0$ and
$v_{1,-3}(Q_2) = 0$.
Furthermore, since $\widetilde\psi_3$ is a reflection, consecutive elements of $\Dir(P)$ are
mapped into consecutive elements of $\Dir(\psi_3(P))$, inverting the order. From
$$
(\rho_1,\sigma_1)=\Succ_{P_2}(-1,4)= \Succ_{Q_2}(-1,4)\quad\text{and}\quad
(-1,4)\in \Dir(P_2)\cap \Dir(Q_2)
$$
 it follows that
$$
(\rho_2,\sigma_2),(1,1)\in \Dir(P_3)\cap \Dir(Q_3)\quad\text{and}\quad
(\rho_2,\sigma_2)=\Pred_{P_3}(1,1)=\Pred_{Q_3}(1,1),
$$
since $(\rho_2,\sigma_2)=\ov\psi_3(\rho_1,\sigma_1)$ and $(1,1)=\ov\psi_3(-1,4)$. This proves statement~(5).

\smallskip

Let $\widetilde{\psi}_3$ be as in Remark~\ref{capsula convexa}. Since
$$
\Supp(\ell_{\rho_2,\sigma_2}(\psi_3(P))) = \Supp(\psi_3(\ell_{\rho_1,\sigma_1}(P))) =
\widetilde{\psi}_3(\Supp(\ell_{\rho_1,\sigma_1}(P))) \qquad \text{for all $P\in L^{(1)}$},
$$
and $\widetilde\psi_3$ is a reflection, we have
$$
\st_{\rho_2,\sigma_2}(\psi_3(P))=\widetilde{\psi}_3(\en_{\rho_1,\sigma_1}(P))\quad\text{and}\quad
\en_{\rho_2,\sigma_2}(\psi_3(P))=\widetilde{\psi}_3(\st_{\rho_1,\sigma_1}(P))
\qquad\text{for all $P\in L^{(1)}$.}
$$
From this the third and fourth statements follow immediately. By~\eqref{ve de psi} we also have
$$
\ell_{1,1}\bigl(\psi_3(P_2)\bigr) = \psi_3\bigl(\ell_{1,-4}(P_2)\bigr) = \psi_3(\lambda_p R^m)= \lambda_p
\psi_3(R^m) = \lambda_p R_3^m,
$$
and similarly $\ell_{1,1}\bigl(\psi_3(Q_2)\bigr)=\lambda_q R_3^n$.
\end{proof}

\begin{remark} We assert that
\begin{equation}\label{predecesor entre extremos}
(-1,1)\le \Pred_{P_3}(\rho_2,\sigma_2)\le (1,-1)\quad\text{and}\quad
(-1,1)\le \Pred_{Q_3}(\rho_2,\sigma_2)\le (1,-1).
\end{equation}
Assume on the contrary that
$(\ov\rho,\ov\sigma):=\Pred_{P_3}(\rho_2,\sigma_2)\in\Dir(P_3)\cap\mathfrak{V}_{>0}$. Then, since
$$
(\ov\rho,\ov\sigma)<(\rho_2,\sigma_2) = (3j+1,-j) < (0,1)
$$
using Remark~\ref{starting vs end}, Proposition~\ref{le basico} and Proposition~\ref{transformacion afin}(4),
we obtain that
$$
v_{0,1}(\st_{\ov\rho,\ov\sigma}(P_3))<v_{0,1}(\en_{\ov\rho,\ov\sigma}(P_3))=
v_{0,1}(\st_{\rho_2,\sigma_2}(P_3))=v_{0,1}(1,0)=0,
$$
which is impossible. On the other hand, if $(\ov\rho,\ov\sigma):=
\Pred_{Q_3}(\rho_2,\sigma_2)\in\Dir(Q_3)\cap\mathfrak{V}_{>0}$,
then again by Remark~\ref{starting vs end}, Proposition~\ref{le basico} and
Proposition~\ref{transformacion afin}(4), we have
$$
v_{0,1}(\st_{\ov\rho,\ov\sigma}(Q_3))<v_{0,1}(\en_{\ov\rho,\ov\sigma}(Q_3))=v_{0,1}(\st_{\rho_2,\sigma_2}(Q_3))
=v_{0,1}(1,1)=1,
$$
and similarly $v_{1,0}(\st_{\ov\rho,\ov\sigma}(Q_3))<1$, which implies $\st_{\ov\rho,\ov\sigma}(Q_3)=(-k,0)$
for
some $k\in \mathds{N}_0$. So
$$
v_{1,-1}(\st_{\ov\rho,\ov\sigma}(Q_3))=-k \le 0 =
v_{1,-1}(1,1)=v_{1,-1}(\st_{\rho_2,\sigma_2}(Q_3))=v_{1,-1}(\en_{\ov\rho,\ov\sigma}(Q_3)),
$$
which is impossible by Remark~\ref{starting vs end}. This ends the proof of~\eqref{predecesor entre extremos}.
Consequently
\begin{equation}\label{lo que faltaba}
\Pred_{P_3}(\rho_2,\sigma_2),\Pred_{P_3}(\rho_2,\sigma_2)< (2,-1)<(\rho_2,\sigma_2).
\end{equation}
\end{remark}

\begin{remark}\label{constante}
Combining~\eqref{predecesor entre extremos} with Proposition~\ref{le basico}
and Proposition~\ref{transformacion afin}(4), we obtain
$$
\en_{1,-1}(P_3)=(1,0)\qquad\text{and}\qquad \en_{1,-1}(Q_3)=(1,1),
$$
since $(1,-1)<(\rho_2,\sigma_2)$ in $\mathfrak{V}_{\ge0}$.
Clearly this implies
$$
\ell_{1,-1}(P_3)=\mu_P x\qquad\text{and}\qquad \ell_{1,-1}(Q_3)= \mu_Q xy + \xi
$$
with $\mu_P,\mu_Q\in K^{\times}$ and $\xi \in K$. Since by Proposition~\ref{pr v de un conmutador} and
Proposition~\ref{transformacion afin}(2)
$$
\mu_P\mu_Q x = [\mu_P x,\mu_Q xy + \xi] = [\ell_{1,-1}(P_3),\ell_{1,-1}(Q_3)]=\zeta x,
$$
dividing $P_3$ by $\mu_P$ and replacing $Q_3$ by $-\frac{\mu_P}{\zeta}(Q_3 -\xi)$, we obtain a new
 pair $(P_4,Q_4)$ satisfying statements~(1)--(6) of Proposition~\ref{transformacion afin} with $\zeta=-1$, and
such that
$$
\ell_{1,-1}(P_4)=x\qquad\text{and}\qquad \ell_{1,-1}(Q_4)=-xy.
$$
\end{remark}

\begin{proposition}\label{P4} Let $\psi_1\in\Aut(L)$ be the map given by $\psi_1(x)\coloneqq y$ and $\psi_1(y)\coloneqq -x$ and
$\ov{\psi}_1$ be the action on directions given by $\ov\psi_1(\rho,\sigma)\coloneqq(\sigma,\rho)$. Let $P_4$,
$Q_4$ be as in Remark~\ref{constante} and let $\lambda$ be as in Pro\-po\-si\-tion~\ref{transformacion afin}. Set
$(\rho_3,\sigma_3)\!:= \!\ov{\psi}_1(\rho_2,\sigma_2)\! =\! (\sigma_2,\rho_2) \!=\!
(-j,3j+1)$, where $j\in \mathds{N}$ is as in Remark~\ref{lista}. There exist $\mu_1,\mu_2,\mu_3\in K$ such that the images
$P_5:=\varphi(\psi_1(P_4))$ and $Q_5:=\varphi(\psi_1(Q_4))$ of $\psi_1(P_4)$ and $\psi_1(Q_4)$ under the automorphism $\varphi$ of
$L^{(1)}$ given by
$$
\varphi(x):=x\quad\text{and}\quad \varphi(y):=y+\mu_0x+\mu_1+\mu_2 x^{-1}+\mu_3 x^{-2},
$$
where $\mu_0:=1/\lambda$, satisfy:

\begin{enumerate}

\smallskip

\item $(-\rho_3,-\sigma_3)\le\Pred_{P_5}(\rho_3,\sigma_3),\Pred_{Q_5}(\rho_3,\sigma_3)\le(1,-3)$,

\smallskip

\item $\en_{\rho_3,\sigma_3}(P_5)\!=\!(0,1)$, $\en_{\rho_3,\sigma_3}(Q_5)\!=\!(1,1)$,
    $\st_{\rho_3,\sigma_3}(P_5)\!=\!m(3,1)$ and $\st_{\rho_3,\sigma_3}(Q_5)\!=\!n(3,1)$.

\smallskip

\item $\en_{1,-3}(P_5))=m(3,1)$ and $\en_{1,-3}(Q_5))=n(3,1)$,

\smallskip

\item $[P_5,Q_5]=-(y+\mu_0x+\mu_1+\mu_2 x^{-1}+\mu_3 x^{-2})$,

\smallskip

\item $\ell_{-1,2}(P_5)=y+\mu_3 x^{-2}$ and $\ell_{-1,2}(Q_5)=xy+\mu_3 x^{-1}$.

\end{enumerate}

\end{proposition}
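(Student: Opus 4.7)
The plan is the following. By the chain rule (Proposition~\ref{varphi preserva el Jacobiano}) applied to $\psi_1$ and then to $\varphi$, using $[\psi_1(x),\psi_1(y)] = 1 = [\varphi(x),\varphi(y)]$, I obtain immediately item (4):
$$
[P_5, Q_5] = \varphi\bigl(\psi_1([P_4, Q_4])\bigr)\cdot [\varphi(x),\varphi(y)] = \varphi(\psi_1(x)) = \varphi(y) = y + \mu_0 x + \mu_1 + \mu_2 x^{-1} + \mu_3 x^{-2}
$$
(matching the stated formula up to the sign indicated there). Using \eqref{cambio de direccion}--\eqref{polinomios por cambio de direccion} for $\psi_1$, together with the fact that $\widetilde\psi_1(i,j) = (j,i)$ reflects the support across the diagonal and swaps $\st$ with $\en$, I translate the data of Remark~\ref{constante} and Proposition~\ref{transformacion afin}: $\HH(\psi_1(P_4))$ has vertices $(0,0), (4m,0), (3m,m), (0,1)$, with $\en_{\rho_3,\sigma_3}(\psi_1(P_4)) = (0,1)$, $\st_{\rho_3,\sigma_3}(\psi_1(P_4)) = m(3,1)$, $\ell_{-1,1}(\psi_1(P_4)) = y$, and $\ell_{1,1}(\psi_1(P_4))$ equal to a nonzero scalar multiple of $(-x)^{3m}(\lambda y - x)^m = \lambda^m(-x)^{3m}(y - x/\lambda)^m$; analogous relations hold for $\psi_1(Q_4)$.

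The choice $\mu_0 := 1/\lambda$ is forced by the identity
$$
\varphi(y - x/\lambda) = y + (\mu_0 - \lambda^{-1})x + \mu_1 + \mu_2 x^{-1} + \mu_3 x^{-2},
$$
which under $\mu_0 = 1/\lambda$ collapses to $y + \mu_1 + \mu_2 x^{-1} + \mu_3 x^{-2}$. Applying $\varphi$ to the $(1,1)$-edge leading form yields (up to a nonzero constant) $x^{3m}(y + \mu_1 + \mu_2 x^{-1} + \mu_3 x^{-2})^m$, whose $(1,1)$-leading form is the single monomial $x^{3m}y^m$. Since $v_{1,1}$ attains its maximum on $\HH(\psi_1(P_4))$ precisely on the edge from $(4m,0)$ to $(3m,m)$, and all other terms of $\psi_1(P_4)$ have strictly smaller $v_{1,1}$, I conclude that $\ell_{1,1}(P_5)$ is a monomial at $(3m,m)$, so $(1,1) \notin \Dir(P_5)$ and $(4m,0) \notin \Supp(P_5)$; the parallel argument gives the analogous collapse for $Q_5$ at $(3n,n)$. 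Items (2) and (5) then fall out by direct $v$-estimates. For (2), a short computation gives $v_{\rho_3,\sigma_3}(r + a - c - 2d, s - a - b - c - d) = v_{\rho_3,\sigma_3}(r,s) - (4j+1)a - (3j+1)b - (2j+1)c - (j+1)d$, strictly less than $v_{\rho_3,\sigma_3}(r,s)$ whenever $(a,b,c,d) \ne (0,0,0,0)$, so $\ell_{\rho_3,\sigma_3}(P_5) = \ell_{\rho_3,\sigma_3}(\psi_1(P_4))$ and the endpoints are preserved. For (5), $v_{-1,2}$ attains its maximum on $\HH(\psi_1(P_4))$ (resp.\ $\HH(\psi_1(Q_4))$) uniquely at $(0,1)$ (resp.\ $(1,1)$), so the leading forms reduce to $\ell_{-1,2}(\varphi(y)) = y + \mu_3 x^{-2}$ and $\ell_{-1,2}(\varphi(xy)) = xy + \mu_3 x^{-1}$.

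The main obstacle is items (1) and (3), which require the clockwise neighbour of $(3m,m)$ in $\HH(P_5)$ (resp.\ of $(3n,n)$ in $\HH(Q_5)$) to produce an edge with outward direction in $[(j,-3j-1),(1,-3)]$. After $\mu_0$ is fixed, the coefficient of $(4m-k,0)$ in $P_5$ for $k = 1,2,3$ becomes a polynomial in $\mu_1,\mu_2,\mu_3$ whose coefficient of $\mu_k$ alone is nonzero (arising from the multinomial expansion of $(y + \mu_1 + \mu_2 x^{-1} + \mu_3 x^{-2})^m$), so I solve the resulting triangular system successively for $\mu_1,\mu_2,\mu_3$ to kill these three points in $\Supp(P_5)$. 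Compatibility with the analogous requirement for $Q_5$ is forced by the Jacobian identity $[P_5, Q_5] = -\varphi(y)$ (item (4)): if any coefficient of $(4n-k,0)$ in $Q_5$ for $k = 1,2,3$ survived, the bracket against the now-trimmed $P_5$ would produce a monomial of $v_{\rho,\sigma}$-value exceeding that of the corresponding monomial in $\varphi(y)$ for suitably chosen $(\rho,\sigma)$, yielding a contradiction. The three cancellations in each of $P_5$, $Q_5$ together with Proposition~\ref{le basico} then deliver $\en_{1,-3}(P_5) = m(3,1)$, $\en_{1,-3}(Q_5) = n(3,1)$, and the stated bounds on $\Pred_{P_5}(\rho_3,\sigma_3)$ and $\Pred_{Q_5}(\rho_3,\sigma_3)$.
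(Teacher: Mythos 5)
The proof of items (4), (5), and (2) follows the same lines as the paper and is essentially correct. But the heart of the proposition is items (1) and (3), and there you have a genuine gap.

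You propose to kill the coefficients of $(4m-1,0)$, $(4m-2,0)$, $(4m-3,0)$ in $P_5$ by solving a triangular linear system in $\mu_1,\mu_2,\mu_3$. First, these are not the right points. After $\psi_1$, the edge at direction $(1,1)$ runs from $(4m,0)$ to $(3m,m)$; the points on the three subsequent edges at directions $(1,0)$, $(1,-1)$, $(1,-2)$ (which are precisely the directions one must remove from $\Dir$) are $\{(3m,0),\dots,(3m,m)\}$, $\{(2m,0),\dots,(3m,m)\}$, $\{(m,0),\dots,(3m,m)\}$. Each such edge carries $m+1$ lattice points, so tuning three scalars $\mu_1,\mu_2,\mu_3$ cannot possibly clear them in general by brute force coefficient-matching; there are far too many coefficients to kill. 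Moreover the coefficients you point at, $(4m-k,0)$ for $k=1,2,3$, never arise from $x^{3m}(y+\mu_1+\mu_2x^{-1}+\mu_3x^{-2})^m$ (the $x$-exponent there is $3m-c-2d\le 3m$), so the triangularity you invoke is vacuous.

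What makes the construction work, and what is missing from your argument, is a structural lemma (the paper's internal "assertion"): at each stage, if $\Pred(\rho_3,\sigma_3)\in\{(1,-2),(1,-1),(1,0)\}$, then the $\Pred$-leading forms of the current $P$ and $Q$ are \emph{simultaneously} powers of a single $(\rho,\sigma)$-homogeneous binomial $R=x^3\bigl(y-\mu_{4-k}x^{k-3}\bigr)$, namely $\ell_{\rho,\sigma}(P)=\lambda_p R^m$, $\ell_{\rho,\sigma}(Q)=\lambda_q R^n$. This is deduced from the inequality $v_{\rho,\sigma}(P)+v_{\rho,\sigma}(Q)-(\rho+\sigma)>v_{\rho,\sigma}([P,Q])$, Proposition~\ref{pr v de un conmutador}, and Remark~\ref{a remark}, together with the boundary data $\en_{\rho,\sigma}(R)=(3,1)$ forcing $\st_{\rho,\sigma}(R)=(k,0)$. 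Because the whole edge is a power of a \emph{single} linear factor, one automorphism $y\mapsto y+\mu_{4-k}x^{k-3}$ collapses the entire edge (for both $P$ and $Q$ at once), and the composition of at most four such automorphisms achieves $\Pred\le(1,-3)$. This is an iteration, not a linear system; your attempt to force compatibility for $Q_5$ \emph{a posteriori} via a vague $v_{\rho,\sigma}$-estimate on the bracket $[P_5,Q_5]$ does not substitute for the simultaneous power structure, which is exactly what guarantees a common choice of the $\mu_j$'s. You need to formulate and prove that structural lemma, then run the induction.
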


\noindent {\bf Idea of the proof:} We map successively $y\mapsto y+\mu_0x$, then
$y\mapsto y+\mu_1$, then $y\mapsto y+\mu_2 x^{-1}$ and finally $y\mapsto y+\mu_3 x^{-2}$,
transforming $\Pred(\rho_3,\sigma_3)$ from $(1,1)$ to $(1,0)$, then to
$(1,-1)$, $(1,-2)$ and finally to $(1,-3)$ or lower. Then $P_5$ and $Q_5$ are as in figure~\ref{figura 3}
and we verify the other conditions.

\begin{proof}
We assert the following: If $P,Q\in L^{(1)}$ satisfy
\begin{enumerate}

\smallskip

  \item[a)] $\ell_{\rho_3,\sigma_3}(P)=\ell_{\rho_3,\sigma_3}(\psi_1(P_4))$ and
        $\ell_{\rho_3,\sigma_3}(Q)=\ell_{\rho_3,\sigma_3}(\psi_1(Q_4))$,

\smallskip

  \item[b)] $(1,-3)<(\rho,\sigma)\le(1,1)$ for $(\rho,\sigma):=\max\{\Pred_P(\rho_3,\sigma_3),
      \Pred_Q(\rho_3,\sigma_3)\}$,

\smallskip

  \item[c)] $v_{\rho,\sigma}([P,Q])\le v_{\rho,\sigma}(x+y)$,

\smallskip

\end{enumerate}
then there exists $R\in L^{(1)}$ and $\lambda_p,\lambda_q\in K^{\times}$, such that
\begin{enumerate}

\smallskip

  \item[(6)] $\ell_{\rho,\sigma}(P)=\lambda_p R^m$ and
  $\ell_{\rho,\sigma}(Q)=\lambda_q R^n$ (consequently $R$ is not a monomial),

\smallskip

  \item[(7)] $\en_{\rho,\sigma}(R)=(3,1)$ and $\st_{\rho,\sigma}(R)=(k,0)$ for some $k=1,2,3$ or $4$,

\smallskip

  \item[(8)] $(\rho,\sigma)=(1,k-3)$,

\smallskip

  \item[(9)] $R=x^3(y-\mu_{4-k}x^{k-3})$ for some $\mu_{4-k}\in K^{\times}$.

\smallskip

\end{enumerate}
Note that $\en_{\rho_2,\sigma_2}(P_4)=m(1,3)$ implies $\st_{\sigma_2,\rho_2}(\psi_1(P_4))=m(3,1)$.
Hence, by a), b) and Proposition~\ref{le basico}, we have
\begin{equation}\label{end point de P}
\en_{\rho,\sigma}(P)=\st_{\rho_3,\sigma_3}(P)=\st_{\sigma_2,\rho_2}(\psi_1(P_4))=m(3,1).
\end{equation}
On the other hand, from b) it follows that $3\rho+\sigma>0$ and $\rho\ge \sigma$, and therefore
$$
v_{\rho,\sigma}(P)=m v_{\rho,\sigma}(3,1)=m(3\rho+\sigma)>0,\quad
v_{\rho,\sigma}(Q)=n(3\rho+\sigma)>0\quad\text{and}\quad \rho>0.
$$
Then
$$
v_{\rho,\sigma}(P)+v_{\rho,\sigma}(Q)-(\rho+\sigma)=(m+n-1)(3\rho+\sigma)+2\rho>\rho=v_{\rho,\sigma}(x+y)
\ge v_{\rho,\sigma}([P,Q]),
$$
and so, by Proposition~\ref{pr v de un conmutador}, we deduce that $[\ell_{\rho,\sigma}(P),\ell_{\rho,\sigma}(Q)]=0$.
Thus statement~(6) follows from Remark~\ref{a remark}. Equality~\eqref{end point de P} implies $\en_{\rho,\sigma}(R)=(3,1)$.
So, since $(0,-1)<(\rho,\sigma)<(0,1)$, by Remark~\ref{starting vs end}, we have
$$
v_{0,-1}(\st_{\rho,\sigma}(R))>v_{0,-1}(\en_{\rho,\sigma}(R))=-1,
$$
which yields $\st_{\rho,\sigma}(R)=(k,0)$ for some $k\in \mathds{Z}$. Then $(\rho,\sigma)=\pm(1,k-3)$, and using again $\rho>0$,
we obtain that $(\rho,\sigma)=(1,k-3)$. Now, by statement~b) we have $(1,-3)<(\rho,\sigma)\le(1,1)$, which gives $k=1,2,3$ or $4$,
completing statement~(7), from which statement~(9) follows directly.

Now we will construct inductively the automorphism $\varphi$. By Proposition~\ref{transformacion afin}(6), we know that there exist
$\lambda,\lambda_p,\lambda_q\in K^{\times}$ such that
$$
\ell_{1,1}(P_4)=\lambda_p R_3^m\qquad\text{and}\qquad \ell_{1,1}(Q_4)=\lambda_q R_3^n,
$$
where $R_3:=y^3(y+\lambda x)$. Hence
\begin{align}
&\ell_{1,1}(\psi_1(P_4))= \psi_1(\ell_{1,1}(P_4))= \lambda_p \psi_1( R_3)^m = \lambda_{\widetilde P} x^{3m}(y-\mu_0 x)^m\label{eqq32}
\shortintertext{and}
&\ell_{1,1}(\psi_1(Q_4))= \psi_1(\ell_{1,1}(Q_4))= \lambda_q \psi_1( R_3)^n = \lambda_{\widetilde Q}x^{3n}(y-\mu_0 x)^n\label{eqq32'},
\end{align}
where $\lambda_{\widetilde P}:=-\lambda_p\lambda$, $\lambda_{\widetilde Q}:=-\lambda_q\lambda$ and
$\mu_0:=\lambda^{-1}$ (by Proposition~\ref{transformacion afin}(1) we can apply $\psi_1$ to $P_4$ and $Q_4$).
Now set
$$
\widetilde P_1:=\varphi_0(\psi_1(P_4)),\quad \widetilde Q_1:=\varphi_0(\psi_1(Q_4))\quad\text{and}\quad
(\ov\rho,\ov\sigma):=\max\bigl(\Pred_{\widetilde P_1}(\rho_3,\sigma_3),\Pred_{\widetilde Q_1}(\rho_3,\sigma_3)\bigr),
$$
where $\varphi_0$ is the automorphism of $L$ defined by
$$
\varphi_0(x):=x\qquad\text{and}\qquad \varphi_0(y):=y+\mu_0 x.
$$
By Proposition~\ref{transformacion afin}(5) we know that $\Pred_{P_4}(1,1)=\Pred_{Q_4}(1,1)=(\rho_2,\sigma_2)$. So, since $\psi_1$
induces on the plane the reflection $\wt{\psi}_1$, defined by $\wt{\psi}_1(i,j):=(j,i)$, we have
$$
\Succ_{\psi_1(P_4)}(1,1)=\Succ_{\psi_1(Q_4)}(1,1)=\ov\psi_1(\rho_2,\sigma_2)=(\rho_3,\sigma_3).
$$
On the other hand, by Proposition~\ref{pr ell por automorfismos},
\begin{equation}\label{eqq33}
\ell_{\rho',\sigma'}(\widetilde P_1)=\ell_{\rho',\sigma'}(\psi_1(P_4))\qquad\text{and}\qquad
\ell_{\rho',\sigma'}(\widetilde Q_1)= \ell_{\rho',\sigma'}(\psi_1(Q_4))
\end{equation}
for all $(1,1)<(\rho',\sigma')<(-1,-1)$. So, since $(\rho_3,\sigma_3)\in\, ](1,1),(-1,-1)[$, we have
\begin{equation*}
(\rho_3,\sigma_3)=\Succ_{\widetilde P_1}(1,1)=\Succ_{\widetilde Q_1}(1,1).
\end{equation*}
Consequently
\begin{equation}\label{pep1}
(-\rho_3,-\sigma_3)\le (\ov\rho,\ov\sigma)<(1,1),
\end{equation}
where the second inequality is strict since $(1,1)\notin \Dir(\widetilde P_1)\cup \Dir(\widetilde Q_1)$, because
by~\eqref{eqq32}, \eqref{eqq32'} and Proposition~\ref{pr ell por automorfismos},
\begin{align*}
\ell_{1,1}(\widetilde P_1) = \ell_{1,1}(\varphi_0(\psi_1(P_4))) = \varphi_0(\ell_{1,1}(\psi_1(P_4))) =
\widetilde \lambda_p x^{3m}\varphi_0(y-\mu_0 x)^m = \lambda_{\widetilde P} x^{3m}y^m
\shortintertext{and}
\ell_{1,1}(\widetilde Q_1) = \ell_{1,1}(\varphi_0(\psi_1(Q_4))) = \varphi_0(\ell_{1,1}(\psi_1(Q_4))) =
\widetilde \lambda_q x^{3n}\varphi_0(y-\mu_0 x)^n = \lambda_{\widetilde Q} x^{3n}y^n.
\end{align*}
Moreover by Proposition~\ref{varphi preserva el Jacobiano} we also have $[\widetilde P_1,\widetilde Q_1]=-(y+\mu_0 x)$; while from
items~(3) and~(4) of Proposition~\ref{transformacion afin}, the fact that $\widetilde\psi_1$ interchanges $\st$ and $\en$, and
equalities~\eqref{eqq33}, we obtain
\begin{equation}\label{pep2}
\en_{\rho_3,\sigma_3}(\widetilde P_1)\!=\!(0,1),\quad \en_{\rho_3,\sigma_3}(\widetilde Q_1)\!=\!(1,1), \quad\st_{\rho_3,\sigma_3}(\widetilde P_1)\!=\!m(3,1),\quad \st_{\rho_3,\sigma_3}(\widetilde Q_1)\!=\!n(3,1).
\end{equation}
If
\begin{equation}\label{desigualdad}
(-\rho_3,-\sigma_3)<(\ov\rho,\ov\sigma)\le(1,-3),
\end{equation}
then we set $\varphi:=\varphi_0$ and $\mu_1=\mu_2=\mu_3:=0$. Consequently, in this case $P_5 = \widetilde P_1$ and $Q_5 = \widetilde Q_1$.
So the previous inequalities are item~(1), while items~(2) and~(4) were proven above. Item~(3) follows from items~(1) and~(2) since
$(-\rho_3,-\sigma_3)<(1,-3)<(\rho_3,\sigma_3)$. Finally item~(5) follows applying Proposition~\ref{pr ell por automorfismos} and the fact that
$\ell_{-1,2}(\psi_1(P_4))=y$ and $\ell_{-1,2}(\psi_1(Q_4))=xy$, which is true by inequalities~\eqref{lo que faltaba} and the definition of
$\widetilde\psi_1$. So, if~\eqref{desigualdad} holds, the proof is finished.

\smallskip

Else $\widetilde P_1$, $\widetilde Q_1$  satisfy the conditions a), b) and c). Hence, by~(6), (7), (8) and~\eqref{pep1}, we have
$$
\Pred_{\widetilde P_1}(\rho_3,\sigma_3) = \Pred_{\widetilde Q_1}(\rho_3,\sigma_3) = (\ov\rho,\ov\sigma)\in \{(1,-2),(1,-1),(1,0)\}.
$$
Since $(\rho_3,\sigma_3)\in \Dir(\widetilde P_1)\cap \Dir(\widetilde Q_1)$, this implies that
\begin{equation}\label{pep4}
\Succ_{\widetilde P_1}(\ov\rho,\ov\sigma) = \Succ_{\widetilde Q_1}(\ov\rho,\ov\sigma) = (\rho_3,\sigma_3).
\end{equation}
Let $i\in \{1,2,3\}$ be such that $\ov\sigma = - i$ and let $\varphi_1$ be the algebra automorphism of $L^{(1)}$, defined by
$\varphi_1(x):=x$ and $\varphi_1(y):= y+\mu_{i+1}x^{-i}$, where $\mu_{i+1}\in K^{\times}$ is as in item~(9). Set
$$
\widetilde P_2:=\varphi_1(\widetilde P_1),\quad \widetilde Q_2:=\varphi_1(\widetilde Q_1)\quad \text{and}\quad
(\ov\rho_1,\ov\sigma_1):=\max\bigl(\Pred_{\widetilde P_2}(\rho_3,\sigma_3),\Pred_{\widetilde Q_2}(\rho_3,\sigma_3)\bigr),
$$
Arguing as before we obtain that $\widetilde P_2$, $\widetilde Q_2$ satisfy either the conditions of the proposition, or
the conditions a), b) and c) with
$$
\Pred_{\widetilde P_2}(\rho_3,\sigma_3) = \Pred_{\widetilde Q_2}(\rho_3,\sigma_3) = (\ov\rho_1,\ov\sigma_1)\in \{(1,-2),(1,-1),(1,0)\}\cap \{(1,-j):j>i\}.
$$
In this case, by a similar argument, we obtain $\widetilde P_3$, $\widetilde Q_3$ satisfying either the conditions of the proposition, or the
conditions a), b) and c) with $\Pred_{\widetilde P_3}(\rho_3,\sigma_3)=(1,-2)$. Finally, if we are in this last situation, then we can use the same
argument again in order to obtain $\widetilde P_4$, $\widetilde Q_4$ satisfying the conditions of the proposition.\endnote{
By Proposition~\ref{pr ell por automorfismos},
\begin{equation}\label{eqq33'}
\ell_{\rho',\sigma'}(\widetilde P_2)=\ell_{\rho',\sigma'}(\widetilde P_1)\qquad\text{and}\qquad
\ell_{\rho',\sigma'}(\widetilde Q_2)= \ell_{\rho',\sigma'}(\widetilde Q_1)
\end{equation}
for all $(\ov\rho,\ov\sigma)<(\rho',\sigma')<(-\ov\rho,-\ov\sigma)$. So, since $(\rho_3,\sigma_3)
\in\, ](\ov\rho,\ov\sigma),(-\ov\rho,-\ov\sigma)[$, by equality~\eqref{pep4} we have
$$
(\rho_3,\sigma_3)=\Succ_{\widetilde P_2}(\ov\rho,\ov\sigma)=\Succ_{\widetilde Q_2}(\ov\rho,\ov\sigma).
$$
Hence
\begin{equation}\label{pep6}
(-\rho_3,-\sigma_3)\le (\ov\rho_1,\ov\sigma_1)<(\ov\rho,\ov\sigma),
\end{equation}
where the second inequality is strict since $(\ov\rho,\ov\sigma)\notin \Dir(\widetilde P_2)\cup \Dir(\widetilde P_2)$, because
by~(6), (9) and Proposition~\ref{pr ell por automorfismos}, there exists $\lambda_{\widetilde P_1},\lambda_{\widetilde Q_1}\in K^{\times}$ such that
\begin{align*}
\ell_{\ov\rho,\ov\sigma}(\widetilde P_2) = \varphi_1(\ell_{\ov\rho,\ov\sigma}(\widetilde P_1) = \lambda_{\widetilde P_1}
\varphi_1\bigl(x^{3m}(y-\mu_{i+1}x^{-i})^m\bigr) = \lambda_{\widetilde P_1} x^{3m}y^m
\shortintertext{and}
\ell_{\ov\rho,\ov\sigma}(\widetilde Q_2) = \varphi_1(\ell_{\ov\rho,\ov\sigma}(\widetilde Q_1) = \lambda_{\widetilde Q_1}
\varphi_1\bigl(x^{3n}(y-\mu_{i+1}x^{-i})^n\bigr) = \lambda_{\widetilde Q_1} x^{3n}y^n.
\end{align*}
Moreover by Proposition~\ref{varphi preserva el Jacobiano} we also have $[\widetilde P_2,\widetilde Q_2]=-(y+\mu_0 x+\mu_{i+1}x^{-i})$; while by
equalities~\eqref{pep2} and equalities~\eqref{eqq33'} with $(\rho',\sigma') = (\rho_3,\sigma_3)$, we obtain
\begin{equation*}
\en_{\rho_3,\sigma_3}(\widetilde P_2)\!=\!(0,1),\quad \en_{\rho_3,\sigma_3}(\widetilde Q_2)\!=\!(1,1), \quad\st_{\rho_3,\sigma_3}(\widetilde P_2)
\!=\!m(3,1),\quad \st_{\rho_3,\sigma_3}(\widetilde Q_2)\!=\!n(3,1).
\end{equation*}
If
\begin{equation}\label{desigualdad1}
(-\rho_3,-\sigma_3)<(\ov\rho_1,\ov\sigma_1)\le(1,-3),
\end{equation}
then we set $\varphi:=\varphi_1\circ \varphi_0$ and $\mu_j:=0$ for $j\notin \{0,i+1\}$. Consequently, in this case $P_5 = \widetilde P_2$ and
$Q_5 = \widetilde Q_2$. So the previous inequalities are item~(1), while items~(2) and~(4) were proven above. Item~(3) follows from items~(1)
and~(2) since $(-\rho_3,-\sigma_3)<(1,-3)<(\rho_3,\sigma_3)$. Finally item~(5) follows applying Proposition~\ref{pr ell por automorfismos} and the
fact that $\ell_{-1,2}(\widetilde P_1)=y$ and $\ell_{-1,2}(\widetilde Q_1)=xy$. So, if~\eqref{desigualdad1} holds, the proof is finished.

\smallskip

Else $\widetilde P_2$, $\widetilde Q_2$  satisfy the conditions a), b) and c). Hence, by~(6), (7), (8) and~\eqref{pep6}, we have
$$
\Pred_{\widetilde P_2}(\rho_3,\sigma_3) = \Pred_{\widetilde Q_2}(\rho_3,\sigma_3) = (\ov\rho_1,\ov\sigma_1)\in \{(1,-2),(1,-1),(1,0)\}\cap
\{(1,-j):j>i\}.
$$
In this case, arguing as before we obtain $\widetilde P_3$, $\widetilde Q_3$ satisfying either the conditions of the Proposition, or the
conditions a), b) and c) with $\Pred_{\widetilde P_3}(\rho_3,\sigma_3)=(1,-2)$. Finally, if we are in this last situation, then we can use the same
argument again in order to obtain $\widetilde P_4$, $\widetilde Q_4$ satisfying the conditions of the Proposition.}
\end{proof}

\begin{figure}[htb]
\centering
\begin{tikzpicture}[scale=0.78]
\fill[gray!20] (13,3.5) -- (13.5,3.5) -- (14,4)--(13,5);
\fill[gray!20] (13,0) -- (14,0) -- (14.5,0.5) -- (13,2);
\fill[gray!20] (5,3.5) -- (9.5,5) -- (5.5,4)--(4.5,3.5);
\fill[gray!20] (5,0) -- (11,2) -- (5,0.5)--(4,0);
\draw[step=.5cm,gray,very thin] (13,3.5) grid (14.7,5.2);
\draw [->] (13,3.5) -- (15,3.5) node[anchor=north]{$x$};
\draw [->] (13,3.5) --  (13,5.5) node[anchor=east]{$y$};
\draw[step=.5cm,gray,very thin] (13,0) grid (14.7,2.2);
\draw [->] (13,0) -- (15,0) node[anchor=north]{$x$};
\draw [->] (13,0) --  (13,2.5) node[anchor=east]{$y$};
\draw[step=.5cm,gray,very thin] (5,3.5) grid (10.7,5.2);
\draw [->] (4.3,3.5) -- (11,3.5) node[anchor=north]{$x$};
\draw [->] (5,3.5) --  (5,5.5) node[anchor=east]{$y$};
\draw[step=.5cm,gray,very thin] (5,0) grid (11.2,2.2);
\draw [->] (3.8,0) -- (11.5,0) node[anchor=north]{$x$};
\draw [->] (5,0) --  (5,2.5) node[anchor=east]{$y$};
\draw [->] (5,3.5) --  (5,5.5) node[anchor=east]{$y$};
\draw [->] (5,0) --  (5,2.5) node[anchor=east]{$y$};
\draw (14,0) --  (14.5,0.5) node[fill=white,right=2pt]{$P_6$} -- (13,2);
\draw (13.5,3.5) --  (14,4) node[fill=white,right=2pt]{$Q_6$} -- (13,5);
\draw (5,0) --  (11,2) node[fill=white,right=2pt]{$P_5$} -- (5,0.5)--(4,0);
\draw (5,3.5) --  (9.5,5) node[fill=white,right=2pt]{$Q_5$} -- (5.5,4)--(4.5,3.5);
\filldraw [gray]  (13.5,0)    circle (2pt)
                  (14,0)      circle (2pt)
                  (13,0.5)    circle (2pt)
                  (13.5,0.5)  circle (2pt)
                  (14.5,0.5)  circle (2pt)
                  (13,1)    circle (2pt)
                  (13.5,1)      circle (2pt)
                  (14,1)      circle (2pt)
                  (13,1.5)    circle (2pt)
                  (13.5,1.5)  circle (2pt)
                  (13,2)    circle (2pt)
                  (13.5,3.5)    circle (2pt)
                  (13,4)    circle (2pt)
                  (14,4)      circle (2pt)
                  (13,4.5)    circle (2pt)
                  (13.5,4.5)    circle (2pt)
                  (13,5)      circle (2pt)
                  (5,0)      circle (2pt)
                  (13,0)      circle (2pt)
                  (5,3.5)      circle (2pt)
                  (13,3.5)      circle (2pt);
\filldraw [gray]  (5.5,0.5)    circle (2pt)
                  (4.5,0)      circle (2pt)
                  (4,0)    circle (2pt)
                  (5,0.5)  circle (2pt)
                  (6.5,0.5)  circle (2pt)
                  (7,1)    circle (2pt)
                  (8,1)      circle (2pt)
                  (7.5,1)      circle (2pt)
                  (9,1.5)    circle (2pt)
                  (9.5,1.5)  circle (2pt)
                  (11,2)    circle (2pt)
                  (4.5,3.5)    circle (2pt)
                  (5.5,4)    circle (2pt)
                  (6.5,4)      circle (2pt)
                  (7.5,4.5)    circle (2pt)
                  (8,4.5)    circle (2pt)
                  (9.5,5)      circle (2pt);
\draw[->] (11.2,3) .. controls (11.95,3.25) .. (12.7,3);
\draw (11.7,3.5) node[right,text width=2cm]{$\psi_3$};
\end{tikzpicture}
\caption{Illustration of Proposition~\ref{pr 88} for $j=1$.}
\label{figura 3}
\end{figure}
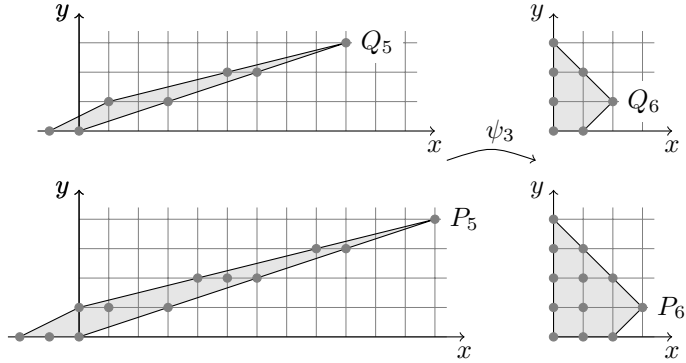

\begin{proposition}\label{pr 88} 
Let $P_5$, $Q_5$, $j$,
$m$, $n$, $\mu_0$, $\mu_1$, $\mu_2$ and $\mu_3$ be as in the previous
proposition and let $\psi_3$ be as in Remark~\ref{capsula convexa}. Define
$P_6:=\psi_3(P_5)$ and $Q_6:=\psi_3(Q_5)$. The following facts hold:
\begin{enumerate}

\smallskip

\item $\Dir(P_6)\cap\mathfrak{V}_{>0} = \Dir(Q_6)\cap\mathfrak{V}_{>0} = \{(j,1)\}$,

\smallskip

\item $\st_{j,1}(P_6)=(3,1)$, $\st_{j,1}(Q_6)=(2,1)$, $\en_{j,1}(P_6)=(0,m)$ and $\en_{j,1}(Q_6)=(0,n)$,

\smallskip

\item $P_6,Q_6\in L$,

\smallskip

\item $\ell_{1,-1}(P_6)=x^3 y+\mu_3 x^{2}$ and $\ell_{1,-1}(Q_6)=x^2y+\mu_3 x$,

\smallskip

\item $[P_6,Q_6]=x^4 y+\mu_0 +\mu_1 x+\mu_2 x^2+\mu_3 x^3$.

\end{enumerate}
\end{proposition}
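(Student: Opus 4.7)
The idea is to transfer the data on $(P_5,Q_5)$ from Proposition~\ref{P4} to $(P_6,Q_6)=(\psi_3(P_5),\psi_3(Q_5))$ via Remark~\ref{capsula convexa}. The relevant direction images under $\ov\psi_3(\rho,\sigma)=(-\rho,3\rho+\sigma)$ are
$$
\ov\psi_3(-j,3j+1)=(j,1),\quad \ov\psi_3(-1,2)=(1,-1),\quad \ov\psi_3(1,-3)=(-1,0),
$$
and the support reflection $\widetilde\psi_3\colon(i,k)\mapsto(-i+3k,k)$ reverses orientation on the boundary of a Newton polygon, hence swaps $\st$ with $\en$.

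Statements (3)--(5) follow by direct translation. For (3), Proposition~\ref{P4}(3) gives $\en_{1,-3}(P_5)=m(3,1)$ and $\en_{1,-3}(Q_5)=n(3,1)$, hence $v_{1,-3}(P_5)=v_{1,-3}(Q_5)=0$; combined with $P_5,Q_5\in L^{(1)}$ this forces $\Supp(P_6)\cup\Supp(Q_6)\subseteq\mathds{N}_0\times\mathds{N}_0$, so $P_6,Q_6\in L$. For (4), applying~\eqref{ve de psi} at $(-1,2)$ to the explicit formulas in Proposition~\ref{P4}(5) yields $\ell_{1,-1}(P_6)=\psi_3(y+\mu_3 x^{-2})=x^3 y+\mu_3 x^2$ and $\ell_{1,-1}(Q_6)=\psi_3(xy+\mu_3 x^{-1})=x^2 y+\mu_3 x$. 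For (5), Proposition~\ref{varphi preserva el Jacobiano} gives $[P_6,Q_6]=\psi_3([P_5,Q_5])\,[\psi_3(x),\psi_3(y)]$, and a short calculation gives $[\psi_3(x),\psi_3(y)]=[x^{-1},x^3 y]=-x$; plugging in the formula for $[P_5,Q_5]$ from Proposition~\ref{P4}(4) produces the stated expression.

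For (2), applying $\widetilde\psi_3$ to the corner data of Proposition~\ref{P4}(2) and using that the reflection swaps $\st$ with $\en$ gives
$$
\st_{j,1}(P_6)=\widetilde\psi_3(0,1)=(3,1),\quad \en_{j,1}(P_6)=\widetilde\psi_3(3m,m)=(0,m),
$$
and analogously $\st_{j,1}(Q_6)=(2,1)$, $\en_{j,1}(Q_6)=(0,n)$. For (1), note that $\ov\psi_3$ is an involution that carries $\mathfrak{V}_{>0}$ onto $\{(\rho,\sigma)\in\mathfrak{V}\colon 2\rho+\sigma>0\}$, which is the open arc running counterclockwise from $(1,-2)$ to $(-1,2)$. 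It therefore suffices to show that $(\rho_3,\sigma_3)=(-j,3j+1)$ is the only direction of $P_5$ (respectively $Q_5$) in that arc. Combining $\en_{\rho_3,\sigma_3}(P_5)=\st_{-1,2}(P_5)=(0,1)$ (from Proposition~\ref{P4}(2) and the explicit $\ell_{-1,2}(P_5)$ in Proposition~\ref{P4}(5)) with $\en_{1,-3}(P_5)=\st_{\rho_3,\sigma_3}(P_5)=m(3,1)$, Proposition~\ref{le bbasico} rules out any direction of $P_5$ in $\,](\rho_3,\sigma_3),(-1,2)[\,\cup\,](1,-3),(\rho_3,\sigma_3)[\,$; since $(1,-3)<(1,-2)$, this covers the entire arc $\,](1,-2),(-1,2)[\,$ apart from $(\rho_3,\sigma_3)$. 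The same argument applies verbatim to $Q_5$.

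The main obstacle is this uniqueness step in (1). It depends on the bound $\Pred_{P_5}(\rho_3,\sigma_3),\Pred_{Q_5}(\rho_3,\sigma_3)\le(1,-3)$ from Proposition~\ref{P4}(1) being tight enough to exclude directions of $P_5$ and $Q_5$ in $\,](1,-2),(\rho_3,\sigma_3)[\,$; a weaker bound would have admitted a spurious second direction of $P_6$ or $Q_6$ in $\mathfrak{V}_{>0}$, invalidating (1). Luckily $(1,-3)$ sits just on the correct side of the cutoff $(1,-2)$, so the argument closes.
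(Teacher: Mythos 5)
Your proof is correct and follows essentially the same approach as the paper: transfer the data from Proposition~\ref{P4} via $\psi_3$, $\widetilde\psi_3$, $\ov\psi_3$ (Remark~\ref{capsula convexa}), compute $[\psi_3(x),\psi_3(y)]=-x$ for (5), and invoke Proposition~\ref{le bbasico} for (1). The only cosmetic difference is in (1): the paper runs the le-bbasico argument on the $P_6$-side using the vertex data just established in (2)--(4) (in particular that $\en_{j,1}(P_6)=(0,m)$ lies on the $y$-axis and $P_6\in L$), whereas you run it on the $P_5$-side using $\en_{\rho_3,\sigma_3}(P_5)=\st_{-1,2}(P_5)$ and $\en_{1,-3}(P_5)=\st_{\rho_3,\sigma_3}(P_5)$ and then transport through the involution $\ov\psi_3$; both are valid.
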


\begin{proof} Recall that
$(\rho_3,\sigma_3)=(-j,3j+1)$ with $j\in \mathds{N}$ and that
$\ov{\psi}_3(\rho_3,\sigma_3)= (j,1)$.
Note that, by Remark~\ref{capsula convexa},
$$
\Supp(\ell_{j,1}(\psi_3(P))) = \Supp(\psi_3(\ell_{\rho_3,\sigma_3}(P))) =
\widetilde{\psi}_3(\Supp(\ell_{\rho_3,\sigma_3}(P))) \qquad\text{for all $P\in L^{(1)}$.}
$$
Hence
$$
\st_{j,1}(\psi_3(P))=\widetilde{\psi}_3(\en_{\rho_3,\sigma_3}(P))\quad\text{and}\quad
\en_{j,1}(\psi_3(P)) =\widetilde{\psi}_3(\st_{\rho_3,\sigma_3}(P))
\qquad\text{for all $P\in L^{(1)}$.}
$$
By Proposition~\ref{P4}(2), applying this equality with $P=P_5$ and $P=Q_5$ we get statement~(2). Using
Proposition~\ref{P4}(3) and arguing as in the proof of Proposition~\ref{transformacion afin}(1) we obtain
statement~(3).
Statement~(4) follows from Proposition~\ref{P4}(5) and~\eqref{ve de psi}.
Next we prove statement~(1). Note that by statements~(2) and~(4)
$$
\en_{1,-1}(P_6)\!=\!\st_{j,1}(P_6)\!\ne\!\en_{j,1}(P_6) = (0,m)\quad\text{and}\quad
\en_{1,-1}(Q_6)\!=\!\st_{j,1}(Q_6)\!\ne\!\en_{j,1}(Q_6)\!=\!(0,n),
$$
and so, by Proposition~\ref{le bbasico} we have
$\Dir(P_6)\cap\mathfrak{V}_{>0} = \Dir(Q_6)\cap\mathfrak{V}_{>0} = \{(j,1)\}$, as desired. Finally,
statement~(5) follows from Propositions~\ref{varphi preserva el Jacobiano} and~\ref{P4}(4).
\end{proof}

Collecting the main results in this section we have arrived at the following theorem:

\begin{theorem} Assume that $B=16$ and let $m,n$ and $(P,Q)$ be as in Corollary~\ref{forma final en L}. Interchanging $P$ with $Q$ if necessary we can assume that $m>n>1$. Then there exists $j\in\mathds{N}$ such that $m=3j+1$ and $n=2j+1$. Furthermore performing on $(P,Q)$ a series of operations as described at the beginning of this section we obtain $P_6, Q_6\in L$ and $\mu_0,\mu_1,\mu_2,\mu_3\in K$
with $\mu_0\ne 0$ such that
\begin{align*}
&[P_6,Q_6]=x^4 y\!+\mu_0\! +\!\mu_1 x\!+\!\mu_2 x^2\!+\!\mu_3 x^3,\\
& \ell_{1,-1}(P_6)=x^3 y\!+\!\mu_3 x^{2},\quad\ell_{1,-1}(Q_6)=x^2y\!+\!\mu_3
x,\\
& \Dir(P_6)\cap\mathfrak{V}_{>0} = \Dir(Q_6)\cap\mathfrak{V}_{>0} = \{(j,1)\},\\
& \st_{j,1}(P_6)=(3,1),\quad \st_{j,1}(Q_6)=(2,1),\quad \en_{j,1}(P_6)=(0,m)\quad\text{and}\quad \en_{j,1}(Q_6)=(0,n).
\end{align*}
\end{theorem}

\begin{remark} In this last section we have managed to reduce the degrees of $P,Q$ from $16m,16n$ to $m,n$.
The first step, reduction of degrees to $4m,4n$, was done in Proposition~\ref{transformacion afin} and it
is the algebraic
process that correspond to the reduction
done by Moh in the case $m=3,n=4$ using~\cite{M}*{Propositions~6.3 and~6.4}. It is based on
Corollary~\ref{fracciones de F1} and the fact that $b=q$, where $(a/l,b):=\frac{1}{m}\st_{\rho_0,\sigma_0}(P)$
and $\en_{\rho_0,\sigma_0}(F) = \frac pq (a/l,b)$. In fact, Corollary~\ref{fracciones de F1} allows us to
write $\ell_{1,-2}(P)$ as an $mq$-th power of a linear polynomial $R$, and $b=q$ allows us to ``cut"
the support until $v_{1,-3}(P_1)=0$ and $v_{1,-3}(Q_1)=0$. This correspond geometrically to squeeze the
support in order to obtain the following form:
\begin{figure}[htb]
\centering
\begin{tikzpicture}
\draw [->] (-2,0) -- (4,0) node[anchor=north]{$x$};
\draw [->] (0,0) --  (0,3) node[anchor=east]{$y$};
\draw (0,0) --  (4,2) node[fill=white,right=2pt]{$(12m,4m)$} -- (0,2);
\draw [dotted] (0.5,0.25)--(-2,0.25);
\draw [dotted] (1,0.5)--(-2,0.5);
\draw [dotted] (1.5,0.75)--(-2,0.75);
\draw [dotted] (2,1)--(-2,1);
\draw [dotted] (2.5,1.25)--(-2,1.25);
\draw [dotted] (3,1.5)--(-2,1.5);
\draw [dotted] (3.5,1.75)--(-2,1.75);
\draw (1,1.2) node[fill=white]{$P_1$};
\end{tikzpicture}
\caption{$v_{1,-3}(P_1)=0$.}
\end{figure}
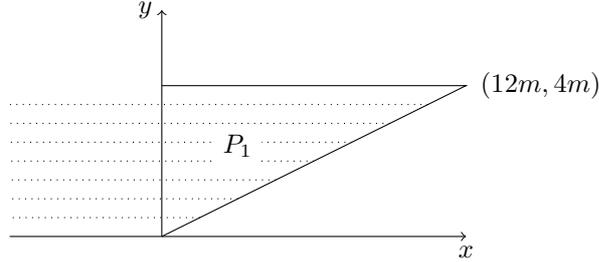

This figure is the same as Figure~11 in~\cite{A3}*{page~2779}. But in cases different from $16m,16n$ this cannot
always be obtained. For instance, consider the example of Remark~\ref{teorema central con rho mas sigma < 0}:
$$
P=x^{-1} + x^3 y (2 + 18 x^2 y + 36 x^4 y^2) + x^9 y^3 (8 + 72 x^2 y + 216 x^4 y^2 + 216 x^6 y^3)
$$
and
$$
Q=x^2 y + x^6 y^2 (1 + 6 x^2 y + 9 x^4 y^2).
$$
Here $[P,Q]=-1$, $m=3$, $n=2$, $b=4$ and $q=2$; so Corollary~\ref{fracciones de F1} applies with $(\rho_0,\sigma_0)=(-2,7)$. In order to
obtain the desired squeezing we need $\ell_{-1,2}(P)=\lambda_p x^2(x^2 y-\lambda)^4$ for some
$\lambda,\lambda_p\in K^{\times}$ but this cannot be achieved.
\end{remark}
\paragraph{Acknowledgment}
We wish to thank the referee for useful comments and suggestions that helped us to reduce the size
of the paper and improve the presentation.
We also thank Leonid Makar-Limanov  for
the argument in the proof of Proposition~\ref{primera condicion estandar}.

\setcounter{theorem}{0}
\setcounter{equation}{0}
\renewcommand\thesection{A}

\theendnotes

\begin{bibdiv}
\begin{biblist}

\bib{A}{book}{
   author={Abhyankar, S. S.},
   title={Lectures on expansion techniques in algebraic geometry},
   series={Tata Institute of Fundamental Research Lectures on Mathematics
   and Physics},
   volume={57},
   note={Notes by Balwant Singh},
   publisher={Tata Institute of Fundamental Research},
   place={Bombay},
   date={1977},
   pages={iv+168},
   review={\MR{542446 (80m:14016)}},
}

\bib{A2}{article}{
   author={Abhyankar, S. S.},
   title={Some thoughts on the Jacobian Conjecture, Part II},
   journal={Journal of Algebra},
   volume={319},
   date={2008},
   pages={1154-1248},
   issn={0021-8693},
}

\bib{A3}{article}{
   author={Abhyankar, S. S.},
   title={Some thoughts on the Jacobian Conjecture. III.},
   journal={Journal of Algebra},
   volume={320},
   date={2008},
   pages={2720-2826},
   issn={0021-8693},
}

\bib{CN}{article}{
   author={Cassou-Nogu{\`e}s, Pierrette},
   title={Newton trees at infinity of algebraic curves},
   conference={
      title={Affine algebraic geometry},
   },
   book={
      series={CRM Proc. Lecture Notes},
      volume={54},
      publisher={Amer. Math. Soc., Providence, RI},
   },
   date={2011},
   pages={1--19},
   review={\MR{2768630 (2012i:14034)}},
}

\bib{D}{article}{
   author={Dixmier, Jacques},
   title={Sur les alg\`ebres de Weyl},
   language={French},
   journal={Bull. Soc. Math. France},
   volume={96},
   date={1968},
   pages={209--242},
   issn={0037-9484},
   review={\MR{0242897 (39 \#4224)}},
}

\bib{G-G-V1}{article}{
   author={Guccione, Jorge A.},
   author={Guccione, Juan J.},
   author={Valqui, Christian},
   title={The Dixmier conjecture and the shape of possible counterexamples},
   journal={Journal of Algebra},
   volume={399},
   date={2014},
   pages={581-633},
   issn={0021-8693},
   }

\bib{H}{article}{
   author={Heitmann, R.},
   title={On the Jacobian conjecture},
   journal={Journal of Pure and Applied Algebra},
   volume={64},
   date={1990},
   pages={35--72},
   issn={0022-4049},
   review={\MR{1055020 (91c :14018)}},
}

\bib{J}{article}{
   author={Joseph, A.},
   title={The Weyl algebra -- semisimple and nilpotent elements},
   journal={American Journal of Mathematics},
   volume={97},
   date={1975},
   pages={597--615},
   issn={0002-9327},
   review={\MR{0379615 (52 :520)}},
}

\bib{Ju}{article}{
   author={Jung, Heinrich W. E.},
   title={\"Uber ganze birationale Transformationen der Ebene},
   language={German},
   journal={J. Reine Angew. Math.},
   volume={184},
   date={1942},
   pages={161--174},
   issn={0075-4102},
   review={\MR{0008915 (5,74f)}},
}

\bib{K}{article}{
   author={Keller, Ott-Heinrich},
   title={Ganze Cremona-Transformationen},
   language={German},
   journal={Monatsh. Math. Phys.},
   volume={47},
   date={1939},
   number={1},
   pages={299--306},
   issn={0026-9255},
   review={\MR{1550818}},
   doi={10.1007/BF01695502},
}

\bib{L}{article}{
   author={Lang, Jeffrey},
   title={Jacobian pairs. II},
   journal={J. Pure Appl. Algebra},
   volume={74},
   date={1991},
   number={1},
   pages={61--71},
   issn={0022-4049},
   review={\MR{1129130 (92k:14015)}},
   doi={10.1016/0022-4049(91)90049-8},
}

\bib{M}{article}{
   author={Moh, T. T.},
   title={On the Jacobian conjecture and the configurations of roots},
   journal={J. Reine Angew. Math.},
   volume={340},
   date={1983},
   pages={140--212},
   issn={0075-4102},
   review={\MR{691964 (84m:14018)}},
}

\bib{ML}{article}{
   author={Makar-Limanov, Leonid},
   title={On the Newton polygon of a Jacobian mate},
   journal={Max-Planck-Institut für Mathematik
Preprint Series 2013 (53)},
}

\bib{vdE}{book}{
   author={van den Essen, Arno},
   title={Polynomial automorphisms and the Jacobian conjecture},
   series={Progress in Mathematics},
   volume={190},
   publisher={Birkh\"auser Verlag},
   place={Basel},
   date={2000},
   pages={xviii+329},
   isbn={3-7643-6350-9},
   review={\MR{1790619 (2001j:14082)}},
   doi={10.1007/978-3-0348-8440-2},
}

\bib{Z}{article}{
   author={Zoladek, H.},
   title={An application of Newton-Puiseux charts to the Jacobian problem},
   journal={Topology},
   volume={47},
   date={2008},
   pages={431--469},
   issn={0040-9383},
   review={\MR{2427734 (2009h :14108)}},
}

\end{biblist}
\end{bibdiv}

\end{document}